\documentclass[11pt,letterpaper]{amsart}
\usepackage[utf8]{inputenc}
\usepackage[T1]{fontenc}
\usepackage{amssymb}
\usepackage{graphicx}
\usepackage{url}
\usepackage{enumerate}
\usepackage{enumitem}
\usepackage{verbatim}
\usepackage{color}
\usepackage{tikz}
\usetikzlibrary{arrows.meta,bending}
\usetikzlibrary{decorations.pathmorphing}
\usetikzlibrary{decorations.markings}
\usepackage{array}
\usepackage{tikz-cd}
\usepackage{float}
\usepackage{amsmath}
\usepackage{amsthm}
\usepackage[colorlinks=true,linkcolor=purple,citecolor=violet]{hyperref}
\usepackage{mathtools}
\usepackage{bbm}
\usepackage[capitalise, noabbrev]{cleveref}
\usepackage{fullpage}
\usepackage{lmodern}
\usepackage{esint}
\usepackage[mathscr]{euscript}
\usepackage{booktabs}
\usepackage{makecell}
\usepackage{tabularx}

\newcolumntype{C}[1]{>{\centering\arraybackslash}m{#1}}
\newcolumntype{Y}{>{\raggedright\arraybackslash}X}

\usetikzlibrary{positioning,arrows.meta}

\DeclareMathOperator{\bS}{\mathbb{S}}
\DeclareMathOperator{\bR}{\mathbb{R}}

\DeclareMathOperator{\bH}{\mathbb{H}}
\DeclareMathOperator{\bRH}{\mathbb{RH}}
\DeclareMathOperator{\bCH}{\mathbb{CH}}

\DeclareMathOperator{\mpp}{\mathfrak{p}}

\DeclareMathOperator{\ttg}{\mathtt{g}}

\DeclareMathOperator{\Ric}{\mathrm{Ric}}
\DeclareMathOperator{\Rm}{\mathrm{Rm}}
\DeclareMathOperator{\dvol}{d\mathrm{vol}}
\DeclareMathOperator{\Nil}{\mathrm{Nil}}
\DeclareMathOperator{\NN}{\mathscr{N}}
\DeclareMathOperator{\divv}{\mathrm{div}}

\usepackage{lipsum}

\newcommand{\vol}{\mathrm{vol}}

\newtheorem{theorem}{Theorem}[section]
\newtheorem{lemma}[theorem]{Lemma}

\newtheorem{proposition}[theorem]{Proposition}

\theoremstyle{definition}
\newtheorem{definition}[theorem]{Definition}
\newtheorem{question}[theorem]{Question}

\begin{document}

	\title{Poincar\'e--Einstein 4-manifolds with cusps}
	\author{Mingyang Li and Hongyi Liu}
    \dedicatory{Dedicated to Claude LeBrun on the occasion of his 70th birthday}

	\newcommand{\Addresses}{{
			\bigskip
			\footnotesize

			\textsc{{Simons Center for Geometry and Physics, Stony Brook University, Stony Brook, NY 11794, USA}}\par\nopagebreak
			\textit{E-mail address}: \href{mailto:mingyang.li@scgp.stonybrook.edu}{\texttt{mingyang.li@scgp.stonybrook.edu}}

			\par\medskip

			\textsc{{Department of Mathematics, Princeton University, Princeton,
					NJ 08544, USA.}}\par\nopagebreak
			\textit{E-mail address}: \href{mailto:hongyil@princeton.edu}{\texttt{hongyil@princeton.edu}}
	}}

	\begin{abstract}
		In this paper, we construct Poincar\'e--Einstein 4-manifolds with various kinds of cusps. In particular, we construct:
		\begin{enumerate}
			\item Infinite families of Einstein metrics on $(0,\infty)\times \NN$, where $\NN\to T^2$ is a principal $\bS^1$-bundle over $T^2$, with one Poincar\'e--Einstein end and one end asymptotic to a real or complex hyperbolic cusp.

			\item Infinite families of Einstein metrics on $(0,\infty)\times P$, where $P\to \Sigma_{\ttg}$ is a principal $\bS^1$-bundle over a closed Riemann surface $\Sigma_{\ttg}$ of genus $\ttg\geq 2$, with one Poincar\'e--Einstein end and one end asymptotic to a bundle of two-dimensional hyperbolic cusps over hyperbolic $\Sigma_{\ttg}$.
		\end{enumerate}
		Universal covers of (1) and (2) provide new complete negative Einstein metrics on $\bR^4$. These Einstein metrics also exhibit interesting degeneration phenomena. With this construction, we give a negative answer to a question of Anderson concerning cusp formation for Poincar\'e--Einstein 4-manifolds.
	\end{abstract}

	\maketitle

	\section{Introduction}

	In this paper, we study complete $4$-dimensional negative Einstein manifolds $(M,h)$ with Poincar\'e--Einstein and cusp ends. We always normalize the metric so that $\Ric_h=-3h$. We first introduce the Poincar\'e--Einstein asymptotic geometry for negative Einstein metrics.
	\begin{definition}
		Let $E$ be an end of a negative Einstein $4$-manifold $(M,h)$. We say that $E$ is Poincar\'e--Einstein (PE) if there are a connected closed $3$-manifold $N$, a diffeomorphism from $E$ to $(0,\epsilon)\times N$, and a defining function $\rho>0$ on $E$ such that the metric
		$g:=\rho^2h$ extends smoothly to $[0,\epsilon)\times N$, with $\rho=0$ and $d\rho\neq 0$ along $\{0\}\times N$.
	\end{definition}
	Although the conformal factor is not unique, the conformal class $[g|_{\{0\}\times N}]$ is well-defined, and this is the \emph{conformal infinity} of the Poincar\'e--Einstein end. Any such conformal metric $g$ is called a \emph{conformal compactification} for the Poincar\'e--Einstein end. If the Einstein manifold has only Poincar\'e--Einstein ends, we shall simply say the Einstein manifold is Poincar\'e--Einstein. Poincar\'e--Einstein manifolds are asymptotically hyperbolic in the sense that the sectional curvatures approach $-1$ at infinity.

	In the literature, the above Poincar\'e--Einstein asymptotic is also known as conformally compact Einstein (CCE) or asymptotically hyperbolic Einstein (AHE). The study of local existence, perturbative existence, uniqueness, compactness, and the construction of examples of Poincar\'e--Einstein metrics has undergone extensive development \cite{lebrun1980spaces,FeffermanGraham1985ConformalInvariants,FeffermanGraham2011AmbientMetric,LeBrun1982HSpace,LeBrun1991CompleteQuaternionicKahler,Biquard2002MetriquesAutoduales,Biquard2008ContinuationUnique,Lee2006FredholmOperators,chang2025problem,ChangGe2018CompactnessDimension4,ChangGeQing2020CompactnessII,ChangGeJinQing2024PerturbationCompactness,ChangYangZhang2025CylindricalInfinity,CalderbankSinger2004ComplexSingularities,GurskySzekelyhidi2020LocalExistence,Page1978CompactRotating,PagePope1987ComplexLineBundles,Pedersen1986SpinningTop,hitchin1995twistor}. Our recent work gives a non-perturbative construction of Poincar\'e--Einstein metrics that are conformally K\"ahler over complex line bundles over Riemann surfaces \cite{LiLiu2025PoincareEinstein}. For nondegenerate PE $4$-manifolds, in the sense that the linearized Einstein operator in Bianchi gauge admits no non-trivial $L^2$ kernel, there is a well-developed deformation theory \cite{Biquard2000EinsteinAsymptotiquementSymetriques,Lee2006FredholmOperators,BiquardRollin2009Wormholes}.

	On the other hand, negative Einstein $4$-manifolds may exhibit many other types of asymptotic geometry, and at present very little is known about Einstein metrics with such asymptotics. These metrics are of particular interest because they may arise as limits of negative Einstein manifolds under degeneration. The main focus of this paper is on the following asymptotic models.

	\begin{definition}\label{def:cusps}
		We say that an end $E$ of a complete negative Einstein $4$-manifold $(M,h)$ is
		\begin{itemize}
			\item asymptotically hyperbolic cuspidal,
			\item asymptotically complex hyperbolic cuspidal, or
			\item asymptotically $\Sigma_{\ttg}$-cuspidal, where $\Sigma_{\ttg}$ is a closed Riemann surface of genus $\ttg\geq 2$,
		\end{itemize}
		if there is a closed $3$-manifold $\mathscr L$, a suitable diffeomorphism from $E$ to $(R_0,\infty)\times \mathscr L$, and a model metric $h_0$ on $(R_0,\infty)\times \mathscr L$ such that, for some $\delta_0>0$ and every $k\geq 0$,
		\[
		|\nabla_{h_0}^k(h-h_0)|_{h_0}=O(e^{-\delta_0 r}).
		\]
		Here:
		\begin{itemize}
			\item In the asymptotically hyperbolic cuspidal case, $\mathscr L=T^3$, and
			\begin{equation}\label{eq:hyperbolic-cusp-model}
				h_0=dr^2+e^{-2r}g_{T^3},
			\end{equation}
			where $g_{T^3}$ is a flat metric on $T^3$.

			\item In the asymptotically complex hyperbolic cuspidal case, $\mathscr L=\Gamma\backslash \mathcal H^3$, where $\mathcal H^3$ is the $3$-dimensional Heisenberg group and $\Gamma\subset \mathcal H^3$ is a cocompact lattice. The model metric $h_0$ is the quotient metric induced by
			\begin{equation}\label{eq:complex-hyperbolic-cusp-model}
				\frac{1}{2}\left(dr^2+e^{-r}(dx^2+dy^2)+e^{-2r}(dt-x\,dy)^2\right)
			\end{equation}
			on $(R_0,\infty)\times \mathcal H^3$. Equivalently, $\mathscr L=\NN_\ell$ is the principal $\bS^1$-bundle over $T^2$ of degree $\ell\neq 0$ determined by $\Gamma$.

			\item In the asymptotically $\Sigma_{\ttg}$-cuspidal case,
			$
			\mathscr L=P_\ell,
			$
			where $P_\ell$ is the principal $\bS^1$-bundle over $\Sigma_{\ttg}$ of degree $\ell\in \mathbb Z$, and
			\begin{itemize}
				\item If $\ell=0$, then $P_0=\Sigma_{\ttg}\times \bS^1$, and
				\begin{equation}\label{eq:sigma-cusp-model-trivial}
					h_0=\frac{1}{3}\left(dr^2+e^{-2r}dt^2+g_{\Sigma_{\ttg}}\right),
				\end{equation}
				where $g_{\Sigma_{\ttg}}$ is a hyperbolic metric on $\Sigma_{\ttg}$.

				\item If $\ell\neq 0$, then $P_\ell=\Gamma\backslash \widetilde{SL}(2,\mathbb R)$ is a nontrivial principal $\bS^1$-bundle over $\Sigma_{\ttg}$, where $\Gamma\subset \widetilde{SL}(2,\mathbb R)$ is a cocompact lattice. The model metric $h_0$ is the quotient of
				\begin{equation}\label{eq:sigma-cusp-model-twisted}
					\frac{1}{3}\left(dr^2+e^{-2r}\left(dt+\frac{dx}{y}\right)^2+\frac{1}{y^2}(dx^2+dy^2)\right)
				\end{equation}
				on $(R_0,\infty)\times \widetilde{SL}(2,\mathbb R)$.
			\end{itemize}
		\end{itemize}
	\end{definition}

	Here $\mathcal H^3$ denotes the $3$-dimensional Heisenberg group
	\[
	\mathcal H^3=
	\left\{
	\begin{pmatrix}
		1 & x & t\\
		0 & 1 & y\\
		0 & 0 & 1
	\end{pmatrix}
	: x,y,t\in\mathbb R
	\right\},
	\]
	and $\widetilde{SL}(2,\mathbb R)$ is the universal cover of $SL(2,\mathbb R)$. The model metrics \eqref{eq:hyperbolic-cusp-model}--\eqref{eq:sigma-cusp-model-trivial} are Einstein, whereas \eqref{eq:sigma-cusp-model-twisted} is only asymptotically Einstein, with
	$
	\Ric(h_0)+3h_0=O(e^{-2r}).
	$
	The $r$-level cross-sections of \eqref{eq:hyperbolic-cusp-model}--\eqref{eq:sigma-cusp-model-twisted} carry, respectively, $\bR^3$, $\Nil$, $\bH^2\times \bR$, and $\widetilde{SL}(2,\mathbb R)$ geometry.

	For brevity, we shall refer to these ends simply as AH cusps, ACH cusps, and $\Sigma_{\ttg}$-cusps, respectively. We shall set $\NN_\ell$ as the nilmanifold that is the $\bS^1$-bundle over $T^2$ of degree $\ell$. Note that the sign of $\ell$ depends on the choice of orientation. In particular, $\NN_0=T^3$. The following three results realize each of these cusp geometries as one end of a complete negative Einstein $4$-manifold whose other end is Poincar\'e--Einstein.

\begin{theorem}\label{thm:PE-with-AH-cusp}
For every Riemannian metric $g^\sharp$ on $T^2$, there exists an
$\bS^1$-invariant Riemannian metric $g^\flat$ on $T^3$ whose quotient is
$(T^2,g^\sharp)$, such that $M=(0,\infty)\times T^3$ admits a complete
negative anti-self-dual Einstein metric $h$ with two ends $E_0,E_\infty$,
where $E_0$ is PE with conformal infinity $[g^\flat]$ and $E_\infty$ is an AH
cusp.
\end{theorem}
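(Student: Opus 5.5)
We build $h$ from an $\bS^1$-invariant anti-self-dual Einstein metric via the Gibbons--Hawking/LeBrun-type ansatz, i.e.\ the Calderbank--Pedersen / Tod framework. An $\bS^1$-invariant ASD Einstein metric with negative scalar curvature is determined locally by a solution of the $SU(\infty)$-Toda equation, or, in the Calderbank--Pedersen setting with two commuting Killing fields, by an eigenfunction of the hyperbolic Laplacian. Here the symmetry is only $\bS^1$ and the base data $g^\sharp$ is an arbitrary metric on $T^2$, so we use Tod's description. Write
\[
h=\frac{P}{\rho^2}\Big(e^{u}(dx^2+dy^2)+d\rho^2\Big)+\frac{1}{P\rho^2}(dt+\theta)^2 ,
\qquad (u_{xx}+u_{yy})+(e^u)_{\rho\rho}=0,
\]
where $P$ is expressed through $\rho u_\rho$, and $\theta$ solves the monopole equation $d\theta=\ast dP$. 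We work on $(0,\infty)_\rho\times T^2_{x,y}$, with $t$ the $\bS^1$-fibre coordinate.

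\textbf{Step 1: reduction and boundary data.} Choose isothermal coordinates and write $g^\sharp=e^{u_0}(dx^2+dy^2)$ on $T^2$. We seek a solution $u(\rho,x,y)$ of the Toda equation on $(0,\infty)\times T^2$ with $u(0,\cdot)=u_0$ and $u$ smooth up to $\rho=0$. The PE end at $\rho\to 0$ then has conformal infinity $e^{u_0}(dx^2+dy^2)+c(dt+\theta_0)^2$. This is an $\bS^1$-invariant metric $g^\flat$ on $T^3$ whose quotient is $(T^2,g^\sharp)$, as in our earlier work \cite{LiLiu2025PoincareEinstein}. Since the degree of the bundle must be $0$ for $T^3$, we need the total flux of $dP$ through $T^2$ to vanish, so $\theta$ descends.

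\textbf{Step 2: existence of the global Toda solution.} We choose the second Cauchy datum (the Neumann data $u_\rho$ at $\rho=0$) so that the solution exists for all $\rho>0$ and behaves like the cusp solution as $\rho\to\infty$. The model AH cusp corresponds to the explicit $x,y$-independent solution $e^u=a\rho^2+b\rho$ (or similar), with $P$ decaying. To produce the global solution, set $v=e^u$ and treat the Toda equation as a degenerate elliptic equation on $(0,\infty)\times T^2$. We solve a Dirichlet problem on $[\epsilon,R]\times T^2$ by the method of sub/supersolutions built from the model $x,y$-independent solutions with the same mean of $u_0$. We then take $\epsilon\to 0$, $R\to\infty$ using the maximum principle and interior estimates. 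Periodicity in $x,y$ together with averaging over $T^2$ shows that the non-constant modes decay exponentially in $\log\rho$. This is the step I expect to be the main obstacle: uniform a priori bounds near $\rho=0$, where the equation degenerates, and the exponential convergence to the model as $\rho\to\infty$. I would first try barriers of the form $\log(\rho^2+c_\pm\rho)$ plus a harmonic correction of $u_0$.

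\textbf{Step 3: geometry of the ends.} Positivity of $P$ (equivalently nondegeneracy of $h$) follows from a maximum principle applied to the linearized equation satisfied by $\rho u_\rho$, compared with the model. Near $\rho=0$, the boundary expansion of the Toda solution gives smooth compactification with defining function $\rho$, hence the end is PE. As $\rho\to\infty$, set $r\sim\log\rho$. The exponential decay from Step 2 yields $|\nabla^k_{h_0}(h-h_0)|=O(e^{-\delta_0 r})$ with $h_0=dr^2+e^{-2r}g_{T^3}$, and the flat limit $g_{T^3}$ is given by the averaged data. Completeness follows because both ends are at infinite distance. Finally, ASD Einstein with $\Ric=-3h$ holds by Tod's theorem.
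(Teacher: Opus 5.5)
\textbf{There is a genuine gap in Step~2.} Your framework matches the paper's: the LeBrun--Tod reduction with $W=1-\tfrac12\rho u_\rho$, Dirichlet data $u(0,\cdot)=u_0$ read off from $g^\sharp$, and an exhaustion by Dirichlet problems. But Step~2 aims at the wrong solution class, so as written it cannot produce an AH cusp.
\begin{itemize}
\item The $x,y$-independent solutions of $(e^u)_{\rho\rho}=0$ are affine, $e^u=b+a\rho$. Your $e^u=a\rho^2+b\rho$ is not a solution; its Toda defect is $2a$.
\item Even formally it fails. It gives $We^u=\tfrac12 b\rho$. Either $b=0$ and $W\equiv 0$, or $b\neq 0$ and $(We^u)_\rho=b/2$ gives the circle bundle nonzero degree. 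The latter contradicts your own requirement that $\theta$ descend to $T^3$, and for $a\neq 0$ one also has $W\to 0$.
\item The piece $e^u=b\rho$ alone is the complex hyperbolic model, Case~5 in Table~\ref{tab:typeI-cases}.
\item The AH cusp is the constant solution $e^u\equiv b$. Then $W\equiv 1$, $d\eta=0$, and with $\rho=e^r$ one gets exactly $h=dr^2+e^{-2r}(\eta^2+b\,g_{T^2})$.
\item Integrating the Toda equation over $T^2$ shows that $\fint_{T^2}e^{u(\rho,\cdot)}$ is affine in $\rho$ for every solution. Hence no solution can lie between barriers of the form $\log(\rho^2+c_\pm\rho)$ plus a bounded correction.
\item Choosing ``Neumann data so that the solution exists for all $\rho$'' is not a workable selection mechanism for an elliptic equation.
\end{itemize}

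\textbf{The missing idea is the correct condition at infinity: require $u\in L^\infty$.} By the affine-mean identity this forces $\fint_{T^2}e^{u(\rho,\cdot)}\equiv b:=\fint_{T^2}e^{u_0}$, which gives degree zero automatically. The argument then runs as follows.
\begin{itemize}
\item Solve on $[0,k_0]\times T^2$ with $u=\log b$ at $\rho=k_0$. The mean stays equal to $b$, and the maximum principle gives $\|u-\log b\|_{L^\infty}\le \|u_0-\log b\|_{L^\infty}$ uniformly in $k_0$.
\item The Toda equation is uniformly elliptic up to $\rho=0$; only $h$ degenerates there, not the equation. So no $\epsilon\to 0$ limit is needed, and the obstacle you single out is not the real one.
\item The real work is decay to the model, which does not follow from ``averaging'' for this nonlinear equation. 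The paper shows that $E=\tfrac12\|e^{u_1}-e^{u_2}\|_{H^{-1}(T^2)}^2$ satisfies $E''\ge \kappa_0E-\kappa_1|E'|$. Boundedness then forces $E\le E(0)e^{-\kappa_2\rho}$, which gives uniqueness and exponential decay in $\rho$, i.e.\ super-exponential decay in $r$.
\end{itemize}

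\textbf{Step~3 also needs more than a maximum principle.} $W$ solves the linearized equation $\Delta_{T^2}W+(e^uW)_{\rho\rho}=0$, whose zeroth-order coefficient $(e^u)_{\rho\rho}=-\Delta_{T^2}u$ has no sign, so a maximum principle alone does not give $W>0$. The paper instead deforms the boundary data from the model along $s\varphi$, $s\in[0,1]$. It proves closedness using the subharmonicity of $|\mathcal{K}|_h^2=\rho^{-2}W^{-1}$ from the Bochner formula. Some continuity argument of this kind is needed.
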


\begin{theorem}\label{thm:PE-with-ACH-cusp}
For every Riemannian metric $g^\sharp$ on $T^2$ and every
$\ell\in\mathbb{Z}_{>0}$, there exist $\bS^1$-invariant Riemannian metrics
$g^\flat_\nu$, $\nu=1,2$, on $\NN_\ell$ whose quotient is
$(T^2,g^\sharp)$, such that $M=(0,\infty)\times \NN_\ell$ admits complete
negative Einstein metrics $h_\nu$ with two ends $E_0,E_\infty$, where
\begin{itemize}
    \item $h_1$ is anti-self-dual, while $h_2$ is not anti-self-dual;
    \item $E_0$ is PE with conformal infinity $[g_\nu^\flat]$, and $E_\infty$
    is an ACH cusp.
\end{itemize}
\end{theorem}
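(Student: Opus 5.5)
Since $h$ is to be $\bS^1$-invariant (and $h_1$ anti-self-dual), I would work with metrics with a Killing field and reduce the Einstein equations to PDE on the quotient. For $h_1$ I would use the LeBrun/Tod ansatz: a self-dual Einstein (equivalently, after orientation reversal, anti-self-dual) metric with negative scalar curvature and a Killing field is locally determined by a solution $u$ of the $SU(\infty)$ Toda equation
\[
u_{xx}+u_{yy}+(e^u)_{zz}=0
\]
on a domain of $\bR^3\ni(x,y,z)$, via
\[
h=\frac{3}{z^2}\Big(P\big(e^u(dx^2+dy^2)+dz^2\big)+P^{-1}(dt+\theta)^2\Big),\qquad P=\frac{1}{2}(zu_z-2).
\]
Here $z$ plays the role of the radial variable, $(x,y)$ are coordinates on $T^2$, and $d\theta$ is fixed by $u$. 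The conformal infinity sits at $z=0$, and the integrality of $[d\theta]$ over $T^2$ encodes the degree $\ell$.

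I would look for $u$ adapted to the cusp end. The simplest separable choice $e^u=e^{2w(x,y)}f(z)$ reduces the Toda equation to a Liouville-type equation on $T^2$ together with an ODE in $z$. On $T^2$ the Liouville constant must vanish (Gauss–Bonnet), so $w$ is harmonic, hence constant. That is too rigid to realise an arbitrary $g^\sharp$.

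So the main step is a nonlinear PDE problem. I would prescribe boundary data $e^{u(x,y,0)}(dx^2+dy^2)=g^\sharp$ at $z=0$, which gives PE asymptotics with an $\bS^1$-invariant conformal infinity over $(T^2,g^\sharp)$. I would then solve the Toda equation on $T^2\times(0,\infty)$ with the requirement that $u$ approaches the explicit cusp solution as $z\to\infty$. The explicit model solutions are:
\begin{itemize}
\item $e^u=z^2$ type profiles for the AH cusp (with $\ell=0$);
\item profiles giving the Heisenberg/Bergman form for the ACH cusp (with $\ell\neq0$).
\end{itemize}
I would use the continuity method or sub/supersolutions built from the model solutions. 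The Toda equation is elliptic where $e^u>0$, and the model solutions with constants adjusted serve as barriers. This yields existence, $z$-monotonicity, and positivity of $P$, which gives a nondegenerate metric. Completeness and the exponential cusp asymptotics would follow from linearizing about the model solution, since decaying modes decay exponentially in $r\sim\log z$. The conformal infinity $g^\flat$ is then read off with connection determined by $u$, so $g^\flat$ is only determined up to this choice, which matches the "there exists $g^\flat$" in the statement.

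For the non-ASD $h_2$, I would instead use the conformally Kähler ansatz (Derdziński/LeBrun): Hermitian non-Kähler Einstein metrics with a Killing field, as in our earlier work \cite{LiLiu2025PoincareEinstein}. These reduce to a Toda-type PDE coupled with an ODE in the conformal factor. I would run the same boundary-value scheme with an ACH model at infinity and check that $W^+\neq0$.

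The hardest step is the global a priori estimates on the noncompact domain: showing the solution neither degenerates ($P\to0$ or $P\to\infty$ in the interior) nor loses the exponential rate. My first attempt would be a maximum principle comparison of $u$ against the model cusp solution shifted by constants, after normalizing the area of $g^\sharp$ by scaling.
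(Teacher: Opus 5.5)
\textbf{There is a genuine gap.} Your framework matches the paper's: the LeBrun--Tod reduction, Dirichlet data $e^{\varphi}g_{T^2}=g^\sharp$ at the PE end, model asymptotics at the cusp, and the period chosen to realize degree $\ell$. Getting a bounded solution from finite-cylinder Dirichlet problems, with the model shifted by constants as barriers, is also essentially what the paper does. But the two steps that turn such a solution into an Einstein metric with an ACH cusp are only asserted, and the mechanisms you propose for them would not work as stated.

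\emph{Positivity of $W=1-\tfrac12\xi v_\xi$ (your $P$).} For negative Einstein constant, $P$ must be a positive multiple of $1-\tfrac12 zu_z$. Your $\tfrac12(zu_z-2)$ is negative at the PE end and on the hyperbolic model $e^u=\mathrm{const}$. More importantly, $W>0$ is a first-order condition, so $C^0$ barriers cannot give it. Nor does $z$-monotonicity help: it fails for general $\varphi$, and what is actually needed is $\xi v_\xi<2$. The paper proves $W>0$ by a continuity argument in the boundary data. Its closedness step uses Bochner: $|\mathcal K|_h^2=\xi^{-2}W^{-1}$ is subharmonic on a negative Einstein manifold. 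The maximum principle on $[a,\infty)\times T^2$, with $W=1$ at $\xi=0$ and $W\to\tfrac12$ at the cusp, then gives the uniform bound $W\ge a^2/(2\xi^2)$ along the family.

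\emph{Cusp asymptotics.} Comparison with $\log(b+a\xi)+c$ only bounds $u=v-\log(b+a\xi)$; here $b+a\xi=\fint e^v$ is forced by integrating over $T^2$. Linearizing about the model presupposes that $u$ is already small near the cusp, which is exactly what Definition~\ref{def:cusps} requires you to prove. The missing idea is the slice-wise energy $E(t)=\tfrac12\|e^{u(t,\cdot)}-1\|_{H^{-1}(T^2)}^2$. The substitution $t=\sqrt{(b+a\xi)/b}$ turns the equation into $\Delta_{T^2}u+\Psi(e^u)_{tt}+B(e^u)_t=0$ with uniformly elliptic, bounded coefficients. One then gets $E''\ge\kappa_0E-\kappa_1|E'|$, so boundedness forces $E'\le0$ and exponential decay of $E$. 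Interpolation upgrades this to $C^2$ decay, giving $|v-v_{\mathrm{mod}}|=O(e^{-\delta\sqrt{\xi}})$.

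\emph{The non-ASD metric $h_2$.} Your sketch does not set up the right problem. In the Type II reduction the moment map is the scalar curvature $\xi=s_g$ of the extremal K\"ahler metric, and it ranges over $(\xi_\ast,0)$ with $\xi_\ast=-\sqrt[3]{12}$. The ACH cusp sits at the finite endpoint $\xi\to\xi_\ast$, because $|\xi|=(2\sqrt6|W_h^+|_h)^{1/3}\to\sqrt[3]{12}$ there; it is not at infinity.

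There is also no extra ODE for the conformal factor. The equation is the single twisted Toda equation $(e^v)_{\xi\xi}+\Delta_{T^2}v=-\xi We^v$ with $W=(12-6\xi v_\xi)/(12+\xi^3)$. The model is $e^{v_{\mathrm{mod}}}=-\tfrac{a}{24}(\xi-\xi_\ast)^3(\xi+\xi_\ast)$, the case $2b^3=3a^3$ of the $\xi$-only solutions. The boundary condition is $v-3\log(\xi-\xi_\ast)\in L^\infty$. The substitution $t=(1-\xi/\xi_\ast)^{-1/2}$ brings this to the same uniformly elliptic form. Then the $H^{-1}$ argument applies, and so does the Bochner argument for $W>0$, now with $W\to+\infty$ at the cusp.

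Non-anti-self-duality then needs no separate check, since $W_h^+\neq0$ wherever $\xi\neq0$. Without locating this finite-endpoint degeneration and its model, you cannot even formulate the ``same boundary-value scheme'' for $h_2$.
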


\begin{theorem}\label{thm:PE-with-Sigma-cusp}
For every Riemannian metric $g^\sharp$ on a closed surface $\Sigma_{\ttg}$ of
genus $\ttg\geq 2$ and every $\ell\in \mathbb{Z}$, there exists an
$\bS^1$-invariant Riemannian metric $g^\flat$ on $P_\ell$ whose quotient is
$(\Sigma_{\ttg},g^\sharp)$, such that $M=(0,\infty)\times P_\ell$ admits a
complete negative Einstein metric $h$ with two ends $E_0,E_\infty$, where
$E_0$ is PE with conformal infinity $[g^\flat]$ and $E_\infty$ is a
$\Sigma_{\ttg}$-cusp.
\end{theorem}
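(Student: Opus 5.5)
The plan is to reduce \cref{thm:PE-with-Sigma-cusp} to a cohomogeneity-one, explicitly integrable problem, using two facts. First, only the conformal class $[g^\flat]$ enters. Second, every metric on $\Sigma_{\ttg}$ is conformal to a hyperbolic one.

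\textbf{Reduction to a hyperbolic quotient.} Write $g^\sharp=e^{2\psi}g_{\Sigma_{\ttg}}$ with $g_{\Sigma_{\ttg}}$ hyperbolic. Suppose we build an $\bS^1$-invariant $g^\flat_0$ on $P_\ell$ with quotient $g_{\Sigma_{\ttg}}$ and conformal infinity $[g^\flat_0]$. Then $g^\flat:=e^{2\pi^*\psi}g^\flat_0$ is still $\bS^1$-invariant, lies in the same conformal class, and has quotient $g^\sharp$. It therefore suffices to treat $g^\sharp=g_{\Sigma_{\ttg}}$ hyperbolic. In that case we look for an Einstein metric invariant under $\bS^1$ and under the full local isometry structure of $\Sigma_{\ttg}$, so the Einstein equation becomes an ODE in one variable.

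\textbf{The explicit family.} I would look for $h$ among conformally K\"ahler metrics, as in \cite{LiLiu2025PoincareEinstein}. Take the $\bS^1$-invariant K\"ahler metric in LeBrun's form
\[
g=W\bigl(e^{u}g_{\Sigma_{\ttg}}+dz^2\bigr)+W^{-1}\theta^2 .
\]
The K\"ahler condition forces $u$ to solve the $SU(\infty)$ Toda equation. Make the separable ansatz $e^{u}=P(z)$, using that $g_{\Sigma_{\ttg}}$ has constant curvature $-1$. Then Toda reduces to $P''=\text{const}$ (with the sign set by the curvature), so $P$ is an explicit quadratic. The monopole-type linear equation for $W$, together with $d\theta=$ (the curvature form it dictates), becomes an ODE in $z$ alone. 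The connection $\theta$ has curvature $c\,\mathrm{dvol}_{g_{\Sigma_{\ttg}}}+(\cdots)$; integrating over $\Sigma_{\ttg}$ ties one free constant to the degree $\ell$. This gives the twisted model for $\ell\neq0$ and the product model for $\ell=0$.

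Next impose Derdzi\'nski's condition that $h=s^{-2}g$ be Einstein, where $s$ is the scalar curvature of $g$ (equivalently, Bach-flatness together with the Tod/Przanowski condition). This should cut the solutions down to a rational family in $z$ with finitely many parameters, of Plebański--Demiański / Calderbank--Pedersen type. Finally normalize so that $\Ric_h=-3h$.

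\textbf{Choice of parameters and the two ends.} The remaining work is to choose parameters and an interval $z\in(z_0,z_1)$ with three properties. On the interval, $W>0$, $P>0$ and $s\neq0$. At the end $z\to z_0$, $s$ has a simple zero while $g$ stays smooth and nondegenerate; then $\rho=|s|$ (suitably rescaled) is a defining function, giving the PE end $E_0$ with conformal infinity $[g|_{z=z_0}]$. This is $\bS^1$-invariant with quotient conformal to $g_{\Sigma_{\ttg}}$, and the conformal reduction above removes the factor. At the other end ($z_1$, possibly $\infty$), a change of variable $r=r(z)$ should put $h$ in the form \eqref{eq:sigma-cusp-model-trivial} or \eqref{eq:sigma-cusp-model-twisted}, with the fibre length decaying like $e^{-r}$, the base staying of size comparable to $g_{\Sigma_{\ttg}}/3$, and exponentially decaying corrections in all derivatives. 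Completeness follows because $E_0$ is at infinite distance (conformal blow-up) and the cusp end has $r\to\infty$.

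\textbf{Main obstacle.} I expect the hardest step to be this last one: finding, for each $\ell\in\mathbb Z$, a parameter range in which positivity holds on the entire interval and both asymptotic behaviours occur simultaneously. The cusp end requires a degenerate configuration, for instance a double root of $P$ or $W^{-1}$ at $z_1$, to produce $e^{-2r}$ fibre collapse with a non-collapsing base. Meanwhile the PE end needs a simple zero of $s$ before any other degeneracy. I would first tune the free constants so that $z_1$ is a double root, then check by sign analysis of the explicit rational functions that the first zero of $s$ on the other side is simple and precedes the zeros of $W$ and $P$. The asymptotic expansion near $z_1$ would then give the $O(e^{-\delta_0 r})$ estimates.
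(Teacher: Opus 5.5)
\textbf{The reduction is fine; the explicit family is wrong.} Your conformal reduction is sound. Multiplying an $\bS^1$-invariant $g^\flat_0$ by $e^{2\pi^*\psi}$ keeps it $\bS^1$-invariant and keeps the conformal class, while multiplying the quotient by $e^{2\psi}$. So the statement as worded does reduce to a quotient that is a constant multiple of a hyperbolic metric.

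The gap is in the family you then search. The K\"ahler condition in LeBrun's form does \emph{not} force the $SU(\infty)$ Toda equation. It only gives the linear equation for $W$, namely closedness of $d\theta$; for $z$-only solutions this says $WP$ is affine. In the paper's normalization the Toda operator is the scalar curvature: $\Delta_{\Sigma}u+(e^u)_{zz}-2K_\Sigma=-s_g\,We^u$. Imposing Toda, i.e.\ $P''=2K_\Sigma=-2$, therefore makes $g$ scalar-flat, so $h=s^{-2}g$ is undefined and your Derdzi\'nski step has nothing to act on. Worse, this family cannot contain the degeneration your last paragraph relies on. $P=b+az-z^2$ is concave, so it is $\le 0$ near any double root. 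Since $W^{-1}=P/(WP)$ with $WP$ affine, $W^{-1}$ cannot have a double zero on a Riemannian interval either. This is the paper's remark that a $\Sigma_{\ttg}$-cusp never occurs in the ASD (Type I) case.

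\textbf{The missing idea.} Use $\xi=s_g$ itself as the moment-map coordinate. Derdzi\'nski's condition then turns LeBrun's scalar-curvature formula, together with the conformal change of Ricci, into the twisted Toda system \eqref{eq:case-II}. Its $\xi$-only solutions over hyperbolic $\Sigma_{\ttg}$ are the quartics $Q(\xi)=b+a\xi-\frac b6\xi^3-\frac a{24}\xi^4-\xi^2$, with $We^v=b+\frac a2\xi$.
\begin{itemize}
\item The identity $Q-\frac\xi2Q'=(b+\frac a2\xi)(1+\frac{\xi^3}{12})$ pins any admissible double root at $\xi_*=-\sqrt[3]{12}$, with $b+\frac a2\xi_*=\frac13\xi_*^2$.
\item Then $b>0$ alone gives $Q>0$ and $b+\frac a2\xi>0$ on $(\xi_*,0)$. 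This yields a PE end at $\xi=0$ and a $\Sigma_{\ttg}$-cusp at $\xi_*$.
\item The degree is set by the period: $\ell=\frac{a}{2\mathfrak{p}}\mathrm{Area}(\Sigma_{\ttg},g_{\Sigma_{\ttg}})$. Take $a=0$ for $\ell=0$; either sign of $a$ is available by choosing $b\gtrless\frac13\xi_*^2$.
\end{itemize}
This is exactly Section~\ref{subsec:model-solution-Type-II-sigma_g}, and it disposes of your ``main obstacle'' by direct computation.

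\textbf{Comparison with the paper.} With that correction, your argument does prove the theorem as literally stated, by a far more elementary route than the paper's. The paper takes $g^\sharp=e^\varphi g_{\Sigma_{\ttg}}$ itself as Dirichlet data and solves \eqref{eq:bvp4}. It subtracts the model solution and passes to $t=-\log(1-\xi/\xi_*)$, so that the existence and $H^{-1}$-stability theory of Section~\ref{sec:solve-boundary-value-problem} applies with $K\equiv-1$. It then proves $W>0$ by the Bochner/maximum-principle argument.

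The payoff is that the paper's $g^\flat$ is the boundary value $\eta^2+e^\varphi g_{\Sigma_{\ttg}}$ of the K\"ahler compactification, with constant fibre length. Different $\varphi$ therefore give genuinely different conformal infinities. Your rescaling only ever reaches the conformal classes $[\eta^2+b\,g_{\Sigma_{\ttg}}]$ of the homogeneous models. It captures the letter of the statement, but not the infinite-dimensional family that is its real content.
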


	Here, on an oriented 4-manifold, an Einstein metric is called \emph{anti-self-dual} (ASD) if its self-dual Weyl curvature satisfies $W^+=0$. There is a previous construction of PE manifolds with intermediate-rank hyperbolic cusps based on perturbing hyperbolic examples \cite{BahuaudRochon2019GeometricallyFinite}. The Einstein metrics we constructed here are actually \emph{conformally K\"ahler} with an isometric $\bS^1$-action. There is a K\"ahler metric $g$ on $M$ that is conformal to $h$ and also invariant under the same $\bS^1$-action. The K\"ahler metric $g$ extends to $\overline M=[0,\infty)\times\mathscr{L}$, which in particular is a conformal compactification for the end $E_0$. The $g^\flat$ that appears in the above Theorems \ref{thm:PE-with-AH-cusp}--\ref{thm:PE-with-Sigma-cusp} is precisely the restriction of $g$ to $\{0\}\times\mathscr{L}$, while $(\Sigma,g^\sharp)$ is the quotient of $(\mathscr{L},g^\flat)$ by the isometric $\bS^1$-action.
    
     It is worth noting that, in Theorems \ref{thm:PE-with-AH-cusp}–\ref{thm:PE-with-Sigma-cusp}, the conformal infinity is of negative Yamabe type, except in the hyperbolic cusp case of Theorem \ref{thm:PE-with-AH-cusp}, where $(T^2,g^\sharp)$ and $(\NN_0,g^\flat)$ are flat. This follows from the results of Witten–Yau \cite{WittenYau1999} and Cai–Galloway \cite{Cai1999BoundariesOZ}, or from a direct argument on the conformal infinity.

     \ 
  
	Next we discuss applications of our construction. Given an Einstein manifold with an AH cusp, there is a well-known generalized Dehn filling procedure that was first proposed by Anderson \cite{Anderson2006DehnFillingHigherDim}, later refined by Biquard and Bamler \cite{Biquard2008DGGA,Bamler2012GeneralizedDehnFilling}, which allows us to close up the AH cusp by a so-called toral black hole metric. Note that the black hole metric is also conformally K\"ahler, and it turns out to be included in our previous construction of conformally K\"ahler PE manifolds \cite{LiLiu2025PoincareEinstein}. Previously, generalized Dehn filling has been performed on hyperbolic cusps, while in our situation AH cusps are only asymptotic to hyperbolic cusps. Note that there is also a continuous complex hyperbolic cusp closing procedure \cite{FuHeinJiang2025CuspClosing} based on complex geometry, where a complex hyperbolic cusp is closed by gluing a Tian-Yau gravitational instanton and a neck region constructed via the Calabi ansatz. Performing generalized Dehn filling on our PE manifolds with an AH cusp from Theorem \ref{thm:PE-with-AH-cusp}, we obtain
	\begin{theorem}\label{thm:Dehn-filling}
		For each PE manifold with an AH cusp $(M,h)$ that arises from Theorem \ref{thm:PE-with-AH-cusp}, there is a sequence of PE manifolds $(M_j,h_j)$ with the same conformal infinity of the PE end of $(M,h)$, such that after taking a pointed Gromov--Hausdorff limit at suitable base points of $(M_j,h_j)$, we recover $(M,h)$.
	\end{theorem}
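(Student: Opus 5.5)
We plan to deduce this from the generalized Dehn filling theorem of Anderson, in the refined forms of Biquard and Bamler \cite{Anderson2006DehnFillingHigherDim,Biquard2008DGGA,Bamler2012GeneralizedDehnFilling}. The input is an Einstein manifold $(M,h)$ with a cusp end that is close to a hyperbolic cusp. The conclusions are that, for all sufficiently long primitive closed geodesics $\gamma_j$ in the cusp torus, one can close the cusp by gluing in a toral black hole metric collapsing $\gamma_j$. The resulting approximate Einstein metric is then perturbed to an exact Einstein metric, provided the original metric is nondegenerate.

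\textbf{Approximate solution.} On the cusp end $E_\infty\cong (R_0,\infty)\times T^3$ we truncate $h$ at $r=R_j\to\infty$. There we interpolate, over an annulus $r\in(R_j,R_j+1)$, between $h$ and the model $h_0=dr^2+e^{-2r}g_{T^3}$. This produces an error of size $O(e^{-\delta_0 R_j})$ by Definition \ref{def:cusps}. We then glue in the toral black hole (Horowitz--Myers-type) metric on $\bR^2\times T^2$, scaled so that its asymptotic cusp matches $h_0$ with the circle $\gamma_j$ (of length $\sim e^{-R_j}\cdot$ length in $g_{T^3}$) bounding a disc. The black hole is exponentially close to the hyperbolic cusp at large radius, so the total error $\Ric+3h$ is exponentially small in weighted H\"older norms. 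The PE end $E_0$ is untouched, so the conformal infinity $[g^\flat]$ is preserved.

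\textbf{Perturbation.} We solve the Einstein equation in Bianchi gauge in weighted spaces that decay at the conformal infinity, so the boundary data stay fixed, following Lee \cite{Lee2006FredholmOperators} and Biquard. This needs a uniform inverse for the linearized operator on the glued manifolds. The black hole piece has no $L^2$ kernel by the analysis in Anderson/Bamler. The key remaining input, and the main obstacle, is the nondegeneracy of $(M,h)$ itself: the linearized Einstein operator must have no $L^2$ kernel on $M$, with appropriate decay weights on the cusp. For this we would use that $h$ is negative Einstein and anti-self-dual (Theorem \ref{thm:PE-with-AH-cusp}). A Weitzenb\"ock argument for trace-free, divergence-free $L^2$ infinitesimal deformations $k$ should give $(\nabla^*\nabla - 2\mathring{R})k=0$. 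Using $W^+=0$ and $\Ric=-3h$, the curvature term has a favorable sign on the self-dual part: via the decomposition $\Lambda^+\otimes\Lambda^-$ of trace-free symmetric 2-tensors, the operator becomes $d^+$-type on $\Lambda^-$-valued self-dual forms, with $W^+=0$ giving a positive curvature term. Integration by parts, justified by the $L^2$ decay on the PE end and the exponential decay on the cusp, would then force $k=0$. If this fails, the fallback is the more refined approach of \cite{Bamler2012GeneralizedDehnFilling}, which only requires the absence of decaying kernel after accounting for cusp modes.

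\textbf{Convergence.} Since the Einstein metrics $h_j$ differ from the glued metrics by a small perturbation that tends to zero as $R_j\to\infty$, base points fixed in a compact region of $M$ give $h_j\to h$ in $C^\infty_{loc}$, hence in pointed Gromov--Hausdorff sense. Each $M_j$ has only the PE end, with conformal infinity $[g^\flat]$, so it is Poincar\'e--Einstein.
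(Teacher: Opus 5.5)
Your proposal follows essentially the paper's route. Both glue a quotient toral black hole into the AH cusp in Biquard's style and perturb in Bianchi gauge in H\"older spaces weighted at both the PE end and the cusp. Both get uniform invertibility from the $L^2$ nondegeneracy of $(M,h)$; the paper simply cites Biquard--Rollin for exactly the anti-self-dual Weitzenb\"ock argument you sketch. Both then conclude with the inverse function theorem, the Bianchi argument, and smooth convergence on compact subsets of $M$.

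One point to tighten is the defect estimate. The cusp weight $(s/s_+)^\delta$ with $\delta>0$, which uniform invertibility requires, amplifies the cusp-side error $e^{-\delta_0 R}$ by a factor $(e^{-R}/s_+)^\delta\sim l^\delta$. Here $l\sim e^{-R}\,\mathrm{length}_{g_{T^3}}(\gamma_j)$ is the length of the filled circle at the gluing level. The weighted error is therefore only $O(l^\delta e^{-\delta_0 R}+l^{\delta-3})$, not automatically exponentially small. You must couple the truncation level and the filling geodesics so that $l\to\infty$ while $l^\delta e^{-\delta_0 R}\to 0$.
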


	The following picture roughly illustrates the convergence of $(M_j,h_j)$ to $(M,h)$. There is a core torus $T^2$ in the black hole metric, so after filling approximately there is also a core torus in $M_j$. Taking the pointed Gromov--Hausdorff limit of $(M_j,h_j,p_j)$ we obtain the PE manifold with an AH cusp $(M,h,p)$. If we instead take pointed limits near the core torus, then the sequence of Einstein metrics is collapsing with bounded curvature, while locally after passing to the universal cover and taking a pointed limit we get the universal cover of the black hole metric.
	\begin{figure}[ht]
		\centering
		\begin{tikzpicture}[scale=0.7]
			\draw (0,0) ellipse (0.8 and 2);
			\draw (0,2).. controls (1,2) and (3,0.3).. (6,0.3);
			\draw (0,-2).. controls (1,-2) and (3,-0.3).. (6,-0.3);
			\draw (6,0.3) -- (8,0.3);
			\draw (6,-0.3) -- (8,-0.3);

			\draw (8,0.3) arc[start angle=90, end angle=-90, radius=0.3];

			\draw[red] (7.9,0.18).. controls (8.05,0)..(8.1,-0.18);

			\draw[->] (9,0) -- (10.7,0) node[midway,above] {$j\to\infty$};

			\draw (12,0) ellipse (0.8 and 2);
			\draw (12,2).. controls (13,2) and (14,0.1).. (19,0.05);
			\draw (12,-2).. controls (13,-2) and (14,-0.1).. (19,-0.05);

			\draw[->] (9,0) -- (10.7,0) node[midway,above] {$j\to\infty$};

			\node at (0,-3) {$(T^3,[g^\flat])$};

			\draw[->] (7.5,-2) to[out=60,in=-60] (8,-0.5);
			\node at (7.5,-2.5) {core torus};

			\draw[->] (4,-2) to[out=60,in=-60] (3,0);
			\node at (4,-2.5) {$p_j$};
			\draw[mark=*, mark size=0.8pt] plot coordinates {(3,0.1)};

			\draw[->] (16,-2) to[out=60,in=-60] (15,0);
			\node at (16,-2.5) {$p$};
			\draw[mark=*, mark size=0.8pt] plot coordinates {(15,0.1)};

			\node at (12,-3) {$(T^3,[g^\flat])$};

			\node at (19,0.5) {AH cusp};
		\end{tikzpicture}
		\caption{Cusp formation with fixed conformal infinity.}
		\label{fig:cusp-formation}
	\end{figure}

		The nontriviality of the above result lies in that the limit PE manifold with an AH cusp is \emph{non-hyperbolic} and during the degeneration, the conformal infinity is \emph{fixed} and \emph{not locally conformally flat} (Lemma \ref{lem:lemma-conformal-infinity-non-locally-conformally-flat}). Note that previously, generalized Dehn filling produced such degeneration processes but with a hyperbolic metric as the limit. The following question was asked and studied by Anderson \cite{anderson2003boundary}. Our Theorem \ref{thm:Dehn-filling} provides a negative answer.	
    \begin{question}[Anderson]
			Let $(M_j,h_j)$ be a sequence of PE 4-manifolds which develops an AH cusp in the limit. Must the limiting PE manifold with an AH cusp be hyperbolic?\footnote{We take this opportunity to clarify a point in the literature. Anderson \cite{anderson2003boundary} asserted that, under certain mild hypotheses, a limiting PE manifold with an AH cusp must be hyperbolic. However, there appears to be a gap in the proof. The argument involves translating into the cusp region and blowing up the difference between the Einstein metric and the hyperbolic cusp metric. It is claimed that the corresponding blow-up limit exists, but this need not be the case. For instance, examples constructed in the present paper converge to the hyperbolic cusp at a super-exponential rate, in which case the proposed blow-up limit does not arise.

            Craig \cite{Craig2006DehnFillingAHE} constructed counterexamples by applying generalized Dehn filling to hyperbolic manifolds with PE ends and AH cusp ends. Nevertheless, that paper still refers to the result of \cite{anderson2003boundary}, that such a limiting PE manifold with an AH cusp is hyperbolic. The examples in \cite{Craig2006DehnFillingAHE} are perturbative in nature and hence remain close to hyperbolic metrics, whereas the examples in the present paper may be far from hyperbolic, since the boundary data can be prescribed.}
	\end{question}

    \ 

	Now we briefly explain our approach. Suppose the negative Einstein manifold $(M,h)$ is \emph{non-trivially conformally K\"ahler}, in the sense that there is a complex structure on $M$ and a K\"ahler metric $g=e^{2f}h$ that is conformal to $h$ with the conformal factor being non-constant. Then associated to the conformally K\"ahler structure, there is always a Killing field $\mathcal{K}$ that preserves both $h$ and $g$, as observed by \cite{Przanowski1991KillingVectorFields,Derdzinski1983SelfDualKahler,Tod1997SUToda}. Assume that $\mathcal{K}$ induces an isometric free $\bS^1$-action on $M$, then by an ansatz essentially due to LeBrun and Tod \cite{LeBrun1991ExplicitSelfDual,Tod2006NoteASD,Tod1997SUToda}, the Einstein equation reduces to the following (twisted-)Toda equation
	\begin{equation}\label{eq:twisted-toda-with-k}
		(e^{v})_{\xi\xi}+\Delta_{\Sigma}{v}-2K_{\Sigma}=-\xi e^{v}\frac{12-6\xi \partial_{\xi}{v}}{12k^3+\xi^3},
	\end{equation}
	\begin{equation}\label{eq:W-with-k}
		W=\frac{12-6\xi \partial_{\xi}{v}}{12+\xi^3/k^3},
	\end{equation}
	where $k\in\{1,\infty\}$, $\Sigma$ is the K\"ahler reduction, and $\Delta_{\Sigma}$ is the Laplace operator of the constant curvature metric $g_\Sigma$ on $\Sigma$. The constant $K_\Sigma\in\{-1,0,1\}$ is the curvature of $g_\Sigma$. Here, the constant $k$ has geometric meaning that we shall see later, and the Einstein metric $h$ is anti-self-dual precisely when $k=\infty$. Once we have a solution to \eqref{eq:twisted-toda-with-k}, there is a conformally K\"ahler negative Einstein metric whenever $W>0$
	\begin{equation}\label{eq:h-Einstein-introduction}
		h=\xi^{-2}(Wd\xi^2+W^{-1}\eta^2+We^vg_{\Sigma})
	\end{equation}
	with $\eta$ being a connection 1-form for the $\bS^1$-bundle, which is determined by $v$. This approach eventually reduces the construction to boundary value problems for a (twisted-)Toda equation on suitable domains:
	\begin{equation}\label{eq:boundary-value-problem-introduction}
		\left\{\begin{aligned}
			&(e^{v})_{\xi\xi}+\Delta_{\Sigma}{v}-2K_\Sigma=-\xi e^{v}\frac{12-6\xi \partial_{\xi}{v}}{12k^3+\xi^3}, \\
			&v|_{\xi=0}=\varphi.
		\end{aligned}\right.
	\end{equation}
	Solving this problem on different domains, with different prescribed asymptotic behaviors of the solutions, produces Einstein metrics with corresponding asymptotic behaviors.
	Here, the boundary value $\varphi$ corresponds to the metric $g^\sharp$ that appears in Theorems \ref{thm:PE-with-ACH-cusp}--\ref{thm:PE-with-Sigma-cusp} via $g^\sharp=e^{\varphi}g_{\Sigma}$. For more details on this reduction, see Section \ref{sec:background}. This conformally K\"ahler perspective has been studied by \cite{LeBrun1996EinsteinMetricsComplexSurfaces,LeBrun2016EinsteinMaxwellConformallyKahler,LeBrun2012EinsteinHermitian4Manifolds}, and more recently by \cite{LeBrun2020BachFlatKahler,BiquardGauduchonLeBrun2024GravitationalInstantons,BiquardGauduchon2023ToricHermitianALF}

	Finally, we outline the structure of the paper. In Section \ref{sec:background}, we introduce the reduction of the Einstein equations. In Section \ref{sec:regularity}, we study the asymptotics of solutions to the (twisted-)Toda equation on cusp ends and formulate the corresponding boundary value problems. In Section \ref{sec:model-geometry}, we consider the simplest solutions to the (twisted-)Toda equation. We then solve these boundary value problems in Section \ref{sec:solve-boundary-value-problem}. Finally, in Section \ref{sec:degeneration}, we investigate various degenerations of our Einstein metrics.

	\subsection*{Acknowledgment}
	The first author would like to thank Claude LeBrun for helpful discussions. The second
  author thanks Sun-Yung Alice Chang and Paul Yang for many discussions on Poincar\'e--Einstein metrics. The authors thank Ruobing Zhang for the invitation to a workshop at the Mathematisches Forschungsinstitut Oberwolfach, where this work was initiated. The authors thank Tristan Ozuch for raising the question of whether the cusp asymptotics impose restrictions on the conformal infinity.
  
  We are both grateful to Claude LeBrun for his work on Einstein 4-manifolds, especially on the interplay between Einstein metrics, complex surfaces, and conformally K\"ahler geometry, which has been an important source of motivation for this work.

	\subsection*{Conventions}
	\begin{itemize}
		\item Given a compact Riemann surface $\Sigma$, we set $g_{\Sigma}$ as the metric with constant curvature $K_\Sigma\in\{-1,0,1\}$ on $\Sigma$. When $\Sigma=T^2$, we also normalize $g_{T^2}$ to have volume one.
		\item We denote a compact Riemann surface of genus $\ttg\geq2$ as $\Sigma_{\ttg}$.
		\item We write the real and complex hyperbolic spaces as $(\bRH^4,h_{\bRH^4})$ and $(\bCH^2,h_{\bCH^2})$, normalized to have Einstein constant $-3$.
		\item We denote the complex hyperbolic orientation of $\bCH^2$ as the positive orientation of $\bCH^2$, and the opposite orientation as the negative orientation.
		\item We denote by $\NN_\ell$ the $\bS^1$-bundle with degree $\ell$ over $T^2$.
	\end{itemize}

	\section{Background}
	\label{sec:background}
	In this section, we introduce the reduction of conformally K\"ahler Einstein metrics to the twisted-Toda equation. For more details on the reduction, see \cite{LiLiu2025PoincareEinstein}.

	Let $(M,h)$ be an oriented 4-manifold. With our choice of the orientation, one can decompose the bundle of 2-forms $\Lambda^2$ into the direct sum $\Lambda^2=\Lambda^+\oplus\Lambda^-$ of the $(\pm1)$-eigenspaces of the Hodge star operator. The sections of $\Lambda^+$ are called self-dual 2-forms and the sections of $\Lambda^-$ are called anti-self-dual 2-forms. The curvature tensor $\Rm_h$ then can be identified as a self-adjoint linear map $\Rm_h:\Lambda^+\oplus\Lambda^-\to\Lambda^+\oplus\Lambda^-$. It can also be decomposed into irreducible pieces
	\[\left(\begin{matrix}
		W_h^++\frac{s_h}{12}I & \Ric_h^0\\
		\Ric_h^0 & W_h^-+\frac{s_h}{12}I
	\end{matrix}\right)\]
	where $s_h$ is the scalar curvature, $\Ric_h^0=\Ric_h-\frac{s_h}{4}h$ is the trace-free part of the Ricci curvature, and $W_h^\pm$ are respectively the self-dual Weyl curvature and anti-self-dual Weyl curvature.

	Now we suppose $(M,h)$ is a 4-dimensional Einstein manifold that is non-trivially conformal to a K\"ahler metric $g:=\xi^2h$, equipped with the orientation induced by the complex structure. Denote the dual vector fields of a 1-form $\alpha$ with respect to $h$ and $g$ by $\alpha^{\sharp_h}$ and $\alpha^{\sharp_g}$, respectively. Then by \cite{Derdzinski1983SelfDualKahler,Tod2006NoteASD}, $(M,h)$ has to be in one of the following two situations.
	\begin{itemize}
		\item \textbf{Type I}: $W_h^+\equiv0$. That is, the Einstein metric $h$ is \emph{anti-self-dual} (ASD). In this situation the K\"ahler metric $g$ is \emph{K\"ahler scalar-flat}.
		\item \textbf{Type II}: $W_h^+$ is nowhere vanishing and has a repeated eigenvalue. In this situation, the K\"ahler metric $g$ is \emph{K\"ahler extremal} in the sense of Calabi, which means the vector field $(\bar\partial s_{g})^{\sharp_g}$ is a holomorphic vector field.
	\end{itemize}
	In both situations, a non-trivial Killing field $\mathcal{K}$ that preserves $h$ and $g$ naturally arises. The Killing field $\mathcal{K}$ moreover preserves the complex structure. We explain this separately.
	\begin{itemize}
		\item In the Type I case, set the K\"ahler form of $g$ as $\omega$, then the Killing field $\mathcal{K}$ is $(\delta_h(\xi^{-3}\omega))^{\sharp_h}$, where $\delta_h$ denotes the divergence of symmetric 2-forms under $h$ \cite{CalderbankPedersen2002TorusSymmetry}.
		\item In the Type II case, the Killing field $\mathcal{K}=J_g\nabla^g s_g$, equivalently the imaginary part of $-2(\bar\partial s_g)^{\sharp_g}$.
	\end{itemize}
	The conformal factor $\xi$ can also be determined explicitly. In the Type I case, the conformal factor $\xi$ up to scaling is $|(d (\mathcal{K}^{\sharp_h}))^+|_h^{-1}$, where $\mathcal{K}^{\sharp_h}$ denotes the 1-form that is dual to $\mathcal{K}$ under $h$. In the Type II case, the conformal factor $\xi$ up to scaling is the scalar curvature $s_g$. Since $\mathcal{K}$ is real holomorphic and Killing under $g$, it is also Hamiltonian; that is, there is a moment map $\sigma:M\to\bR$ such that $-d\sigma=\omega(\mathcal{K},\cdot)$. It turns out that up to scaling the conformal factor $\xi$ is also a moment map for $\mathcal{K}$. Hence by scaling $g$, we may suppose the conformal factor $\xi$ is precisely the moment map, and in the Type II case $\xi=s_g$, hence $g=\xi^2h=s_g^2h$. In the Type II case we moreover have $|s_g|=(2\sqrt{6}|W^+_h|_h)^{1/3}>0$.

	For the K\"ahler metric $(M,g)$ with the Hamiltonian Killing field $\mathcal{K}$, away from zeros of $\mathcal{K}$, one can perform K\"ahler reduction. If we further suppose the Killing field $\mathcal{K}$ induces a free $\bS^1$-action with the K\"ahler reduction being holomorphically a compact Riemann surface $\Sigma$, then the K\"ahler metric $g$ and the Einstein metric $h$ can be written as
	\begin{align}
		g&=Wd\xi^2+W^{-1}\eta^2+We^vg_{\Sigma},\label{eq:Kahler-metric-g}\\
		h&=\xi^{-2}(Wd\xi^2+W^{-1}\eta^2+We^vg_{\Sigma}),\label{eq:Einstein-metric-h}
	\end{align}
	where $g_{\Sigma}$ is the constant curvature metric with $K_\Sigma\in\{-1,0,1\}$ (with volume one if $\Sigma=T^2$), and $\eta$ is the connection 1-form determined by the metric $g$ on the $\bS^1$-bundle associated to the free $\bS^1$-action. Diffeomorphically, the manifold $M$ is $I\times\mathscr{L}$, where $I$ is an open interval in the range of $\xi$ and the link $\mathscr{L}$ is an $\bS^1$-bundle over $\Sigma$. Following the computation of LeBrun \cite{LeBrun1991ExplicitSelfDual}, we have that in the Type I and Type II cases, $h$ being Einstein is equivalent to $v,W$ satisfying the following \emph{(twisted-)Toda equation systems} respectively.

	\noindent\emph{Type I.}
	\begin{equation}\label{eq:case-I}
		\Delta_{\Sigma} v+(e^{v})_{\xi\xi}-2K_\Sigma=0,
		\qquad
		W=1-\frac12\xi v_\xi.
	\end{equation}
	\noindent\emph{Type II.}
	\begin{equation}\label{eq:case-II}
		\Delta_{\Sigma} v+(e^{v})_{\xi\xi}-2K_{\Sigma}=-\xi\,W e^{v},
		\qquad
		W=\frac{12-6\xi v_\xi}{12+\xi^{3}}.
	\end{equation}
	Here, the (twisted-)Toda equations follow from LeBrun's calculation \cite{LeBrun1991ExplicitSelfDual} of the scalar curvature of \eqref{eq:Kahler-metric-g}, where in the Type I case $s_g=0$ and in the Type II case $s_g=\xi$. The equations for $W$ follow from the conformal change of Ricci curvature for $g=\xi^2h$. Finally, if we pick a holomorphic coordinate $x+iy$ on $\Sigma$, with $g_{\Sigma}=e^{f(x,y)}\,(dx^2+dy^2)$, then the connection 1-form $\eta$ is determined via
	\begin{equation}\label{eq:deta}
		d\eta=(We^v)_{\xi}\,e^f\,dx\wedge dy+W_x\,dy\wedge d\xi+W_y\,d\xi\wedge dx.
	\end{equation}
	By integrating the (twisted-)Toda equation over $\Sigma$, in both cases there are constants $a,b$ such that $\fint_{\Sigma}We^vd\vol_{\Sigma}=b+\frac{a}{2}\xi$. Indeed, since the degree of the $\bS^1$-bundle is $\ell$, if the period of $\mathcal{K}$ is $\mpp$, we have $\frac{1}{\mpp}\int_{\Sigma} d\eta=\ell$, which is the same as
	\begin{equation}\label{eq:period-constraint}
		\frac{1}{\mpp}\int_{\Sigma}(We^v)_{\xi}d\vol_{\Sigma}=\ell
	\end{equation}
	where $d\vol_{\Sigma}$ is the volume form of $(\Sigma,g_{\Sigma})$.

	\section{Asymptotics and the boundary value problems}
	\label{sec:regularity}

	In this section, we derive the asymptotic behavior of the function $v$
	arising from the reductions \eqref{eq:case-I} and \eqref{eq:case-II} for
	conformally K\"ahler Poincar\'e--Einstein four-manifolds with a cusp end. We first establish the asymptotics of the conformal factor
	$\xi$ along the cusp end, and then use them to determine the corresponding
	behavior of $v$. These asymptotic regimes lead to four natural boundary
	value problems, according to the type of the reduction and the asymptotic
	model of the cusp. In Section \ref{sec:solve-boundary-value-problem}, we will analyze these boundary
	value problems and obtain sharper asymptotics for $v$, which in turn yield
	sharper asymptotics for the Einstein metric $h$ and the K\"ahler metric
	$g$.

	Suppose there is a complete Einstein metric $h$ that is conformal to the K\"ahler metric $g=\xi^2h$, with $\mathcal{K}$ inducing a free $\bS^1$-action and compact K\"ahler reduction $\Sigma$. The manifold $M$ is diffeomorphic to $(0,\infty)\times \mathscr{L}$, with $\mathscr{L}$ being an $\bS^1$-bundle over $\Sigma$. For such $(M,h)$, there are two ends, which we denote by $E_0$ and $E_\infty$. Furthermore, we assume that $E_0$ is PE and $E_\infty$ is an AH cusp, an ACH cusp, or a $\Sigma_{\ttg}$ cusp in the sense of Definition \ref{def:cusps}. We first have
	\begin{lemma}\label{lem:pe-end-compactification}
		The K\"ahler metric $g$ extends smoothly to a K\"ahler metric with boundary on $\overline{M}=[0,\infty)\times\mathscr{L}$. Therefore, the K\"ahler metric $g$ is a conformal compactification for the PE end $E_0$.
	\end{lemma}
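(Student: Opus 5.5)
The plan is to compare $g=\xi^2h$ with a smooth conformal compactification $\bar g=\rho^2h$ of $E_0$ given by the definition of a PE end. Writing
\[
g=u^2\bar g,\qquad u:=\xi/\rho ,
\]
it suffices to show that $u$ extends to a smooth positive function on $[0,\epsilon)\times\mathscr L$. Then $g$ extends smoothly to $\overline M$, and since $J$ is $g$-parallel on the interior, it extends by continuity as a $g$-parallel complex structure; so $g$ is Kähler with boundary. The second sentence of the lemma is then just the definition of a conformal compactification, with $\xi$ as defining function.

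First I would control the Killing field. Near $E_0$, $\mathcal K$ is an $h$-Killing field with closed orbits generating a free $\bS^1$-action. By the standard boundary regularity for PE metrics (after choosing $\rho$ as a geodesic defining function, which exists by Graham–Lee), such a Killing field extends smoothly to $[0,\epsilon)\times\mathscr L$. The extension is tangent to $\{0\}\times\mathscr L$ and restricts there to a conformal Killing field of $[g^\flat]$. Freeness of the action forces $\mathcal K|_{\{0\}\times\mathscr L}$ to be nowhere zero.

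Next I would express $\xi$ through $\mathcal K$ and run a weight count in $\rho$.

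In the Type I case, $\xi$ is, up to a constant, $|(d\mathcal K^{\sharp_h})^+|_h^{-1}$. We have $\mathcal K^{\sharp_h}=\rho^{-2}\mathcal K^{\sharp_{\bar g}}$, so
\[
d\mathcal K^{\sharp_h}=\rho^{-3}\bigl(-2\,d\rho\wedge\mathcal K^{\sharp_{\bar g}}+\rho\,d\mathcal K^{\sharp_{\bar g}}\bigr),
\]
and the decomposition into $\Lambda^\pm$ is conformally invariant. Since $|\alpha|_h=\rho^2|\alpha|_{\bar g}$ for 2-forms, this gives
\[
\xi^{-1}=c\,\rho^{-1}\bigl|\bigl(-2\,d\rho\wedge\mathcal K^{\sharp_{\bar g}}+\rho\,d\mathcal K^{\sharp_{\bar g}}\bigr)^+\bigr|_{\bar g}.
\]
At $\rho=0$ the form $d\rho\wedge\mathcal K^{\sharp_{\bar g}}$ is decomposable and nonzero, because $\mathcal K$ is tangent to the boundary and nonvanishing. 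Hence its self-dual part is nonzero, being half of a nonzero simple 2-form in norm. So $u=\xi/\rho$ is a smooth positive function up to the boundary. This also shows that $\xi\to0$ exactly at $E_0$, so $E_0$ is the end at $\xi=0$, consistent with the boundary condition $v|_{\xi=0}=\varphi$.

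In the Type II case I would use instead that $\xi$ is the $g$-moment map of $\mathcal K$. The plan is to reduce to a structure like Type I: the self-dual part of $d\mathcal K^{\sharp_h}$ is still a nonzero multiple of $\xi^{-3}\omega$ plus terms controlled by $d\xi$, by the Derdzinski–Tod structure. Alternatively, one can integrate the relation $d\xi=\xi^2\,\iota_{\mathcal K}\omega_h$ in the normal direction, where $\omega_h=\xi^{-2}\omega$ is the $h$-unit self-dual form, i.e. $\rho^{-2}$ times a $\bar g$-unit form $\hat\omega$. Either route should give $\xi=\rho\,u$ with $u$ smooth and positive.

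The step I expect to be the main obstacle is Type II, specifically proving smoothness (not just continuity) of $\hat\omega$ and hence of $u$ at $\rho=0$. Using $\xi=s_g\propto|W^+_h|^{1/3}$ directly would require nonvanishing of the $\rho^{-1}W^+_{\bar g}$ term at the boundary, which is global data. I would therefore first try the Killing-field formula, applied to the parallel-form and eigenform decomposition of $W^+_h$, which is rank-one in the direction of $\omega$. I would fall back on elliptic regularity of the Toda system near $\xi=0$ if that fails.
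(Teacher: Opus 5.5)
The paper gives no argument of its own here; it defers to \cite{LiLiu2025PoincareEinstein}. Your comparison $g=u^2\bar g$ is therefore a self-contained alternative, and in Type I it works as written. With $G:=\bigl|\bigl(-2\,d\rho\wedge\mathcal K^{\sharp_{\bar g}}+\rho\,d\mathcal K^{\sharp_{\bar g}}\bigr)^+\bigr|_{\bar g}$ one has $|(d\mathcal K^{\sharp_h})^+|_h=\rho^{-1}G$. Here $G$ is smooth up to $\rho=0$, and $G|_{\rho=0}=\sqrt2\,|d\rho|_{\bar g}|\mathcal K|_{\bar g}>0$.

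One justification is missing. Nonvanishing of $\mathcal K$ on $\{\rho=0\}$ does not follow from freeness in the interior alone: a parabolic Killing field of $\bRH^4$ acts freely inside but vanishes at a point at infinity. What you need is that the time-$\mathfrak p$ flow is the identity up to the boundary, so $\bS^1$ acts on the closed collar. The isotropy of a boundary zero would then fix the inward unit normal of an $\bS^1$-averaged metric, and hence fix an interior segment of the normal geodesic, contradicting freeness.

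The genuine gap is Type II. There you list strategies rather than give a proof. The first one, phrased as a multiple of $\xi^{-3}\omega$ ``plus terms controlled by $d\xi$'', misses what makes it work: after using the Einstein equation there are no extra terms. Since $\xi$ is the $g$-moment map, $\mathcal K^{\sharp_h}=\pm\xi^{-2}d^c\xi$ with $d^c\xi=-d\xi\circ J$, so
\[
d\mathcal K^{\sharp_h}=\pm\bigl(\xi^{-2}\,dd^c\xi-2\xi^{-3}\,d\xi\wedge d^c\xi\bigr).
\]
Both forms are real $(1,1)$-forms, so their self-dual parts are their $\omega$-components: $(dd^c\xi)^+=\tfrac12(\Delta_g\xi)\,\omega$ and $(d\xi\wedge d^c\xi)^+=\tfrac12|d\xi|_g^2\,\omega$. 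Moreover $\Delta_h\xi=\xi\bigl(\xi\Delta_g\xi-2|d\xi|_g^2\bigr)$. So the scalar-curvature equations $\Delta_h\xi+2\xi=0$ (Type I) and $6\Delta_h\xi+\xi(12+\xi^3)=0$ (Type II) both read $6\bigl(\xi\Delta_g\xi-2|d\xi|_g^2\bigr)=-(12+s_g\xi^2)$. Hence, in both types,
\[
|(d\mathcal K^{\sharp_h})^+|_h=\frac{\sqrt2\,|12+s_g\xi^2|}{12\,|\xi|},
\qquad
\omega=c\,\frac{\xi^3}{12+s_g\xi^2}\,(d\mathcal K^{\sharp_h})^+,
\]
with $c$ a universal constant.

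For Type I the first formula is exactly the relation you used. For Type II ($s_g=\xi$) it gives $|\xi|/(12+\xi^3)=\sqrt2\,\rho/(12G)$. We have $\xi\to0$ at $E_0$, because $|\xi|^3=2\sqrt6|W^+_h|_h\to0$ there. Since $x\mapsto x/(12\pm x^3)$ is a local diffeomorphism at $0$, $|\xi|/\rho$ is then smooth and positive up to $\rho=0$. The second formula, together with $(d\mathcal K^{\sharp_h})^+=\rho^{-3}(\cdots)^+$, shows directly that $\omega$, and hence $J$, are smooth up to the boundary. This removes your concern about $\hat\omega$.

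Without this identity none of your routes closes:
\begin{itemize}
\item The $W^+$ route needs $\partial_\rho W^+_{\bar g}\neq0$ at $\rho=0$, which is global data, as you note.
\item Integrating $d\xi=\xi^2\iota_{\mathcal K}\omega_h$ presupposes regularity of $\hat\omega$, which is equivalent to the conclusion.
\item The Toda fallback presupposes that $E_0$ sits at $\xi=0$ with controlled behaviour, which is what is being proved.
\end{itemize}
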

	\begin{proof}
		This follows from \cite{LiLiu2025PoincareEinstein}.
	\end{proof}

	Next, we consider the Type I case and the Type II case separately and establish the behavior of $v$ towards the cusp end $E_\infty$.

	\subsection{The Type I case}
	\label{subsec:regularity-Type-I}
	We study the reduction over the cusp end $E_\infty$. Note that a $\Sigma_{\ttg}$ cusp cannot occur in the Type I case since the model metric is not ASD (under both of the orientations). We first consider the asymptotic behavior of the conformal factor $\xi$. Recall that we have arranged that $g=\xi^2h$ and $\xi$ is a moment map for $\mathcal{K}$. Moreover, $\xi$ is proportional to $|d(\mathcal{K}^{\sharp_h})^+|_h^{-1}$. We may suppose $\xi>0$. In the Type I case, since $g$ is K\"ahler scalar-flat, by calculating the conformal change of scalar curvature for $g=\xi^2h$ we have
	\begin{equation}\label{eq:conformal-relation-for-scalar-ASD}
		\Delta_h\xi+2\xi=0.
	\end{equation}
	We have the following Harnack inequality for $\xi$.
	\begin{lemma}\label{lem:Harnack_inequality}
		For any $p\in M$ and any ball of radius one $B_1(p)\subset(M,h)$, there is a uniform constant $C$ such that
		\begin{equation}
			|\nabla_h\xi|_h\leq C\xi.
		\end{equation}
	\end{lemma}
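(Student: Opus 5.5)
The plan is to apply the standard Cheng--Yau gradient estimate (equivalently, the Harnack inequality for positive solutions of a linear elliptic equation) to the positive function $\xi$, which by \eqref{eq:conformal-relation-for-scalar-ASD} satisfies $\Delta_h\xi=-2\xi$. Since $h$ is Einstein with $\Ric_h=-3h$, we have a uniform lower Ricci bound on every ball. The constant $C$ should therefore depend only on the dimension, the Ricci lower bound $-3$, and the coefficient $2$ in the equation.

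First, work with $u=\log\xi$, which is well defined because $\xi>0$. A direct computation gives
\[
\Delta_h u=-2-|\nabla_h u|_h^2 .
\]
Next, apply Bochner's formula to $Q=|\nabla_h u|_h^2$, using $\Ric_h=-3h$:
\[
\tfrac12\Delta_h Q=|\nabla^2_h u|^2+\langle\nabla_h u,\nabla_h\Delta_h u\rangle-3Q\geq \tfrac14(\Delta_h u)^2-\langle\nabla_h u,\nabla_h Q\rangle-3Q .
\]
Since $(\Delta_h u)^2=(Q+2)^2\geq Q^2$, this yields the differential inequality
\[
\tfrac12\Delta_h Q+\langle\nabla_h u,\nabla_h Q\rangle\geq \tfrac14 Q^2-3Q,
\]
which has the superlinear $Q^2$ term needed for a maximum principle argument.

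Then localize to $B_1(p)$. Take a cutoff $\phi$ supported in $B_1(p)$ with $\phi\equiv1$ on $B_{1/2}(p)$ and $|\nabla\phi|^2/\phi+|\Delta\phi|\leq C$. The bound on $\Delta\phi$ is obtained from the Laplacian comparison theorem, which applies because of the Ricci lower bound; the usual Calabi trick handles non-smoothness at the cut locus. At an interior maximum of $\phi Q$, the differential inequality together with the cutoff estimates gives $\phi Q\leq C$. This yields $|\nabla_h\xi|_h\leq C\xi$ on $B_{1/2}(p)$, and since $p$ is arbitrary this is the pointwise bound everywhere on $M$. Completeness of $h$ is used here so that metric balls are relatively compact and the cutoff argument is legitimate. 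The constant is uniform because it depends only on the dimension and the Ricci lower bound, not on $p$ or on injectivity radius.

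The only delicate step is handling the cross term $\langle\nabla_h u,\nabla_h(\phi Q)\rangle$ at the maximum point, where $\nabla(\phi Q)=0$ gives $\nabla Q=-Q\nabla\phi/\phi$. This turns the cross term into $Q^{3/2}|\nabla\phi|/\phi$-type expressions, which are absorbed into the $\tfrac14\phi Q^2$ term by Young's inequality. This is the routine Cheng--Yau/Li--Yau bookkeeping, and I expect no genuine obstacle. Alternatively, one could quote Cheng--Yau directly for $\Delta u+\lambda u=0$ with $\lambda=2$.
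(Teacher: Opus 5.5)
Your proposal is correct and takes the same route as the paper, which simply invokes the Cheng--Yau gradient estimate for the positive solution $\xi$ of $\Delta_h\xi+2\xi=0$ on the complete manifold $(M,h)$ with $\Ric_h=-3h$; you write that argument out via $u=\log\xi$, the Bochner formula, and a cutoff. One small correction: Laplacian comparison gives only the one-sided bound $\Delta_h\phi\geq -C$ (in the barrier sense), not $|\Delta_h\phi|\leq C$, but the one-sided bound is all your maximum-principle step needs.
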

	\begin{proof}
		This is a direct application of the Cheng-Yau gradient estimate \cite{ChengYau1975DifferentialEquations} on $(M,h)$.
	\end{proof}

	Now for any sequence of points $p_j\in M$ that diverges to the cusp end, we consider the balls $B_j:=B_1(p_j)\subset(M,h)$. The sequence of balls $(B_j,h)$ is collapsing in the sense that the injectivity radius converges to zero, but the universal cover $(\widetilde{B}_j,\widetilde{h}_j)$ is non-collapsed. Denote the fundamental group of $B_j$ as $G_j$, which acts on $\widetilde{B}_j$ via deck transformations. Write the hyperbolic metric and complex hyperbolic metric as
	\begin{align}
		h_{\bRH^4}&:=dr^2+e^{-2r}(dx^2+dy^2+dt^2),\\
		h_{\bCH^2}&:=\frac{1}{2}\left(dr^2+e^{-r}(dx^2+dy^2)+e^{-2r}(dt-xdy)^2\right).
	\end{align}
	Since the end is an AH cusp or an ACH cusp, we know $(\widetilde{B}_j,\widetilde{h}_j,G_j)$ converges smoothly in the equivariant Cheeger-Gromov sense to an open domain $(\widetilde{B}_\infty,\widetilde{h}_\infty, G_\infty)$ that embeds into $\bRH^4$ or $\bCH^2$. Here we can isometrically identify the limit as $\widetilde{B}_\infty=\{1<r<3\}$, with the action $G_\infty$ being the standard $\bR^3$-action in the $\bRH^4$ case, and being the standard $\mathcal{H}^3$-action in the $\bCH^2$ case. We may normalize $\xi$ on $\widetilde{B}_1(p_j)$ to $\xi_j:=\xi/\xi(p_j)$. Then because of the Harnack inequality and the elliptic equation \eqref{eq:conformal-relation-for-scalar-ASD}, one has

	\begin{lemma}
		Up to passing to a subsequence, $\xi_j$ converges smoothly to a limit function $\xi_\infty$ on $\widetilde{B}_\infty$ that satisfies $\Delta_{\widetilde{h}_\infty}\xi_\infty+2\xi_\infty=0$. Moreover, $\xi_\infty$ is $G_\infty$-invariant.
	\end{lemma}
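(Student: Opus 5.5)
We will lift $\xi$ to the universal cover $\widetilde{B}_j$, keeping the same notation. The normalized functions $\xi_j=\xi/\xi(p_j)$ satisfy $\xi_j(\tilde p_j)=1$ and solve $\Delta_{\widetilde h_j}\xi_j+2\xi_j=0$ by \eqref{eq:conformal-relation-for-scalar-ASD}. Lemma \ref{lem:Harnack_inequality} is scale invariant and holds on the cover, since the Cheng--Yau estimate is local and the lifted metric has the same curvature bounds. So $|\nabla\log\xi_j|\le C$ on $\widetilde B_j$. Integrating along paths from $\tilde p_j$ then gives uniform two-sided bounds $C^{-1}\le\xi_j\le C$ on compact subsets, since any point of $\widetilde{B}_j$ is joined to $\tilde p_j$ by a path of length at most $2$ inside the ball.

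For higher regularity we use the smooth equivariant Cheeger--Gromov convergence, pulled back by diffeomorphisms $\Phi_j$. It gives harmonic or otherwise good coordinates on $\widetilde B_\infty$ in which the metrics $\Phi_j^*\widetilde h_j$ converge in $C^\infty_{loc}$ to $\widetilde h_\infty$. In these coordinates the equation is a linear elliptic equation whose coefficients are uniformly bounded in every $C^k$. Interior Schauder estimates, bootstrapped from the $C^0$ bound, give uniform $C^k_{loc}$ bounds on $\Phi_j^*\xi_j$ for all $k$. By Arzel\`a--Ascoli and a diagonal argument we extract a subsequence converging in $C^\infty_{loc}(\widetilde B_\infty)$ to some $\xi_\infty$. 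Passing to the limit in the equation gives $\Delta_{\widetilde h_\infty}\xi_\infty+2\xi_\infty=0$, and the lower bound ensures $\xi_\infty>0$.

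For $G_\infty$-invariance we use that the lifted $\xi_j$ are invariant under the deck group $G_j$, because they are pulled back from $B_j$. In equivariant convergence every $\gamma\in G_\infty$ is a limit of elements $\gamma_j\in G_j$, meaning $\Phi_j^{-1}\gamma_j\Phi_j\to\gamma$ locally uniformly. For $x\in\widetilde B_\infty$ (with $\gamma x$ also in the domain) this gives
\[
\xi_\infty(\gamma x)=\lim_j\xi_j(\Phi_j^{-1}\gamma_j\Phi_j x)+o(1)=\lim_j\xi_j(x)=\xi_\infty(x),
\]
using $\gamma_j$-invariance and the uniform gradient bound to absorb the error.

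The main technical point is this last step. One must make sure the equivariant convergence supplies approximating deck transformations for every element of $G_\infty$, here the full $\bR^3$ or $\mathcal H^3$ orbit directions. This holds because the collapsing is by the fibers of the cusp cross-sections, whose diameters shrink like $e^{-r}$, so $G_j$ becomes dense in the limit group. The error $o(1)$ is then controlled by the uniform $C^1$ bound. Where an orbit leaves $\widetilde B_\infty=\{1<r<3\}$, we restrict to the part of the orbit inside the domain, which suffices since the orbits are the level sets of $r$.
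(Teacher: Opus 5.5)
Your proposal is correct and follows the paper's argument, which is only sketched there ("because of the Harnack inequality and the elliptic equation"): the gradient bound $|\nabla\log\xi|\le C$ plus the linear equation $\Delta_h\xi+2\xi=0$ give smooth subconvergence of the normalized functions, and $G_\infty$-invariance passes to the limit from the $G_j$-invariance via the approximating deck transformations and the uniform $C^1$ bound.

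One justification needs fixing. The universal cover $\widetilde B_j$ has unbounded diameter (its limit is the whole slab $\{1<r<3\}$), so not every point is joined to $\tilde p_j$ by a path of length at most $2$. Instead, join each point by a path of length $<1$ to \emph{some} lift of $p_j$; equivalently, integrate downstairs in $B_j$ and use $G_j$-invariance. This in fact gives the two-sided bound on all of $\widetilde B_j$.
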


	Since $\xi_\infty$ is $G_\infty$-invariant, it is easy to see that
	\begin{itemize}
		\item in the $\bRH^4$ case, there are constants $a,b$ such that
		\[\xi_\infty=a e^r+be^{2r};\]
		\item in the $\bCH^2$ case, there are constants $a,b$ such that
		\[\xi_\infty=(a+br)e^r.\]
	\end{itemize}
	However, since the conformal metric $\widetilde{g}_j:=\xi_j^2\widetilde{h}_j$ is K\"ahler on $\widetilde{B}_j$, the limit metric $\widetilde{g}_\infty:=\xi_\infty^2\widetilde{h}_\infty$ is K\"ahler as well.
	\begin{lemma}
		For the $\bRH^4$ case, $b=0$.
	\end{lemma}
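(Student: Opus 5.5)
The plan is to use the rigidity of the conformally Kähler structure in the limit. Along the sequence, $\mathcal{K}=J_g\nabla^g\xi$ is a Killing field for both $h$ and $g$, and in the Type I case $\xi$ is proportional to $|(d\mathcal{K}^{\sharp_h})^+|_h^{-1}$. After the normalization $\xi_j=\xi/\xi(p_j)$, the rescaled fields $\mathcal{K}_j:=J_{g_j}\nabla^{g_j}\xi_j$ are still Killing for $\widetilde h_j$. I will show that they converge to a Killing field $\mathcal{K}_\infty$ of $\widetilde h_\infty$ on $\widetilde B_\infty=\{1<r<3\}\subset\bRH^4$. The relation
\[
\xi_\infty = c\,|(d\mathcal{K}_\infty^{\sharp_{\widetilde h_\infty}})^+|^{-1}
\]
should survive in the limit, with $J_\infty$ the limit complex structure making $\widetilde g_\infty$ Kähler.

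The convergence needs two things. First, the complex structures $J_j$ are $\widetilde g_j$-parallel and $\widetilde g_j\to\widetilde g_\infty$ smoothly, so they subconverge. Second, $|\nabla\xi_j|$ is bounded by Lemma~\ref{lem:Harnack_inequality}. Hence $\mathcal{K}_j$ is bounded and subconverges smoothly, and the Killing and moment-map identities pass to the limit. By the explicit formula $\xi_\infty=ae^r+be^{2r}$, the limit $\mathcal{K}_\infty$ is nonzero, because $\nabla\xi_\infty\neq0$ unless $a=b=0$, which is impossible since $\xi_\infty(p_\infty)=1$.

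The key computation is a norm identity. We have
\[
|\mathcal{K}_\infty|_{\widetilde h_\infty}=\xi_\infty^{-1}|\mathcal{K}_\infty|_{\widetilde g_\infty}=\xi_\infty^{-2}|d\xi_\infty|_{\widetilde h_\infty}=\frac{ae^r+2be^{2r}}{(ae^r+be^{2r})^2}.
\]
Consequently $|\mathcal{K}_\infty|_{\widetilde h_\infty}$ is a function of $r$ alone, and hence it is invariant under the $\bR^3$-translations $G_\infty$. Since this holds on an open set, and Killing fields are analytic, $\mathcal{K}_\infty$ extends to a Killing field of all of $\bRH^4$.

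The next step is to classify the Killing fields of $\bRH^4$, that is, elements of $\mathfrak{so}(4,1)$ in upper half-space coordinates, whose norm depends only on $r$. Writing $z=e^{r}$ and using the upper half-space picture, I would check the generators directly. Translations have norm $\propto e^{-r}$. Dilations and rotations, and their combinations with translations, produce norms depending on $x,y,t$. Inversions produce norms with polynomial dependence on $x,y,t$. The expected conclusion is that $\mathcal{K}_\infty$ is a translation, with norm $ce^{-r}$.

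Given this, $\mathcal{K}_\infty^{\sharp}=c\,e^{-2r}dx$ up to rotation, so $d\mathcal{K}_\infty^\sharp=2ce^{-2r}dx\wedge dr$. Its $\widetilde h_\infty$-norm is $\propto e^{-r}$, and therefore $\xi_\infty\propto e^{r}$. Comparing with $\xi_\infty=ae^r+be^{2r}$ forces $b=0$. As a consistency check, the norm identity gives $\frac{a+2be^r}{(a+be^r)^2}e^{-r}=c'e^{-r}$, which again forces $b=0$ (with $a=1/c'$).

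The main obstacle I expect is the classification step, namely ruling out combinations of a dilation with a translation or rotation whose norm happens to be a function of $r$. To settle it, I would work at the Lie algebra level: require $[\,\partial_x,\mathcal{K}_\infty]$ to be Killing with the corresponding derivative of the norm vanishing, and argue that the parabolic subalgebra fixing $\infty$ is the only source. Alternatively, I would use the direct computation of $|X|^2=z^{-2}|X|_{\mathrm{eucl}}^2$ for the conformal Killing fields of $\bR^4$.
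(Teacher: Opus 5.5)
Your proof is correct, but it takes a different route from the paper.

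The paper applies LeBrun's ansatz to the limiting scalar-flat K\"ahler metric. It takes the Hamiltonian field to be the translation $\partial_t$, with $\xi$ as its moment map. The $\bR^3$-symmetry then reduces the equations to $(e^{\tilde v})_{\xi\xi}=0$ and $(\tilde We^{\tilde v})_{\xi\xi}=0$, so $e^{\tilde v}=A\xi+B$ and $\tilde W=(C\xi+D)/(A\xi+B)$, and matching this with $\xi_\infty^2h_{\bRH^4}$ gives $b=0$. You use only three things: the limit Killing field, the identity $|\mathcal K|_h=\xi^{-2}|d\xi|_h$, and the isometry algebra of $\bRH^4$.

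What your route buys is that you \emph{prove}, rather than assume, that the limiting Hamiltonian field is a translation. The paper's proof passes over this point quickly; it really comes from the $\bS^1$-orbits being collapsing directions, so that $\mathcal K_\infty\in\mathrm{Lie}(G_\infty)$. You also never need $G_\infty$ to preserve $J_\infty$. The paper's approach is more structural and parallels its treatment of the $\bCH^2$ case.

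Two remarks streamline your write-up. First, your ``consistency check'' is already a complete ending. With $z=e^r$ and $\mathcal K_\infty$ a translation of Euclidean length $|\alpha|$, the norm identity reads $|\alpha|(a+bz)^2=|a+2bz|$ on an interval. Squaring and comparing $z^4$-coefficients gives $\alpha^2b^4=0$, so the detour through $|(d\mathcal K_\infty^{\sharp})^+|^{-1}$ is unnecessary. Second, the classification you flag as the main obstacle is short. Write $w=(x,y,t)$, so that $h_{\bRH^4}=z^{-2}(dz^2+|dw|^2)$. A general Killing field $Y$ then has $w$-component $\alpha+(\Omega+\lambda I)w+2(\beta\cdot w)w-(|w|^2+z^2)\beta$ and $z$-component $(\lambda+2\beta\cdot w)z$, with $\Omega$ antisymmetric. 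At fixed $z$, the top-order term of $z^2|Y|_h^2$ in $w$ is $|\beta|^2|w|^4$, which forces $\beta=0$. Then $|\alpha+(\Omega+\lambda I)w|^2$ must be independent of $w$, which forces $\Omega+\lambda I=0$, and taking the trace gives $\Omega=0$ and $\lambda=0$.

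The extension of $\mathcal K_\infty$ to all of $\bRH^4$ holds for any Killing field on a connected open subset of a space form, so it does not depend on your norm invariance. An even shorter argument uses only that $\tilde g_\infty=(a+bz)^2(dz^2+|dw|^2)$ is K\"ahler. Its Ricci tensor is $b^2(a+bz)^{-2}(3dz^2-|dw|^2)$, which has a simple eigenvalue; this is incompatible with $J$-invariance of the Ricci tensor unless $b=0$.
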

	\begin{proof}
		Note that the K\"ahler metric $g_j:=\xi_j^2h$ on $B_1(p_j)$ is collapsing to an interval. Therefore, the limit K\"ahler metric $\widetilde{g}_\infty$ is K\"ahler scalar flat with 3-dimensional symmetry, so we can apply LeBrun's ansatz for K\"ahler scalar flat metrics. Because the 3-dimensional symmetry is abstractly an action by either $\mathbb{R}^3$ or $\mathcal{H}^3$, we can arrange the Killing fields to be $\partial_{t},\partial_{x},\partial_{y}$, where in the $\mathcal{H}^3$ case $\partial_t$ induces the center $\mathbb{R}$-action. LeBrun's ansatz reduces to
		\[\widetilde{g}_\infty=\widetilde{W}d\xi^2+\widetilde{W}e^{\widetilde{v}}(dx^2+dy^2)+\widetilde{W}^{-1}(dt+\theta)^2\]
		with $(e^{\widetilde{v}})_{\xi\xi}=0$ and $(\widetilde{W}e^{\widetilde{v}})_{\xi\xi}=0$, where $\xi$ is the moment map for $\partial_t$. Here $\theta$ is the 1-form determined by $d\theta=(\widetilde{W}e^{\widetilde{v}})_{\xi} dx\wedge dy+\widetilde{W}_xdy\wedge d\xi+\widetilde{W}_y\,d\xi\wedge dx$. It is easy to see there are constants $A,B,C,D$ such that $e^{\widetilde{v}}=A\xi+B$ and $\widetilde{W}=\frac{C\xi+D}{A\xi+B}$.
		Comparing with $\xi_\infty^2 h_{\bRH^4}$ one can check that they are K\"ahler scalar flat only if $b=0$.
	\end{proof}

    As a direct consequence of the above, we have
	\begin{lemma}\label{lem:gradient_exponential_asd}
		For any $\epsilon\in (0,1)$ and $k\geq0$, there is a constant $C_{\epsilon,k}$, such that on the cusp end
		\[C_{\epsilon,0}^{-1}e^{(1-\epsilon)r}\leq\xi\leq C_{\epsilon,0} e^{(1+\epsilon)r}, \quad |\nabla_h^k\xi|\leq C_{\epsilon,k}e^{(1+\epsilon)r}.\]
	\end{lemma}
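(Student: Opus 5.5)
Our plan is to upgrade the blow-up analysis above into quantitative estimates, using a contradiction/compactness argument along the cusp. Parametrize the end $E_\infty$ by the cusp coordinate $r$ of Definition \ref{def:cusps}. For each $r$ large, pick a point $p$ with $r(p)=r$ and the normalized function $\xi_p=\xi/\xi(p)$ on $\widetilde B_1(p)$. The preceding lemmas show that every subsequential limit is $G_\infty$-invariant and solves $\Delta\xi_\infty+2\xi_\infty=0$. In the $\bRH^4$ case the limit is $a e^{r}$ (since $b=0$). In the $\bCH^2$ case it is $(a+br)e^r$. Because of the normalization $\xi_\infty=1$ at the base point, we have $a\neq 0$ (resp.\ $a+br_0\neq0$), and positivity of $\xi$ gives the corresponding sign.

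The key step is a uniform statement for the logarithmic derivative. We claim that $\partial_r\log\xi\to 1$ uniformly as $r\to\infty$. In the $\bRH^4$ case this is immediate: if it failed along a sequence $p_j$, the limit $\xi_\infty=ae^{r}$ would contradict the failure at the base point, since smooth convergence passes $\partial_r\log\xi_j$ to the limit. In the $\bCH^2$ case we get $\partial_r\log\xi_\infty=1+b/(a+br)$ at the base point. To reach the same conclusion we must show $b/(a+br_0)$ is small, i.e.\ that the ratio tends to $0$ along the sequence. We would first try running the analogue of the $b=0$ lemma using the Kähler condition via LeBrun's ansatz with Heisenberg symmetry, hoping to force $b=0$ as well. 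If that fails, we fall back on the ODE structure. The function $u(r)=\fint \partial_r\log\xi$ over a cross-section approximately satisfies the Riccati equation $u'=-(u-1)^2$ coming from $(a+br)e^r$. Solutions of this equation either blow up in finite $r$, which contradicts smoothness, or converge to $1$. So $u\to1$, and the convergence to the model is locally uniform.

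With $|\partial_r\log\xi-1|<\epsilon$ for $r\ge R_\epsilon$, we integrate along $r$, using the fact that cross-sections have diameter $\to0$ together with Lemma \ref{lem:Harnack_inequality} to compare different points of the same slice. This yields $C^{-1}e^{(1-\epsilon)r}\le\xi\le Ce^{(1+\epsilon)r}$. For the derivative bounds, we apply interior Schauder estimates to $\Delta_h\xi+2\xi=0$ on unit balls of the universal covers, which have bounded geometry. These give $|\nabla^k_h\xi|\le C_k\sup_{B_1(p)}\xi\le C_{\epsilon,k}e^{(1+\epsilon)r}$, the last step again by Harnack.

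The main obstacle is the $\bCH^2$ case: we must exclude the $re^r$ mode, or at least show it contributes only a subexponential correction. Note that even a genuine $re^r$ term would still satisfy the stated $\epsilon$-loss bounds. So the simplest route may be to allow $b\neq0$ and observe directly that $(a+br)e^r$ lies between $e^{(1\pm\epsilon)r}$; the Riccati argument above then only needs to rule out finite-distance blow-up of $\partial_r\log\xi$.
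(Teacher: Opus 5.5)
Your proof is correct. In the $\bRH^4$ case it is the paper's argument in infinitesimal form. The paper runs the same compactness contradiction on the finite-step ratio $\xi(r+\tau,y)/\xi(r,y)$, using that recentred limits are $e^{s}$, and then iterates in steps of $\tau$. You run it on $\partial_r\log\xi$ and integrate.

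The genuine difference is the $\bCH^2$ case. The paper's proof takes the recentred limit to be $e^{s}$ there as well. That requires the local coefficient $b$ in $(a+bs)e^{s}$ to vanish, but the preceding lemma establishes this only for $\bRH^4$. The vanishing of the global $re^{r}$ term comes later, in Proposition \ref{prop:asymptotic-v-cusp}, whose input Lemma \ref{lem:asymptotic-for-xi-ASD} relies on the present lemma. A nonzero local $b$ would actually break the paper's one-step contradiction, since the ratio would then tend to $(1+b\tau)e^{\tau}$.

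Your Riccati observation removes the need for any K\"ahler input here. Every $(a+bs)e^{s}$ satisfies $w'=-w^2$ for $w=\partial_s\log\xi-1$. Compactness therefore gives $\partial_r w+w^2\to0$ uniformly on the end, regardless of the local $b$. Semi-stability of $w=0$, together with the Cheng--Yau bound $|w|\le C$, then forces $w\to0$.

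You should state the perturbative version rather than the dichotomy for exact Riccati solutions. Suppose $|\partial_r w+w^2|\le\eta^2$ for $r\ge R_\eta$. If $w<-2\eta$ anywhere, then $\partial_r w\le-\tfrac34 w^2$, giving blow-up at finite $r$ and contradicting the bound. If $w>2\eta$, then $\partial_r w\le-3\eta^2$, and $w$ cannot re-cross $2\eta$. Hence $|w|\le 2\eta$ after a uniform distance. This works pointwise along each $r$-ray, so the cross-sectional average is unnecessary.

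Note that the trick does not replace the K\"ahler lemma for $\bRH^4$. There $u=\partial_s\log(ae^{s}+be^{2s})$ satisfies $u'=(u-1)(2-u)$, for which $u=1$ is unstable. So $b=0$ is genuinely needed in that case, as you correctly use.
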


	\begin{proof}
		Choose $\tau>0$ small. We claim that for all $q=(r,y)$ with $r$ sufficiently large,
		\begin{equation}\label{eq:xi-ratio-inequality-asd}
			e^{(1-\epsilon)\tau}\le \frac{\xi(r+\tau,y)}{\xi(r,y)}\le e^{(1+\epsilon)\tau}.
		\end{equation}
		Otherwise, there exists a sequence $q_j=(r_j,y_j)$ with $r_j\to\infty$ such that one of the inequalities in \eqref{eq:xi-ratio-inequality-asd} fails at $q_j$. After passing to a subsequence and recentering at $q_j$, the normalized functions
		$
		\xi_j:=\frac{\xi}{\xi(q_j)}
		$
		converge uniformly on a fixed ball to $e^s$, where $s=0$ at the basepoint. Since $(r_j+\tau,y_j)$ converges to the point $s=\tau$ in the limit chart, we obtain
		$
		\xi_j(r_j+\tau,y_j)\to e^\tau,
		$
		contradicting the failure of $\eqref{eq:xi-ratio-inequality-asd}$. Hence $\eqref{eq:xi-ratio-inequality-asd}$ holds.

		Now write $r=r_0+N\tau$, where $r_0$ lies in a fixed compact interval and $N\in\mathbb N$. Iterating $\eqref{eq:xi-ratio-inequality-asd}$ yields
		\[
		e^{(1-\epsilon)(r-r_0)}\xi(r_0,y)=e^{(1-\epsilon)N\tau}\xi(r_0,y)\le \xi(r,y)\le e^{(1+\epsilon)N\tau}\xi(r_0,y)=e^{(1+\epsilon)(r-r_0)}\xi(r_0,y).
		\]
		Higher derivative estimates follow from Lemma \ref{lem:Harnack_inequality} and \eqref{eq:conformal-relation-for-scalar-ASD}.
	\end{proof}

	Since the cusp end is exponentially asymptotic to the corresponding model cusp metric, we may apply Fredholm theory to \eqref{eq:conformal-relation-for-scalar-ASD}.

	\begin{lemma}\label{lem:asymptotic-for-xi-ASD}
		There exists $\delta_1\in(0,\delta_0)$ such that, for every $k\ge 0$,
		\begin{itemize}
			\item in the AH cusp case, there exists a constant $a>0$ such that
			\[
			|\nabla_{h_0}^k(\xi-ae^r)|_{h_0}=O(e^{(1-\delta_1)r});
			\]
			\item in the ACH cusp case, there exist constants $a\in\mathbb R$ and $b\geq 0$ such that
			\[
			|\nabla_{h_0}^k(\xi-(a+br)e^r)|_{h_0}=O(e^{(1-\delta_1)r}).
			\]
		\end{itemize}
	\end{lemma}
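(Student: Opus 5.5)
We will treat \eqref{eq:conformal-relation-for-scalar-ASD} as a perturbation of the model equation $\Delta_{h_0}u+2u=0$ on the cusp $(R_0,\infty)\times\mathscr L$, and analyze it by separating variables in the fibre $\mathscr L$.

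\textbf{Model operator.} For $h_0$ the Laplacian on functions depending only on $r$ is
\[
\partial_r^2-3\partial_r \quad\text{(AH case)},\qquad 2(\partial_r^2-2\partial_r)\quad\text{(ACH case)},
\]
using the volume densities $e^{-3r}$ and $e^{-2r}$ respectively. The invariant solutions of $\Delta_{h_0}u+2u=0$ are therefore $e^{r},e^{2r}$ in the AH case and $e^{r},re^{r}$ in the ACH case, matching the lists preceding the lemma. On the $L^2(\mathscr L)$-orthogonal complement of the constants, the fibre Laplacian of $e^{-2r}g_{T^3}$, or of the Heisenberg metric, has eigenvalues of size at least $ce^{2r}$ or $ce^{r}$. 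Non-invariant modes therefore satisfy an ODE whose solutions either grow or decay super-exponentially, like $\exp(\pm c e^{r})$ or $\exp(\pm c e^{r/2})$.

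\textbf{Splitting $\xi$.} Write $\xi=\bar\xi(r)+\xi^\perp$, with $\bar\xi$ the fibre average. Lemma \ref{lem:gradient_exponential_asd} gives $|\nabla^k_h\xi|\le C_{\epsilon,k}e^{(1+\epsilon)r}$. Since $h-h_0=O(e^{-\delta_0 r})$ with all derivatives, we get
\[
\Delta_{h_0}\xi+2\xi=F,\qquad |F|=O(e^{(1+\epsilon-\delta_0)r}).
\]
\begin{itemize}[label={}, leftmargin=1.5em]
\item \emph{Oscillating part.} Project onto the non-constant modes. The growing super-exponential solutions are excluded by the polynomial-exponential bound on $\xi$. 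A weighted maximum principle, or Fredholm estimates in weighted spaces for the fibre-nonconstant part (where the operator is invertible for every weight), then gives $\xi^\perp=O(e^{(1+\epsilon-\delta_0)r})$. This uses the large spectral gap to absorb $F$.
\item \emph{Averaged part.} $\bar\xi$ solves the ODE above with forcing $\bar F+(\text{cross terms})=O(e^{(1+\epsilon-\delta_0)r})$. Variation of parameters gives
\[
\bar\xi=ae^r+be^{2r}+O(e^{(1-\delta_1)r})\quad\text{(AH)},\qquad \bar\xi=(a+br)e^r+O(e^{(1-\delta_1)r})\quad\text{(ACH)},
\]
for any $\delta_1<\delta_0-\epsilon$. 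In the AH case the integral from $\infty$ is taken for the $e^{2r}$ component and from $R_0$ for the $e^{r}$ component. Choosing $\epsilon$ small makes $\delta_1>0$.
\end{itemize}

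\textbf{Constants.} In the AH case the upper bound $\xi\le Ce^{(1+\epsilon)r}$ forces $b=0$, and the lower bound $\xi\ge C^{-1}e^{(1-\epsilon)r}$ forces $a>0$. In the ACH case positivity of $\xi$ forces $b\ge0$: if $b<0$ then $(a+br)e^r$ would be negative for large $r$. Higher derivative bounds in $h_0$-norm follow from interior Schauder estimates on unit balls, applied in the universal covers of the collapsed balls, where the geometry is uniformly controlled.

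\textbf{Main obstacle.} The hardest step is the treatment of the forcing term $F$. Its bound relies on the a priori $e^{(1+\epsilon)r}$ growth, so the error is only slightly better than the leading term. We must check that the decay gap $\delta_0-\epsilon$ survives:
\begin{itemize}[label={}, leftmargin=1.5em]
\item In the ACH case the double indicial root means the variation of parameters produces an extra factor of $r$. This loss is harmless, since it is absorbed by shrinking $\delta_1$.
\item For the oscillating modes, the relevant estimate is uniform in the eigenvalue because the spectral gap grows with $r$.
\end{itemize}
If needed, the argument can be bootstrapped: after the first pass we know $\xi=O(e^{r})$ (times $r$ in the ACH case), which improves the bound on $F$ to $O(e^{(1-\delta_0)r})$ up to a factor of $r$.
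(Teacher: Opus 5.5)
Your proposal is correct and follows essentially the same route as the paper. You rewrite the equation as $\Delta_{h_0}\xi+2\xi=F$, with $F$ controlled by the a priori growth bounds of Lemma~\ref{lem:gradient_exponential_asd}, then expand in eigenfunctions of the cross-section $\mathscr L$ and solve the resulting ODEs mode by mode. Your write-up is more explicit than the paper's in two places: the treatment of the non-constant modes via the exponentially growing spectral gap, and the justification of $b=0$ and $a>0$ in the AH case and of $b\ge 0$ in the ACH case.
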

	We will show $b=0$ for the second case in the proof of Proposition \ref{prop:asymptotic-v-cusp}.

	\begin{proof}
		Since $h$ is exponentially asymptotic to $h_0$, the equation \eqref{eq:conformal-relation-for-scalar-ASD} may be rewritten as
		\begin{equation}\label{eq:xi-linearized-cusp}
			\Delta_{h_0}\xi+2\xi = F,
		\end{equation}
		where
		\[
		F=
		O(e^{-\delta_0 r})\cdot \nabla_{h_0}^2\xi
		+
		O(e^{-\delta_0 r})\cdot \nabla_{h_0}\xi.
		\]
		By Lemma \ref{lem:gradient_exponential_asd}, we have $F=O(e^{(1-\delta_1)r})$ for any $\delta_1\in(0,\delta_0]$.

		We write
		\[
		\xi=\sum_j \alpha_j(r)\Theta_j(y), \qquad F=\sum_j \beta_j(r)\Theta_j(y),
	\]
	where $r\in(R_0,\infty)$ and $y\in\mathscr L$. This reduces \eqref{eq:xi-linearized-cusp} to a family of ODEs for the coefficients $\alpha_j(r)$. Solving these ODEs and using Lemma \ref{lem:gradient_exponential_asd} shows that for $\delta_1\in (0,\delta_0]$ that avoids finitely many numbers,
		\[
		|\nabla^k_{h_0}(\xi-\hat\xi)|_{h_0}=O(e^{(1-\delta_1)r})
		\]
		for every $k\ge 0$, where $\hat\xi=ae^r$ in the AH cusp case and $\hat\xi=(a+br)e^r$ in the ACH cusp case.
	\end{proof}

	Since $\xi$ is a defining function for the PE end $E_0$, its range on $M$ is $(0,\infty)$. Hence the reduction is defined globally:
	\begin{align*}
		g&=Wd\xi^2+W^{-1}\eta^2+We^vg_{T^2},\\
		h&=\xi^{-2}(Wd\xi^2+W^{-1}\eta^2+We^vg_{T^2}),
	\end{align*}
	where $v$ and $W$ satisfy
	\begin{equation}\label{eq:toda_equation_asd_uniqueness_proof}
		(e^v)_{\xi\xi}+\Delta_{T^2}v=0
		\qquad
		W=1-\frac12\xi v_\xi.
	\end{equation}

	\begin{proposition}\label{prop:asymptotic-v-cusp}
		On the cusp end $E_\infty$,
		\begin{itemize}
			\item in the AH cusp case,
			$
			|v|<C;
			$
			\item in the ACH cusp case,
			$
			|v-\log\xi|<C.
			$
		\end{itemize}
	\end{proposition}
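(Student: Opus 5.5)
The plan is to read $v$ off directly from the geometry of $h$ near $E_\infty$, and to compare with the model $h_0$ using Lemma \ref{lem:asymptotic-for-xi-ASD}. From \eqref{eq:Einstein-metric-h} we get $|d\xi|_h^2=\xi^2W^{-1}$, so
\[
W=\frac{\xi^2}{|d\xi|^2_h}.
\]
The metric that $h$ induces on the $2$-plane field orthogonal to both $\nabla_h\xi$ and $\mathcal K$ is $\xi^{-2}We^v g_{T^2}$ (transported by the projection to $T^2$). Equivalently, the quotient metric obtained by restricting $h$ to a level set $\{\xi=c\}$ and taking the Riemannian submersion quotient by the $\bS^1$-action is $\xi^{-2}We^vg_{T^2}$. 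Combining these two facts gives
\[
e^v=|d\xi|_h^2\,\lambda, \qquad \lambda:=\frac{h^{\mathrm{quot}}}{g_{T^2}}.
\]
So it suffices to estimate $|d\xi|_h$ and the conformal factor $\lambda$ of the quotient metric on the cusp end.

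\emph{Step 1 (gradient of $\xi$).} By Lemma \ref{lem:asymptotic-for-xi-ASD}, in the AH case $d\xi=ae^r\,dr+O(e^{(1-\delta_1)r})$ with $|dr|_h\to1$. Hence $|d\xi|^2_h=a^2e^{2r}(1+o(1))$. Here $a>0$, since $\xi>0$ and $\xi\ge C^{-1}e^{(1-\epsilon)r}$. In the ACH case $|dr|^2_{h_0}=2$, so $|d\xi|_h^2=2(a+b+br)^2e^{2r}(1+o(1))$. In both cases $\nabla_h\xi$ is asymptotically parallel to $\partial_r$, so the level sets of $\xi$ are $C^1$-close, after rescaling, to the slices $\{r=\mathrm{const}\}$.

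\emph{Step 2 (identifying $\mathcal K$).} I expect this to be the main obstacle. We need to know that $\mathcal K$ is asymptotic to a Killing field of the model cusp tangent to the slices. In the AH case this should be a constant translation field of $T^3$; in the ACH case it should be the central field $\partial_t$ of $\mathcal H^3$. My first attempt would use the equivariant blow-up limits already used for $\xi_\infty$. A suitably normalized $\mathcal K$ on $\widetilde B_j$ subconverges to a Killing field of $\bRH^4$ or $\bCH^2$ that preserves $\xi_\infty$ and $\widetilde g_\infty$ and commutes with $G_\infty$, because $\mathcal K$ is defined on the quotient. Such a field is a translation in $\bR^3$, respectively in the center $\partial_t$. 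Since $\mathcal K$ has closed orbits of fixed period $\mpp$ and is tangent to the level sets of $\xi$, this limit is a fixed lattice direction. The exponential convergence $h\to h_0$ then upgrades this to $\mathcal K=\mathcal K_0+O(e^{-\delta r})$.

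\emph{Step 3 (quotient metric).} In the AH case, the quotient of $e^{-2r}g_{T^3}$ by the circle generated by $\mathcal K_0$ is $e^{-2r}$ times a fixed flat metric on $T^2$, which is uniformly comparable to $g_{T^2}$. Therefore $\lambda\asymp e^{-2r}$, and $e^v\asymp a^2e^{2r}e^{-2r}$, which gives $|v|<C$. In the ACH case, the quotient of $\frac12(e^{-r}(dx^2+dy^2)+e^{-2r}(dt-x\,dy)^2)$ by $\partial_t$ is $\frac12e^{-r}(dx^2+dy^2)$, so $\lambda\asymp e^{-r}$ and
\[
e^v\asymp (a+br)^2e^{r}.
\]
Since $\xi\asymp(a+br)e^r$, this yields $v-\log\xi=\log(a+br)+O(1)$.

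\emph{Step 4 ($b=0$).} It remains to show $b=0$, which also completes Lemma \ref{lem:asymptotic-for-xi-ASD}. Integrating \eqref{eq:toda_equation_asd_uniqueness_proof} over $T^2$ gives $\fint We^v=b_0+\frac{a_0}{2}\xi$, which is affine in $\xi$. On the other hand, $W=\xi^2/|d\xi|_h^2\to\frac12$ in the ACH case, so $\fint We^v\asymp(a+br)^2e^r\asymp(a+br)\,\xi$. If $b>0$ this grows strictly faster than any affine function of $\xi$, a contradiction. Hence $b=0$ and $a>0$, and so $|v-\log\xi|<C$.
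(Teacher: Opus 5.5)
Your identity $e^v=|d\xi|_h^2\,\lambda$ is correct and your route can be completed, but it is genuinely different from the paper's and considerably heavier. The paper never looks at $\mathcal K$ or at the $\bS^1$-quotient. It takes $W=\xi^2/|d\xi|_h^2$, which by Lemma~\ref{lem:asymptotic-for-xi-ASD} gives $W=1+O(e^{-\delta_1 r})$ (resp.\ $W=\frac12(1+\frac{b}{a+br})^{-2}+O(e^{-\delta_1 r})$), and combines it with the \emph{pointwise} relation $W=1-\frac12\xi v_\xi$. You use that relation only after averaging over $T^2$. The combination yields $v_\xi=2(1-W)/\xi=O(\xi^{-1-\delta_1})$ (resp.\ $v_\xi-\xi^{-1}=O(\xi^{-1-\delta_1})$ once $b=0$). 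Integrating in $\xi$ at each fixed point of $T^2$ then gives the claim, even in the sharper form $v=c_0+O(\xi^{-\delta_1})$ (resp.\ $v=\log\xi+c_0+O(\xi^{-\delta_1})$). The Fredholm rate is essential here, since $W\to1$ alone would only give $v=o(\log\xi)$. For $b=0$ the paper also avoids pointwise information on $e^v$. The integrated Toda system forces the $e^v$-weighted level-set average of $W$ to be $\frac12+O(\xi^{-1})$, while the pointwise asymptotics of $W$ deviate from $\frac12$ by order $1/r$ if $b\neq0$. In your approach all the weight sits instead in Steps 2--3, which are only sketched.

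\textbf{Step 2.} The centralizer argument (translations of $\bR^3$, resp.\ the center $\partial_t$ of $\mathcal H^3$) goes through. So does the period argument: the time-$\mpp$ flow of the lifted field is a deck transformation, constant along the end, and this pins the limiting field to a lattice (resp.\ central) element. The rate $O(e^{-\delta r})$ you claim is not needed.

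\textbf{Step 3.} The phrase ``uniformly comparable to $g_{T^2}$'' hides a real point. $g_{T^2}$ is the flat metric on the holomorphic quotient $\Sigma$, whereas your flat metric lives on $T^3/\langle\mathcal K_0\rangle$ via the cusp chart. The two are related only by a diffeomorphism that is a priori merely approximately conformal, so pointwise bounds on $\lambda$ need an argument. Uniformization would do: $e^{2r}\lambda\,g_{T^2}$ is, up to diffeomorphism, $C^2$-close to a fixed flat metric. Hence its conformal factor relative to the unit-area flat metric in the same conformal class, namely $g_{T^2}$, is close to a constant.

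\textbf{Step 4.} Once Step 2 gives $\mathcal K\approx\kappa\partial_t$ with $\kappa$ fixed, Step 4 is superfluous. Indeed $|\mathcal K|_h=\xi^{-1}W^{-1/2}$ and $W\to\frac12$ give $\xi\approx 2e^r/\kappa$, which forces $b=0$ at once.
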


	\begin{proof}
		Since $\xi^2W^{-1}=|d\xi|_h^2$, the asymptotics of $W$ follow from those of $\xi$ in Lemma \ref{lem:asymptotic-for-xi-ASD} together with the exponential convergence $h\to h_0$.

		In the AH cusp case, Lemma \ref{lem:asymptotic-for-xi-ASD} gives
		\[
		\xi=ae^r+O(e^{(1-\delta_1)r}), \qquad \partial_r\xi=ae^r+O(e^{(1-\delta_1)r}).
		\]
		Since $h$ is exponentially asymptotic to the AH cusp model $h_0$, we have $|dr|_h^2=1+O(e^{-\delta_1 r})$. Hence
		\[
		|d\xi|_h^2=\xi^2\bigl(1+O(e^{-\delta_1 r})\bigr),
		\qquad
		W=\frac{\xi^2}{|d\xi|_h^2}=1+O(e^{-\delta_1 r}).
		\]
		Using $W=1-\frac12\xi v_\xi$, we obtain
		$
		v_\xi=\frac{2(1-W)}{\xi}=O(\xi^{-1-\delta_1}),
		$
		and integrating in $\xi$ gives $v=c_0+O(\xi^{-\delta_1})$, where $c_0$ is a smooth function on $T^2$.

		In the ACH cusp case, Lemma \ref{lem:asymptotic-for-xi-ASD} gives 
		\[
		\xi=(a+br)e^r+O(e^{(1-\delta_1)r}), \qquad \partial_r\xi=(a+b+br)e^r+O(e^{(1-\delta_1)r}).
		\]
		Since $h$ is exponentially asymptotic to the ACH cusp model $h_0$, we have $|dr|_h^2=2+O(e^{-\delta_1 r})$. It follows that, after decreasing $\delta_1$ if necessary to absorb polynomial factors in $r$, we have
		\[
		|d\xi|_h^2
		=
		2(a+b+br)^2e^{2r}+O(e^{(2-\delta_1)r})
		=
		2\Bigl(1+\frac{b}{a+br}\Bigr)^2\xi^2+O(e^{(2-\delta_1)r}),
		\]
		and hence
		\[
		W=\frac{\xi^2}{|d\xi|_h^2}
		=
		\frac12\Bigl(1+\frac{b}{a+br}\Bigr)^{-2}+O(e^{-\delta_1 r}).
		\]

		\begin{equation}\label{eq:Wev_ev_ratio1}
			\frac{\int_{T^2}We^v\dvol_{T^2}}{\int_{T^2}e^v\dvol_{T^2}}
			=
			\frac12\Bigl(1+\frac{b}{a+br}\Bigr)^{-2}+O(e^{-\delta_1 r}).
		\end{equation}

		On the other hand, integrating $(e^v)_{\xi\xi}+\Delta_{T^2}v=0$ over $T^2$ gives
		$
		\int_{T^2}e^v=\lambda\xi+\mu, \,\lambda>0.
		$
		Moreover, using $W=1-\frac12\xi v_\xi$, we have
		$
		We^v=e^v-\frac12\xi(e^v)_\xi,
		$
		and hence
		$
		\int_{T^2}We^v=\frac12\lambda\xi+\mu.
		$
		Therefore
		\begin{equation}\label{eq:Wev_ev_ratio2}
			\frac{\int_{T^2}We^v\dvol_{T^2}}{\int_{T^2}e^v\dvol_{T^2}}
			=
			\frac{\frac12\lambda\xi+\mu}{\lambda\xi+\mu}
			=
			\frac12+O(\xi^{-1})
			=
			\frac12+O((re^r)^{-1}).
		\end{equation}

		Comparing \eqref{eq:Wev_ev_ratio1} and \eqref{eq:Wev_ev_ratio2} gives $b=0$. Therefore
		$
		W=\frac12+O(e^{-\delta_1 r}).
		$
		Using again $W=1-\frac12\xi v_\xi$, we obtain
		$
		v_\xi-\frac1\xi=\frac{1-2W}{\xi}=O(\xi^{-1-\delta_1}),
		$
		and integrating gives
		$
		v=\log\xi+c_0+O(\xi^{-\delta_1}),
		$
		where $c_0$ is a smooth function on $T^2$.
	\end{proof}

	\subsection{The Type II case}
	\label{subsec:regularity-Type-II}

	Now we consider the Type II case. Recall that in the Type II case we arrange $g=s_g^2h$ with $\xi=s_g$ being the scalar curvature of the K\"ahler extremal metric $g$, which is either strictly positive or strictly negative in $M$. Moreover,
	\begin{equation}\label{eq:xi-and-Weyl}
		|\xi|=(2\sqrt{6}|W^+_h|_h)^{\frac{1}{3}}.
	\end{equation}
	It is impossible for $M$ to have both a PE end and an AH cusp end; otherwise, by \eqref{eq:xi-and-Weyl}, $\xi \to 0$ at both ends, so $\xi$ has an interior critical point, contradicting the freeness of the $\bS^1$-action.

	We first consider the case that $E_\infty$ is an ACH cusp. Then the orientation induced by the complex structure agrees with the positive orientation of $\bCH^2$; otherwise, \eqref{eq:xi-and-Weyl} again gives $\xi \to 0$ on both ends, yielding the same contradiction.

	Since $E_\infty$ is an ACH cusp with the positive orientation of $\bCH^2$, $W_h^+$ approaches the self-dual Weyl tensor of positively oriented $\bCH^2$, which, in an orthogonal frame of $\Lambda^+$, is $\mathrm{diag}(-2,1,1)$. Hence
	$|\xi|=(2\sqrt{6}|W_h^+|)^{1/3}\to \sqrt[3]{12}$
	as we approach infinity of $E_\infty$. Since the conformal factor tends to a constant, the K\"ahler metric $g$ is also asymptotic, up to scaling, to the standard complex hyperbolic cusp metric. As $\bCH^2$ has scalar curvature $-12$, it follows that $\xi<0$ on $E_\infty$, thus $\xi<0$ on $M$. For simplicity we shall set
	\[ \xi_\ast:=-\sqrt[3]{12}.\]

	The conformal change of scalar curvature associated to $g=\xi^2h$ is
	\begin{equation}\label{eq:conformal-change-of-scalar-Type-II}
		6\Delta_h\xi+\xi(12+\xi^3)=0,
	\end{equation}
	which can be written as
	\begin{equation}\label{eq:modified-conformal-change-of-scalar-Type-II}
		6\Delta_h(\xi- \xi_\ast)+\xi(\xi- \xi_\ast)(\xi^2+ \xi_\ast\xi+ \xi_\ast^2)=0.
	\end{equation}

	Now we identify the end $E_\infty$ as $(R_0,\infty)\times\NN_{\ell}$ via the diffeomorphism in Definition \ref{def:cusps}.

	\begin{lemma}\label{lem:gradient_exponential_typeii}
		We have $\xi>\xi_\ast$ on $M$, and for any $\epsilon\in (0,1)$ and $k\geq0$, there is a constant $C_{\epsilon,k}$, such that on the cusp end
		\[C_{\epsilon,0}^{-1}e^{-(1+\epsilon)r}\leq \xi-\xi_\ast\leq C_{\epsilon,0} e^{-(1-\epsilon)r}, \quad \qquad |\nabla_h^k(\xi-\xi_\ast)|\leq C_{\epsilon,k}e^{-(1-\epsilon)r}.\]
	\end{lemma}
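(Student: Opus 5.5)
We first show $\xi>\xi_\ast$ on $M$, then obtain the exponential bounds by mimicking the proof of Lemma \ref{lem:gradient_exponential_asd}, with $u:=\xi-\xi_\ast$ in place of $\xi$.

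\emph{Step 1: $u>0$.} On $M$ we have $\xi<0$. At the PE end $\xi\to 0$, so $u\to -\xi_\ast>0$. At the cusp end $\xi\to\xi_\ast$, so $u\to 0$. Rewrite \eqref{eq:modified-conformal-change-of-scalar-Type-II} as $6\Delta_h u + c\,u=0$ with $c:=\xi(\xi^2+\xi_\ast\xi+\xi_\ast^2)$. Since $\xi<0$ and $\xi^2+\xi_\ast\xi+\xi_\ast^2>0$, we get $c<0$, so the zeroth-order coefficient has the good sign for the maximum principle. Suppose $u$ had a negative value. Then, since $u\to$ a positive constant at $E_0$ and $u\to0$ at $E_\infty$, it attains a negative interior minimum. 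There $\Delta_h u\ge 0$ while $cu>0$, which contradicts $6\Delta_h u=-cu$. Hence $u\ge 0$, and the strong maximum principle gives $u>0$. (Alternatively, $\xi$ is monotone along the end since it is a moment map with no critical points, and one can argue directly.)

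\emph{Step 2: Harnack and blow-up limits.} Near the cusp, $c\to c_\ast:=\xi_\ast^3=-12$, so $u$ is a positive solution of $\Delta_h u=2u+o(1)u$. The Cheng--Yau gradient estimate on unit balls (exactly as in Lemma \ref{lem:Harnack_inequality}) gives $|\nabla_h u|\le Cu$. Now take $p_j\to\infty$ along $E_\infty$ and pass to universal covers of unit balls. These converge equivariantly to the slab $\{1<r<3\}\subset\bCH^2$ with the $\mathcal H^3$-action. The normalized functions $u_j=u/u(p_j)$ converge smoothly to a positive $\mathcal H^3$-invariant $u_\infty$ solving $\Delta u_\infty=2u_\infty$. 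For invariant functions on $h_{\bCH^2}$ we have $\Delta=2(\partial_r^2-2\partial_r)$, so the equation becomes $u''-2u'-u=0$, with exponents $1\pm\sqrt2$. This does not match $e^{-r}$, so I recheck the normalization. With $\xi=\xi_\ast+u$, the linearized coefficient is $\frac{1}{6}(12+4\xi_\ast^3)=\frac16(12-48)=-6$, so the equation is $\Delta_h u=6u$ to first order. The ODE is then $2(u''-2u')=6u$, i.e.\ $u''-2u'-3u=0$, with roots $r=3,-1$. So $u_\infty=Ae^{-r}+Be^{3r}$ with $A,B\ge0$.

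\emph{Step 3: ruling out the growing mode.} This is the main obstacle: we must exclude $B\neq0$ in the blow-up limits. I would use the global information that $u\to0$ at the cusp. Suppose infinitely many limits had $B>0$. Then the iteration argument of Lemma \ref{lem:gradient_exponential_asd} would propagate growth of the form $u(r+\tau)\ge e^{c\tau}u(r)$ for some $c>0$ along the end, contradicting $u\to0$. More carefully, I would show that the set of limits is closed under translation in $r$ and consists of positive invariant solutions. A limit with $B>0$, translated far in the $-r$ direction, becomes close to $Ae^{-r}$ normalized, while translated in the $+r$ direction it becomes $e^{3r}$. One then uses a ratio $u(r+\tau)/u(r)$ argument: the ratio near $e^{3\tau}$ at some large $r$ persists for all larger $r$ by continuity and compactness, forcing growth. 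Hence every limit is $e^{-r}$. (Alternatively, use the fact that $\xi$ is a moment map whose gradient has $h$-norm controlled by $W$ to get monotone decay in $r$.)

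\emph{Step 4: conclusion.} With all limits equal to $e^{-(r-r_j)}$, the contradiction argument gives $e^{-(1+\epsilon)\tau}\le u(r+\tau,y)/u(r,y)\le e^{-(1-\epsilon)\tau}$ for large $r$. Iterating this yields the two-sided bounds. The higher derivative bounds follow from the Harnack inequality together with interior elliptic estimates for \eqref{eq:modified-conformal-change-of-scalar-Type-II} on unit balls of the universal covers.
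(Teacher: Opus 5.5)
Your Step 1 is the paper's maximum-principle argument. For the decay bounds you take a genuinely different route.

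\textbf{How the paper argues.} The paper does not blow up here. It uses $e^{-(1\pm\epsilon)r}$ directly as barriers. From $(\Delta_{h_0}-6)e^{-\alpha r}=2(\alpha-1)(\alpha+3)e^{-\alpha r}$, together with $h\to h_0$ and $q\to 6$, one gets $(\Delta_h-q)e^{-(1-\epsilon)r}<0<(\Delta_h-q)e^{-(1+\epsilon)r}$ on $\{r\ge R\}$. Since $q\ge 0$ and $u\to 0$ at the cusp, comparison on $\{r\ge R\}$ gives both bounds at once. The growing mode $e^{3r}$ is excluded automatically by the condition at infinity, so no Harnack inequality, blow-up limits, or classification of limits are needed for $k=0$. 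This works precisely because the zeroth-order term has the favorable sign and $u$ decays. Neither holds for $\Delta_h\xi+2\xi=0$ in the Type I case, which is why the paper uses blow-up there.

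\textbf{Your route.} You transplant the Type I blow-up template. It is longer but sound, and it gives the uniform ratio asymptotics $u(r+\tau,y)/u(r,y)\to e^{-\tau}$ directly. The points below need tightening.

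\emph{Positivity of $A,B$.} On the slab $\{1<r<3\}$, positivity of $u_\infty=Ae^{-s}+Be^{3s}$ does not force $A,B\ge 0$; for instance, a small $B<0$ is allowed. Step 3 genuinely needs $A,B\ge 0$. The fix is to take limits on balls of radius $\rho\to\infty$; the Harnack constant $e^{C\rho}$ is harmless. Then $u_\infty$ is a positive invariant solution on all of $\bCH^2$, and positivity on the whole line does give $A,B\ge0$.

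\emph{Step 3.} The phrase \emph{persists by continuity and compactness} should become a forward-invariance statement. Fix $\theta\in(1,e^{3\tau})$. Then there is $R_1$ such that, for $r\ge R_1$, $\rho(r,y):=u(r+\tau,y)/u(r,y)\ge\theta$ implies $\rho(r+\tau,y)\ge\theta$.
\begin{itemize}
\item This follows by contradiction and compactness. The function $s\mapsto u_\infty(s+\tau)/u_\infty(s)$ is identically $e^{-\tau}$ if $B=0$, identically $e^{3\tau}$ if $A=0$, and strictly increasing if $A,B>0$.
\item Iterating and using $u\to 0$ then gives $\rho<\theta$ for all $r\ge R_1$.
\item This excludes $B>0$ in every limit, because then $u_\infty(s+\tau)/u_\infty(s)\to e^{3\tau}>\theta$ as $s\to\infty$.
\end{itemize}

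\emph{Step 2.} Remove the false start: $c\to 3\xi_\ast^3=-36$, so $\Delta_h u=(6+o(1))u$ from the outset.
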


	\begin{proof}
		Set $u:=\xi-\xi_\ast$. Then $u$ satisfies
		\begin{equation}\label{eq:modified-conformal-change-of-scalar-Type-II-in-u}
			\Delta_h u-qu=0,
			\qquad
			q:=-\frac{1}{6}\xi(\xi^2+\xi_\ast\xi+\xi_\ast^2)\geq 0.
		\end{equation}
		Since $\xi\to \xi_\ast$ on the ACH cusp end and $\xi_\ast^3=-12$, we have $q\to 6$.

		We first show $u>0$ on $M$. Since $u\to -\xi_\ast>0$ on one end and $u\to 0$ on the other, the maximum principle applied on an exhaustion of $M$ gives $u\ge0$. If $u=0$ at some interior point, then $u$ attains its minimum there, so by the strong maximum principle $u$ is constant, contradicting the end behavior. Hence $u>0$ on $M$.

		For the model complex hyperbolic metric $h_0$, a direct computation shows that for any $\alpha\in\mathbb R$,
		\begin{equation}\label{eq:Laplace_minus_6_ACH_cusp}
			(\Delta_{h_{0}}-6)e^{-\alpha r}
			=2(\alpha-1)(\alpha+3)e^{-\alpha r}.
		\end{equation}

		Since $h\to h_0$ and $q\to 6$ on the cusp end, we can find $R$ such that on $\{r\ge R\}$,
		\[
		(\Delta_h-q)e^{-(1-\epsilon)r}<0,\qquad (\Delta_h-q)e^{-(1+\epsilon)r}>0.
		\]
		Choose $c,C>0$ such that $ce^{-(1+\epsilon)R}\le u\le Ce^{-(1-\epsilon)R}$ on $\{r=R\}$. Since $u\rightarrow 0$ on the cusp end, applying the maximum principle to $u-Ce^{-(1-\epsilon)r}$ and $u-ce^{-(1+\epsilon)r}$ on $\{r\ge R\}$ yields
		\[
		c\,e^{-(1+\epsilon)r}\le u\le C\,e^{-(1-\epsilon)r},
		\]
		which proves the first claim when $k=0$. The derivative estimates follow from standard elliptic theory applied to $\Delta_h u - qu=0$.
	\end{proof}

	Therefore $\xi=s_g$ takes values in $(\xi_\ast,0)$ on $M$.

	\begin{lemma}\label{lem:asymptotic-for-xi-Type-II}
		There exists $\delta_1\in (0,\delta_0)$ and a constant $a$ such that
		\[|\nabla_{h_0}^k(\xi- \xi_\ast-ae^{-r})|_{h_0}=O(e^{-(1+\delta_1)r}).\]
	\end{lemma}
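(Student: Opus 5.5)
The plan is to follow the proof of Lemma \ref{lem:asymptotic-for-xi-ASD}, with Lemma \ref{lem:gradient_exponential_typeii} playing the role of Lemma \ref{lem:gradient_exponential_asd}. Set $u=\xi-\xi_\ast$ and start from \eqref{eq:modified-conformal-change-of-scalar-Type-II-in-u}, $\Delta_h u-qu=0$. Write $q=6+(q-6)$. Since $q$ is a polynomial in $\xi$ and $\xi_\ast^3=-12$, a Taylor expansion at $\xi=\xi_\ast$ gives $q-6=O(u)$. Lemma \ref{lem:gradient_exponential_typeii} then gives $q-6=O(e^{-(1-\epsilon)r})$, so $(q-6)u=O(e^{-2(1-\epsilon)r})$. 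Replacing $\Delta_h$ by $\Delta_{h_0}$ introduces errors of the form $O(e^{-\delta_0 r})\nabla^2_{h_0}u+O(e^{-\delta_0 r})\nabla_{h_0}u$, which by the derivative bounds of Lemma \ref{lem:gradient_exponential_typeii} are $O(e^{-(1-\epsilon+\delta_0)r})$. So I would rewrite the equation as
\[
(\Delta_{h_0}-6)u=F,\qquad |\nabla^k_{h_0}F|_{h_0}=O(e^{-(1+\delta_1')r})
\]
for some $\delta_1'>0$, after choosing $\epsilon$ small compared with $\min(\delta_0,1)$.

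Next I would expand $u=\sum_j\alpha_j(r)\Theta_j(y)$ and $F=\sum_j\beta_j(r)\Theta_j(y)$ on the cross-section $\NN_\ell$ of the ACH cusp, using the eigenfunctions $\Theta_j$ of the Laplacian of the left-invariant metric on $\Gamma\backslash\mathcal H^3$. For the model \eqref{eq:complex-hyperbolic-cusp-model}, $\Delta_{h_0}$ acting on $r$-dependent functions is $2(\partial_r^2-2\partial_r)$; this is consistent with \eqref{eq:Laplace_minus_6_ACH_cusp}, whose indicial roots are $\alpha=1$ and $\alpha=-3$. For the zero mode the ODE is
\[
2(\alpha_0''-2\alpha_0')-6\alpha_0=\beta_0,
\]
with homogeneous solutions $e^{-r}$ and $e^{3r}$. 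The growing solution $e^{3r}$ is excluded by the upper bound in Lemma \ref{lem:gradient_exponential_typeii}, and variation of parameters then gives $\alpha_0=ae^{-r}+O(e^{-(1+\delta_1)r})$.

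For nonzero modes the $\Theta_j$ are of two kinds: those pulled back from $T^2$, which contribute $-2e^{r}\lambda_j\alpha_j$, and those with nonzero weight along the $\bS^1$-fiber, which contribute $-2e^{2r}\mu_j\alpha_j$ plus their horizontal parts. In both cases the potential grows, so the ODEs have one solution decaying faster than any $e^{-Nr}$ (super-exponentially) and one growing solution. The growing solutions are again excluded by the a priori bound. The decaying parts, together with the particular solutions forced by $\beta_j$, are bounded by $O(e^{-(1+\delta_1)r})$ uniformly in $j$; this follows from a comparison or maximum-principle argument on $\{r\ge R\}$ for the nonzero-mode part, using barriers as in the proof of Lemma \ref{lem:gradient_exponential_typeii}. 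Alternatively one can use weighted Fredholm theory for $\Delta_{h_0}-6$ on the cusp, with weight between $-1$ and $-(1+\delta_1)$, choosing $\delta_1$ to avoid the finitely many indicial roots, exactly as in Lemma \ref{lem:asymptotic-for-xi-ASD}.

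Higher derivatives come from rescaled interior Schauder estimates on unit balls, after passing to local universal covers of the collapsing balls as in Subsection \ref{subsec:regularity-Type-I}. The main obstacle I expect is making the nonzero-mode estimate uniform over all modes. I would handle it with the barrier approach: apply the maximum principle to $u-ae^{-r}$ directly, using supersolutions of the form $Ce^{-(1+\delta_1)r}$. By \eqref{eq:Laplace_minus_6_ACH_cusp} these satisfy $(\Delta_{h_0}-6)e^{-(1+\delta_1)r}=2\delta_1(\delta_1+4)e^{-(1+\delta_1)r}>0$, with the correct sign for small $\delta_1$, which controls the inhomogeneous error. Finally, $\delta_1<\delta_0$ is enforced by shrinking $\delta_1$ if necessary.
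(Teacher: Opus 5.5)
Your reduction to $(\Delta_{h_0}-6)u=F$ with $F=O(e^{-(1+\delta_1')r})$, including the remark $q-6=O(u)$, is correct. Your zero-mode analysis is also correct. This is the paper's route: the paper then simply invokes Fredholm theory for $\Delta_{h_0}-6$ on the cusp, which you list only as an alternative.

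\textbf{The step that fails.} It is the one you single out as the main obstacle and propose to settle with a barrier. By \eqref{eq:Laplace_minus_6_ACH_cusp}, $(\Delta_{h_0}-6)e^{-(1+\delta_1)r}=2\delta_1(\delta_1+4)e^{-(1+\delta_1)r}>0$. So $Ce^{-(1+\delta_1)r}$ is a \emph{subsolution}; it is the lower barrier in the proof of Lemma~\ref{lem:gradient_exponential_typeii}. To get $\pm(u-ae^{-r})\le\phi$ from the maximum principle you need $(\Delta_{h_0}-6)\phi\le-|F|<0$.

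No other choice of $\phi$ rescues this. Suppose $\phi>0$ and $(\Delta_{h_0}-6)\phi\le 0$ on $\{r\ge R\}$. Write its cross-sectional average as $\bar\phi=e^{-r}\psi$. Then $2(\bar\phi''-2\bar\phi')-6\bar\phi=2e^{-r}(\psi''-4\psi')\le 0$, so $e^{-4r}\psi'$ is nonincreasing. If $\psi>0$ and $\psi\to 0$, then $\psi'(r_1)<0$ for some $r_1$, and $\psi'\le -me^{4r}$ afterwards, which is absurd. Hence no positive supersolution decays faster than $e^{-r}$. A maximum principle applied to the whole function therefore cannot pass the indicial rate $1$, and in particular cannot detect the coefficient $a$.

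\textbf{What the nonconstant modes need.} They need the spectral gap of the cross-section, which the maximum principle ignores. Write $\Delta_{h_0}=2(\partial_r^2-2\partial_r)+2e^r\Delta_H+2e^{2r}\partial_t^2$, and set $u^\perp:=u-\alpha_0$ and $y(r):=\|u^\perp(r,\cdot)\|_{L^2}^2$ on the slice. Pair $(\Delta_{h_0}-6)u^\perp=F-\beta_0$ with $u^\perp$. Use $-\Delta_H\ge\lambda>0$ on mean-zero slice functions; this is the $T^2$ gap on fiber-invariant functions and the Heisenberg commutator bound on the others. This gives $y''-2y'\ge(2\lambda e^r+6)y-\|F(r,\cdot)\|\,y^{1/2}$.

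For this ODE the growing coefficient does make $Ke^{-2(1+\delta_1)r}$ a supersolution for large $r$. Hence $y^{1/2}=O(e^{-(1+\delta_1)r})$. Pointwise and derivative bounds then follow from elliptic estimates on local universal covers.

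Equivalently, in your mode-by-mode ODEs the potential $e^r\lambda_j+e^{2r}\mu_j$ makes $e^{-(1+\delta_1)r}$ a genuine supersolution for every $j\neq 0$ once $r$ is large, uniformly in $j$. With this replacement, or with the weighted Fredholm argument you mention, the proof goes through.
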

	\begin{proof}
		From \eqref{eq:modified-conformal-change-of-scalar-Type-II} or \eqref{eq:modified-conformal-change-of-scalar-Type-II-in-u}, we have
		\begin{align*}
			&(\Delta_{h_0}-6)(\xi- \xi_\ast)\\
			=&O(e^{-\delta_0 r})\cdot\nabla_{h_0}(\xi- \xi_\ast)+O(e^{-\delta_0 r})\cdot\nabla_{h_0}^2(\xi- \xi_\ast)+O(e^{-(1-\epsilon)r})(\xi- \xi_\ast).
		\end{align*}
		With the pointwise bounds of Lemma~\ref{lem:gradient_exponential_typeii} in hand,
		the lemma follows from a standard application of Fredholm theory for
		$\Delta_{h_0}-6$ on the complex hyperbolic cusp.
	\end{proof}

	So we can apply the reduction over the entire $M$ to write
	\begin{align*}
		g&=Wd\xi^2+W^{-1}\eta^2+We^vg_{T^2},\\
		h&=\xi^{-2}(Wd\xi^2+W^{-1}\eta^2+We^vg_{T^2}),
	\end{align*}
	with $\xi\in(\xi_\ast,0)$, and $v,W$ satisfy the twisted-Toda system
	\begin{align}\label{eq:twisted-toda-system}
		(e^v)_{\xi\xi}+\Delta_{T^2}v=-\xi We^v,\qquad
		W=\frac{12-6\xi v_\xi}{12+\xi^3}.
	\end{align}

	With $W^{-1}=\xi^{-2}|d\xi|^2_h$, this shows that
	\[|\nabla_{h_0}^k(W^{-1}-2 \xi_\ast^{-2}a^2e^{-2r})|_{h_0}=O(e^{-(2+\delta_2)r})\]
	where $\delta_2=\min\{1,\delta_1\}$, which implies
	\[|W-\frac{1}{2} \xi_\ast^2(\xi- \xi_\ast)^{-2}|=O((\xi- \xi_\ast)^{-2+\delta_2}).\]
	Together with $W=\frac{12-6\xi v_\xi}{12+\xi^3}$, we have $v_\xi=3(\xi- \xi_\ast)^{-1}+\frac{2}{\xi_\ast}+O((\xi- \xi_\ast)^{-1+\delta_2})$. By integration, it follows that
	\begin{proposition}
		On the ACH cusp end $E_\infty$,
		\[|v-3\log(\xi- \xi_\ast)|<C.\]
	\end{proposition}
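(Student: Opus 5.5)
The plan is to integrate the expansion of $v_\xi$ just derived along the $\xi$-direction at fixed points of $T^2$, and to check that the resulting constant is uniformly bounded. Concretely, fix a reference level $\xi_1\in(\xi_\ast,0)$ close to $\xi_\ast$, chosen so that the slab $\{\xi_\ast<\xi\le\xi_1\}$ lies inside the cusp end $E_\infty$. By Lemma \ref{lem:gradient_exponential_typeii} and Lemma \ref{lem:asymptotic-for-xi-Type-II}, $\xi-\xi_\ast$ is comparable to $e^{-r}$ there, so the level sets of $\xi$ near $\xi_\ast$ are compact and exhaust the end. On this region, the asymptotics of $W$ combined with $W=\frac{12-6\xi v_\xi}{12+\xi^3}$ give
\[
v_\xi=3(\xi-\xi_\ast)^{-1}+\frac{2}{\xi_\ast}+O\bigl((\xi-\xi_\ast)^{-1+\delta_2}\bigr),
\]
uniformly over $T^2$. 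The uniformity comes from the fact that the error terms in Lemma \ref{lem:asymptotic-for-xi-Type-II} are measured in $h_0$-norm over the whole cross-section.

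First, I would record this uniformity carefully. Both $W^{-1}=\xi^{-2}|d\xi|_h^2$ and $12+\xi^3$ are smooth functions on $M$. Moreover, $12+\xi^3$ vanishes only to first order at $\xi_\ast$, with $12+\xi^3=3\xi_\ast^2(\xi-\xi_\ast)+O((\xi-\xi_\ast)^2)$. Solving for $v_\xi$ from $W$ therefore only involves dividing by a quantity bounded above and below by multiples of $\xi-\xi_\ast$. As a consistency check, the leading terms give $12-6\xi v_\xi\approx -18\xi_\ast(\xi-\xi_\ast)^{-1}$, and dividing by $12+\xi^3$ recovers $W\approx \tfrac{1}{2}\xi_\ast^2(\xi-\xi_\ast)^{-2}$. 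The error is then $O((\xi-\xi_\ast)^{-1+\delta_2})$ with a constant independent of the point of $T^2$.

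Next, fix a point $z\in T^2$ and integrate from $\xi$ to $\xi_1$:
\[
v(\xi,z)-3\log(\xi-\xi_\ast)=v(\xi_1,z)-3\log(\xi_1-\xi_\ast)-\int_\xi^{\xi_1}\Bigl(\tfrac{2}{\xi_\ast}+O((s-\xi_\ast)^{-1+\delta_2})\Bigr)ds.
\]
Since $\delta_2>0$, the integrand is integrable at $\xi_\ast$, so the integral is bounded uniformly in $\xi$ and $z$. The boundary term $v(\xi_1,\cdot)$ is a continuous function on the compact level set $\{\xi=\xi_1\}\cong\mathscr L$, hence it is bounded. Together these give $|v-3\log(\xi-\xi_\ast)|<C$ on $\{\xi\le \xi_1\}$, and this region is a neighborhood of infinity of $E_\infty$. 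The same computation shows that $v-3\log(\xi-\xi_\ast)$ in fact converges to a continuous function on $T^2$ as $\xi\to\xi_\ast$.

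The main obstacle I expect is justifying the passage from the $r$-asymptotics to asymptotics in the $\xi$-variable, uniformly on $T^2$. The function $v$ is defined on the quotient, with $\xi$ as the coordinate, whereas the estimates of Lemma \ref{lem:asymptotic-for-xi-Type-II} are stated in terms of $r$. I would handle this step by noting two things. First, $e^{-r}=a^{-1}(\xi-\xi_\ast)(1+O(e^{-\delta_1 r}))$, which requires $a>0$; this follows from the lower bound in Lemma \ref{lem:gradient_exponential_typeii}. Second, every error term is a function on $M$ bounded pointwise by a power of $e^{-r}$, so it is bounded by the corresponding power of $\xi-\xi_\ast$, independently of the $\bS^1$-orbit and of the point in $T^2$.
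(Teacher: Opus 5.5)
Your proposal is correct and follows the same route as the paper. The paper likewise converts the $W$-asymptotics, via $W=\frac{12-6\xi v_\xi}{12+\xi^3}$, into $v_\xi=3(\xi-\xi_\ast)^{-1}+\frac{2}{\xi_\ast}+O\bigl((\xi-\xi_\ast)^{-1+\delta_2}\bigr)$ and integrates in $\xi$. Your extra points fill in details the paper leaves implicit: $a>0$ from the lower bound, uniformity over $T^2$, and a reference level $\xi_1$ whose sublevel set lies inside the cusp end.
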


	Next we consider the situation that $E_\infty$ is a $\Sigma_{\ttg}$ cusp. For the same reason as in the ACH cusp case, we know that the range of $\xi=s_g$ on $M$ is $(\xi_\ast,0)$. We have Lemma \ref{lem:gradient_exponential_typeii} as well: note that for the model $\Sigma_{\ttg}$
	cusp metric $h_0$ (both trivial and non-trivial case), instead of \eqref{eq:Laplace_minus_6_ACH_cusp}, we have
	\begin{equation}
		(\Delta_{h_{0}}-6)e^{-\alpha r}
		=3(\alpha-1)(\alpha+2)e^{-\alpha r}.
	\end{equation}

	We also have
	\[|\nabla_{h_0}^k(\xi-\xi_\ast-ae^{-r})|_{h_0}=O(e^{-(1+\delta_1)r})\]
	as in Lemma \ref{lem:asymptotic-for-xi-Type-II}, which gives that
		\[|\nabla_{h_0}^k(W^{-1}-3\xi_\ast^{-2}a^2e^{-2r})|_{h_0}=O(e^{-(2+\delta_2)r}),\]
		where $\delta_2=\min\{1,\delta_1\}$. This implies
	\[|W-\frac{1}{3} \xi_\ast^2(\xi- \xi_\ast)^{-2}|=O((\xi- \xi_\ast)^{-2+\delta_2}).\]
	Recall $W=\frac{12-6\xi v_\xi}{12+\xi^3}$. By integration, it follows that
	\begin{proposition}
		On the $\Sigma_{\ttg}$ cusp end $E_\infty$,
		\[|v-2\log(\xi-\xi_\ast)|<C.\]
	\end{proposition}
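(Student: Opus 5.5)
We follow the argument used for the ACH cusp, starting from the asymptotics of $W$ just derived on the $\Sigma_{\ttg}$ cusp end,
\[
W=\tfrac13\xi_\ast^2(\xi-\xi_\ast)^{-2}+O\bigl((\xi-\xi_\ast)^{-2+\delta_2}\bigr).
\]
The plan is to solve the relation $W=\frac{12-6\xi v_\xi}{12+\xi^3}$ for $v_\xi$ and integrate in $\xi$ toward $\xi_\ast$.

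Explicitly,
\[
v_\xi=\frac{12-(12+\xi^3)W}{6\xi}.
\]
Since $\xi_\ast^3=-12$, we have
\[
12+\xi^3=\xi^3-\xi_\ast^3=(\xi-\xi_\ast)(\xi^2+\xi\xi_\ast+\xi_\ast^2).
\]
Near $\xi_\ast$ this factor equals $3\xi_\ast^2(\xi-\xi_\ast)+O((\xi-\xi_\ast)^2)$.

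Substituting the asymptotics of $W$ gives
\[
(12+\xi^3)W=\xi_\ast^4(\xi-\xi_\ast)^{-1}+O\bigl((\xi-\xi_\ast)^{-1+\delta_2}\bigr),
\]
where $\delta_2\le 1$ absorbs the bounded correction. Dividing by $-6\xi=-6\xi_\ast+O(\xi-\xi_\ast)$ and using $\xi_\ast^4/(-6\xi_\ast)=-\xi_\ast^3/6=2$, we get
\[
v_\xi=2(\xi-\xi_\ast)^{-1}+O\bigl((\xi-\xi_\ast)^{-1+\delta_2}\bigr).
\]
The constant term $12/(6\xi)$ is bounded and can be absorbed. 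As a consistency check, the same computation with coefficient $\tfrac12$ in place of $\tfrac13$ gives $3$, which reproduces the ACH case.

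Next we integrate in $\xi$ along each $\bS^1$-orbit of the reduction. We fix $\xi_0\in(\xi_\ast,0)$ near $\xi_\ast$ and integrate from $\xi_0$ to $\xi\in(\xi_\ast,\xi_0]$ at a fixed point of $\Sigma_{\ttg}$. The error term $(\xi-\xi_\ast)^{-1+\delta_2}$ is integrable at $\xi_\ast$, so
\[
v-2\log(\xi-\xi_\ast)=v(\xi_0,\cdot)-2\log(\xi_0-\xi_\ast)+O(1).
\]
The function $v(\xi_0,\cdot)$ is smooth on the compact surface $\Sigma_{\ttg}$ and hence bounded. On the compact region $\{\xi\ge\xi_0\}\cap E_\infty$ the claim is immediate.

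The main point needing care is \textbf{uniformity} of the $O(\cdot)$ error in the $W$-asymptotics over $\Sigma_{\ttg}$, i.e.\ over the link $P_\ell$, since $v$ is a function of $(\xi,\text{point of }\Sigma_{\ttg})$. This uniformity follows from the $C^0$ bounds with respect to $h_0$ in the asymptotics for $\xi$, which are uniform on the compact link. We also use that $\xi-\xi_\ast\asymp e^{-r}$ uniformly by Lemma \ref{lem:gradient_exponential_typeii}. This lets us convert the $O(e^{-(2+\delta_2)r})$ error into $O((\xi-\xi_\ast)^{2+\delta_2})$ uniformly, after shrinking $\delta_2$ slightly if needed to absorb the $\epsilon$-losses.
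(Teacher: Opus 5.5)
Your proposal is correct and follows the paper's argument. Like the paper, you start from $W=\tfrac13\xi_\ast^2(\xi-\xi_\ast)^{-2}+O\bigl((\xi-\xi_\ast)^{-2+\delta_2}\bigr)$, solve $W=\frac{12-6\xi v_\xi}{12+\xi^3}$ for $v_\xi$, and integrate in $\xi$; you additionally write out the computation $v_\xi=2(\xi-\xi_\ast)^{-1}+O\bigl((\xi-\xi_\ast)^{-1+\delta_2}\bigr)$ (via $-\xi_\ast^3/6=2$) and the uniformity over $\Sigma_{\ttg}$, both of which the paper leaves implicit.
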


	\subsection{The boundary value problems}
	\label{subsec:the-boundary-value-problem}

	Finally, we formulate our boundary value problems. Based on our analysis of the asymptotic behavior of solutions $v$ to the (twisted-)Toda equation in Sections \ref{subsec:regularity-Type-I} and \ref{subsec:regularity-Type-II}, we are led to the following four natural boundary value problems with smooth Dirichlet boundary data $\varphi$.

  \begin{align}
  	&\label{eq:bvp1}
  	\tag{BVP1}
  	\left\{
  	\begin{aligned}
  		&(e^v)_{\xi\xi}+\Delta_{T^2}v = 0
  		\qquad \text{in } (0,\infty)\times T^2,\\
  		&v|_{\{0\}\times T^2} = \varphi,\\
  		&v \in L^\infty\bigl((0,\infty)\times T^2\bigr).
  	\end{aligned}
  	\right.\\
  	\notag\\
  	&\label{eq:bvp2}
  	\tag{BVP2}
  	\left\{
  	\begin{aligned}
  		&(e^v)_{\xi\xi}+\Delta_{T^2}v = 0
  		\qquad \text{in } (0,\infty)\times T^2,\\
  		&v|_{\{0\}\times T^2} = \varphi,\\
  		&v-\log(\xi+1) \in L^\infty\bigl((0,\infty)\times T^2\bigr).
  	\end{aligned}
  	\right.\\
  	\notag\\
  	&\label{eq:bvp3}
  	\tag{BVP3}
  	\left\{
  	\begin{aligned}
  		&(e^v)_{\xi\xi}+\Delta_{T^2}v = -\xi\frac{12-6\xi v_\xi}{12+\xi^3}e^v
  		\qquad \text{in } (\xi_\ast,0)\times T^2,\\
  		&v|_{\{0\}\times T^2} = \varphi,\\
  		&v-3\log(\xi- \xi_\ast) \in L^\infty\bigl((\xi_\ast,0)\times T^2\bigr).
  	\end{aligned}
  	\right.\\
  	\notag\\
  	&\label{eq:bvp4}
  	\tag{BVP4}
  	\left\{
  	\begin{aligned}
  		&(e^v)_{\xi\xi}+\Delta_{\Sigma_{\ttg}}v =-2 -\xi\frac{12-6\xi v_\xi}{12+\xi^3}e^v
  		\qquad \text{in } (\xi_\ast,0)\times \Sigma_{\ttg},\\
  		&v|_{\{0\}\times \Sigma_{\ttg}} = \varphi,\\
  		&v-2\log(\xi- \xi_\ast) \in L^\infty\bigl((\xi_\ast,0)\times \Sigma_{\ttg}\bigr).
  	\end{aligned}
  	\right.
  \end{align}

	Suppose that there exists a smooth solution $v$, and suppose moreover that the corresponding functions $W$, determined by $v$ through the reductions \eqref{eq:case-I}--\eqref{eq:case-II}, are positive. As explained in Section \ref{sec:background}, $\int_{\Sigma}(We^v)_\xi d\vol_{\Sigma}$ is a constant. For given $\ell$, by choosing the period $\mpp$ correctly we can arrange \eqref{eq:period-constraint}. Then geometrically these solutions give rise to Einstein metrics $h$ on $M=(0,\infty)\times\mathscr{L}$ with the following asymptotic behavior:
	\begin{itemize}
		\item BVP1: $h$ is Type I, $E_0$ is a PE end, and $E_\infty$ is an AH cusp end;
		\item BVP2: $h$ is Type I, $E_0$ is a PE end, and $E_\infty$ is an ACH cusp end;
		\item BVP3: $h$ is Type II, $E_0$ is a PE end, and $E_\infty$ is an ACH cusp end;
		\item BVP4: $h$ is Type II, $E_0$ is a PE end, and $E_\infty$ is a $\Sigma_{\ttg}$ cusp end.
	\end{itemize}
	Moreover, there is the K\"ahler metric $g$ that extends to a smooth complete K\"ahler metric with boundary $\bar g$ on $\overline{M}=[0,\infty)\times\mathscr{L}$. The quotient of $\bar g|_{\{0\}\times\mathscr{L}}$ by $\bS^1$ is $(\Sigma,e^{\varphi}g_{\Sigma})$, and hence is determined by the boundary data $\varphi$. Conversely, by Section \ref{sec:background} and Sections \ref{subsec:regularity-Type-I}--\ref{subsec:regularity-Type-II}, we know any conformally K\"ahler PE metric with a cusp on $(0,\infty)\times\mathscr{L}$, whose $\mathcal{K}$ induces an $\bS^1$-action, arises from solutions to these boundary value problems.

	In Sections \ref{subsec:hyperbolic-cusp}--\ref{subsec:cuspxsurface-typeII}, we shall prove the following results.
	\begin{theorem}\label{thm:bvp-existence-uniqueness}
		The boundary value problems \eqref{eq:bvp1}, \eqref{eq:bvp3} and \eqref{eq:bvp4} have unique solutions, and \eqref{eq:bvp2} has a unique one-parameter family of solutions.
	\end{theorem}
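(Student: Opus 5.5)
The plan is to treat each boundary value problem by the method of sub- and supersolutions combined with a comparison principle. Write $u=e^v$. In the Type I cases the equation becomes $u_{\xi\xi}+\Delta_\Sigma \log u=0$, which is degenerate elliptic in $u$. The comparison principle is therefore available for the operator $\mathcal{T}(v):=(e^v)_{\xi\xi}+\Delta_\Sigma v$, since $e^v$ is monotone in $v$. The first step is to build explicit model solutions, namely those independent of the surface variable, as in Section~\ref{sec:model-geometry}. These are:
\begin{itemize}
\item for \eqref{eq:bvp1}, $e^v=c$ constant;
\item for \eqref{eq:bvp2}, $e^v=\lambda\xi+c$;
\item for \eqref{eq:bvp3} and \eqref{eq:bvp4}, the ODE solutions of $(e^v)''=-\xi\frac{12-6\xi v'}{12+\xi^3}e^v-2K_\Sigma$, chosen with the prescribed logarithmic blow-up at $\xi_\ast$.
\end{itemize}
Shifting these by the constants $\max\varphi$ and $\min\varphi$, with suitable $\xi$-dependent corrections, gives ordered barriers $\underline v\le\overline v$. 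Both barriers have the required behaviour at the cusp end, and on $\xi=0$ they satisfy $\underline v\le\varphi\le\overline v$.

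For existence, solve the Dirichlet problem on truncated domains. These are $(0,R)\times\Sigma$ in the Type I cases and $(\xi_\ast+\epsilon,0)\times\Sigma$ in the Type II cases, with boundary data $\varphi$ at $\xi=0$ and the value of a model solution at the far boundary. On these compact domains the equation is uniformly elliptic once we have a priori $L^\infty$ bounds, and the barriers supply those bounds. Interior and boundary Schauder estimates, together with a continuity or Leray--Schauder argument, then give solutions $v_R$ trapped between $\underline v$ and $\overline v$. Letting $R\to\infty$ (respectively $\epsilon\to0$), local estimates give a convergent subsequence, and the barrier bounds pass to the limit. This yields the required $L^\infty$ conditions $v\in L^\infty$, $v-\log(\xi+1)\in L^\infty$, $v-3\log(\xi-\xi_\ast)\in L^\infty$ and $v-2\log(\xi-\xi_\ast)\in L^\infty$.

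In the Type II problems the right-hand side contains $v_\xi$. Expanding it, the equation reads
\[
(e^v)_{\xi\xi}-\frac{6\xi^2}{12+\xi^3}e^v v_\xi+\Delta v=-\frac{12\xi}{12+\xi^3}e^v-2K_\Sigma .
\]
This is a quasilinear elliptic equation with a first-order term, so the maximum principle still applies. The zeroth-order term needs checking: $-\frac{12\xi}{12+\xi^3}e^v$ is increasing in $v$ because $\xi<0$ and $12+\xi^3>0$ on $(\xi_\ast,0)$, and that monotonicity has the sign favourable for comparison. I would verify this carefully.

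For uniqueness, compare two solutions $v_1,v_2$ with the same data. Their difference is bounded, since both satisfy the same $L^\infty$ asymptotic condition. At the degenerate endpoint, apply a maximum principle for bounded functions. I would try to use a weighted barrier, for instance adding $\epsilon\log(\xi-\xi_\ast)$ or $\epsilon\xi$, to show $\sup(v_1-v_2)\le0$. Alternatively, integrate over $\Sigma$ and use the ODE for $\int e^v$ established in the proof of Proposition~\ref{prop:asymptotic-v-cusp}.

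For \eqref{eq:bvp2}, the integral $\int_{T^2}e^v=\lambda\xi+\mu$ supplies a free parameter. The growth rate $\lambda>0$ is not fixed by the $L^\infty$ condition, so prescribing $\lambda$ and running the argument above produces the one-parameter family. Uniqueness for each fixed $\lambda$ follows by comparison once we show the solutions agree to order $o(\xi)$.

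The main obstacle is uniqueness at the cusp end: the bounded difference need not decay, so a plain maximum principle fails. The key step is a Phragmén--Lindelöf type argument. For a bounded difference, the only admissible $\Sigma$-independent modes of the linearization are constants, or multiples of $\xi$ in the Type I case, and the averaged ODE should rule these out, or pin them down to exactly the parameter $\lambda$ in \eqref{eq:bvp2}.
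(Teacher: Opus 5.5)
Your existence half matches the paper's Proposition \ref{prop:exhaustion-exterior-solution}. Both solve truncated Dirichlet problems with model data at the far end. Both get $L^\infty$ control by comparing against $\Sigma$-independent models shifted by constants; in the paper this is the maximum principle for $u=v-v_{\mathrm{mod}}$, where $2K(e^u-1)$ with $K\le0$ has the right sign. Both then pass to the limit with local estimates. One sign to fix: the Type II equation is $\Delta v+(e^v)_{\xi\xi}=-\xi We^v+2K_\Sigma$, not $-2K_\Sigma$. With your sign, the zeroth-order term in the $\Sigma_{\ttg}$ case has the unfavourable sign.

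The genuine gap is uniqueness. It begins with the claim that comparison holds for $\mathcal{T}(v)=(e^v)_{\xi\xi}+\Delta v$ ``since $e^v$ is monotone.'' The two directions carry different nonlinearities. The difference $z=v_1-v_2$ satisfies $\partial_\xi^2(az)+\Delta z=0$ with $a=\int_0^1e^{sv_1+(1-s)v_2}\,ds$, and its zeroth-order coefficient $a_{\xi\xi}$ has no sign. At a positive interior maximum of $z$ you get $\mathcal{T}(v_1)-\mathcal{T}(v_2)=(e^{v_2})_{\xi\xi}(e^z-1)+e^{v_1}z_{\xi\xi}+\Delta z$, and the first term can be positive. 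The pointwise argument therefore works only against barriers whose $(e^{v_2})_{\xi\xi}$ is controlled. That is why it suffices for existence but not for comparing two genuine solutions. Comparison on compact cylinders can be recovered through the adjoint, which after the conjugation in the paper's openness argument has nonpositive zeroth-order term, but that still leaves the half-cylinder.

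Your proposed weights do not help. For example, $\mathcal{T}(v_2+\epsilon\xi)=(e^{\epsilon\xi}-1)(e^{v_2})_{\xi\xi}+e^{\epsilon\xi}\bigl(2\epsilon(e^{v_2})_\xi+\epsilon^2e^{v_2}\bigr)$ has no sign. In the Type II problems the $\xi$-equation also degenerates at $\xi_\ast$: $e^v$ vanishes like $(\xi-\xi_\ast)^3$ or $(\xi-\xi_\ast)^2$, and the drift $6\xi^2/(12+\xi^3)$ blows up like $2/(\xi-\xi_\ast)$. So no ``maximum principle for bounded functions at the degenerate endpoint'' is available without first renormalizing.

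More fundamentally, in \eqref{eq:bvp1}--\eqref{eq:bvp3} the constants lie in the kernel of the linearization at the model. A bounded, non-decaying difference is exactly what a pointwise maximum principle cannot exclude. The only thing that kills this zero mode is the slice integral, and an integral constraint does not feed into a Phragm\'en--Lindel\"of argument.

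The paper uses the averaged ODE as the first step, not as an alternative. Together with the growth condition, it forces every solution to have slice mass $\fint_\Sigma e^{v(\xi,\cdot)}=e^{v_{\mathrm{mod}}}$; for \eqref{eq:bvp2} this holds up to the slope $a$, which is the parameter. Next one sets $u=v-v_{\mathrm{mod}}$ and changes $\xi$ to $t$, so the equation is uniformly elliptic with bounded coefficients on $[0,\infty)\times\Sigma$. Then $w=e^{u_1}-e^{u_2}$ has mean zero on every slice, and $E_w(t)=\frac12\|w(t)\|_{H^{-1}(\Sigma)}^2$ is defined. Pair the difference equation with $\phi=(-\Delta_\Sigma)^{-1}w$ and write $u_1-u_2=\Theta w$ with $\Theta>0$. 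This gives $\int_\Sigma\bigl(-\Delta_\Sigma(\Theta w)\bigr)\phi=\int_\Sigma\Theta w^2$. With Poincar\'e this yields $E_w''\ge\kappa_0E_w-\kappa_1|E_w'|$, and a bounded $E_w$ with $E_w(0)=0$ vanishes identically.

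This $H^{-1}$ monotonicity is the idea your sketch lacks, and without it or a substitute the uniqueness half of the theorem is unproved. That includes uniqueness within the family for \eqref{eq:bvp2}. There what is needed is equal slice masses $b+a\xi$ plus this energy argument, not agreement to $o(\xi)$, which is automatic.
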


	For $\Sigma=T^2$ or $\Sigma_{\ttg}$, set
	\[
	\bar\varphi=\log(\fint_{\Sigma} e^\varphi \, d\vol_{\Sigma}).
	\]
	We will also prove that all solutions are asymptotic to model solutions:

	\begin{theorem}\label{thm:bvp-model-asymptotics}
		Let $v$ be a solution of one of the boundary value problems
		\eqref{eq:bvp1}--\eqref{eq:bvp4}. Then $v$ is asymptotic to the corresponding
		model solution $v_{\mathrm{mod}}$ with the following rates:
		\begin{enumerate}
		\item For \eqref{eq:bvp1}, as $\xi\to\infty$,
		\[
		|v-v_{\mathrm{mod}}|=O(e^{-\delta \xi}),
		\qquad
		v_{\mathrm{mod}}=\bar\varphi .
		\]
		
		\item For \eqref{eq:bvp2}, there exists $a>0$ such that, as $\xi\to\infty$,
		\[
		|v-v_{\mathrm{mod}}|=O(e^{-\delta\sqrt{\xi}}),
		\qquad
		v_{\mathrm{mod}}=\log(e^{\bar\varphi}+a\xi).
		\]
		
		\item For \eqref{eq:bvp3}, as $\xi\to\xi_\ast$,
		\[
		|v-v_{\mathrm{mod}}|
		=
		O\!\left(e^{-\delta(\xi-\xi_\ast)^{-1/2}}\right),
		\qquad
		v_{\mathrm{mod}}
		=
		\bar\varphi
		+3\log\left(1-\frac{\xi}{\xi_\ast}\right)
		+\log\left(1+\frac{\xi}{\xi_\ast}\right).
		\]
		
		\item For \eqref{eq:bvp4}, as $\xi\to\xi_\ast$,
		\[
		|v-v_{\mathrm{mod}}|
		=
		O\!\left((\xi-\xi_\ast)^\delta\right),
		\qquad
		v_{\mathrm{mod}}
		=
		2\log(\xi-\xi_\ast)
		+
		\log\left(
		\frac{1}{3}
		-\frac{\xi_\ast^2}{18}\xi
		+\frac{a}{24}(\xi_\ast^2-\xi^2)
		\right),
		\]
		where
		\[
		a
		=
		\frac{2}{\xi_\ast}
		\left(
		\frac{1}{3}\xi_\ast^2-e^{\bar\varphi}
		\right).
		\]
		\end{enumerate}
		Here $\delta>0$ depends on $v$ and $(\Sigma,g_{\Sigma})$.
	\end{theorem}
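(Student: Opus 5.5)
We prove Theorem \ref{thm:bvp-model-asymptotics} by linearizing around the model solution and splitting off the fiber average.

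\textbf{Step 1: the model solutions.} First we check that $v_{\mathrm{mod}}$ is the $\Sigma$-independent solution with the right endpoint data. In each case we solve the ODE obtained by dropping $\Delta_\Sigma$, with $v(0)=\bar\varphi$ and the prescribed growth at the cusp end.
\begin{itemize}
\item For \eqref{eq:bvp1} and \eqref{eq:bvp2}, $(e^v)_{\xi\xi}=0$ gives $e^v=e^{\bar\varphi}+a\xi$. Boundedness forces $a=0$ in \eqref{eq:bvp1}, and $a>0$ in \eqref{eq:bvp2}.
\item For \eqref{eq:bvp3} and \eqref{eq:bvp4}, multiplying by $(12+\xi^3)$ makes the equation linear in $e^v$ after the substitution $e^v=(12+\xi^3)\,p(\xi)$, which is the usual extremal (Calabi) trick. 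The growth conditions $3\log$ and $2\log$ then fix the remaining constant, and the averaged condition $\fint e^{v}|_{\xi=0}=e^{\bar\varphi}$ determines $a$.
\end{itemize}

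\textbf{Step 2: the fiber average.} Let $v$ be any solution. Integrating the equation over $\Sigma$ shows that $\fint e^v\,d\vol_\Sigma$, or $\fint We^v$ in Type II, is affine in $\xi$ (Section \ref{sec:background}). This pins down the average exactly, so $\log\fint e^v$ agrees with $v_{\mathrm{mod}}$ up to the one free constant, which the cusp asymptotics of Section \ref{sec:regularity} identify. The problem is thereby reduced to estimating the oscillation $w:=v-\log\fint_\Sigma e^v$, or equivalently $u:=e^{v}-\fint e^v$.

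\textbf{Step 3: decay of the oscillation.} We change variables to the geometric cusp coordinate $r$: $\xi\sim e^r$ for \eqref{eq:bvp1}–\eqref{eq:bvp2} and $\xi-\xi_\ast\sim e^{-r}$ for \eqref{eq:bvp3}–\eqref{eq:bvp4}. Writing the equation as $\partial_\xi^2(e^v)+\Delta_\Sigma v=\dots$, the linearization at $v_{\mathrm{mod}}$ on the $\lambda_j>0$ eigenmodes of $\Delta_\Sigma$ is
\[ (e^{v_{\mathrm{mod}}}\phi)''-\lambda_j\phi=\text{lower order}. \]
WKB/ODE comparison then gives decaying solutions of size
\begin{itemize}
\item $\exp(-\sqrt{\lambda_j}\int e^{-v_{\mathrm{mod}}/2}d\xi)$ in general;
\item $e^{-\sqrt{\lambda_1}\xi}$ when $e^{v_{\mathrm{mod}}}$ is constant (\eqref{eq:bvp1});
\item $e^{-c\sqrt\xi}$ when $e^{v_{\mathrm{mod}}}\sim a\xi$ (\eqref{eq:bvp2});
\item $e^{-c(\xi-\xi_\ast)^{-1/2}}$ when $e^{v_{\mathrm{mod}}}\sim(\xi-\xi_\ast)^3$ (\eqref{eq:bvp3});
\item only a power $(\xi-\xi_\ast)^{\delta}$ when $e^{v_{\mathrm{mod}}}\sim(\xi-\xi_\ast)^2$ (\eqref{eq:bvp4}). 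Here the equation becomes Euler-type in $\log(\xi-\xi_\ast)$. This is consistent with the rates claimed.
\end{itemize}
To make this rigorous for the nonlinear equation, we use barrier functions built from these profiles times $|u|$-type quantities. A good choice is to apply the maximum principle to $\mathrm{osc}_\Sigma v$ or to $|\nabla_\Sigma v|^2$, which satisfy a differential inequality whose principal part is $e^{v}\partial_\xi^2+\Delta_\Sigma$. The $L^\infty$ bound on $v-v_{\mathrm{mod}}$ from the boundary value problem gives the uniform ellipticity ratio needed. Comparison with the supersolution $C e^{-\delta\int e^{-v_{\mathrm{mod}}/2}}$ on $\{\xi\ge\xi_0\}$ yields the decay of $w$. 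Combined with Step 2, this gives $|v-v_{\mathrm{mod}}|$ at the stated rates, after absorbing the $O(\xi^{-1})$-type error in the average into the oscillation rate where needed.

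\textbf{Main obstacle.} The hardest part is the Type II cases \eqref{eq:bvp3}–\eqref{eq:bvp4}. There the right-hand side contains $v_\xi$ and degenerates at $\xi_\ast$, so it is unclear that the oscillation inequality closes. The averaged identity is also for $We^v$ rather than $e^v$, so matching the constant $a$ requires care. In \eqref{eq:bvp4} the additional $-2$ term produces only polynomial decay; there we would first try a rescaled variable $s=-\log(\xi-\xi_\ast)$ and a spectral-gap argument on $\Sigma_{\ttg}$.
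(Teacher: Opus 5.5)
Your Steps 1--2 are fine and match the paper. The slice average is pinned exactly, so $u:=v-v_{\mathrm{mod}}$ is bounded with $\fint_\Sigma e^{u(\xi,\cdot)}\,d\vol_\Sigma\equiv 1$. Your WKB variable $\int e^{-v_{\mathrm{mod}}/2}\,d\xi$ is, up to constants, the paper's $t$ (namely $\xi$, $\sqrt{(b+a\xi)/b}$, $(1-\xi/\xi_\ast)^{-1/2}$, $-\log(1-\xi/\xi_\ast)$). In that variable the equation becomes $\Delta_\Sigma u+\Psi(e^u)_{tt}+B(e^u)_t+2K(e^u-1)=0$ with $\Psi$ bounded above and below, $B$ bounded and $K\le 0$, so each stated rate is just $e^{-\delta t}$.

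The gap is Step 3, which carries all the analytic content. You give a linearized mode count, but no argument proving decay for the nonlinear problem, and the devices you propose do not work. In \eqref{eq:bvp1}--\eqref{eq:bvp3} one has $K\equiv 0$, so every constant $u\equiv c$ is a bounded, non-decaying solution with bounded boundary data. Hence no comparison with a supersolution depending only on $\xi$, such as $Ce^{-\delta\int e^{-v_{\mathrm{mod}}/2}}$, can force decay. The decay must come from the spectral gap of $\Delta_\Sigma$ combined with the normalization $\fint_\Sigma e^u=1$, which such a barrier cannot see. Also, $\mathrm{osc}_\Sigma v$ is a function of $\xi$ alone and obeys no differential inequality with principal part $e^v\partial_\xi^2+\Delta_\Sigma$. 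Finally, differentiating along $\Sigma$ gives, for $f=\partial_x v$, a linear equation $\Delta_\Sigma f+(e^vf)_{\xi\xi}+\dots=0$ whose zeroth-order coefficient (e.g.\ $(e^v)_{\xi\xi}$ in Type I) has no sign, so $|\nabla_\Sigma v|^2$ is not a subsolution in general.

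The missing ingredient is the paper's slice-wise $H^{-1}$ energy. Set $w:=e^u-1$, which has mean zero on each slice, $\phi:=(-\Delta_\Sigma)^{-1}w$, and $E(t):=\frac12\int_\Sigma w\phi$.
\begin{itemize}
\item Since $u=\Theta w$ with $e^{-\|u\|_\infty}\le\Theta\le e^{\|u\|_\infty}$, $w$ solves the linear equation $\Delta_\Sigma(\Theta w)+\Psi w_{tt}+Bw_t+2Kw=0$.
\item Testing against $\phi$ turns the nonlinear term into $\int_\Sigma\bigl(-\Delta_\Sigma(\Theta w)\bigr)\phi=\int_\Sigma\Theta w^2$.
\item With the Poincar\'e inequality and $K\le0$ this gives $E''\ge\kappa_0E-\kappa_1|E'|$. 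Boundedness of $E$ forces $E'\le0$, and then $E(t)\le E(0)e^{-\kappa_2t}$.
\item Interpolation $\|w\|_{L^\infty}\lesssim\|w\|_{H^{-1}}^{1/4}\|w\|_{H^2}^{3/4}$, together with uniform $H^2$ bounds, converts this into $|u|=O(e^{-\delta t})$.
\end{itemize}
A Krylov--Safonov Harnack iteration for the linear equation that $u$ satisfies with frozen coefficients, using that $u(t,\cdot)$ changes sign on every slice, might also work, but you would have to supply it.

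The Type II obstacle you flag is not real. The term $\xi We^v=\xi\frac{12e^v-6\xi(e^v)_\xi}{12+\xi^3}$ is linear in $e^v$. After writing $e^v=e^{v_{\mathrm{mod}}}e^u$, its zeroth-order part cancels against the ODE satisfied by $e^{v_{\mathrm{mod}}}$, and the rest is absorbed into $B(t)(e^u)_t$. In \eqref{eq:bvp4} the constant $-2$ becomes $-2(e^u-1)$, which has the favourable sign $K=-1$. The merely polynomial rate there comes from $e^{v_{\mathrm{mod}}}\sim(\xi-\xi_\ast)^2$, which makes $t\sim-\log(\xi-\xi_\ast)$, not from the $-2$ term.
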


	Here, for \eqref{eq:bvp2}, each $a>0$ and boundary value $\varphi$ determine a unique solution $v_{a,\varphi}$ to \eqref{eq:bvp2} with the above asymptotic. Given the solution $v_{1,\varphi}$, the function $v_{1,\varphi}(a\xi,\cdot)-2\log a$ is also a solution, and is exactly $v_{a^{-1},\varphi-2\log a}$. These solutions give rise to the same Einstein metric up to isometry.

	The decay of $v$ shows that the cusp ends arising from \eqref{eq:bvp1}--\eqref{eq:bvp3} are asymptotic to their model cusp metrics at a super-exponential rate, while the $\Sigma_{\ttg}$ cusp end arising from \eqref{eq:bvp4} has exponential decay. Note that the $\Sigma_{\ttg}$ cusp has degree zero if and only if $a=0$, i.e.,
	$
	\fint_{\Sigma_{\ttg}} e^\varphi \,d\vol_{\Sigma_{\ttg}}=\frac{1}{3}\xi_\ast^2 .
	$

	Summarizing this section, we have

	\begin{theorem}\label{thm:bvp-classification}
		Suppose $h$ is a complete Einstein metric with one PE end and one AH/ACH/$\Sigma_{\ttg}$ cusp end on $(0,\infty)\times\mathscr{L}$, which is conformally K\"ahler and the Killing field $\mathcal{K}$ induces a free $\bS^1$-action. Then it arises from a solution of one of the boundary value problems \eqref{eq:bvp1}--\eqref{eq:bvp4} via the ansatz \eqref{eq:Einstein-metric-h}. Here $W$ is determined by \eqref{eq:case-I} or \eqref{eq:case-II}, and $\eta$ is determined by \eqref{eq:deta}. The corresponding potential $v$ has the asymptotic behavior described in Theorem \ref{thm:bvp-model-asymptotics}.
	\end{theorem}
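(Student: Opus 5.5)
Theorem \ref{thm:bvp-classification} is essentially a summary of Sections \ref{sec:background} and \ref{sec:regularity}. The plan is to reduce the metric via the conformally K\"ahler ansatz, pin down the behavior at each end, and then quote Theorem \ref{thm:bvp-model-asymptotics}.

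\emph{Setting up the reduction.} By the Derdzi\'nski--Tod dichotomy recalled in Section \ref{sec:background}, the Einstein metric $h$ is either Type I (ASD) or Type II. In either case the Killing field $\mathcal{K}$ is Hamiltonian for $g$, and the conformal factor $\xi$ can be normalized to be its moment map. Since $\mathcal{K}$ generates a free $\bS^1$-action, $\xi$ has no critical points. The K\"ahler quotient is then a compact Riemann surface $\Sigma$, and by uniformization we write the induced metric as $We^vg_\Sigma$ with $g_\Sigma$ of constant curvature. Hence $g$ and $h$ take the forms \eqref{eq:Kahler-metric-g} and \eqref{eq:Einstein-metric-h}, with $\eta$ determined by \eqref{eq:deta}. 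LeBrun's scalar curvature computation, together with the conformal change formula, shows that $v$ and $W$ satisfy \eqref{eq:case-I} or \eqref{eq:case-II}.

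\emph{Determining the range of $\xi$ and the boundary condition at $E_0$.} Since $\xi$ has no critical points, it is monotone along the $(0,\infty)$ factor, so its range is an interval whose endpoints are its limits at $E_0$ and $E_\infty$.
\begin{itemize}
\item By Lemma \ref{lem:pe-end-compactification}, $g$ extends smoothly to $\overline M$, so $\xi$ is a defining function for $E_0$ and $\xi\to0$ there. The restriction $v|_{\xi=0}=\varphi$ is then smooth, and it encodes the quotient boundary metric $e^\varphi g_\Sigma$.
\item In the Type I case, Lemma \ref{lem:gradient_exponential_asd} gives $\xi\to\infty$ at $E_\infty$, so the range is $(0,\infty)$. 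A $\Sigma_{\ttg}$ cusp is excluded here, so $\Sigma=T^2$ and $K_\Sigma=0$.
\item In the Type II case, the same end is impossible for an AH cusp, by the argument in Section \ref{subsec:regularity-Type-II}. For an ACH or $\Sigma_{\ttg}$ cusp, Lemma \ref{lem:gradient_exponential_typeii} gives the range $(\xi_\ast,0)$. Here $\Sigma=T^2$ (so $K_\Sigma=0$) for the ACH cusp, and $\Sigma=\Sigma_{\ttg}$ with $K_\Sigma=-1$ for the $\Sigma_{\ttg}$ cusp.
\end{itemize}

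\emph{Identifying which boundary value problem $v$ solves.} The growth condition at $E_\infty$ comes from the propositions of Section \ref{sec:regularity}.
\begin{itemize}
\item Proposition \ref{prop:asymptotic-v-cusp} gives $v\in L^\infty$ in the AH case, so $v$ solves \eqref{eq:bvp1}.
\item The same proposition gives $v-\log\xi$ bounded in the ACH Type I case. Since $\log\xi-\log(\xi+1)$ is bounded for $\xi\ge1$, and $v$ is bounded on compact $\xi$-ranges by smoothness up to $\xi=0$, $v$ solves \eqref{eq:bvp2}.
\item The Type II propositions give $v-3\log(\xi-\xi_\ast)$ and $v-2\log(\xi-\xi_\ast)$ bounded near $\xi_\ast$, for the ACH and $\Sigma_{\ttg}$ cusps respectively. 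Away from $\xi_\ast$, boundedness again follows from smoothness up to $\xi=0$. Substituting the formula for $W$ from \eqref{eq:case-II} turns the equation into the forms \eqref{eq:bvp3} and \eqref{eq:bvp4}.
\end{itemize}

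\emph{Concluding.} Once $v$ is known to solve the appropriate BVP, the asymptotics follow from Theorem \ref{thm:bvp-model-asymptotics}. The only step needing care is uniform boundedness of $v-v_{\mathrm{mod}}$ across the compact middle region together with the $\xi=0$ boundary. I expect this to follow from the smooth extension in Lemma \ref{lem:pe-end-compactification} together with compactness of $\mathscr{L}$, so no step should be a serious obstacle.
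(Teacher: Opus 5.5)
Your proposal is correct and follows the same route as the paper, which presents this theorem as a summary of Section~\ref{sec:regularity}. In both, you reduce via Section~\ref{sec:background} and use Lemma~\ref{lem:pe-end-compactification} at the PE end. The range of $\xi$ and the growth of $v$ at the cusp (Proposition~\ref{prop:asymptotic-v-cusp} and its Type~II analogues) then place $v$ in one of \eqref{eq:bvp1}--\eqref{eq:bvp4}, and the sharp asymptotics are read off from Theorem~\ref{thm:bvp-model-asymptotics}.
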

	In particular, the weak assumption in Definition \ref{def:cusps} that $h$ is asymptotic to a model cusp metric $h_0$ implies the much stronger asymptotic behavior described above.

	\section{Model solutions and their geometries}
	\label{sec:model-geometry}

	Before solving the boundary value problems \eqref{eq:bvp1}--\eqref{eq:bvp4}, we first study the simplest solutions of the (twisted-)Toda equation, namely those depending only on $\xi$. Part of our solutions here, namely the subclass for which the connection one-form $\eta$ is flat, in the sense that $d\eta=0$, can be identified with the so-called Euclidean topological black holes \cite{Birmingham1999TopologicalBHAdS}.

	\subsection{The case of Type I}
	\label{subsec:model-solutions-Type-I}

	Solutions to \eqref{eq:case-I} that only depend on $\xi$ take the form of
	\[
	We^v=b+\frac a2\xi,
	\qquad
	e^v=b+a\xi.
	\]
	The corresponding metric is
	\begin{equation}\label{eq:typeI-metric}
		h_{\mathrm{mod}}
		=
		\frac1{\xi^{2}}\left(
		\frac{b+\frac a2\xi}{b+a\xi}\,d\xi^{2}
		+
		\frac{b+a\xi}{b+\frac a2\xi}\,\eta^{2}
		+
		\left(b+\frac a2\xi\right)g_{T^{2}}
		\right),
		\qquad
		d\eta=\frac a2\,d\vol_{T^2}.
	\end{equation}
	Here $h_{\mathrm{mod}}$ is Riemannian precisely when
	\begin{equation}\label{eq:typeI-positivity}
		b+\frac a2\xi>0,
		\qquad
		b+a\xi>0.
	\end{equation}
	Let $I\subset (0,\infty)$ be the maximal interval on which \eqref{eq:typeI-positivity} holds.
	We now determine $I$ and describe the geometry at the endpoints of $I$. The maximal interval $I\subset(-\infty,0)$ such that \eqref{eq:typeI-positivity} can be determined similarly. For an endpoint $\hat\xi$ of $I$, we say the $\hat\xi$-end is
	\begin{itemize}
		\item \emph{conical} if the Einstein metric $h$ at $\hat\xi$ completes to a metric with cone angle along $T^2$;
		\item \emph{$\frac{2}{3}$-horn-type} if reparametrizing $\xi$ as a suitable $\zeta=\zeta(\xi)>0$ with $\zeta(\hat\xi)=0$,
		\[h\sim d\zeta^2+c_1\zeta^{-2/3}\eta^2+c_2\zeta^{2/3}g_{T^2},\]
		and \emph{$\frac{4}{3}$-horn-type} if
		\[h\sim d\zeta^2+c_1\zeta^{-2/3}\eta^2+c_2\zeta^{4/3}g_{T^2},\]
		for some constants $c_1,c_2$.
	\end{itemize}
	We summarize the maximal Riemannian intervals and the corresponding
    endpoint geometries in the following table. 

	\begin{table}[htbp]
		\centering
		\small
		\renewcommand{\arraystretch}{1.18}
		\setlength{\tabcolsep}{5pt}

		\begin{tabularx}{\textwidth}{C{0.9cm} C{2.2cm} C{4.0cm} Y}
			\toprule
			\textbf{Case} & \textbf{Condition} & \textbf{Maximal interval $I$} & \textbf{Behavior at endpoints of $I$} \\
			\midrule

			1
			& $a=0,\ b>0$
			& $(0,\infty)$
			& \makecell[l]{$\infty$-end AH cusp.\\
				Hyperbolic cusp metric.} \\
			\addlinespace

			2
			& $a>0,\ b>0$
			& $(0,\infty)$
			& $\infty$-end ACH cusp. \\
			\addlinespace

			3
			& $a<0,\ b>0$
			& \makecell[c]{$(0,-\frac{b}{a})$}
			& $-\frac{b}{a}$-end conical. \\
			\addlinespace

			4
			& $a>0,\ b<0$
			& \makecell[c]{$(-\frac{2b}{a},\infty)$}
			& \makecell[l]{$-\frac{2b}{a}$-end $\frac{2}{3}$-horn-type;\\
				$\infty$-end ACH cusp.} \\
			\addlinespace

			5
			& $b=0,\ a>0$
			& $(0,\infty)$
			& \makecell[l]{$\infty$-end ACH cusp.\\
				Complex hyperbolic cusp metric.} \\
			\bottomrule
		\end{tabularx}
		\medskip
		\caption{Maximal Riemannian intervals for the Type I family. In cases 1–3, the $0$-end is PE, and in case 5, the $0$-end is ACH.  Here, ACH refers to \emph{asymptotically complex hyperbolic} in the sense of \cite{BiquardRollin2009Wormholes}; An ACH end is a complete, expanding end, distinct from an ACH cusp end.}
		\label{tab:typeI-cases}
	\end{table}

	\subsection{The case of Type II with base \texorpdfstring{$T^2$}{T2}}
	\label{subsec:model-solution-Type-II}

	We now turn to the twisted-Toda equation with $\Sigma=T^2$. Solutions to \eqref{eq:case-II} that only depend on $\xi$ are
	\[
	We^v=b+\frac a2\xi,
	\qquad
	e^v=b+a\xi-\frac b6\xi^3-\frac a{24}\xi^4.
	\]
	Set
	\[
	P(\xi):=b+a\xi-\frac b6\xi^3-\frac a{24}\xi^4.
	\]
	Then
	\begin{equation}\label{eq:typeII-metric}
		h_{\mathrm{mod}}
		=
		\frac1{\xi^2}\left(
		\frac{b+\frac a2\xi}{P(\xi)}\,d\xi^2
		+
		\frac{P(\xi)}{b+\frac a2\xi}\,\eta^2
		+
		\left(b+\frac a2\xi\right)g_{T^2}
		\right),
		\qquad
		d\eta=\frac a2\,d\vol_{T^2}.
	\end{equation}
	Hence $h_{\mathrm{mod}}$ is Riemannian precisely when
	\begin{equation}\label{eq:typeII-positivity}
		b+\frac a2\xi>0,
		\qquad
		P(\xi)>0.
	\end{equation}
	Set $\underline{\xi}:=-\frac{2b}{a}$. We make the following convention.
	\begin{itemize}
		\item When $a>0,b\neq0$, the largest root of $P$ is positive, denoted as $\xi_+$.
		\item When $a<0,b>0$, $P$ has a unique root in $(0,\underline{\xi})$ denoted as $\xi_+$.
		\item When $3a^3>2b^3>0$, $P$ has a unique root in $(\underline{\xi},0)$ denoted as $\xi_-$.
		\item When $2b^3<3a^3<0$, $P$ has a unique root in $(-\infty,\underline{\xi})$ denoted as $\xi_-$.
	\end{itemize}

	We summarize the maximal Riemannian intervals and the corresponding endpoint geometries in the following Table \ref{tab:model-cases}. 
	\begin{table}[htbp]
		\centering
		\small
		\renewcommand{\arraystretch}{1.18}
		\setlength{\tabcolsep}{5pt}

		\begin{tabularx}{\textwidth}{C{0.9cm} C{2.2cm} C{4.6cm} Y}
			\toprule
			\textbf{Case} & \textbf{Condition} & \textbf{Maximal interval $I$} & \textbf{Behavior at endpoints of $I$} \\
			\midrule

			1
			& $a=0,\ b>0$
			& \makecell[c]{$(0,\sqrt[3]{6})$, $(-\infty,0)$}
			& \makecell[l]{$\sqrt[3]{6}$-end conical; $-\infty$-end $\frac{4}{3}$-horn-type.} \\
			\addlinespace

			2
			& $a>0,\ b>0$
			& \makecell[c]{$(0,\xi_+)$, and\\
				$
				\left\{
				\begin{array}{ll}
					(\xi_-,0), & \text{if }2b^3<3a^3,\\
					(\underline{\xi},0), & \text{if }2b^3\ge 3a^3
				\end{array}
				\right.
				$}
			& \makecell[l]{ $\xi_+$-end conical. On the negative side: \\ conical if $2b^3<3a^3$; \\ ACH cusp if $2b^3=3a^3$; \\ $\frac{2}{3}$-horn-type if $2b^3>3a^3$.} \\
			\addlinespace

			3
			& $a<0,\ b>0$
			& $(0,\xi_+)$, $(-\infty,0)$
			& \makecell[l]{ $\xi_+$-end conical; $-\infty$-end $\frac{2}{3}$-horn-type.} \\
			\addlinespace

			4
			& $a>0,\ b<0$
			& $(\underline{\xi},\xi_+)$
			& $\underline{\xi}$-end $\frac{2}{3}$-horn-type; $\xi_+$-end conical. \\
			\addlinespace

			5
			& $a<0,\ b<0$
			& \makecell[c]{
				$
				\left\{
				\begin{array}{ll}
					(-\infty,\underline{\xi}), & \text{if }2b^3\ge 3a^3,\\
					(-\infty,\xi_-), & \text{if }2b^3<3a^3
				\end{array}
				\right.
				$}
			& \makecell[l]{$-\infty$-end $\frac{2}{3}$-horn-type. For the other end: \\ $\frac{2}{3}$-horn-type if $2b^3>3a^3$; \\ ACH cusp if $2b^3=3a^3$; \\ conical if $2b^3<3a^3$.} \\
			\addlinespace

			6
			& $b=0,\ a>0$
			& \makecell[c]{$(0,\sqrt[3]{24})$}
			& $\sqrt[3]{24}$-end conical. \\
			\addlinespace

			7
			& $b=0,\ a<0$
			& $(-\infty,0)$
			& $-\infty$-end $\frac{2}{3}$-horn-type. \\
			\bottomrule
		\end{tabularx}
        \medskip
		\caption{Maximal Riemannian intervals for the Type II family with base $T^2$. In case 1-3, the $0$-end is PE, and in case 6-7 the $0$-end is ACH.} 
		\label{tab:model-cases}
	\end{table}

	\subsection{The case of Type II with base \texorpdfstring{$\Sigma_{\ttg}$}{Sigma g}}
	\label{subsec:model-solution-Type-II-sigma_g}

	We now consider the case where the base $(\Sigma_{\ttg},g_{\Sigma_{\ttg}})$ is a hyperbolic surface with constant Gauss curvature $-1$.
	For solutions of \eqref{eq:case-II} depending only on $\xi$, we have
	\[
	We^v=b+\frac a2\xi,
	\qquad
	e^v=b+a\xi-\frac b6\xi^3-\frac a{24}\xi^4-\xi^2.
	\]
	Set
	\[
	Q(\xi):=b+a\xi-\frac b6\xi^3-\frac a{24}\xi^4-\xi^2.
	\]
	Then
	\begin{equation}\label{eq:model_metric_cusp_cross_hyperbolic_surface}
		h_{\mathrm{mod}}
		=
		\frac1{\xi^2}\left(
		\frac{b+\frac a2\xi}{Q(\xi)} d\xi^2
		+
		\frac{Q(\xi)}{b+\frac a2\xi} \eta^2
		+
		\left(b+\frac a2\xi\right)g_{\Sigma_{\ttg}}
		\right),
		\qquad
		d\eta=\frac a2 d\vol_{\Sigma_{\ttg}}.
	\end{equation}

	Note that $Q$ cannot have a triple root. For $h_{\mathrm{mod}}$ to define a complete Riemannian metric with two ends, it is necessary that $Q$ have a double root at some $\xi_\ast\in\mathbb{R}$ satisfying $b+\frac{a}{2}\xi_\ast\neq 0$. Since $Q(\xi_\ast)=Q'(\xi_\ast)=0$, the identity
	\[
	Q(\xi)-\frac{\xi}{2}Q'(\xi)=\left(b+\frac{a}{2}\xi\right)\left(1+\frac{\xi^3}{12}\right)
	\]
	implies $\xi_\ast=-\sqrt[3]{12}$. Substituting this into $Q(\xi_\ast)=0$ yields
	$
	b+\frac{a}{2}\xi_\ast=\frac{\xi_\ast^2}{3}.
	$

	Conversely, if
	\[
	\xi_\ast=-\sqrt[3]{12}, \qquad b>0, \qquad b+\frac{a}{2}\xi_\ast=\frac{\xi_\ast^2}{3},
	\]
	then $h_{\mathrm{mod}}$ defines a complete Riemannian metric on $(\xi_\ast,0)$ with two ends. Indeed,

	\begin{equation}\label{eq:Q_expression_sigma_g_cusp}
		Q(\xi)=(\xi-\xi_\ast)^2\left(\frac{1}{3}-\frac{\xi_\ast^2}{18}\xi+\frac{a}{24}(\xi_\ast^2-\xi^2)\right)
	\end{equation}
	as $\xi\to\xi_\ast^+$,
	\[
	h_{\mathrm{mod}}
	\sim \frac{1}{3}\left(
	\frac{d\xi^2}{(\xi-\xi_\ast)^2}
	+
	\frac{9(\xi-\xi_\ast)^2}{\xi_\ast^4}\eta^2
	+
	g_{\Sigma_{\ttg}}\right).
	\]
	Therefore, the metric defined by \eqref{eq:model_metric_cusp_cross_hyperbolic_surface} on $(\xi_\ast,0)$ with $Q(\xi)$ defined by \eqref{eq:Q_expression_sigma_g_cusp} defines a complete Riemannian metric with two ends: one Poincar\'e--Einstein end and one $\Sigma_{\ttg}$-cusp end.

	\section{Solving the boundary value problems}
	\label{sec:solve-boundary-value-problem}

	In this section we establish existence, uniqueness, and decay estimates for the boundary value problems \eqref{eq:bvp1}--\eqref{eq:bvp4}. We consider the following classes of PDEs. We shall eventually make a change of variables in the (twisted-)Toda equation, which puts it into this form. Let $(\Sigma,g_{\Sigma})$ be a closed surface. Consider
	\begin{equation}\label{eq:u-PDE-t}
		\Delta_{\Sigma}u+\Psi(t)\,(e^{u})_{tt}+B(t)(e^{u})_{t}+2K(t)\,(e^u-1)=0
		\qquad\text{on }[0,\infty)\times \Sigma.
	\end{equation}
	We suppose
	\begin{equation}\label{eq:coeff-assumptions}
		\begin{aligned}
			&0<c^{-1}\le \Psi(t)\le c,\qquad K(t)\le 0,\qquad \Psi,B,K\in C^\infty([0,\infty)),\\
			&\sup_{t\ge 0}\bigl(|\partial_t^j\Psi(t)|+|\partial_t^jB(t)|+|\partial_t^jK(t)|\bigr)<\infty,
			\qquad \forall j\ge 0.
		\end{aligned}
	\end{equation}
	By integrating \eqref{eq:u-PDE-t} over $\Sigma$, $\fint_{\Sigma}e^{u(t,\cdot)} d\vol_{\Sigma}$ satisfies the ODE
	\[\Psi(t)\left(\fint_{\Sigma}e^{u(t,\cdot)}d\vol_{\Sigma}\right)_{tt}+B(t)\left(\fint_{\Sigma}e^{u(t,\cdot)}d\vol_{\Sigma}\right)_{t}+2K(t)\left(\fint_{\Sigma}e^{u(t,\cdot)}d\vol_{\Sigma}-1\right)=0.\]
	We study solutions such that $\fint_{\Sigma}e^{u(t,\cdot)} d\vol_{\Sigma}=1$. The main conclusion of this section is

	\begin{theorem}[Well-posedness]\label{thm:wp-stability}
		Given $\varphi\in C^{2,\alpha}(\Sigma)$ with $\fint_{\Sigma}e^{\varphi}\,d\vol_{\Sigma}=1$, there exists a unique solution $u\in C^{2}(\Sigma\times[0,\infty))$ of \eqref{eq:u-PDE-t} such that
		\[
		u|_{t=0}=\varphi,\qquad u\in L^\infty(\Sigma\times[0,\infty)),\qquad
		\fint_{\Sigma} e^{u(t,\cdot)}\,d\vol_{\Sigma}=1\ \ \forall\,t\ge 0.
		\]
		Moreover, it satisfies
		\[
		\|u\|_{L^\infty(\Sigma\times[0,\infty))}\le \|\varphi\|_{L^\infty(\Sigma)}, \qquad u=O(e^{-\delta t}) \text{ as } t\to\infty \text{ for some } \delta>0.
		\]
	\end{theorem}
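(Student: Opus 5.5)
We plan to treat \eqref{eq:u-PDE-t} as a quasilinear elliptic equation for $u$:
\[
\Delta_\Sigma u+\Psi e^{u}\bigl(u_{tt}+u_t^2\bigr)+Be^{u}u_t+2K(e^u-1)=0 .
\]
As long as $u$ stays bounded, this is uniformly elliptic thanks to \eqref{eq:coeff-assumptions}. We would build the solution as a limit of solutions $u_T$ on the finite cylinders $[0,T]\times\Sigma$, with $u_T=\varphi$ at $t=0$ and $u_T=0$ at $t=T$.

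\textbf{A priori bound.} The $L^\infty$ bound comes from the maximum principle. At an interior positive maximum of $u_T$ we have $u_t=0$, $\Delta_\Sigma u\le0$ and $u_{tt}\le 0$. Since $K\le0$ and $e^u-1>0$ there, every term is $\le0$. To reach a contradiction we would use a standard strict perturbation, for instance comparing with $M+\epsilon e^{\lambda t}$, or apply the strong maximum principle to the linearization in $u$. Negative minima are handled the same way. This gives
\[
\|u_T\|_{L^\infty}\le\max\{\|\varphi\|_{L^\infty},0\}=\|\varphi\|_{L^\infty},
\]
where the last equality holds because $\fint e^\varphi=1$ forces $\varphi$ to change sign (or vanish identically).

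\textbf{Existence.} With the $L^\infty$ bound fixed, the equation is uniformly elliptic with bounded coefficients. The Ladyzhenskaya--Ural'tseva gradient and $C^{1,\alpha}$ estimates, followed by Schauder estimates, give $C^{2,\alpha}$ bounds up to $t=0$ (using $\varphi\in C^{2,\alpha}$) and interior bounds uniform in $T$. Existence for each $T$ then follows from Leray--Schauder degree or the continuity method, deforming $K,B$ to $0$ and $\Psi$ to $1$. Letting $T\to\infty$ along a subsequence yields a bounded $C^2$ solution with the same bound.

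\textbf{Normalization.} For the normalization, set $m_T(t)=\fint_\Sigma e^{u_T}$. It solves the ODE
\[
\Psi m''+Bm'+2K(m-1)=0,\qquad m_T(0)=m_T(T)=1 .
\]
Since $K\le0$, the one-dimensional maximum principle forces $m_T\equiv1$, and this passes to the limit.

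\textbf{Decay.} We would set $w=e^u-1$, which has mean zero on each slice. Multiplying the equation in the $w$-form,
\[
\Psi w_{tt}+Bw_t+2Kw+\Delta_\Sigma\log(1+w)=0,
\]
by $w$ and integrating over $\Sigma$ gives a differential inequality for $E(t)=\int_\Sigma w^2$. By Poincar\'e on mean-zero functions and the two-sided bound $1+w\in[e^{-C},e^{C}]$, we get
\[
\int_\Sigma\nabla w\cdot\nabla\log(1+w)\ \ge\ c\int_\Sigma|\nabla w|^2\ \ge\ c'E .
\]
Together with boundedness of $E$ and $E'$, this should give $\Psi E''+BE'\ge 2c'E-(\text{lower order})$. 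An ODE comparison with $e^{-\delta t}$ then yields $E=O(e^{-\delta t})$ for small $\delta$; the $B$ term only restricts the size of $\delta$. Local elliptic estimates on unit cylinders upgrade the $L^2$ decay to $u=O(e^{-\delta t})$ in $C^k$.

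\textbf{Uniqueness (main obstacle).} If $u_1,u_2$ are two solutions, both tend to $0$ exponentially, so $z=u_1-u_2\to0$ at infinity and $z=0$ at $t=0$. The difficulty is that the nonlinear operator is not obviously monotone in the zeroth-order variable, so the comparison principle (Gilbarg--Trudinger Thm.~10.1) does not apply directly. The route I would try first is the following. Write the difference equation as a linear equation
\[
\Delta_\Sigma z+\Psi(c z)_{tt}+B(cz)_t+2Kcz=0,\qquad c=\frac{e^{u_1}-e^{u_2}}{u_1-u_2}\in[e^{-C},e^{C}] .
\]
Then test against $cz=e^{u_1}-e^{u_2}$, which also has mean zero on every slice. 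The cross term $\int\nabla z\cdot\nabla(cz)$ is controlled from below by $\int|\nabla z|^2$ up to errors involving $|\nabla u_i|\,|z|\,|\nabla z|$. These errors are small for large $t$ because of the decay, which gives uniqueness on $[T_0,\infty)$ given data there. On the compact part we would combine this with a maximum principle argument for the linear equation, using that the zeroth-order coefficient is nonpositive up to terms controlled by $c_t,c_{tt}$. If this mixing fails, the fallback is a sub/supersolution ordering argument: show that the solution obtained from the finite-$T$ approximations is maximal and minimal among bounded normalized solutions.
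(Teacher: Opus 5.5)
Your $L^\infty$ bound, the finite-cylinder existence scheme, and the normalization via the mass ODE are essentially what the paper does. The paper runs a continuity method in the boundary data, with openness coming from the maximum principle for the conjugated adjoint $e^{-q}L_u^{*}e^{q}$. Your decay argument for a single solution is also sound, and it is a genuine alternative to the paper's. For $w=e^u-1$ the cross term is $\int_\Sigma |\nabla w|^2/(1+w)$, which is pointwise signed, so the slice-wise $L^2$ energy gives $\Psi E''+BE'\ge 2c'E$ with no error term.

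\textbf{The gap is uniqueness.} For the difference of two solutions, the $L^2$ pairing produces $\int_\Sigma \nabla(e^{u_1}-e^{u_2})\cdot\nabla(u_1-u_2)$, which has no sign.
\begin{itemize}
\item The error $\int_\Sigma z\,\nabla c\cdot\nabla z$ is small only where $\nabla u_i$ is small, i.e.\ on $[T_0,\infty)$. There your inequality only shows that $\int_\Sigma w^2$ is nonincreasing and decays from its value at $T_0$. It gives $z\equiv 0$ only if you already know $z(T_0,\cdot)=0$.
\item On $[0,T_0]$ the linear equation for $z$ has zeroth-order coefficient $\Psi c_{tt}+Bc_t+2Kc$. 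Here $c_t,c_{tt}$ are $O(1)$ but not small, so there is no maximum principle on this long cylinder. In any case it would need the unknown boundary data $z(T_0,\cdot)$.
\end{itemize}
So the two halves do not combine; the argument is circular. The fallback via maximal/minimal solutions needs exactly the comparison principle you observed is unavailable.

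\textbf{The missing idea} is to measure $w=e^{u_1}-e^{u_2}$, which has mean zero on every slice, in the slice-wise $H^{-1}$ norm rather than $L^2$. Let $-\Delta_\Sigma\phi=w$ with $\int_\Sigma\phi=0$, and set $E(t)=\frac{1}{2}\int_\Sigma|\nabla\phi|^2$. Pair the difference equation $\Delta_\Sigma(u_1-u_2)+\Psi w_{tt}+Bw_t+2Kw=0$ with $\phi$; this moves the Laplacian onto $\phi$ and gives
\[
E''\ge\int_\Sigma w_{tt}\phi=\frac{1}{\Psi}\Bigl(\int_\Sigma (u_1-u_2)(e^{u_1}-e^{u_2})-BE'-4KE\Bigr)\ge \frac{e^{-A'}}{\Psi}\int_\Sigma w^2-\frac{B}{\Psi}E' .
\]
The dangerous term is now zeroth order and signed by the monotonicity of $\log$, and no derivatives of $u_i$ appear. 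With Poincar\'e this yields $E''\ge\kappa_0E-\kappa_1|E'|$ on all of $[0,\infty)$, including the compact part. Boundedness of $E$ forces $E'\le 0$, so $E(0)=0$ gives $E\equiv 0$. This is the paper's route, and it gives uniqueness, stability, and (taking $u_2=0$) decay at once.
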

	We will actually prove the following exponential stability result that is stronger than the uniqueness part and decay part of the above.
	\begin{theorem}[Exponential stability]\label{thm:stability-exp-decay}
		Let $u_i\ (i=1,2)$ be bounded solutions with boundary values $\varphi_i$ satisfying
		$\fint_{\Sigma}e^{\varphi_i} d\vol_{\Sigma}=1$ and $\|\varphi_i\|_{C^{2,\alpha}(\Sigma)}\le A$. Then there exist constants
		$C$ and $\delta>0$, depending on $\Sigma$, $g_{\Sigma}$, $A$, and coefficient bounds for $\Psi$, $B$, and $K$, such that for all $t\ge 0$ and $x\in \Sigma$,
		\[
		|u_1-u_2|(t,x)+|D(u_1-u_2)|(t,x)+|D^2(u_1-u_2)|(t,x)
		\le C \|\varphi_1-\varphi_2\|_{C^{2,\alpha}(\Sigma)}^{1/4} e^{-\delta t}.
		\]
	\end{theorem}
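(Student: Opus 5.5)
The plan is to compare two bounded solutions via the difference of exponentials. Set $w:=e^{u_1}-e^{u_2}$; then
\[
\Delta_\Sigma(u_1-u_2)+\Psi w_{tt}+Bw_t+2Kw=0 .
\]
Since both $u_i$ are bounded (by $\|\varphi_i\|_{L^\infty}\le C(A)$, from the maximum principle in Theorem \ref{thm:wp-stability}), we may write $u_1-u_2=\kappa\, w$ with
\[
\kappa:=\frac{u_1-u_2}{e^{u_1}-e^{u_2}}\in[C^{-1},C].
\]
The equation becomes
\[
\Delta_\Sigma(\kappa w)+\Psi w_{tt}+Bw_t+2Kw=0 .
\]
In addition, $\fint_\Sigma w\,d\vol_\Sigma=0$ for every $t$. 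The key steps are then as follows.

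\textbf{Step 1: uniform bounds.} Using $\fint e^{u_i}=1$, boundedness, and Schauder theory on $[0,\infty)\times\Sigma$ with $C^{2,\alpha}$ boundary data, obtain uniform $C^{2,\alpha}$ bounds on $u_i$ depending only on $A$ and the coefficient bounds. This is routine interior and boundary Schauder estimates, since the equation $\Delta_\Sigma u+\Psi e^u(u_{tt}+u_t^2)+\cdots=0$ is uniformly elliptic once $u$ is bounded. These bounds control $\kappa$ and its derivatives.

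\textbf{Step 2: energy decay.} This is the main obstacle. I would multiply the $w$-equation by $\kappa w$ (equivalently $u_1-u_2$) and integrate over $\Sigma$, and consider
\[
E(t):=\int_\Sigma \kappa w^2 \quad\text{or}\quad F(t):=\int_{\{t'\ge t\}}\int_\Sigma\bigl(|\nabla_\Sigma(\kappa w)|^2+|w_t|^2\bigr).
\]
The Laplacian term gives $-\int|\nabla_\Sigma(\kappa w)|^2$. Because $w$ has mean zero, the Poincaré inequality gives
\[
\int w^2\le C\int|\nabla_\Sigma(\kappa w)|^2 ,
\]
after noting $\int w^2\le C\int \kappa w\cdot w$ and projecting off the mean of $\kappa w$ (this needs care, since $\kappa w$ need not have mean zero). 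The sign $K\le0$ makes the zeroth-order term favorable. The cross terms $\kappa w\,w_{tt}$ are handled by integrating by parts in $t$, producing $\kappa_t$ terms that are bounded but not small. To avoid needing smallness, I would instead run a Saint-Venant / Agmon-type argument: show $F(t)\le -CF'(t)$, so that $F(t)\le F(0)e^{-t/C}$. Boundedness of all quantities justifies the integrations to infinity, which also gives uniqueness, because $F(0)$ controls nothing forcing a nonzero bounded difference. If the $\kappa_t$ terms obstruct this, the fallback is to first prove decay of each $u_i$ to $0$ separately (where $u_2\equiv0$ is a solution) and then use smallness of $\kappa_t$ for large $t$.

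\textbf{Step 3: pointwise estimates and the exponent $1/4$.} From the exponential $L^2$ decay on slabs $[t,t+1]\times\Sigma$ and the uniform $C^{2,\alpha}$ bounds, interpolation gives $C^2$ decay like $e^{-\delta t}$ times a fractional power of the $L^2$ norm. The initial size $F(0)$ is bounded by $C\|\varphi_1-\varphi_2\|_{C^{2,\alpha}}^{1/2}$, via a global energy estimate with a cutoff near $t=0$ together with the uniform bounds. Interpolating between $L^2$ smallness and $C^{2,\alpha}$ boundedness then loses a further power, giving the factor $\|\varphi_1-\varphi_2\|^{1/4}$ after shrinking $\delta$.
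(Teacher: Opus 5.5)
Your Step 2 does not close, and it is the heart of the theorem. Whether you test with $u_1-u_2=\kappa w$ or with $w$, slice-wise or in a Saint-Venant integral over $\{t'\ge t\}$, you get terms such as $\Psi\,\kappa_t\,w\,w_t$, $B\,\kappa\,w\,w_t$, or $\tfrac12 w^2\Delta_\Sigma\kappa$. Their coefficients are bounded but not small compared with the spectral gap of $\Sigma$, so no inequality $F\le -CF'$ follows.

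The fallback does not repair this. Decay of a single solution ($u_2\equiv 0$) can be obtained from an $L^2$ convexity argument: there $\nabla_\Sigma u\cdot\nabla_\Sigma(e^u-1)=e^u|\nabla_\Sigma u|^2\ge 0$, so testing slice-wise with $e^u-1$ creates no bad cross term. But smallness of $\kappa_t,\nabla_\Sigma\kappa$ for $t\ge T_0$ only gives decay of $u_1-u_2$ after $T_0$, with a constant $C(A)$. It gives neither uniqueness nor the factor $\|\varphi_1-\varphi_2\|^{1/4}$. Controlling the difference on $[0,T_0]$ by its Dirichlet data at $t=0$ is again the global elliptic problem. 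Moreover, the linear equation for $u_1-u_2$ has a zeroth-order coefficient (containing $\Psi\,\partial_t^2(\kappa^{-1})$) of no definite sign, so the maximum principle does not apply. For the same reason, your bound on $F(0)$ in Step 3 presupposes the missing estimate.

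There is also a circularity in Step 1. You cannot import $\|u_i\|_{L^\infty}\le\|\varphi_i\|_{L^\infty}$ from Theorem \ref{thm:wp-stability}: for an arbitrary bounded solution this bound is derived from $u_i\to 0$, i.e.\ from the present theorem with $u_2=0$.

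The missing idea is to measure $w=e^{u_1}-e^{u_2}$ in $H^{-1}(\Sigma)$ rather than $L^2$. Let $\phi(t,\cdot)$ be the mean-zero solution of $-\Delta_\Sigma\phi=w(t,\cdot)$, and set $E(t)=\tfrac12\int_\Sigma w\phi$. Since $\Psi,B,K$ depend only on $t$, they commute with $(-\Delta_\Sigma)^{-1}$, and the Laplacian moves entirely off $\kappa w$:
\[
E''\ \ge\ \int_\Sigma w_{tt}\phi=\frac1\Psi\Bigl(\int_\Sigma\kappa w^2-BE'-4KE\Bigr)\ \ge\ c_0E-c_1|E'|.
\]
The last step uses $K\le 0$ and $\int_\Sigma\kappa w^2\ge e^{-A'}\int_\Sigma w^2\ge 2e^{-A'}\lambda_1E$, with $A'=\max_i\|u_i\|_{L^\infty}$. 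No derivative of $\kappa$ ever appears, and $B$ enters only through $E'$, so no smallness is needed.

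Boundedness of $E$ then forces $E'\le 0$ and $E(t)\le E(0)e^{-c_2t}$. Here $E(0)=\tfrac12\|e^{\varphi_1}-e^{\varphi_2}\|_{H^{-1}}^2$ is controlled directly by the boundary data. This gives uniqueness and decay at once, and, with $u_2=0$, the decay that justifies taking $A'=A$. The exponent $1/4$ comes from $\|w\|_{L^\infty}\le C\|w\|_{H^{-1}}^{1/4}\|w\|_{H^2}^{3/4}$, combined with uniform higher-order bounds and Schauder estimates for the linear equation for $w$.
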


	Taking $u_2=0$ in Theorem \ref{thm:stability-exp-decay}, we get exponential decay of the solution, and taking $\varphi_1=\varphi_2$ we get the uniqueness part in Theorem \ref{thm:wp-stability}. 
    
    We prove the existence first.

	\begin{proposition}\label{prop:exhaustion-exterior-solution}
		Given $\varphi\in C^{2,\alpha}(\Sigma)$ with $\fint_{\Sigma}e^{\varphi} d\vol_{\Sigma}=1$, there exists
		$
		u\in C^{2}\big([0,\infty)\times \Sigma\big)\cap L^{\infty}\big([0,\infty)\times \Sigma\big)
		$
		solving \eqref{eq:u-PDE-t} with
		\[
		u|_{t=0}=\varphi,\qquad
		\fint_{\Sigma}e^{u(t,\cdot)} d\vol_{\Sigma}=1\quad \forall t\ge 0,\qquad
		\|u\|_{L^\infty(\Sigma\times[0,\infty))}\le \|\varphi\|_{L^\infty(\Sigma)}.
		\]
	\end{proposition}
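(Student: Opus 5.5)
We will construct $u$ by exhaustion: solve the problem on finite cylinders $[0,T]\times\Sigma$ and let $T\to\infty$. For each $T>0$ we consider
\[
\Delta_{\Sigma}u+\Psi(t)(e^{u})_{tt}+B(t)(e^{u})_{t}+2K(t)(e^u-1)=0 \quad\text{on }[0,T]\times\Sigma,\qquad u|_{t=0}=\varphi,\qquad u|_{t=T}=0 .
\]
It is convenient to write $w=e^u$. The operator acting on $u$ is quasilinear elliptic, since $\Psi(e^u)_{tt}=\Psi e^u(u_{tt}+u_t^2)$ and $\Psi>0$.

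\textbf{Maximum principle.} We first establish the a priori bound $\min(\min\varphi,0)\le u_T\le \max(\max\varphi,0)$. At an interior maximum point of $u$ with $u>0$, we have $u_t=0$, $u_{tt}\le0$ and $\Delta_\Sigma u\le0$. Hence $\Psi e^u u_{tt}\le 0$, and $2K(e^u-1)\le0$ because $K\le0$. The sum is then $\le0$, with strict inequality obtainable by a standard perturbation; if all terms vanish we use the strong maximum principle, viewing the equation as linear in the variable $u$ with bounded coefficients. The minimum is handled symmetrically. Since $\fint e^\varphi=1$ forces $\min\varphi\le0\le\max\varphi$, this gives $\|u_T\|_\infty\le\|\varphi\|_\infty$.

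\textbf{Existence on finite cylinders.} With the $L^\infty$ bound in hand, we obtain gradient and $C^{2,\alpha}$ bounds up to the boundary on $[0,T]\times\Sigma$, uniform in $T$ on compact sets $[0,T_0]$. This follows from standard quasilinear theory: gradient bounds by a Bernstein argument or barriers at $t=0$ using $\varphi\in C^{2,\alpha}$, then De Giorgi--Nash or Krylov--Safonov and Schauder. Existence for fixed $T$ then follows by the Leray--Schauder method, using the continuity path $\tau\varphi$ with $\tau\in[0,1]$, which has the trivial solution at $\tau=0$; the normalization is not needed at this stage. Alternatively, one can use sub- and supersolutions given by the constants $\max(\max\varphi,0)$ and $\min(\min\varphi,0)$, which are valid precisely because $K\le0$. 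Local uniformity in $T$ and Arzelà--Ascoli then yield a subsequence $u_{T_j}\to u$ in $C^2_{loc}([0,\infty)\times\Sigma)$. The limit solves the equation, satisfies $u|_{t=0}=\varphi$, and obeys $\|u\|_\infty\le\|\varphi\|_\infty$.

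\textbf{Normalization and main obstacle.} The remaining step, which we expect to be the main obstacle, is the normalization $\fint e^{u(t,\cdot)}=1$. The average $m(t)=\fint e^{u}$ satisfies the linear ODE $\Psi m''+Bm'+2K(m-1)=0$ with $m(0)=1$, and $m$ is bounded since $u$ is. Thus $y=m-1$ is a bounded solution of $\Psi y''+By'+2Ky=0$ with $y(0)=0$. On the finite cylinder we have $y(0)=y(T)=0$, so the maximum principle for $y$, valid since $K\le0$, gives $y\equiv0$ on $[0,T]$. This passes to the limit, so $m\equiv1$ for every $t$. This argument also avoids any need for a Liouville-type uniqueness for the ODE when $K\equiv0$, a case in which bounded nonconstant solutions might otherwise be possible when $B\neq0$. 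The finite-cylinder Dirichlet data $u|_{t=T}=0$ is chosen exactly so that the average condition holds on each approximant and survives the limit. If constructing barriers near $t=0$ proves delicate, we would instead extend $\varphi$ to a smooth function $\tilde\varphi(t,x)$ and estimate $u-\tilde\varphi$.
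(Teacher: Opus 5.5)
Your proposal is correct, and its overall structure is the paper's. The following steps are all exactly as in the paper:
\begin{itemize}
\item the exhaustion by $[0,T]\times\Sigma$ with $u|_{t=T}=0$;
\item the $L^\infty$ bound from the maximum principle, where $K\le 0$ gives the zeroth-order term the right sign;
\item the limit $T\to\infty$ via $T$-independent local estimates;
\item the normalization, obtained from $m(0)=m(T)=1$ and the maximum principle for $\Psi m''+Bm'+2K(m-1)=0$.
\end{itemize}

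You differ in how the Dirichlet problem is solved on a fixed cylinder. The paper runs the method of continuity along the normalized data $\log\bigl(e^{s\varphi}/\fint_\Sigma e^{s\varphi}\,d\vol_\Sigma\bigr)$. Closedness comes from the same a priori bounds you list. Openness comes from showing that the Dirichlet kernel of the formal adjoint $L_u^*$ of the linearization is trivial. To do this the paper conjugates with $e^{q}$, where $(\Psi e^q)_t=Be^q$, so that $e^{-q}L_u^*e^{q}=\Delta_\Sigma+e^u(\Psi\partial_t^2+B\partial_t+2K)$. This operator obeys the maximum principle, which is a second use of $K\le0$.

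Your Leray--Schauder argument, or the constant sub/supersolutions, needs only the a priori bounds and no study of the linearized operator. The cost is that it says nothing about invertibility of the linearization or local uniqueness on the finite cylinder. This proposition needs neither, since uniqueness on the half-cylinder is proved separately by the $H^{-1}$ energy method.

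The variable $w=e^u$ you introduce but then drop is worth using for the regularity step. With $\mu=\exp\int_0^tB/\Psi$, multiplying the equation by $\mu/\Psi$ gives
\[
\operatorname{div}_\Sigma\Bigl(\frac{\mu}{\Psi w}\nabla_\Sigma w\Bigr)+(\mu w_t)_t+\frac{2K\mu}{\Psi}(w-1)=0 .
\]
This is in divergence form with no gradient terms, and it is uniformly elliptic once $w$ is pinched between $e^{\pm\|\varphi\|_\infty}$. De Giorgi--Nash--Moser followed by Schauder then gives the estimates directly. Your separate Bernstein/barrier gradient step for the quadratic term $\Psi e^u u_t^2$ becomes unnecessary. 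Presumably this is the structure behind the paper's brief appeal to De Giorgi--Nash--Moser.
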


	\begin{proof}
		Fix $k_{0}>0$ and consider the Dirichlet problem for \eqref{eq:u-PDE-t} on $[0,k_{0}]\times \Sigma$ with
		\begin{equation}\label{eq:dirichlet-data-finite-cylinder-sigma}
			u|_{t=0}=\varphi,\qquad u|_{t=k_{0}}=0.
		\end{equation}
		Integrating \eqref{eq:u-PDE-t} over $\Sigma$ and setting
		\[
		m(t):=\fint_{\Sigma}e^{u(t,\cdot)} d\vol_{\Sigma}
		\]
		we get the ODE
		\begin{equation}\label{eq:mass-ode-sigma}
			\Psi(t) m''(t)+B(t) m'(t)+2K (m(t)-1)=0.
		\end{equation}
		Since $m(0)=\fint_{\Sigma}e^{\varphi} d\vol_{\Sigma}=1$ and $m(k_{0})=\fint_{\Sigma}1 d\vol_{\Sigma}=1$, the maximum principle for the second--order
		linear ODE \eqref{eq:mass-ode-sigma} implies $m\equiv 1$ on $[0,k_{0}]$. In particular, for \eqref{eq:u-PDE-t} on $[0,k_0]\times \Sigma$ with the boundary values \eqref{eq:dirichlet-data-finite-cylinder-sigma}, its solution satisfies $m(t)=1$.

		We solve the boundary value problem on $[0,k_0]\times \Sigma$ by the continuity method.
		Let
		\[
		\begin{aligned}
			\mathcal{C}
			:=\Bigl\{ s\in[0,1]\ \Big|\ &
			\eqref{eq:u-PDE-t}\ \text{admits a solution }u\in C^{2,\alpha}([0,k_0]\times \Sigma)\\
			&\text{with }u|_{t=0}=\log\Bigl(\tfrac{e^{s\varphi}}{\fint_{\Sigma} e^{s\varphi}d\vol_{\Sigma}}\Bigr),\quad
			u|_{t=k_0}=0\Bigr\}.
		\end{aligned}
		\]
			Clearly, $0\in\mathcal{C}$.

		We first prove closedness of $\mathcal{C}$. The a priori bounds required for closedness of $\mathcal{C}$ follow from the maximum principle, De Giorgi--Nash--Moser theory, and Schauder estimates. Since $\Psi$ is uniformly positive and $\Psi,B$ have uniformly
		bounded derivatives, the constants in these estimates depend only on $\alpha$, $\|\varphi\|_{C^{2,\alpha}(\Sigma)}$,
		$c$, and finitely many uniform bounds for $\Psi,B$; in particular they can be chosen independent of $k_{0}$.

		For openness of $\mathcal{C}$, it suffices to show that the Dirichlet kernel of the formal adjoint of the linearized operator is trivial.
		Linearizing \eqref{eq:u-PDE-t} at a given function $u$ yields
		\[
		L_{u}\phi
		=\Delta_{\Sigma}\phi+\Psi(t) \partial_{tt}\bigl(e^{u}\phi\bigr)+B(t) \partial_{t}\bigl(e^{u}\phi\bigr)+2K e^{u}\phi,
		\]
		whose formal adjoint with respect to $dt\wedge d\vol_{\Sigma}$ is
		\[
		L_{u}^{*}f
		=\Delta_{\Sigma}f+e^{u}\Big((\Psi f)_{tt}-(B f)_{t}\Big)+2K e^{u}f.
		\]
		Let $f\in C^{2}([0,k_{0}]\times \Sigma)$ solve
		\begin{equation}\label{eq:adjoint-dirichlet-sigma}
			L_{u}^{*}f=0\quad\text{in }(0,k_{0})\times \Sigma,\qquad f|_{t=0}=f|_{t=k_{0}}=0.
		\end{equation}
		Choose $q=q(t)$ so that
		\[
		(\Psi e^{q})_{t}=Be^{q}\qquad\Longleftrightarrow\qquad q'=\frac{B-\Psi_t}{\Psi},
		\]
		and set $\tilde f:=e^{-q}f$ (so $f=e^{q}\tilde f$). Then
		\[
		e^{-q}L_u^{*}(e^{q}\tilde f)
		=\Delta_{\Sigma}\tilde f+e^{u}\bigl(\Psi \tilde f_{tt}+B \tilde f_{t}+2K \tilde f\bigr).
		\]

		Since $\Psi>0$ and $e^{u}$ is bounded above and below on $[0,k_{0}]\times \Sigma$, this is uniformly elliptic on the
		compact cylinder. Moreover, since $K\leq 0$, the zero--order coefficient $2K e^{u}\le 0$, so the maximum principle
		applies to \eqref{eq:adjoint-dirichlet-sigma} and yields $\tilde f\equiv 0$, hence $f\equiv 0$. Therefore
		$\ker_{\mathrm D}L_{u}^{*}=\{0\}$ and openness follows. Hence $\mathcal{C}=[0,1]$.

		Thus, for each $k_{0}>0$ there exists a solution
		$u_{k_{0}}\in C^{2,\alpha}([0,k_{0}]\times \Sigma)$ of \eqref{eq:u-PDE-t} with \eqref{eq:dirichlet-data-finite-cylinder-sigma}.
		By the $k_{0}$-independent estimates above, after passing to a subsequence we have
		$u_{k_{0}}\to u_\infty$ in $C^{2}$ on compact subsets of $[0,\infty)\times \Sigma$ as $k_{0}\to\infty$. The limit
		$u_\infty\in C^{2}([0,\infty)\times \Sigma)\cap L^\infty$ solves \eqref{eq:u-PDE-t}, satisfies $u_\infty|_{t=0}=\varphi$, and $\fint_{\Sigma}e^{u_\infty(t,\cdot)} d\vol_{\Sigma}=1$ passes to the limit for every $t\ge 0$.
		Finally, the $L^\infty$ estimate from the maximum principle on each finite cylinder $[0,k_{0}]\times \Sigma$ is uniform
		in $k_{0}$, hence it passes to the limit and yields
		\[\|u_\infty\|_{L^\infty(\Sigma\times[0,\infty))}\le \|\varphi\|_{L^\infty(\Sigma)}.\]
	\end{proof}

		Now we prove Theorem \ref{thm:stability-exp-decay} through a slice-wise $H^{-1}$ energy differential inequality. Proposition \ref{prop:exhaustion-exterior-solution} and Theorem \ref{thm:stability-exp-decay} together imply Theorem \ref{thm:wp-stability}.
	Let $I\subset\mathbb R$ be an interval and suppose $w:I\times \Sigma\to\mathbb R$ is a function where
	\begin{itemize}
		\item $w(t,\cdot)\in L^{2}(\Sigma)$ for each $t\in I$;
		\item $\int_{\Sigma}w(t,x) d\vol_{\Sigma}=0$.
	\end{itemize}
	For each $t$, let $\phi(t,\cdot)\in H^{2}(\Sigma)$ be the unique solution of
	\[
	-\Delta_{\Sigma}\phi(t,\cdot)=w(t,\cdot),\qquad \int_{\Sigma}\phi(t,x) d\vol_{\Sigma}=0,
	\]
	and define
	\[
	E_{w}(t):=\frac12\|w(t,\cdot)\|_{H^{-1}(\Sigma)}^{2}
	:=\frac12\int_{\Sigma}|\nabla\phi(t,x)|^{2} d\vol_{\Sigma}=\frac{1}{2}\int_{\Sigma}w(t,x)\phi(t,x)d\vol_{\Sigma}.
	\]
	Let $u_i\in C^{2}([0,\infty)\times \Sigma)\cap L^\infty([0,\infty)\times \Sigma)$ $(i=1,2)$ be solutions of
	\eqref{eq:u-PDE-t}
	with boundary values $u_i(0,\cdot)=\varphi_i$, $\|\varphi_i\|_{L^\infty(\Sigma)}\le A$, and $A':=\max_i\|u_i\|_{L^\infty([0,\infty)\times \Sigma)}<\infty$. Suppose $\fint_{ \Sigma} e^{u_i(t,\cdot)} d\vol_{\Sigma}=1$ for $t\geq0$.
	Define
	\[
	w(t,x):=e^{u_1(t,x)}-e^{u_2(t,x)}.
	\]
	We first prove a differential inequality.
	\begin{lemma}\label{lem:energy-inequality}
		There exist constants $\kappa_0,\kappa_1>0$, depending only on $A,A'$, $\inf\Psi, \sup\Psi$, and $\|B\|_{L^\infty}$, such that
		\begin{equation}\label{eq:abs-ODE-sigma-Kt}
			E_w''(t)\ge \kappa_0 E_w(t)-\kappa_1 |E_w'(t)|,\qquad\forall t\ge 0.
		\end{equation}
	\end{lemma}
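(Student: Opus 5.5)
The plan is to differentiate $E_w$ twice in $t$, and to use the difference of the two equations, inverted slice-wise by $(-\Delta_\Sigma)^{-1}$, to replace $\phi_{tt}$. The structural facts used are these: $w$ has zero mean on each slice, $K\le 0$, and the map $s\mapsto e^s$ is monotone.

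\emph{Derivatives of $E_w$.} Since $u_i\in C^2$ and $\fint_\Sigma e^{u_i(t,\cdot)}\,d\vol_\Sigma=1$, the function $w$ is $C^2$ in $t$ with values in mean-zero functions. Hence $\phi$, $\phi_t$, $\phi_{tt}$ are well defined, mean zero, and satisfy $-\Delta_\Sigma\phi_t=w_t$ and $-\Delta_\Sigma\phi_{tt}=w_{tt}$. Integrating by parts on $\Sigma$ gives
\[
E_w'=\int_\Sigma\nabla\phi\cdot\nabla\phi_t=\int_\Sigma w\,\phi_t,\qquad
E_w''=\int_\Sigma|\nabla\phi_t|^2+\int_\Sigma w\,\phi_{tt}\ \ge\ \int_\Sigma w\,\phi_{tt}.
\]

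\emph{Equation for $\phi$.} Subtracting the two copies of \eqref{eq:u-PDE-t} gives
\[
\Delta_\Sigma(u_1-u_2)+\Psi w_{tt}+Bw_t+2Kw=0 .
\]
The coefficients $\Psi,B,K$ depend only on $t$, so we may apply $(-\Delta_\Sigma)^{-1}$ on mean-zero functions. This yields
\[
\Psi\phi_{tt}+B\phi_t+2K\phi=(u_1-u_2)-c(t),
\]
where $c(t)$ is the slice average of $u_1-u_2$.

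\emph{Testing against $w$.} Multiplying by $w$ and integrating over $\Sigma$, the constant $c(t)$ drops out because $\int_\Sigma w=0$. Using $\int_\Sigma w\phi_t=E_w'$ and $\int_\Sigma w\phi=2E_w$, we get
\[
\Psi\int_\Sigma w\phi_{tt}=\int_\Sigma w(u_1-u_2)-B\,E_w'-4K\,E_w .
\]
The last term is nonnegative since $K\le 0$.

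\emph{Lower bound for $\int_\Sigma w(u_1-u_2)$.} By the mean value theorem, $w=e^{\theta}(u_1-u_2)$ for some $\theta$ between $u_1$ and $u_2$, so $|\theta|\le A'$. Hence
\[
w(u_1-u_2)=e^{-\theta}w^2\ge e^{-A'}w^2 .
\]
Next, by Cauchy--Schwarz and the Poincaré inequality for the mean-zero function $\phi$,
\[
2E_w=\int_\Sigma|\nabla\phi|^2=\int_\Sigma w\phi\le\|w\|_{L^2}\,C_P\|\nabla\phi\|_{L^2},
\]
so $\|\nabla\phi\|_{L^2}\le C_P\|w\|_{L^2}$, i.e.\ $\|w\|_{L^2}^2\ge 2C_P^{-2}E_w$.

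\emph{Conclusion.} Combining the bounds and dividing by $\Psi\in[c^{-1},c]$ gives
\[
E_w''\ge \frac{2e^{-A'}}{c\,C_P^{2}}\,E_w-c\,\|B\|_{L^\infty}|E_w'| .
\]
This is \eqref{eq:abs-ODE-sigma-Kt} with $\kappa_0=2e^{-A'}c^{-1}C_P^{-2}$ and $\kappa_1=c\|B\|_{L^\infty}$. These constants depend on $(\Sigma,g_\Sigma)$ through the Poincaré constant $C_P$, which is fixed, in addition to the quantities listed in the lemma.

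The main point to handle with care is the justification of differentiating under the integral and commuting $\partial_t$ with $(-\Delta_\Sigma)^{-1}$. This follows from $u_i\in C^2$ up to $t=0$ together with elliptic regularity on each slice. The key analytic input is the monotonicity estimate $w(u_1-u_2)\ge e^{-A'}w^2$; it produces coercivity in $L^2$, and the Poincaré inequality then converts this into a bound in $H^{-1}$.
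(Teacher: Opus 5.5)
Your proposal is correct and follows the paper's proof essentially step by step. You use the same identity $E_w''=\|w_t\|_{H^{-1}}^2+\int_\Sigma w\,\phi_{tt}$, and your slice-wise inversion of $\Delta_\Sigma$ followed by testing against $w$ is equivalent to the paper's testing of the difference equation against $\phi$; the coercivity $w(u_1-u_2)\ge e^{-A'}w^2$ via your mean-value point $\theta$ is the paper's $\Theta$-bound, and the spectral-gap step $\|w\|_{L^2}^2\ge 2\lambda_1 E_w$ matches as well.
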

	\begin{proof}
		Set $A'=\max_i\|u_i\|_{L^\infty([0,\infty)\times \Sigma)}<\infty$. Then $e^{-A'}\le e^{u_i}\le e^{A'}$.
		Note that $\int_{\Sigma} w(t,\cdot) d\vol_{\Sigma}=0$ for all $t\ge 0$. We have
		\begin{equation}\label{eq:Theta-def-sigma-Kt}
			u_1-u_2=\log(e^{u_1})-\log(e^{u_2})
			=\Theta w,
			\qquad
			\Theta(t,x):=\int_{0}^{1}\frac{1}{s e^{u_1(t,x)}+(1-s)e^{u_2(t,x)}} ds,
		\end{equation}
		hence
		\begin{equation}\label{eq:Theta-bds-sigma-Kt}
			e^{-A'}\le \Theta(t,x)\le e^{A'}\qquad\text{on }[0,\infty)\times \Sigma.
		\end{equation}
		Subtracting the equations for $u_1$ and $u_2$ gives
		\begin{equation}\label{eq:w-eq-sigma-Kt}
			\Delta_{\Sigma}(\Theta w)+\Psi(t) w_{tt}+B(t) w_t+2K(t) w=0.
		\end{equation}

		Let $\phi(t,\cdot)$ be the unique solution of $-\Delta_{\Sigma}\phi(t,\cdot)=w(t,\cdot)$ with $\int_{\Sigma}\phi(t,\cdot)d\vol_{\Sigma}=0$. Then
		\[
		E_w(t)=\frac12\int_{\Sigma}|\nabla\phi|^{2} d\vol_{\Sigma}=\frac12\int_{\Sigma}w \phi d\vol_{\Sigma},\qquad
		E_w'(t)=\int_{\Sigma} w_t \phi d\vol_{\Sigma},
		\]
		Integrating by parts and invoking \eqref{eq:w-eq-sigma-Kt} together with $K(t)\le 0$, we obtain
		\begin{align*}
			E_w''(t)
			&=\|w_t(t,\cdot)\|_{H^{-1}(\Sigma)}^{2}+\int_{\Sigma} w_{tt} \phi d\vol_{\Sigma}
			\;\ge\;\int_{\Sigma} w_{tt} \phi d\vol_{\Sigma} \\
			&=\frac{1}{\Psi(t)}\int_{\Sigma}\Bigl(-\Delta_{\Sigma}(\Theta w) \phi - B(t) w_t \phi-2K(t) w \phi\Bigr) d\vol_{\Sigma} \\
			&=\frac{1}{\Psi(t)}\int_{\Sigma}\Theta w^{2} d\vol_{\Sigma}-\frac{B(t)}{\Psi(t)} E_w'(t)-\frac{4K(t)}{\Psi(t)} E_w(t) \\
			&\ge \frac{e^{-A'}}{\Psi(t)}\int_{\Sigma} w^{2} d\vol_{\Sigma}-\frac{B(t)}{\Psi(t)} E_w'(t),
		\end{align*}
		Since $\int_{\Sigma}w d\vol_{\Sigma}=0$, we have
		\[
		\int_{\Sigma}w^{2} d\vol_{\Sigma} \ge \lambda_{1}(-\Delta_{\Sigma})\|w\|_{H^{-1}(\Sigma)}^{2}=2\lambda_{1}(-\Delta_{\Sigma})E_w(t),
		\]
		with $\lambda_{1}(-\Delta_{\Sigma})>0$ being the first non-zero eigenvalue of $-\Delta_{\Sigma}$. Hence there exist constants $\kappa_0,\kappa_1>0$
		(depending only on $A'$, $\inf\Psi, \sup\Psi$, $\|B\|_{L^\infty}$) such that
		\[
		E_w''(t)\ge \kappa_0 E_w(t)-\kappa_1 |E_w'(t)|
		\qquad\forall t\ge 0,
		\]
		which is \eqref{eq:abs-ODE-sigma-Kt}.
	\end{proof}

	Next we conclude the decay of the $H^{-1}$ energy.
	\begin{lemma}
		We have
		\[
		E_w(t)\le E_w(0)e^{-\kappa_2 t}\qquad\forall t\ge 0,
		\qquad
		\kappa_2:=\frac{\sqrt{\kappa_1^{2}+4\kappa_0}-\kappa_1}{2}.
		\]
	\end{lemma}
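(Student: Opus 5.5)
The plan is a comparison (maximum principle) argument for the differential inequality of Lemma~\ref{lem:energy-inequality}. The function $F(t):=E_w(0)e^{-\kappa_2 t}$ should turn out to be an exact solution of the extremal version of that inequality. We then show that $E_w$ cannot rise above $F$. Boundedness of $E_w$ at infinity is used to exclude a growing competitor.

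\emph{Step 1: regularity and boundedness of $E_w$.} Since $u_i\in C^2\cap L^\infty$ with $e^{-A'}\le e^{u_i}\le e^{A'}$, the function $w=e^{u_1}-e^{u_2}$ is $C^2$ in $t$ with values in $L^2(\Sigma)$. It is uniformly bounded, $|w|\le 2e^{A'}$. Hence $E_w\in C^2([0,\infty))$, $E_w\ge 0$, and
\[
E_w(t)\le \frac{1}{2\lambda_1(-\Delta_\Sigma)}\int_\Sigma w^2\,d\vol_\Sigma\le C(A',\Sigma)\qquad\text{for all } t\ge0 .
\]

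\emph{Step 2: the comparison function.} By definition, $\kappa_2>0$ is the positive root of $x^2+\kappa_1x-\kappa_0=0$, that is, $\kappa_2^2=\kappa_0-\kappa_1\kappa_2$. Therefore
\[
F''=\kappa_0F-\kappa_1|F'| .
\]
Fix $\mu\in(0,\kappa_2)$. Then $\mu^2+\kappa_1\mu<\kappa_0$. For $\varepsilon>0$ set
\[
G:=E_w-F-\varepsilon e^{\mu t}.
\]
We have $G(0)=-\varepsilon<0$. Since $E_w$ is bounded, $G(t)<0$ for all $t\ge T_\varepsilon$.

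\emph{Step 3: exclude a positive maximum.} Suppose $\sup G>0$. Then $G$ attains a positive maximum at some interior $t_0\in(0,T_\varepsilon)$, where $G'(t_0)=0$ and $G''(t_0)\le0$. At $t_0$ this gives
\[
E_w'=F'+\varepsilon\mu e^{\mu t_0},\qquad E_w''\le F''+\varepsilon\mu^2e^{\mu t_0},
\]
so in particular $|E_w'|\le|F'|+\varepsilon\mu e^{\mu t_0}$. Lemma~\ref{lem:energy-inequality} together with $G(t_0)>0$ then yields
\[
E_w''\ge\kappa_0\bigl(F+\varepsilon e^{\mu t_0}+G\bigr)-\kappa_1|F'|-\kappa_1\varepsilon\mu e^{\mu t_0}>F''+\varepsilon e^{\mu t_0}(\kappa_0-\kappa_1\mu).
\]
Because $\kappa_0-\kappa_1\mu>\mu^2$, this contradicts the upper bound on $E_w''$. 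Hence $G\le0$ on $[0,\infty)$. Letting $\varepsilon\to0$ gives $E_w(t)\le E_w(0)e^{-\kappa_2t}$.

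The only delicate point is the behaviour at $t=\infty$. The inequality of Lemma~\ref{lem:energy-inequality} alone also admits growing solutions such as $e^{+ct}$. It is exactly the boundedness of $u_i$, through Step 1, together with the auxiliary term $\varepsilon e^{\mu t}$ with $\mu<\kappa_2$, that rules them out.
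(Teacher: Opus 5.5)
Your proof is correct, but it takes a different route from the paper's.

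The paper argues in two steps:
\begin{itemize}
\item It first proves $E_w'\le 0$ by contradiction. If $E_w'(t_0)>0$, integrating $(e^{\kappa_1 t}E_w')'\ge \kappa_0E_w(t_0)e^{\kappa_1 t}$ keeps $E_w'$ bounded below by a positive constant, forcing $E_w\to\infty$. Monotonicity removes the absolute value and gives the linear inequality $E_w''-\kappa_1E_w'-\kappa_0E_w\ge0$.
\item It then factors this as $(\partial_t-r_+)(\partial_t-r_-)E_w\ge 0$ with $r_-=-\kappa_2$. Boundedness of $E_w$ shows $y:=E_w'+\kappa_2E_w\le 0$, since otherwise $y$ grows like $e^{r_+t}$. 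Integrating $E_w'\le-\kappa_2E_w$ finishes the proof.
\end{itemize}

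You instead run a single maximum-principle comparison against the exact solution $F=E_w(0)e^{-\kappa_2t}$ of the extremal equation $F''=\kappa_0F-\kappa_1|F'|$. The growing term $\varepsilon e^{\mu t}$, $0<\mu<\kappa_2$, is a strict supersolution that uses the boundedness of $E_w$ to control $t=\infty$. You handle the absolute value only at the touching point, via $|E_w'|\le|F'|+\varepsilon\mu e^{\mu t_0}$, so you never need the preliminary monotonicity step.

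What each approach buys:
\begin{itemize}
\item Your argument is a single comparison step and adapts readily to other nonlinear second-order differential inequalities.
\item The paper's argument directly produces the pointwise first-order bound $E_w'\le-\kappa_2E_w$ and the monotonicity of $E_w$. These also follow from your result by applying it on $[s,\infty)$ for each $s\ge0$, which gives $E_w(t)\le E_w(s)e^{-\kappa_2(t-s)}$.
\end{itemize}

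Your Step~1 also makes explicit why $E_w$ is bounded, namely $\|w\|_{H^{-1}}^2\le\lambda_1^{-1}\|w\|_{L^2}^2$ with $|w|\le 2e^{A'}$. The paper only notes this parenthetically.
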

	\begin{proof}
		We claim $E_w'(t)\le 0$ for all $t\ge 0$. Otherwise, choose $t_0>0$ such that $E_w'(t_0)>0$ and $E_w(t_0)>0$, and set
		\[
		t_1:=\sup\bigl\{ t>t_0:\ E_w'(s)>0\ \text{for all }s\in[t_0,t) \bigr\}.
		\]
		On $[t_0,t_1)$, \eqref{eq:abs-ODE-sigma-Kt} implies $E_w''+\kappa_1 E_w'\ge \kappa_0 E_w$. Since $E_w$ is increasing on $[t_0,t_1)$, we have
		$E_w(t)\ge E_w(t_0)>0$ for all $t\in[t_0,t_1)$, hence
		$
		\bigl(e^{\kappa_1 t}E_w'(t)\bigr)'\ge \kappa_0 E_w(t_0)e^{\kappa_1 t}\ \text{on }[t_0,t_1).
		$
		Integrating over $[t_0,t]\subset [t_0,t_1)$ yields
		\[
		E_w'(t)\ge e^{-\kappa_1(t-t_0)}E_w'(t_0)+\frac{\kappa_0}{\kappa_1}E_w(t_0)\Bigl(1-e^{-\kappa_1(t-t_0)}\Bigr),
		\]
		so $E_w'(t)$ admits a positive lower bound on $[t_0,t_1)$, therefore $t_1=\infty$. Consequently $E_w(t)\to\infty$, contradicting the boundedness of $E_w$ (since $u_i$ are bounded).

		Thus $E_w'\le 0$, so $|E_w'|=-E_w'$ and
		\[
		E_w''-\kappa_1 E_w'-\kappa_0 E_w\ge 0.
		\]
		Let $r_\pm$ be the roots of $r^{2}-\kappa_1 r-\kappa_0=0$ and set $\kappa_2:=-r_->0$. Define $y:=E_w'-r_-E_w$. Then
		$y'-r_+y=E_w''-\kappa_1 E_w'-\kappa_0 E_w\ge 0$, so $e^{-r_+t}y(t)$ is nondecreasing. If $y(t_0)>0$ for some $t_0$, then
		$y(t)\ge y(t_0)e^{r_+(t-t_0)}$ and hence $E_w'(t)=y(t)+r_-E_w(t)$ becomes positive for $t\gg 1$, contradicting $E_w'\le 0$.
		Therefore $y\le 0$, i.e.\ $E_w'\le r_-E_w$. Integration gives
		\[
		E_w(t)\le E_w(0)e^{r_-t}=E_w(0)e^{-\kappa_2 t}.
		\]
	\end{proof}

	Finally we prove the exponential stability.
	\begin{lemma}
		There are constants $C, \kappa_3$ depending on $\|\varphi_i\|_{C^{2,\alpha}(\Sigma)}$ such that
		\begin{align}
			\sum_{j=0}^{10} \bigl|D^{j}(u_1-u_2)(t,x)\bigr|
			&\le
			Ce^{-\kappa_3t} \bigl\|{\varphi_1-\varphi_2}\bigr\|_{H^{-1}(\Sigma)}^{1/4},
			\qquad \forall t\ge 1,\ x\in \Sigma,
			\label{eq:uniform-stability-interior-general-PDE}
			\\
			\|u_1-u_2\|_{C^{2,\alpha}([0,2]\times \Sigma)}
			&\le
			C \bigl\|\varphi_1-\varphi_2\bigr\|_{C^{2,\alpha}(\Sigma)}^{1/4}.
			\label{eq:uniform-stability-boundary-general-PDE}
		\end{align}
	\end{lemma}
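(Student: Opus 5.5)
We upgrade the $H^{-1}$ energy decay $E_w(t)\le E_w(0)e^{-\kappa_2 t}$ of the previous lemma to pointwise $C^{10}$ decay by interpolation against uniform higher-order bounds. The starting inputs are uniform a priori estimates for each $u_i$. By Proposition \ref{prop:exhaustion-exterior-solution} (or directly by the maximum principle) $\|u_i\|_{L^\infty}\le A'$, which depends only on $A$. Since \eqref{eq:u-PDE-t} is quasilinear and uniformly elliptic once $e^{u_i}$ is pinched, with smooth coefficients having bounded derivatives, De Giorgi--Nash--Moser and Schauder estimates give
\[
\|u_i\|_{C^{2,\alpha}([0,3]\times\Sigma)}\le C(A),\qquad \|u_i\|_{C^{k,\alpha}([t-1,t+1]\times\Sigma)}\le C_k(A)\quad (t\ge 2),
\]
for every $k$, with the first estimate up to the boundary using $\|\varphi_i\|_{C^{2,\alpha}}\le A$. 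The interior bounds come from bootstrapping on unit cylinders, which is uniform in $t$ because the coefficients and $u_i$ are uniformly controlled. The same bounds then hold for $w=e^{u_1}-e^{u_2}$ and for $u_1-u_2=\Theta w$.

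For \eqref{eq:uniform-stability-interior-general-PDE}, note first that $E_w(0)=\tfrac12\|e^{\varphi_1}-e^{\varphi_2}\|_{H^{-1}}^2\le C\|\varphi_1-\varphi_2\|_{H^{-1}}^2$. This step needs care because multiplication by $\Theta|_{t=0}$ is bounded on $H^{-1}$ only because $\Theta|_{t=0}$ is $C^{2,\alpha}$-controlled; if this is awkward, we will instead state the bound with the $C^{2,\alpha}$ norm, which is all the theorem uses. Thus $\|w(t)\|_{H^{-1}(\Sigma)}\le C e^{-\kappa_2 t/2}\|\varphi_1-\varphi_2\|_{H^{-1}}$.

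On each slice we then interpolate between $H^{-1}$ and $H^{m}$ with $m$ large:
\[
\|w(t)\|_{C^{10}}\le C\|w(t)\|_{H^{-1}}^{\theta}\|w(t)\|_{H^m}^{1-\theta},
\]
where $\theta\ge 1/4$ once $m$ is chosen large, since on a surface $\theta=(m-11-\epsilon)/(m+1)$ roughly. The $H^m$ factor is bounded by the uniform interior estimates. Slice-wise control only covers $x$-derivatives, so $t$-derivatives are handled in one of two ways. Either we interpolate on the $3$-dimensional cylinder $[t-1,t+1]\times\Sigma$, using $\int_{t-1}^{t+1}\|w\|_{H^{-1}}^2\,ds$ as the weak norm (an anisotropic negative norm that still interpolates against $H^m$ of the cylinder), or we use the equation \eqref{eq:w-eq-sigma-Kt} to express $w_{tt}$ through $x$-derivatives and $w_t$, and bound $w_t$ by interpolating $w$ in $t$ between $C^0$ and $C^2$. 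Finally we pass from $w$ to $u_1-u_2=\Theta w$ using the $C^k$ bounds on $\Theta$. This yields the claim with $\kappa_3=\theta\kappa_2/2$.

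The boundary estimate \eqref{eq:uniform-stability-boundary-general-PDE} is proved the same way near $t=0$. The difference $\psi:=u_1-u_2$ solves a linear elliptic equation $\tilde L\psi=0$, obtained by writing the difference of the nonlinear terms via mean-value coefficients, with $C^{\alpha}$ coefficients bounded in terms of $A$. Its boundary data are $\psi|_{t=0}=\varphi_1-\varphi_2$ and $\psi|_{t=3}$, and the latter is already small, of size $\|\varphi_1-\varphi_2\|^{1/4}$, by the interior step. Boundary Schauder estimates then give
\[
\|\psi\|_{C^{2,\alpha}([0,2]\times\Sigma)}\le C\bigl(\|\varphi_1-\varphi_2\|_{C^{2,\alpha}}+\|\psi\|_{C^0([0,3]\times\Sigma)}\bigr).
\]
The $C^0$ term is controlled either by the linear maximum principle (the zeroth-order coefficient $2Ke^{u}$-type term has a good sign after the conjugation used in Proposition \ref{prop:exhaustion-exterior-solution}) or by the interior estimate above. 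Since $\|\varphi_1-\varphi_2\|_{C^{2,\alpha}}\le C(A)\|\varphi_1-\varphi_2\|^{1/4}$, the power $1/4$ follows.

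The main obstacle is the interpolation step: getting pointwise control of mixed and $t$-derivatives from a slice-wise $H^{-1}$ decay, with a fixed exponent $1/4$, while keeping every constant dependent only on $A$ and the coefficient bounds.
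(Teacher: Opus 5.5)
Your overall architecture (uniform a priori bounds, the $H^{-1}$ energy decay, slicewise interpolation, then $u_1-u_2=\Theta w$) matches the paper's. However, the boundary estimate \eqref{eq:uniform-stability-boundary-general-PDE} has a genuine gap.

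\textbf{The gap: no small $C^0$ bound near $t=0$.} Your boundary Schauder estimate needs a small bound on $\|\psi\|_{C^0([0,3]\times\Sigma)}$, and you never obtain one on $[0,1]\times\Sigma$.

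\begin{itemize}
\item Your interior step cannot supply it. It only covers $t\ge 1$, because it relies on $H^m$ slice bounds with $m$ large, and near $t=0$ you only control $C^{2,\alpha}$.
\item The maximum-principle alternative does not work as stated. The conjugation in Proposition \ref{prop:exhaustion-exterior-solution} was applied to the \emph{adjoint} $L_u^*$, where $\Delta_\Sigma$ acts directly on the unknown and the only zeroth-order term is $2Ke^u f$. The difference of two solutions satisfies an equation of the non-adjoint type, \eqref{eq:w-eq-sigma-Kt}. Its zeroth-order coefficient is $\Delta_\Sigma\Theta+2K$ in the $w$-form, or $\Psi(\Theta^{-1})_{tt}+B(\Theta^{-1})_t+2K\Theta^{-1}$ in the $\psi$-form. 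Either one depends on $x$ and has no sign, so that argument gives no maximum principle.
\end{itemize}

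\textbf{The missing step.} It is exactly the one that produces the exponent $1/4$. On every slice $t\in[0,3]$ you have:
\begin{itemize}
\item $\|w(t)\|_{H^{-1}}\le \|w(0)\|_{H^{-1}}\le C\|\varphi_1-\varphi_2\|_{H^{-1}}$, by monotonicity of $E_w$;
\item $\|w(t)\|_{H^2}\le C(A)$, from the $C^{2,\alpha}$ bounds up to $t=0$ that you already state.
\end{itemize}
Interpolating between $H^{-1}$ and $H^2$ gives $H^{5/4}\hookrightarrow L^\infty$ on a surface, with $\tfrac54=\tfrac14(-1)+\tfrac34\cdot 2$. Hence $\sup_{[0,3]\times\Sigma}|w|\le C\|\varphi_1-\varphi_2\|_{H^{-1}}^{1/4}$. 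This is Lemma \ref{lem:interpolation}, and with it your boundary Schauder argument closes.

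\textbf{Simplifying the interior step.} The same $L^\infty$ interpolation also removes what you call the main obstacle. For all $t$ it gives $\sup_\Sigma|w(t)|\le Ce^{-\kappa_2 t/8}\|\varphi_1-\varphi_2\|_{H^{-1}}^{1/4}$. The coefficients of the linear $w$-equation involve $\Theta$ and are uniformly $C^k$-bounded. So interior Schauder estimates on unit cylinders bound all mixed and $t$-derivatives of $w$ up to order $10$ by $\sup_{[t-1,t+1]\times\Sigma}|w|$. This is how the paper proceeds. Your options (a) and (b) can be made to work, but they are unnecessary and cost you exponents.

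\textbf{A smaller point on $A'$.} Proposition \ref{prop:exhaustion-exterior-solution} gives $\|u\|_{L^\infty}\le\|\varphi\|_{L^\infty}$ only for the solution it constructs. A maximum principle on the half-cylinder needs information as $t\to\infty$. For arbitrary bounded $u_i$, first run the decay argument with $u_2\equiv 0$, with constants depending on $A'$, to get $u_i\to 0$. Then apply the maximum principle to conclude $A'\le A$, as the paper does.
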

	\begin{proof}
		The De Giorgi--Nash--Moser and Schauder estimates, together with elliptic bootstrapping applied to \eqref{eq:u-PDE-t}, yield
		\[
		\sum_{k=0}^{10}\bigl|D^{k}u_i(t,x)\bigr|\le C \ \text{for all }(t,x)\in[1,\infty)\times \Sigma,
		\qquad
		\|u_i\|_{C^{2,\alpha}([0,2]\times \Sigma)}\le C.
		\]
		Here $C$ depends only on $A',\Psi,B,K$. By the following interpolation Lemma \ref{lem:interpolation}, we have
		\[
		\lim_{t\to\infty}\sup_{x\in \Sigma}|u_i(t,x)|=0,
		\]
		and the maximum principle then implies
		\[
		\sup_{[0,\infty)\times \Sigma} |u_i|\le \sup_{\Sigma} |\varphi_i|.
		\]
		In particular, we may take $A'=A$.
		Estimates \eqref{eq:uniform-stability-interior-general-PDE}--\eqref{eq:uniform-stability-boundary-general-PDE}
		follow by applying Schauder estimates to the linear equation for $w$ in \eqref{eq:w-eq-sigma-Kt}, combining this with
		Lemma~\ref{lem:interpolation}, and using the relation $u_1-u_2=\Theta w$.
	\end{proof}

	\begin{lemma}[Interpolation]\label{lem:interpolation}
		For $w\in H^{2}(\Sigma)$ such that $\int_{\Sigma} w\,d\vol_{\Sigma}=0$, we have
		\[\|w\|_{L^\infty(\Sigma)}\leq C\|w\|_{H^{\frac{5}{4}}(\Sigma)}\leq C\|w\|_{H^{-1}(\Sigma)}^{\frac14}\|w\|_{H^{2}(\Sigma)}^{\frac{3}{4}}\]
	\end{lemma}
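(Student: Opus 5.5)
The first inequality is a Sobolev embedding on the closed surface $\Sigma$, and the second is an interpolation inequality; both are most easily seen through the spectral decomposition of $-\Delta_\Sigma$. Since $\int_\Sigma w\,d\vol_\Sigma=0$, I would expand $w=\sum_{j\ge1}w_j\Theta_j$ in an $L^2$-orthonormal eigenbasis $-\Delta_\Sigma\Theta_j=\lambda_j\Theta_j$. The constant mode is absent, so every $\lambda_j\ge\lambda_1>0$.

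On the mean-zero subspace I would use the norms $\|w\|_{H^s}^2:=\sum_j\lambda_j^{s}|w_j|^2$ for $s\in\mathbb R$. For $s=-1$ this agrees with the definition of $E_w$: if $\phi=\sum_j\lambda_j^{-1}w_j\Theta_j$, then $\int_\Sigma|\nabla\phi|^2=\sum_j\lambda_j^{-1}|w_j|^2$. For $s=2$ it is equivalent to the usual $H^2$ norm on mean-zero functions, by elliptic regularity together with $\lambda_j\ge\lambda_1$.

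For the first inequality I would invoke the standard Sobolev embedding $H^s(\Sigma)\hookrightarrow C^0(\Sigma)$ for $s>\dim\Sigma/2=1$; here $s=5/4$ suffices. To keep the argument self-contained, one can instead combine the local Weyl law, which gives $\sup_\Sigma\sum_{\lambda_j\le\Lambda}|\Theta_j|^2\le C\Lambda$, with a dyadic decomposition in $\lambda_j$ and Cauchy--Schwarz. On the block $2^k\le\lambda_j<2^{k+1}$ this bounds the sup norm by $C2^{k/2}\cdot 2^{-5k/8}\|w\|_{H^{5/4}}$, and the resulting geometric series in $k$ converges.

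For the second inequality I would apply Hölder's inequality to the spectral sums. Since $5/4=\tfrac14\cdot(-1)+\tfrac34\cdot 2$, each term factors as $\lambda_j^{5/4}|w_j|^2=(\lambda_j^{-1}|w_j|^2)^{1/4}(\lambda_j^{2}|w_j|^2)^{3/4}$. Hölder with exponents $4$ and $4/3$ then gives
\[
\sum_j\lambda_j^{5/4}|w_j|^2\le\Bigl(\sum_j\lambda_j^{-1}|w_j|^2\Bigr)^{1/4}\Bigl(\sum_j\lambda_j^{2}|w_j|^2\Bigr)^{3/4}.
\]
Taking square roots yields $\|w\|_{H^{5/4}}\le\|w\|_{H^{-1}}^{1/4}\|w\|_{H^2}^{3/4}$, and the norm equivalence at $s=2$ converts this into the stated inequality with a constant $C$.

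The only step that needs any care is the identification of these spectral norms with the standard Sobolev norms, which is what allows the embedding at $s=5/4$ to be quoted. This identification is classical on a closed manifold, so I do not expect a genuine obstacle.
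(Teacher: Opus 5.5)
Your proof is correct. The paper states this lemma without proof and treats it as the standard combination of the embedding $H^{5/4}(\Sigma)\hookrightarrow C^0(\Sigma)$ (valid since $5/4>\dim\Sigma/2$) with interpolation between $H^{-1}$ and $H^2$. Your spectral argument supplies exactly that justification: H\"older with exponents $4$ and $4/3$ using $\tfrac54=\tfrac14(-1)+\tfrac34\cdot 2$, together with the correct identification of the paper's $H^{-1}$ norm (via $\phi$) with $\sum_j\lambda_j^{-1}|w_j|^2$.

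Your dyadic local-Weyl-law estimate bounds $\|w\|_{L^\infty}$ directly by the spectral $H^{5/4}$ norm. So the identification with the standard $H^{5/4}$ norm, which you flagged as the delicate step, is not actually needed.
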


	Finally we apply our study to the boundary value problems \eqref{eq:bvp1}--\eqref{eq:bvp4} and prove Theorem \ref{thm:bvp-existence-uniqueness} and \ref{thm:bvp-model-asymptotics}.

	\subsection{The boundary value problem \eqref{eq:bvp1}}
	\label{subsec:hyperbolic-cusp}

	We consider \eqref{eq:bvp1}. Integrating the Toda equation over $T^{2}$ at fixed $\xi$ gives
	\begin{equation}\label{eq:slice-affine-hyperbolic-cusp}
		\fint_{T^{2}}e^{v(\xi,\cdot)} d\vol_{T^2}=a\xi+b
		\quad\text{for some }a,b\in\mathbb{R}.
	\end{equation}
	Since the left-hand side is positive on $[0,\infty)$ and for \eqref{eq:bvp1} we only consider bounded solutions, we have $a=0$ and $b>0$. Plugging in $v|_{\xi=0}=\varphi$, we find $b=\fint_{T^2}e^{\varphi}\dvol_{T^2}$. Setting $u=v-\log b$, we have $\fint_{T^2}e^{u(\xi,\cdot)}\dvol_{T^2}\equiv 1$. By Theorems \ref{thm:wp-stability} and \ref{thm:stability-exp-decay}, we obtain existence, uniqueness, and exponential decay of the solution to \eqref{eq:bvp1}. It remains to show $W=1-\frac{1}{2}\xi v_{\xi}>0$ so that the solutions to \eqref{eq:bvp1} correspond to Einstein metrics.

	\begin{proposition}\label{prop:W-positive-BVP1}
		Let $v$ be a solution of \eqref{eq:bvp1}.
		Then
		\[
		W=1-\frac{1}{2}\xi v_{\xi}>0 \quad \text{on }[0,\infty)\times T^{2}.
		\]
	\end{proposition}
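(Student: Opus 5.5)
The plan is to observe that $W$ solves the linearization of the Toda equation, so that a deformation in the boundary data together with the strong maximum principle keeps $W$ positive.

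\emph{Linear equation for $W$.} If $v$ solves $(e^v)_{\xi\xi}+\Delta_{T^2}v=0$, then so does $v_\lambda(\xi,x):=v(\lambda\xi,x)-2\log\lambda$ for every $\lambda>0$. Differentiating in $\lambda$ at $\lambda=1$ gives $\partial_\lambda v_\lambda|_{\lambda=1}=\xi v_\xi-2=-2W$. Hence $W$ lies in the kernel of the linearized operator
\[
L_v\phi:=\Delta_{T^2}\phi+(e^{v}\phi)_{\xi\xi}=\Delta_{T^2}\phi+e^v\phi_{\xi\xi}+2(e^v)_\xi\phi_\xi+(e^v)_{\xi\xi}\phi .
\]
Since $v$ is bounded with bounded derivatives, $L_v$ is uniformly elliptic on $[0,\infty)\times T^2$. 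Its zero-order coefficient $(e^v)_{\xi\xi}=-\Delta_{T^2}v$ has no sign, so we cannot use a weak maximum principle directly. We only use the strong maximum principle at a zero minimum: if $\phi\ge 0$, $L_v\phi=0$ and $\phi$ vanishes at an interior point, then $\phi\equiv 0$ near that point. This holds for any bounded zero-order term.

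\emph{Boundary behaviour.} At $\xi=0$ we have $W=1$. As $\xi\to\infty$, $v\to\bar\varphi$ exponentially together with derivatives, by Theorem \ref{thm:stability-exp-decay} applied after $u=v-\log b$. Hence $\xi v_\xi=O(\xi e^{-\delta\xi})\to 0$ and $W\to 1$ uniformly. This decay is quantitative: $C$ and $\delta$ depend only on a bound $A$ on $\|\varphi\|_{C^{2,\alpha}}$.

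\emph{Continuity argument.} For $s\in[0,1]$ let $\varphi_s:=s\varphi$, let $v_s$ be the unique bounded solution of \eqref{eq:bvp1} with this data, and let $W_s:=1-\frac12\xi\partial_\xi v_s$. At $s=0$, $v_0\equiv 0$ and $W_0\equiv1$.

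By the uniform decay above, there is $R$ independent of $s$ with $W_s\ge \frac12$ on $\{\xi\ge R\}$. Theorem \ref{thm:stability-exp-decay} shows $s\mapsto v_s$ is continuous in $C^1$ uniformly on $[0,\infty)\times T^2$. Hence $W_s$ depends continuously on $s$ uniformly on the compact region $[0,R]\times T^2$, and the set $S:=\{s: W_s>0\}$ is open.

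For closedness, let $s_j\to s$ with $s_j\in S$. Then $W_s\ge 0$ everywhere, $W_s=1$ at $\xi=0$, and $W_s\ge\frac12$ for $\xi\ge R$. If $W_s$ vanished somewhere, this would be an interior zero minimum of a nonnegative solution of $L_{v_s}W_s=0$. The strong maximum principle plus connectedness would then force $W_s\equiv0$, contradicting $W_s=1$ on the boundary. So $S=[0,1]$, and $s=1$ gives the proposition.

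The main obstacle is the uniformity in $s$ of the decay of $\xi\partial_\xi v_s$ at infinity. It is what prevents a zero of $W_s$ from escaping to $\xi=\infty$, and it requires the derivative bounds of Theorem \ref{thm:stability-exp-decay} with constants depending only on $\|\varphi_s\|_{C^{2,\alpha}}\le\|\varphi\|_{C^{2,\alpha}}$. We also need the mean normalization $\fint_{T^2} e^{s\varphi}\,d\vol_{T^2}$ to vary continuously in $s$, which is clear.
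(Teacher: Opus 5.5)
Your proof is correct, and it takes a genuinely different route at the one nontrivial step, closedness.

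The paper uses the same continuity path and the same openness argument: uniform convergence $W_s\to 1$ at infinity, plus Theorem~\ref{thm:stability-exp-decay} on a compact region. It proves closedness geometrically. For each $s_i$ with $W_{s_i}>0$ it forms the Einstein metric $h_{s_i}$. The Bochner formula shows that $|\mathcal{K}_{s_i}|^2_{h_{s_i}}=\xi^{-2}W_{s_i}^{-1}$ is $h_{s_i}$-subharmonic. The maximum principle on $[a,\infty)\times T^2$ then gives $W_{s_i}\ge a^2/(2\xi^2)$ uniformly in $i$, and this bound survives the limit.

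You instead use the scaling symmetry $v\mapsto v(\lambda\xi,\cdot)-2\log\lambda$ to see that $W$ lies in the kernel of the linearized operator, $\Delta_{T^2}W+(e^vW)_{\xi\xi}=0$. This can also be checked directly, since the two terms equal $\pm\frac12\xi(e^v)_{\xi\xi\xi}$. You then apply the strong maximum principle at a zero minimum, which is indeed insensitive to the sign of the zero-order coefficient.

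Your argument is purely analytic and never needs to build $h_{s_i}$, $\eta_{s_i}$ or $\mathcal{K}_{s_i}$ along the path. The paper's argument gives an explicit lower bound. More importantly, it does not rely on the scaling symmetry, which is broken by the $\xi^3$ term in the twisted-Toda equation. That is why the paper can reuse it for \eqref{eq:bvp3} and \eqref{eq:bvp4}. For \eqref{eq:bvp2} your argument works equally well.

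One bookkeeping remark. With data $s\varphi$, the normalized functions $v_s-\log\fint_{T^2}e^{s\varphi}$ solve equations whose coefficient $\Psi=\fint_{T^2}e^{s\varphi}$ depends on $s$. So Theorem~\ref{thm:stability-exp-decay}, which compares solutions of one fixed equation, does not compare them directly. Either use the paper's data $\log\bigl(e^{s\varphi}/\fint_{T^2}e^{s\varphi}\bigr)$, or apply your own symmetry with $\lambda_s=(\fint_{T^2}e^{s\varphi})^{1/2}$. The latter changes $W_s$ only by $\xi\mapsto\lambda_s\xi$, so positivity and the uniform tail estimate are unaffected.
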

	\begin{proof}
		For any $s\in [0,1]$, let $v_s$ be the unique bounded solution to \eqref{eq:bvp1} with the boundary value
		\[
		v_s|_{\xi=0}=\log\Bigl(\tfrac{e^{s\varphi}}{\fint_{T^2} e^{s\varphi}\dvol_{T^2}}\Bigr),
		\]
		and set $W_s:=1-\frac{1}{2}\xi (v_s)_{\xi}$.
		Define
		\[
		\mathcal{C}:=\bigl\{s\in [0,1]\ \big|\ W_s>0 \ \text{for all }(\xi,x)\in [0,\infty)\times T^2\bigr\}.
		\]
		Clearly, $0\in \mathcal{C}$. Theorem \ref{thm:wp-stability} implies $\lim_{\xi\rightarrow\infty}W_s=1$, thus if $W_s>0$, then $W_s\geq\delta(s)>0$. The stability Theorem \ref{thm:stability-exp-decay} implies $\mathcal{C}$ is open.

		We show that $\mathcal{C}$ is closed.
		Let $s_i\to s_\infty$ with $s_i\in\mathcal{C}$.
		By our reduction \eqref{eq:case-I}, for each $i$, we choose a connection $1$-form $\eta_{s_i}$ determined by \eqref{eq:deta} such that
		\[
		h_{s_i}:=\xi^{-2}\Bigl(W_{s_i}\,d\xi^2+W_{s_i}^{-1}\eta_{s_i}^{\,2}+W_{s_i}e^{v_{s_i}}g_{T^2}\Bigr)
		\]
		is Einstein with $\Ric(h_{s_i})=-3h_{s_i}$, and let $\mathcal{K}_{s_i}$ be the Killing field. In particular,
		\[
		|\mathcal{K}_{s_i}|_{h_{s_i}}^{2}=\xi^{-2}W_{s_i}^{-1}.
		\]
		By the Bochner formula,
		\begin{equation}\label{eq:Bochner-formula}
			\frac{1}{2}\Delta_{h_{s_i}}|\mathcal{K}_{s_i}|_{h_{s_i}}^{2}
			=|\nabla^{h_{s_i}}\mathcal{K}_{s_i}|_{h_{s_i}}^{2}+3|\mathcal{K}_{s_i}|_{h_{s_i}}^{2}\geq 0.
		\end{equation}
		Hence $|\mathcal{K}_{s_i}|_{h_{s_i}}^{2}=\xi^{-2}W_{s_i}^{-1}$ satisfies the maximum principle. For any $[a,b]\subset (0,\infty)$,
		\[
		\sup_{[a,b]\times T^2}\,\xi^{-2}W_{s_i}^{-1}(\xi,x)
		=\max\Bigl\{a^{-2}\sup_{T^2}W_{s_i}^{-1}(a,x),\ b^{-2}\sup_{T^2}W_{s_i}^{-1}(b,x)\Bigr\}.
		\]
		Letting $b\rightarrow\infty$, we have
		\[
		\sup_{[a,\infty)\times T^2}\,\xi^{-2}W_{s_i}^{-1}(\xi,x)
		=a^{-2}\sup_{T^2}W_{s_i}^{-1}(a,x).
		\]
		By Theorem~\ref{thm:wp-stability}, we may choose $a_0$ such that for any
		$a\in (0,a_0)$ and all $s_i$ we have
		\[
		\frac{1}{2}\le W_{s_i}(a,\cdot)\le 2.
		\]
		Consequently,
		\[
		W_{s_i}\ge \frac{a^{2}}{2\xi^{2}}\quad \text{on }[a,\infty)\times T^2,
		\]
		and passing to the limit gives
		\[
		W_{s_\infty}\ge \frac{a^{2}}{2\xi^{2}}\quad \text{on }[a,\infty)\times T^2.
		\]
		Since $a\in (0,a_0)$ is arbitrary, we conclude that $W_{s_\infty}>0$ on $[0,\infty)\times T^2$.
		Hence $\mathcal{C}=[0,1]$ and $W=W_1>0$.
	\end{proof}

	\subsection{The boundary value problem \eqref{eq:bvp2}}
	\label{subsec:achc-asd}

	We now consider \eqref{eq:bvp2}. Since we consider solutions with logarithmic growth, by integrating the Toda equation over $T^2$ we have $\fint_{T^2}e^vd\vol_{T^2}=b+a\xi$ with $a>0$. Define
	\[
	u:=v-\log(b+a\xi).
	\]
	Then the Toda equation is equivalent to
	\begin{equation}\label{eq:u-PDE-asd-complex-cusp}
		\Delta_{T^2}u+\bigl((b+a\xi)e^u\bigr)_{\xi\xi}=0,
	\end{equation}
	and we have $\fint_{T^2}e^{u(\xi,\cdot)}d\vol_{T^2}=1$. We consider bounded solutions of \eqref{eq:u-PDE-asd-complex-cusp}.

	Introduce
	\[
	t:=\sqrt{\frac{b+a\xi}{b}}\in [1,\infty).
	\]
	Then
	\[
	\partial_\xi=\frac{a}{2bt}\partial_t.
	\]
	Therefore \eqref{eq:u-PDE-asd-complex-cusp} becomes
	\begin{equation}\label{eq:PDE-PsiB}
		\Delta_{T^{2}}u+\Psi(t) (e^{u})_{tt}+B(t) (e^{u})_{t}=0,
		\qquad
		\Psi(t)=\frac{a^{2}}{4b},\ \ B(t)=\frac{3a^{2}}{4bt}.
	\end{equation}
	The coefficients satisfy \eqref{eq:coeff-assumptions}. Hence Theorems \ref{thm:wp-stability} and \ref{thm:stability-exp-decay} apply, giving existence, uniqueness, and exponential decay. The positivity $W:=1-\frac12\xi v_\xi>0$ follows by the same argument as in Proposition \ref{prop:W-positive-BVP1}.
	For any solution $v=\log(b+a\xi)+u$,
	\[
	W=1-\tfrac12 \xi v_\xi
	=1-\frac{a\xi}{2(b+a\xi)}-\frac{1}{2}\xi u_{\xi}.
	\]
	Since
	\[
	W(0,x)\equiv 1,
	\qquad
	\lim_{\xi\to\infty} W(\xi,x)=\frac12 \ \ \text{uniformly in }x\in T^{2},
	\]
	we can argue as in Proposition \ref{prop:W-positive-BVP1} to conclude $W>0$. This shows that, for each fixed $a>0$ and boundary value $\varphi$, \eqref{eq:bvp2} has a unique solution.

	\subsection{The boundary value problem \eqref{eq:bvp3}}
	\label{subsec:achc-typeII}

	Consider the twisted Toda equation on $(\xi_\ast,0)\times T^{2}$,
	\begin{equation}\label{eq:v-PDE}
		\Delta_{T^{2}} v + (e^{v})_{\xi\xi}
		+\xi \frac{12e^v-6\xi (e^v)_{\xi}}{12+\xi^3}=0.
	\end{equation}
	Recall that we have the model solution $v_{\mathrm{mod}}$ from Section \ref{subsec:model-solution-Type-II} that corresponds to a Type II PE metric with an ACH cusp
	\[
	e^{v_{\mathrm{mod}}}
	=
	-\frac a{24}(\xi-\xi_\ast)^3(\xi+\xi_\ast),
		\qquad a>0, \quad \xi_\ast=-\sqrt[3]{12}.
	\]
	This is the model solution from \textbf{Case 2} in Table \ref{tab:model-cases} of Section \ref{subsec:model-solution-Type-II} with $2b^3=3a^3$.
	Set
	\begin{equation}\label{eq:u-def}
		u(\xi,x):=v(\xi,x)-v_{\mathrm{mod}}(\xi).
	\end{equation}
	By picking the correct $a$, we have $\fint_{T^{2}} e^{u(\xi,\cdot)} d\vol_{T^2}=1$.
	Then \eqref{eq:v-PDE} is equivalent to
	\begin{equation}\label{eq:u-PDE-xi}
		\Delta_{T^{2}}u
		+e^{v_{\mathrm{mod}}}(e^{u})_{\xi\xi}
		+\left(2(e^{v_{\mathrm{mod}}})_{\xi}
		-\frac{6\xi^{2}}{12+\xi^{3}} e^{v_{\mathrm{mod}}}\right)(e^{u})_{\xi}=0.
	\end{equation}

	Introduce
	\[
	t:=\Bigl(1-\frac{\xi}{\xi_\ast}\Bigr)^{-1/2}\in[1,\infty),
	\qquad
	\xi=\xi_\ast(1-t^{-2}).
	\]
	Then
	\[
	e^{v_{\mathrm{mod}}}(t)
	=
	-\frac{a\xi_\ast}{2} \frac{2t^{2}-1}{t^{8}}, \qquad W_{\mathrm{mod}}
	=
	\frac{t^{6}}{2t^{2}-1},
	\]
	and \eqref{eq:u-PDE-xi} becomes
	\begin{equation}\label{eq:u-PDE-t-typeII}
		\Delta_{T^{2}}u+\Psi(t)(e^{u})_{tt}+B(t)(e^{u})_{t}=0,
	\end{equation}
	where
	\[
	\Psi(t)=-\frac{a}{8\xi_\ast}\left(2-\frac1{t^{2}}\right),
	\qquad
	B(t)=
	\frac{a}{8\xi_\ast}
	\frac{30t^{6}-33t^{4}+9t^{2}-1}{t^{3}(3t^{4}-3t^{2}+1)}.
	\]
	Since $a>0$ and $\xi_\ast<0$, we have
	\[
	0<-\frac{a}{8\xi_\ast}\le \Psi(t)\le -\frac{a}{4\xi_\ast},
	\]
	and $B$ is bounded on $[1,\infty)$. Therefore the coefficients satisfy \eqref{eq:coeff-assumptions}, so Theorems \ref{thm:wp-stability} and \ref{thm:stability-exp-decay} apply directly. This shows \eqref{eq:bvp3} has a unique solution.

	Moreover, for $t\in [1,\infty),$
	\[
	W
	=
	W_{\mathrm{mod}}
	-\frac{\frac12\xi u_\xi}{1+\frac{\xi^3}{12}}
	=
	\frac{t^{6}}{2t^{2}-1}
	-\frac{t^{7}(t^{2}-1)}{4(3t^{4}-3t^{2}+1)} u_t\ge c_1t^4-c_2t^5e^{-\kappa_2 t}
	\]
	for some $c_1,c_2,\kappa_2>0$. In particular,
	$
	\lim_{t\to\infty}W(t)=+\infty.
	$
	The argument in Proposition~\ref{prop:W-positive-BVP1} then implies $W>0$ on $[1,\infty)\times T^{2}$.

	\subsection{The boundary value problem \eqref{eq:bvp4}}
	\label{subsec:cuspxsurface-typeII}

	Consider the twisted Toda equation on $(\xi_\ast,0)\times \Sigma_{\ttg}$,
	\begin{equation}\label{eq:v-PDE-sigma_g}
		\Delta_{\Sigma_{\ttg}} v + (e^{v})_{\xi\xi}
		+\xi \frac{12e^v-6\xi (e^v)_{\xi}}{12+\xi^3}=-2.
	\end{equation}

	Recall that we have model solutions $v_{\mathrm{mod}}$ from Section \ref{subsec:model-solution-Type-II-sigma_g} that correspond to a Type II PE metric with $\Sigma_{\ttg}$ cusp
	\[
	e^{v_{\mathrm{mod}}}
	=(\xi-\xi_\ast)^2\left(\frac{1}{3}-\frac{\xi_\ast^2}{18}\xi+\frac{a}{24}(\xi_\ast^2-\xi^2)\right).
	\]
	Here $a$ is determined by
	$
	\fint_{\Sigma_{\ttg}} e^\varphi \dvol_{\Sigma_{\ttg}}+\frac{a}{2}\xi_\ast=\frac{1}{3}\xi_\ast^2.
	$

	Define
	\begin{equation}\label{eq:u-def-sigma_g}
		u(\xi,x):=v(\xi,x)-v_{\mathrm{mod}}(\xi).
	\end{equation}
	It follows that $\fint_{\Sigma_{\ttg}} e^{u(\xi,\cdot)} d\vol_{\Sigma_{\ttg}}=1$ and $u\in L^\infty((\xi_\ast,0)\times \Sigma_{\ttg})$.
	Then \eqref{eq:v-PDE-sigma_g} is equivalent to
	\begin{equation}\label{eq:u-PDE-xi-sigma_g}
		\Delta_{\Sigma_{\ttg}}u
		+e^{v_{\mathrm{mod}}}(e^{u})_{\xi\xi}
		+\left(2(e^{v_{\mathrm{mod}}})_{\xi}
		-\frac{6\xi^{2}}{12+\xi^{3}} e^{v_{\mathrm{mod}}}\right)(e^{u})_{\xi}-2(e^u-1)=0.
	\end{equation}

	Introduce
	\[
	t:=-\log\Bigl(1-\frac{\xi}{\xi_\ast}\Bigr)\in[0,\infty),
	\qquad
	\xi=\xi_\ast(1-e^{-t}).
	\]

	Substituting into \eqref{eq:u-PDE-xi-sigma_g}, we obtain
	\begin{equation}\label{eq:u-PDE-t-typeII-hyp}
		\Delta_{\Sigma_{\ttg}}u+\Psi(t)(e^{u})_{tt}+B(t)(e^{u})_{t}-2(e^{u}-1)=0,
	\end{equation}
	where
	\[
	\Psi(t)
	=
	1+\left(\frac{a\xi_\ast^2}{12}-\frac{2}{3}\right)e^{-t}
	-\frac{a\xi_\ast^2}{24}e^{-2t},
	\]
	and
	\[
	B(t)
	=
	2\Psi_t(t)-3\Psi(t)
	+\frac{6(1-e^{-t})^2}{3-3e^{-t}+e^{-2t}}\Psi(t).
	\]

	Since $e^{v_{\mathrm{mod}}}>0$ on $(\xi_\ast,0)$, the function $\Psi$ is positive on $[0,\infty)$; moreover $\Psi(t)\to1$ as $t\to\infty$, so there exist constants $0<c\le C$ such that
	\[
	c\le \Psi(t)\le C
	\qquad\text{for all }t\in[0,\infty).
	\]
	Also, $B$ is bounded on $[0,\infty)$. Therefore \eqref{eq:u-PDE-t-typeII-hyp} is uniformly elliptic with bounded coefficients, hence Theorems \ref{thm:wp-stability} and \ref{thm:stability-exp-decay} are applicable.

	Moreover,
	\[
	W_{\mathrm{mod}}
	=
	\frac{\frac{\xi_\ast^2}{3}+\frac{a}{2}(\xi-\xi_\ast)}{e^{v_{\mathrm{mod}}}}
	=
	\frac{e^{2t}\left(\frac13-\frac{a}{2\xi_\ast}e^{-t}\right)}{\Psi(t)}.
	\]
	Hence
	\[
	W
	=
	W_{\mathrm{mod}}
	-\frac{\frac12\xi u_\xi}{1+\frac{\xi^3}{12}}
	=
	\frac{e^{2t}\left(\frac13-\frac{a}{2\xi_\ast}e^{-t}\right)}{\Psi(t)}
	-\frac{e^{t}(e^{t}-1)}{2(3-3e^{-t}+e^{-2t})}\,u_t
	=\frac13 e^{2t}+O(e^{(2-\kappa_2)t})
	\]
	for some $\kappa_2>0$. In particular,
	$
	\lim_{t\to\infty}W(t)=+\infty.
	$
	The argument in Proposition~\ref{prop:W-positive-BVP1} then implies $W>0$ on $[0,\infty)\times \Sigma_{\ttg}$.

	\section{Degenerations}
	\label{sec:degeneration}

	In this section we investigate various degenerations of our PE manifolds (with a cusp).

	\subsection{Limiting geometry of a family of Type I PE metrics}

	Recall our Type I model solutions in Section \ref{subsec:model-solutions-Type-I}, where the model solution is $e^{v_{\mathrm{mod}}}=b+a\xi$ and the Einstein metric is
	\[h_{\mathrm{mod}}
	=
	\frac1{\xi^{2}}\left(
	\frac{b+\frac a2\xi}{b+a\xi}\,d\xi^{2}
	+
	\frac{b+a\xi}{b+\frac a2\xi}\,\eta^{2}
	+
	\left(b+\frac a2\xi\right)g_{T^{2}}
	\right),
	\qquad
	d\eta=\frac a2\,d\vol_{T^2}.\]
		Suppose $a>0,b>0$. After choosing a suitable period $\mpp$, the model solution induces a PE metric with an ACH cusp on $(0,\infty)\times \NN_{\ell}$. We then let $b\to0^+$. By \eqref{eq:period-constraint}, in this process the period $\mpp=\frac{a}{2\ell}$ does not change. In this limit, the solution $v_{\mathrm{mod}}$ converges to $\log(a\xi)$. Geometrically, if we take a pointed Cheeger-Gromov limit at a suitable base point, for instance at $\xi=1$, then the PE metrics with an ACH cusp converge to a complex hyperbolic metric, namely the metric associated to $\log(a\xi)$.

	We show that this convergence to the complex hyperbolic metric also occurs for the PE manifolds with an ACH cusp arising from \eqref{eq:bvp2}. More precisely, we work in the following setting. Recall that, for each boundary value $\varphi$ and constant $a>0$, there is a unique solution $v$ to \eqref{eq:bvp2} with boundary value $\varphi$ satisfying
	\[\left|v-\log(\fint_{T^2} e^{\varphi}\dvol_{T^2}+a\xi)\right|=O(e^{-\delta\sqrt{\xi}}).\]
	As discussed at the end of Section \ref{subsec:the-boundary-value-problem}, solutions corresponding to different values of $a$ determine isometric Einstein manifolds, after shifting the boundary value by an appropriate constant. We may assume without loss of generality that $a=1$. Next, fix $\phi\in C^\infty(T^2)$ and, for $N\gg1 $, let $v_N$ denote the unique solution to \eqref{eq:bvp2} with $a=1$ and boundary data $\varphi=-N+\phi$. We prove the following.

	\begin{theorem}\label{thm:PE-to-ACH-degeneration-PDE}
		As $N\to\infty$, for any $D\Subset (0,\infty)\times T^2$, $v_N$ converges smoothly to $\log \xi$ on $D$.
	\end{theorem}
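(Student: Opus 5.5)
We work in the normalized variables of Section \ref{subsec:achc-asd}. Set $b_N:=\fint_{T^2}e^{-N+\phi}\,d\vol_{T^2}=e^{-N}\beta$, where $\beta:=\fint_{T^2}e^{\phi}\,d\vol_{T^2}$. Write $v_N=\log(b_N+\xi)+u_N$. Then $u_N$ is the unique bounded solution of \eqref{eq:u-PDE-asd-complex-cusp} with $a=1$, $b=b_N$, boundary value $u_N|_{\xi=0}=\phi-\log\beta=:\psi$, and normalization $\fint_{T^2}e^{u_N}=1$. The boundary datum $\psi$ is independent of $N$. Since $\log(b_N+\xi)\to\log\xi$ smoothly on compact subsets of $(0,\infty)$, it suffices to show that $u_N\to 0$ smoothly on every $D\Subset(0,\infty)\times T^2$.

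The natural tool is the change of variables $t=\sqrt{(b_N+\xi)/b_N}$, which turns the equation into \eqref{eq:PDE-PsiB} with $\Psi=\frac{1}{4b_N}$ and $B=\frac{3}{4b_Nt}$. These coefficients blow up as $N\to\infty$, so the constants in Theorem \ref{thm:stability-exp-decay} are not uniform. I would therefore first rescale: divide \eqref{eq:PDE-PsiB} by $\frac{1}{4b_N}$, which gives $4b_N\Delta_{T^2}u+(e^u)_{tt}+\frac{3}{t}(e^u)_t=0$. This is the rotationally symmetric Laplacian on $\bR^4$ in the radial variable $t$, with a degenerating horizontal Laplacian. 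A fixed compact $\xi$-range $[\xi_1,\xi_2]$ corresponds to $t\in[\sqrt{\xi_1/b_N},\sqrt{\xi_2/b_N}]$, which escapes to infinity. Decay in $t$ should therefore give smallness, but we need a decay rate that is uniform in $N$.

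The plan is to redo the $H^{-1}$ energy argument of Lemma \ref{lem:energy-inequality} directly with $w=e^{u_N}-1$ (comparing $u_N$ with the solution $0$), tracking the constants. From the maximum principle, $|u_N|\le\|\psi\|_{L^\infty}$ uniformly in $N$, so $\Theta$ is uniformly bounded. The computation of Lemma \ref{lem:energy-inequality}, carried out in the original variable $\xi$ where the equation is $\Delta_{T^2}(\Theta w)+((b_N+\xi)w)_{\xi\xi}=0$, gives
\[
\bigl((b_N+\xi)E\bigr)''\ \ge\ c\,E,\qquad c=2e^{-A}\lambda_1,
\]
with $E(\xi)=\frac12\|w(\xi,\cdot)\|_{H^{-1}}^2$. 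Setting $F=(b_N+\xi)E$, this reads $F''\ge cF/(b_N+\xi)$. Because $F$ is bounded by $C(b_N+\xi)$ and $F(0)\le Cb_N\to 0$, a comparison with the ODE $y''=cy/(b_N+\xi)$ should apply. That ODE is a Bessel-type equation with solutions behaving like $\sqrt{\xi}\,I_1(2\sqrt{c\xi})$ and $\sqrt{\xi}\,K_1(2\sqrt{c\xi})$. Comparing with a combination that equals $F(0)$ at $0$ and grows at most linearly, one expects $F(\xi)\le F(0)\cdot(\text{decaying }K\text{-type solution normalized at }0)$. Alternatively, at points $\xi\le R$ one can use the linear growth bound together with the maximum principle for convex subsolutions on $[0,R]$.

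The key point is that the bounding solution is controlled by $F(0)\to 0$ at $\xi=0$ and by $F(R)\le CR$ at a large $R$. The $K$-type solution decays like $e^{-2\sqrt{c\xi}}$, while the $I$-type solution forced by the data at $R$ is divided by $e^{2\sqrt{cR}}$. Letting $N\to\infty$ and then $R\to\infty$ gives $E\to0$ uniformly on compact $\xi$-intervals in $(0,\infty)$. In fact it gives $E\le Cb_N^{\,\theta}$ plus a term vanishing as $R\to\infty$, since the $K$-solution normalized at $\xi=0$ is singular there only logarithmically.

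Finally, on $D$ the equation $\Delta_{T^2}u+((b_N+\xi)e^u)_{\xi\xi}=0$ is uniformly elliptic with $N$-independent bounds, since $b_N+\xi\in[\xi_1,\xi_2+1]$. De Giorgi--Nash--Moser and Schauder estimates then give uniform $C^k$ bounds on $u_N$, and Lemma \ref{lem:interpolation} converts $H^{-1}$ smallness into $C^k$ smallness. The main obstacle is the comparison step: making the Bessel-type barrier argument rigorous for the differential inequality, which is degenerate at $\xi=0$ when $b_N\to0$, with constants independent of $N$. If that fails, a fallback is a compactness argument. Extract a limit $u_\infty$ on $(0,\infty)\times T^2$ solving $\Delta_{T^2}u+(\xi e^u)_{\xi\xi}=0$, bounded, with $\fint e^{u_\infty}=1$. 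Then pass the inequality $(\xi E)''\ge cE$ to the limit and show that a bounded nonnegative solution with $\xi E\to0$ at $0$ must vanish. This Liouville-type step follows from the same ODE comparison but without the $N$-uniformity issue.
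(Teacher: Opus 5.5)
Your proof is correct, but it follows a genuinely different route from the paper's.

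\textbf{The paper's route.} The paper argues by compactness. The uniform bound $|v_N-\log(e^{-N+\bar\phi}+\xi)|\le C$ and interior estimates give a subsequential limit $v_\infty$ solving the Toda equation on $(0,\infty)\times T^2$, with $|v_\infty-\log\xi|\le C$ and $\fint_{T^2}e^{v_\infty}\,d\vol_{T^2}=\xi$. After lifting to $(0,\infty)\times\mathbb{R}^2$, Lemma \ref{lem:Liouville_non_compact_easy} forces $v_\infty=\log\xi$. That lemma uses the substitution $V(Z,z)=e^{w(|Z|^2/4,z)}$, a removable singularity across $\{0\}\times\mathbb{R}^2$, and Harnack on $\mathbb{R}^6$. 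Uniqueness of the limit then gives convergence of the whole sequence.

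\textbf{Your route.} You make the slice-wise $H^{-1}$ method quantitative instead, and your key inequality checks out:
\[
\bigl((b_N+\xi)E\bigr)''=(b_N+\xi)\|w_\xi\|_{H^{-1}}^2+\int_{T^2}\Theta w^2\,d\vol_{T^2}\ge cE .
\]
So $F=(b_N+\xi)E\ge0$ is a subsolution whose only boundary input is $F(0)=b_NE(0)=O(e^{-N})$.

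The comparison step you flag as the main obstacle is in fact harmless. Since $F\ge0$ and $b_N\le1$, you have the $N$-independent inequality $F''\ge\frac{c}{1+\xi}F$. Let $y_-=\sqrt{1+\xi}\,K_1(2\sqrt{c(1+\xi)})$ and $y_+=\sqrt{1+\xi}\,I_1(2\sqrt{c(1+\xi)})$. Compare $F$ on $[0,R]$ with $F(0)y_-/y_-(0)+\epsilon y_+$, use $F\le C(1+\xi)$, and let $R\to\infty$, $\epsilon\to0$. This gives $E_N(\xi)\le Ce^{-N}\xi^{-1}$. The large coefficient near $\xi=0$ only helps, and nothing is singular there.

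\textbf{What each route buys.} Your route gives an explicit exponential rate in $N$. The rate holds in $H^{-1}$, and then on $D$ in every $C^k$ norm via Lemma \ref{lem:interpolation} and uniform interior bounds. It needs neither subsequence extraction nor a removable-singularity theorem.

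The paper's route is more general, because it never uses the spectral gap $\lambda_1(T^2)>0$. The same Liouville lemma is reused for the rescaled limits on $(0,\infty)\times\mathbb{R}^2$ in Theorem \ref{thm:pointed-limits}. There the horizontal torus is blown up, and your energy argument would lose its uniform Poincar\'e constant.
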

		Geometrically, for each solution $v_N$, after fixing a degree $\ell$ and choosing the correct period $\mpp$, we obtain a PE manifold with an ACH cusp $(M,h_N)$, where $M$ is diffeomorphic to $(0,\infty)\times \NN_\ell$. Let $p_N$ be a base point with $\xi(p_N)=1$. We then have
	\begin{theorem}\label{thm:PE-to-ACH-degeneration-geometry}
		As $N\to\infty$, $(M,h_N,p_N)$ converges smoothly in the pointed Cheeger-Gromov sense to the complex hyperbolic cusp metric.
	\end{theorem}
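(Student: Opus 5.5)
The plan is to deduce Theorem \ref{thm:PE-to-ACH-degeneration-geometry} from Theorem \ref{thm:PE-to-ACH-degeneration-PDE} by transferring the smooth convergence of $v_N$ to smooth convergence of all the data in the ansatz \eqref{eq:Einstein-metric-h}. We compare $h_N$ with the complex hyperbolic cusp metric $h_{\mathrm{CH}}$ built from the model solution $\log\xi$, namely Case 5 of Table \ref{tab:typeI-cases} with $b=0$, $a=1$. Writing
\[
W_N=1-\tfrac12\xi\,\partial_\xi v_N ,
\]
Theorem \ref{thm:PE-to-ACH-degeneration-PDE} gives $W_N\to \tfrac12$ smoothly on compact subsets $D\Subset(0,\infty)\times T^2$, together with $e^{v_N}\to\xi$ and $W_Ne^{v_N}\to\tfrac12\xi$. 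These are exactly the coefficients of $h_{\mathrm{CH}}=\xi^{-2}\bigl(\tfrac12 d\xi^2+2\eta_\infty^2+\tfrac12\xi g_{T^2}\bigr)$.

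For the connection forms, we note that $\int_{T^2}(W_Ne^{v_N})_\xi\,d\vol_{T^2}=\tfrac a2=\tfrac12$ for every $N$, since $\fint e^{v_N}=e^{\bar\varphi_N}+\xi$. Hence, by \eqref{eq:period-constraint}, the period $\mpp=\frac{1}{2\ell}$ is independent of $N$. The $\bS^1$-bundle $\mathscr L=\NN_\ell$ and the fibre length normalization are therefore fixed. By \eqref{eq:deta}, the curvature $d\eta_N$ is a closed $2$-form on $(0,\infty)\times T^2$ whose coefficients converge smoothly on $D$ to $d\eta_\infty=\tfrac12 d\vol_{T^2}$.

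We then fix a reference connection $\eta_\infty$ on $(0,\infty)\times\NN_\ell$ and write $\eta_N=\eta_\infty+\pi^*\alpha_N$. Here $\alpha_N$ is a $1$-form on $(0,\infty)\times T^2$ with $d\alpha_N=d\eta_N-d\eta_\infty\to0$ smoothly on compacts. Using the Hodge decomposition on each slice, and absorbing closed parts by gauge transformations (bundle automorphisms $e^{i f}$ together with translations along flat directions of $T^2$), we choose $\alpha_N$ in Coulomb gauge with $\alpha_N\to0$ smoothly on compacts. The harmonic part in the $T^2$ directions cannot be killed by a gauge transformation unless its periods lie in the integer lattice. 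We handle it instead by noting that $d\alpha_N\to 0$ and normalizing the harmonic part at the slice $\xi=1$. Any remaining constant flat part is removed by a diffeomorphism of $\NN_\ell$ covering a translation of $T^2$. With these diffeomorphisms $\Phi_N$, the metrics $\Phi_N^*h_N$ converge to $h_{\mathrm{CH}}$ in $C^\infty$ on every set $\{\xi\in[c,C]\}\subset M$.

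The main obstacle is that the pointed Cheeger--Gromov limit requires convergence on metric balls $B_R(p_N)$, and these must stay inside some fixed $\xi$-range $[c_R,C_R]$ uniformly in $N$. We obtain this from the ansatz itself: along any curve, $|d\log\xi|_{h_N}=\xi^{-1}W_N^{-1/2}|d\xi|_{g}/\ldots=W_N^{-1/2}$. So we need a uniform positive lower bound on $W_N$, at least on the region reached within distance $R$. We get it from the maximum-principle argument in the proof of Proposition \ref{prop:W-positive-BVP1}. By \eqref{eq:Bochner-formula}, $\xi^{-2}W_N^{-1}$ is subharmonic. Its supremum on $[\xi_0,\infty)$ is attained at $\xi_0$, and this gives $W_N\ge c\,(\xi_0/\xi)^2$ for $\xi\ge\xi_0$, with $\xi_0$ fixed and $W_N(\xi_0,\cdot)\to\tfrac12$. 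This yields $|d\log\xi|_{h_N}\le C\xi/\xi_0$ above $\xi_0$, and the same reasoning on a slightly smaller $\xi_0$ controls the region just below. Hence $B_R(p_N)\subset\{c_R\le\xi\le C_R\}$ for all large $N$, and the smooth convergence on such slabs gives the claimed pointed Cheeger--Gromov convergence to the complex hyperbolic cusp metric.
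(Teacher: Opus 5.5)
Your overall route is the paper's: the paper just says it suffices to prove Theorem~\ref{thm:PE-to-ACH-degeneration-PDE}. Your first two paragraphs correctly supply what it leaves implicit. These are the convergence of $W_N$, $e^{v_N}$ and $W_Ne^{v_N}$, the $N$-independence of the period, and the gauge-fixing of $\eta_N$ by bundle gauge transformations together with lifts of translations of $T^2$; the latter works because $\ell\neq0$.

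\textbf{The gap.} The step you call the main obstacle is resolved by an argument that does not work. The maximum-principle bound from the proof of Proposition~\ref{prop:W-positive-BVP1} only gives $W_N\ge c(\xi_0/\xi)^2$ on $\{\xi\ge\xi_0\}$. Equivalently, $|d\log\xi|_{h_N}=W_N^{-1/2}\le C\xi/\xi_0$, or $|d(\xi^{-1})|_{h_N}\le C/\xi_0$. This allows $\xi^{-1}$ to fall from $1$ at $p_N$ to $0$ along a path of length $\xi_0/C$. So it gives no upper bound for $\xi$ on $B_R(p_N)$ once $R\gtrsim\xi_0/C$. Quantitatively, it bounds the length of a curve from $\{\xi=1\}$ to $\{\xi=C_R\}$ below only by
\[
\int_1^{C_R}W_N^{1/2}\,\frac{d\xi}{\xi}\ \ge\ c'\xi_0\int_1^{C_R}\xi^{-2}\,d\xi ,
\]
which stays bounded as $C_R\to\infty$. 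The lower end has the same problem: applying the estimate at a level $\xi_0'\le c_R$ gives, on $[c_R,1]$, a length bound of at most $c'\xi_0'/c_R\le c'$, not something that grows as $c_R\to0$. So the inference ``hence $B_R(p_N)\subset\{c_R\le\xi\le C_R\}$'' does not follow.

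\textbf{The fix.} No maximum principle is needed. For fixed $R$, set $S_R:=\{e^{-2R-2}\le\xi\le e^{2R+2}\}\subset M$, which is compact. By Theorem~\ref{thm:PE-to-ACH-degeneration-PDE}, $W_N\to\frac12$ uniformly on $S_R$. Hence $W_N\ge\frac14$ there for $N\ge N(R)$, that is, $|d\log\xi|_{h_N}\le2$ on $S_R$. A unit-speed curve from $p_N$, where $\log\xi=0$, must change $\log\xi$ by $2R+2$ before it leaves $S_R$. Its length is therefore at least $R+1$, so $B_R(p_N)\subset S_R$ for all $N\ge N(R)$. Combine this with your smooth convergence of $\Phi_N^*h_N$ to $h_{\mathrm{CH}}$ on slabs. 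After passing to a subsequence so that the base points converge in the compact slice $\{\xi=1\}$, you obtain the pointed Cheeger--Gromov convergence.
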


	It suffices to prove Theorem \ref{thm:PE-to-ACH-degeneration-PDE}.

	\begin{proof}[Proof of Theorem \ref{thm:PE-to-ACH-degeneration-PDE}]
		Denote $\bar\phi:=\log(\fint_{T^2}e^{\phi}\dvol_{T^2})$ and $C:=\sup_{T^2}|\phi-\bar\phi|$. Theorem \ref{thm:wp-stability} implies that
		\[\left|v_N-\log(e^{-N+\bar\phi}+\xi)\right|\leq C, \qquad \fint_{T^2} e^{v_N}\dvol_{T^2}=e^{-N+\bar\phi}+\xi. \]
		Thus, interior elliptic estimates imply that, after passing to a subsequence, $v_N$ converges smoothly on any $D\Subset (0,\infty)\times T^2$ to $v_\infty\in C^\infty((0,\infty)\times T^2)$, which satisfies \[(e^{v_\infty})_{\xi\xi}+\Delta_{T^2}v_\infty=0,\qquad |v_\infty-\log\xi|\leq C,\qquad \fint_{T^2}e^{v_\infty}\dvol_{T^2}=\xi.\]
		Lifting $v_\infty$ to $C^\infty((0,\infty)\times\mathbb{R}^2)$ and applying Lemma \ref{lem:Liouville_non_compact_easy}, we conclude $v_\infty=\log\xi$.
	\end{proof}

		An interesting phenomenon is that there is another non-trivial scale. By taking a pointed Cheeger-Gromov limit at suitably chosen base points, we obtain a non-trivial limit that is different from the complex hyperbolic space above.

	For each $N\gg1$, let $q_N\in M$ be a base point with $\xi(q_N)=e^{-N}$. We then analyze the pointed Cheeger--Gromov limit of the family $(M,h_N,q_N)$ as $N\to \infty$. Set $z:=\log\xi+N$.

	First we consider the model case. The metric becomes
	\[h_N=\frac{1+\frac{1}{2}e^z}{1+e^z}dz^2+\frac{1+e^z}{1+\frac{1}{2}e^z}\frac{1}{e^{-2N}e^{2z}}\eta^2+\frac{1+\frac{1}{2}e^z}{e^{-N}e^{2z}}g_{T^2}.\]
		Hence, taking the limit we obtain the following smooth limit Einstein metric on $\bR^4$, which is neither hyperbolic, nor complex hyperbolic.
	\begin{equation}\label{eq:blow-up-limit-Einstein-on-R4}
		h_{\infty}=\frac{1+\frac{1}{2}e^z}{1+e^z}dz^2+\frac{1+e^z}{1+\frac{1}{2}e^z}\frac{1}{e^{2z}}\left(dt+\frac{1}{2}xdy\right)^2+\frac{1+\frac{1}{2}e^z}{e^{2z}}(dx^2+dy^2).
	\end{equation}
	As $z\to\infty$, the metric $h_\infty$ is asymptotic to the complex hyperbolic space, whereas as $z\to-\infty$, it is asymptotic to the real hyperbolic space. Note that the metric $h_\infty$ was discovered in \cite{cortes2018quarter}.

	Next we investigate the general case. We identify the Riemann surface $T^2$ as $\mathbb{C}/\Lambda$, so the flat metric $g_{T^2}$ can be identified as $dx'^2+dy'^2$. The Toda equation then becomes
	\[(e^v)_{\xi\xi}+v_{x'x'}+v_{y'y'}=0.\]
	Set $\zeta=\xi e^N, x=x'e^{N/2},y=y' e^{N/2}$. For the solution $v_N$, we set
	\begin{equation}\label{eq:blow-up-R4-Einstein-metric}
		u_N(\zeta,x,y):=v_{N}(\xi,x',y')+N=v_N(e^{-N}\zeta,e^{-N/2}x, e^{-N/2}y)+N.
	\end{equation}
	Then $u_N$ satisfies
	\[(e^{u_N})_{\zeta\zeta}+(u_N)_{xx}+(u_N)_{yy}=0,\qquad u_N|_{\{\zeta=0\}}(x,y)=\phi(e^{-N/2}x,e^{-N/2}y).\]
	In terms of $u_N$, the Einstein metric is
	\[h_N=\zeta^{-2}\left(W(d\zeta^2+e^{u_N}(dx^2+dy^2)))+W^{-1}\eta^2\right),\qquad W=1-\frac{1}{2}\zeta (u_N)_{\zeta}.\]
	Denote $\bar{u}_N(\zeta)=\log(\fint_{T^2} e^{u_N(\zeta, x,y)} dxdy)=\log(e^{\bar\phi}+\zeta)$.
		Applying the maximum principle to the PDE for $w_N:=u_N-\bar{u}_N$, we have
	\[\sup |w_N|\leq \sup |\phi-\bar\phi|.\]
	Thus we can take a limit of $u_N$ to $u_\infty$, which satisfies the equation
	\[(e^{u_\infty})_{\zeta\zeta}+(u_\infty)_{xx}+(u_\infty)_{yy}=0,\quad u_\infty|_{\{\zeta=0\}}(x,y)=\phi(0,0), \]
		with $u_\infty-\log(e^{\bar\phi}+\zeta)\in L^\infty([0,\infty)\times\mathbb{R}^2).$ We have the following lemma.

	\begin{lemma}
		$u_\infty=\log(b+a\zeta)$ for some $a, b>0$.
	\end{lemma}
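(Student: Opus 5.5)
The plan is to show that $u_\infty$ is independent of $(x,y)$. Once that is known, the equation reduces to $(e^{u_\infty})_{\zeta\zeta}=0$, so $e^{u_\infty}=b+a\zeta$. The boundary condition then gives $b=e^{\phi(0,0)}>0$. The bound $u_\infty-\log(e^{\bar\phi}+\zeta)\in L^\infty$ forces $a>0$ (in fact $a\asymp 1$), since otherwise $e^{u_\infty}$ would stay bounded while $e^{\bar\phi}+\zeta\to\infty$. To get $(x,y)$-independence I would use translations. For any $\tau\in\mathbb R^2$, the function $u^\tau(\zeta,x,y):=u_\infty(\zeta,(x,y)+\tau)$ solves the same Toda equation. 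It has the same constant boundary value $\phi(0,0)$ and the same growth bound. It therefore suffices to prove a uniqueness statement for the half-space problem: two solutions $u_1,u_2$ of $(e^{u})_{\zeta\zeta}+\Delta_{xy}u=0$ on $[0,\infty)\times\mathbb R^2$ with equal boundary values and both within bounded distance of $\log(e^{\bar\phi}+\zeta)$ must coincide. This is presumably the content of Lemma \ref{lem:Liouville_non_compact_easy} invoked above; I would prove it directly as follows.

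First I would put the difference equation in a form to which scale-invariant estimates apply. As in Lemma \ref{lem:energy-inequality}, set $w=e^{u_1}-e^{u_2}$ and $U=u_1-u_2=\Theta w$, with $\Theta$ comparable to $(e^{\bar\phi}+\zeta)^{-1}$. Subtracting the two equations gives $\Delta_{xy}U+\partial_{\zeta\zeta}(\Theta^{-1}U)=0$. Here $U$ is bounded, and it vanishes on $\{\zeta=0\}$. Next I would change variables as in the treatment of \eqref{eq:bvp2}, with $t=\sqrt{1+\zeta/b}$, and also rescale $(x,y)$. In the rescaled coordinates the operator becomes uniformly elliptic: the factor $\Theta^{-1}\sim\zeta$ against $\partial_\zeta^2$ turns into a bounded multiple of $\partial_t^2$, with bounded lower-order terms. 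The derivative bounds needed for this come from interior and boundary Schauder estimates for $u_i$, using $|u_i-\log(e^{\bar\phi}+\zeta)|\le C$, which is exactly the control used to take the limit $u_N\to u_\infty$. Writing $\partial_{\zeta\zeta}(\Theta^{-1}U)$ out, $U$ satisfies a linear, uniformly elliptic, nondivergence equation $a^{ij}\partial_{ij}U+b^i\partial_iU+cU=0$ with bounded coefficients.

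The key step is a Liouville argument. $U$ is bounded on a half-space and vanishes on its boundary. The Krylov--Safonov boundary Hölder estimate on half-balls of radius $R$ gives $\operatorname{osc}_{B_r^+}U\le C(r/R)^\alpha\sup|U|$. Letting $R\to\infty$ forces $U\equiv0$, hence $u^\tau=u_\infty$ for every $\tau$.

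The main obstacle is the zero-order and drift terms. Under the scaling $R\to\infty$, the lower-order coefficients must scale away, or have a favorable sign ($c\le0$) so that the maximum principle and the boundary Hölder estimate hold uniformly in $R$. In the $t$ variable the drift is of size $1/t$, as with $B(t)$ in \eqref{eq:PDE-PsiB}, so it decays, which is favorable. If the zero-order term $c$ has a bad sign, my fallback is to work with $w$ instead of $U$. The equation $\Delta_{xy}(\Theta w)+w_{\zeta\zeta}=0$ has divergence structure in $\zeta$ and no zero-order term, and one can run a localized version of the $H^{-1}$-in-$(x,y)$ energy argument of Lemma \ref{lem:energy-inequality}, exploiting translation invariance to average over large squares.
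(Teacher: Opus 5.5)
\textbf{There is a genuine gap.} The half-space uniqueness statement you reduce to is false. Take $b=e^{\phi(0,0)}$ and any $a_1\neq a_2$ in $(0,\infty)$. The functions $u_i=\log(b+a_i\zeta)$ solve $(e^u)_{\zeta\zeta}+\Delta_{xy}u=0$, have the same boundary value, and both stay within bounded distance of $\log(e^{\bar\phi}+\zeta)$. This is the same free parameter $a$ that gives \eqref{eq:bvp2} a one-parameter family of solutions, and it is why the lemma asserts the form $\log(b+a\zeta)$ without determining $a$. (Lemma \ref{lem:Liouville_non_compact_easy} is the interior statement without boundary, not a uniqueness theorem.)

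So your Liouville step must fail. The function $U=\log\frac{b+a_1\zeta}{b+a_2\zeta}$ is a bounded solution of your difference equation and vanishes on $\{\zeta=0\}$. Its oscillation on a fixed half-ball is a fixed positive number, while your bound $C(r/R)^\alpha\sup|U|$ tends to $0$ as $R\to\infty$. The error is the claim that the drift is favorable. Write $b+\zeta=s^2/4$. Then $(b+\zeta)\partial_\zeta^2+2\partial_\zeta=\partial_s^2+\frac{3}{s}\partial_s$, which is the radial Laplacian of $\mathbb{R}^4$; this is the $B(t)=3\Psi/t$ you noticed. A drift homogeneous of degree $-1$ is exactly scale-critical, not decaying. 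Rescaling by $R$ reproduces $\frac{3}{s}\partial_s$ on $\{s>2\sqrt b/R\}$, so $R$ times the supremum of the drift coefficient is unbounded, and that quantity controls the Krylov--Safonov constants on $B_R^+$. Geometrically, $\{\zeta=0\}$ is a small sphere times $\mathbb{R}^2$ inside $\mathbb{R}^4\times\mathbb{R}^2$. Under blow-down it shrinks to a codimension-$4$ set of zero capacity, so Dirichlet data there give no control at large scales. Your fallback has the same defect: the pair above satisfies all its hypotheses. Moreover, on $\mathbb{R}^2$ there is no compact slice whose average fixes $a$, the way $\fint_{T^2}e^u$ does on the torus.

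\textbf{What is missing.} You need to show that the asymptotic slope exists, namely that $e^{u_\infty}/(1+\zeta)$ tends to a constant as $\zeta\to\infty$, uniformly in $(x,y)$. You then identify $u_\infty$ by comparison rather than by uniqueness. This is what Lemma \ref{lem:Liouville_non_compact} does:
\begin{enumerate}
\item It lifts to $V(Z,z)=e^{w(|Z|^2/4-1,z)}$ on $(\mathbb{R}^4\setminus B_2)\times\mathbb{R}^2$. There the equation becomes the uniformly elliptic divergence-form equation $\Delta_ZV+\Delta_z\log V=0$.
\item Blow-down limits are bounded solutions off the codimension-$4$ set $\{0\}\times\mathbb{R}^2$. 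They are therefore constant, by a removable singularity theorem and Harnack.
\item Lemma \ref{lem:annular_cylinder_decay_lemma} controls the thin region near the shrinking inner boundary, which gives $V\to c$ uniformly.
\item The explicit radial solutions $c\pm\varepsilon+4(1-c\mp\varepsilon)|Z|^{-2}$ equal $1$ on $\{|Z|=2\}$ and serve as barriers. They force $V=c+4(1-c)|Z|^{-2}$, i.e.\ $e^{u}=1+c\zeta$.
\end{enumerate}
With that uniform asymptotic in hand, your translation argument could be finished with a maximum principle, but it would no longer be needed.
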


	\begin{proof}
		This is proved by Lemma \ref{lem:Liouville_non_compact} in the Appendix.
	\end{proof}

		Geometrically, if we consider $(M,h_N,q_N)$ with a base point $q_N$ where $\xi(q_N)=e^{-N},x'(q_N)=x_0',y'(q_N)=y_0'$, then the limit is again the Einstein metric on $\bR^4$ defined by
	\eqref{eq:blow-up-limit-Einstein-on-R4} (after coordinate change).

	By similar blow-up arguments, we provide a complete understanding of our Einstein metric during the process $N\to\infty$.

	\begin{theorem}\label{thm:pointed-limits}
		Let $\tilde p_N\in M$ be any sequence of base points. Then, after passing to a subsequence, the pointed spaces
		$
		(M,h_N,\tilde p_N)
		$
		converge in the pointed Gromov--Hausdorff sense, and every such pointed limit is one of the following:
		\begin{enumerate}
		\item $\bRH^4$, if $\xi(\tilde p_N)e^N\to 0$;
		
		\item the Einstein metric on $\mathbb{R}^4$ given in \eqref{eq:blow-up-limit-Einstein-on-R4}, if
		$\xi(\tilde p_N)e^N\to c\in(0,\infty)$;
		
		\item $\bCH^2$, if $\xi(\tilde p_N)e^N\to\infty$ and $\xi(\tilde p_N)\to0$;
		
		\item the complex hyperbolic cusp metric, if $\xi(\tilde p_N)\to c\in(0,\infty)$;
		
		\item $\mathbb{R}$, if $\xi(\tilde p_N)\to\infty$.
		\end{enumerate}
	\end{theorem}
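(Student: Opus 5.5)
The plan is to reduce every pointed limit to a rescaled solution of the Toda equation, exactly as in the proof of Theorem \ref{thm:PE-to-ACH-degeneration-PDE} and the blow-up at $q_N$. The regime is set by the value $\xi_N:=\xi(\tilde p_N)$, so after passing to a subsequence we split into the five cases by the behaviour of $\xi_N e^N$ and $\xi_N$. In each case we rescale
\[
\zeta=\xi/\lambda_N,\qquad x=x'\mu_N,\qquad y=y'\mu_N,\qquad u=v_N+\log(\mu_N^{2}/\lambda_N),
\]
with $\lambda_N$ comparable to $\xi_N$ and $\mu_N^2$ chosen so that the leading part $e^{-N+\bar\phi}+\xi$ of $\fint e^{v_N}$ stays of unit size. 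This rescaling preserves the Toda equation $(e^{u})_{\zeta\zeta}+\Delta u=0$. It is compatible with the metric because
\[
h=\xi^{-2}\bigl(W\,d\xi^2+W\,e^{v}g_{T^2}+W^{-1}\eta^2\bigr)
\]
is invariant under $\xi\mapsto\lambda\xi$ together with $v\mapsto v-\log\lambda$ (after rescaling $\eta$ and the fiber coordinate). The maximum principle bound $\sup|v_N-\log(e^{-N+\bar\phi}+\xi)|\le C$ from Theorem \ref{thm:wp-stability} is scale invariant, so it gives uniform $C^\infty_{loc}$ bounds for the rescaled $u_N$ on $(0,\infty)\times\mathbb{R}^2$ lifted to the universal cover. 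Passing to the limit, we then identify it with Lemma \ref{lem:Liouville_non_compact} or Lemma \ref{lem:Liouville_non_compact_easy}, which force $u_\infty=\log(b+a\zeta)$ with $a,b\ge0$.

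Matching the cases to the model table:
\begin{enumerate}
\item If $\xi_Ne^N\to c\in(0,\infty)$, we take $\lambda_N=e^{-N}$. This is the computation already done at $q_N$, and it yields \eqref{eq:blow-up-limit-Einstein-on-R4}.
\item If $\xi_Ne^N\to0$, we take $\lambda_N=\xi_N$. The rescaled average becomes $e^{-N+\bar\phi}/\xi_N+\zeta$, which after normalization tends to a constant. We get $a=0$, and Case 1 of Table \ref{tab:typeI-cases} is the hyperbolic metric; on the universal cover with $\zeta\in(0,\infty)$ this is all of $\bRH^4$.
\item If $\xi_Ne^N\to\infty$ and $\xi_N\to0$, the same rescaling gives $b\to0$, so the limit is $\log(a\zeta)$. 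This is Case 5, the complex hyperbolic metric. Since $\xi_N\to0$ the injectivity radius at the base point blows up, so the limit is all of $\bCH^2$ rather than a quotient.
\item If $\xi_N\to c\in(0,\infty)$, no rescaling is needed, and Theorem \ref{thm:PE-to-ACH-degeneration-geometry} gives the complex hyperbolic cusp.
\item If $\xi_N\to\infty$, the unit-radius balls collapse. Here we use the decay $|v_N-v_{\mathrm{mod}}|=O(e^{-\delta\sqrt\xi})$, uniform in $N$ by Theorem \ref{thm:stability-exp-decay} (its constant depends only on $\|\phi\|$ after subtracting $\bar\phi$). This shows the metric is close to the model cusp $\tfrac12\bigl(dr^2+e^{-r}g+e^{-2r}\eta^2\bigr)$, whose fibres shrink, so the Gromov--Hausdorff limit is a line $\mathbb{R}$.
\end{enumerate}

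To pass from convergence of $u_N$ to pointed Cheeger--Gromov convergence of $h_N$, we need $W_N$ to converge to the model $W$ with a positive limit, and $\eta_N$ to converge via \eqref{eq:deta}. Both follow from $C^\infty_{loc}$ convergence of $u_N$. Positivity of the limit $W=1-\tfrac12\zeta u_\zeta$ is automatic for $\log(b+a\zeta)$.

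The main obstacle is twofold. First, in the regime $\xi_N\to0$ the torus directions are rescaled by $\mu_N\to\infty$, and we must check that the boundary data $\phi(\cdot/\mu_N)$ becomes constant, so the Liouville lemma applies with constant trace. The second point is completeness of the limit: in cases (1)--(3) the base point sits at distance $\to\infty$ from $\zeta=0$, which holds because $h$ has $\zeta^{-2}d\zeta^2$ behaviour near the PE boundary. The limits therefore live on $\zeta\in(0,\infty)$ and are complete, rather than being domains with boundary.
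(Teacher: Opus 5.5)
Your route is the paper's own: its proof is only a sketch, and it details just the $\bCH^2$ case. The recipe is to rescale at $\lambda_N=\xi(\tilde p_N)$, use the scale-invariant bound $|v_N-\log(e^{-N+\bar\phi}+\xi)|\le\sup|\phi-\bar\phi|$, and identify the limit by a Liouville theorem.

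\textbf{The scaling law.} Your formula is off. With $\xi=\lambda\zeta$ and $x=\mu x'$, the function $u=v(\lambda\zeta,x/\mu)+c$ solves the Toda equation iff $e^{c}=(\lambda\mu)^{-2}$, not $e^{c}=\mu^2/\lambda$. In particular $u=v-\log\lambda$ is a symmetry only together with $x=\lambda^{-1/2}x'$, which is the paper's normalization. With the correct law your unit-size prescription does fix $\mu_N$. The regimes $\xi_Ne^N\to c$, ($\xi_Ne^N\to\infty$, $\xi_N\to0$) and $\xi_N\to c$ then go through as you describe.

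\textbf{The gap in the regime $\xi_Ne^N\to0$.} Take $\lambda=\xi_N$ and $e^{c}=e^N$. Then $\mu_N^{-1}=e^{N/2}\xi_N\to0$, the trace $\phi(x_0'+\mu_N^{-1}x)$ tends to $\phi(x_0')$, and $|u_N-\log(e^{\bar\phi}+e^N\xi_N\zeta)|\le C$. So $u_\infty$ is a \emph{bounded} solution on $[0,\infty)\times\mathbb R^2$ with constant trace.

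Neither Lemma~\ref{lem:Liouville_non_compact} nor Lemma~\ref{lem:Liouville_non_compact_easy} covers this. Both lift $V=e^{u}/(1+\zeta)$ (resp.\ $e^{u}/\zeta$) to $\mathbb R^6$ and need $V$ bounded below to make $\mathcal L$ uniformly elliptic. Here $V\to0$ as $\zeta\to\infty$. Lemma~\ref{lem:Liouville_non_compact} also concludes $c>0$, so your ``$a=0$'' lies outside its scope.

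You need a separate Liouville statement, which is easy. $V=e^{u_\infty}$ solves $V_{\zeta\zeta}+\operatorname{div}_z(V^{-1}\nabla_zV)=0$. This is uniformly elliptic on the three-dimensional half-space, since $V$ is bounded above and below. $V-e^{\phi(x_0')}$ vanishes on $\{\zeta=0\}$. The boundary De Giorgi--Nash--Moser estimate $\sup_{B_r^+}|V-e^{\phi(x_0')}|\le C(r/R)^\alpha\sup|V-e^{\phi(x_0')}|$, with $R\to\infty$, forces $V\equiv e^{\phi(x_0')}$. Then $W\equiv1$ and $d\eta=0$. Since the fibre period $\mpp/\xi_N$ and the torus periods both diverge, the limit is the half-space model of $\bRH^4$.

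\textbf{The regime $\xi_N\to\infty$.} This case rests on a false uniformity claim. The constants in Theorem~\ref{thm:stability-exp-decay} depend on bounds for $\Psi$ and $B$, not only on $\|\phi-\bar\phi\|$. In the reduction \eqref{eq:PDE-PsiB} with $a=1$ and $b_N=e^{-N+\bar\phi}$, one has $\Psi=1/(4b_N)\to\infty$. Tracking the proof of Lemma~\ref{lem:energy-inequality} gives $\kappa_0\sim b_N$ and $\kappa_1=3$, so the rate is $\kappa_2\sim b_N$. With $t\approx\sqrt{\xi/b_N}$, the exponent $\kappa_2t\sim\sqrt{b_N\xi}$ tends to $0$ at fixed $\xi$, so the estimate says nothing as $N\to\infty$.

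There are two ways to repair this.
\begin{enumerate}
\item Pass to $s=2\sqrt{b_N+\xi}$. The equation becomes $\Delta_{T^2}u+(e^u)_{ss}+\frac3s(e^u)_s=0$, with uniformly bounded coefficients on $\{s\ge2\}$. Rerun the energy argument on that half-line, with $E_w(2)$ controlled by the maximum principle.
\item Treat this case exactly like the $\bCH^2$ one. Rescale at $\lambda_N=\xi_N\to\infty$ and lift to the universal cover, where Lemma~\ref{lem:Liouville_non_compact_easy} gives convergence to $\bCH^2$. Now the torus and fibre periods tend to zero in the rescaled metric. So the deck groups become dense in the horospherical $\mathcal H^3$-action, and the quotients collapse to $\bCH^2/\mathcal H^3\cong\mathbb R$.
\end{enumerate}
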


	Note that in the third case, instead of using \eqref{eq:blow-up-R4-Einstein-metric}, we use $u_N(\zeta,x,y)=v_N(\lambda_N\zeta,\lambda_N^{1/2}x,\lambda_N^{1/2}y)-\log(\lambda_N)$ with $\lambda_N=\xi(\tilde{p}_N)$ and we apply Lemma \ref{lem:Liouville_non_compact_easy}.

	\subsection{Generalized Dehn filling}

	In this section we outline how to perform generalized Dehn filling on the
    PE manifold with an AH cusp constructed above. This gives a proof of
    Theorem \ref{thm:Dehn-filling}. We follow the approach of \cite{Biquard2008DGGA}. Suppose $(M,h)$ is an ASD PE manifold with an AH cusp that arises from our construction. Since the end $E_\infty$ is an AH cusp, we may choose a diffeomorphism identifying $E_\infty$ with $(0,\epsilon_0)\times T^3$ such that, for some small $\delta_0>0$,
	\begin{equation}\label{eq:asymptotic-to-cusp}
		h=\frac{ds^2}{s^2}+s^2g_{T^3}+O(s^{\delta_0})=h_{\bRH^4_{cusp}}+O(s^{\delta_0}).
	\end{equation}
	Here the cusp is at $s=0$, and the asymptotics are understood in the sense that $|\nabla_{h_{\bRH^4_{cusp}}}^k (h-h_{\bRH^4_{cusp}})|_{h_{\bRH^4_{cusp}}} = O(s^{\delta_0})$ for all $k \ge 0$. There is a toral black hole metric on $\bR^2\times\bR^2$
	\begin{equation}\label{eq:BH-metric}
		h_{BH}:=\frac{ds^2}{V(s)}+V(s)d\theta^2+s^2 g_{\bR^2},\quad V(s)=s^2-\frac{a}{s},
	\end{equation}
	where $s_+=a^{\frac{1}{3}}$, $s\in[s_+,\infty)$, and the period for $\theta$ is $\beta=\frac{4\pi}{3s_+}$. A different choice of $a$ gives the same metric up to isometry. The metric $h_{BH}$ is asymptotic to the hyperbolic metric $\frac{ds^2}{s^2}+s^2(d\theta^2+g_{\bR^2})$ as $s\to\infty$, with
	\[\left|h_{BH}-\left(\frac{ds^2}{s^2}+s^2(d\theta^2+g_{\bR^2})\right)\right|<C\frac{a}{s^3}\]
	for a constant $C>0$, measured under the hyperbolic metric.

	Next we describe the approximation metric. We would like to patch \eqref{eq:asymptotic-to-cusp} and \eqref{eq:BH-metric} near $s=e^{-R}$ for $R\gg1$. For this, we pick a primitive closed geodesic $\sigma$ on the flat $T^3$ with length $l\gg1$, such that
	\[\beta^2V(e^{-R})=l^2.\]
	This is equivalent to
	\[\frac{16\pi^2(1-e^{3R}a)}{9a^{\frac{2}{3}}e^{2R}}=l^2.\]
	So once we fix $l,R$, we can determine $a$. There is a unique lattice $\Lambda\subset \bS^1\times\mathbb{R}^2$ such that
	\begin{itemize}
		\item there is an isometry that identifies the quotient of the slice $s=e^{-R}$ in \eqref{eq:BH-metric} by $\Lambda$, which is a flat torus, to the $T^3$ at $s=e^{-R}$ of the hyperbolic cusp metric $h_{\bRH^4_{cusp}}=\frac{ds^2}{s^2}+s^2g_{T^3}$;
		\item the isometry identifies the $\theta$-geodesic in the quotient torus to $\sigma$.
	\end{itemize}

	\begin{figure}[H]
		\begin{tikzpicture}[x=1cm,y=1cm,line join=round,line cap=round,>=Latex]

			\definecolor{myyellow}{RGB}{248,236,135}
			\definecolor{mypink}{RGB}{232,140,210}
			\definecolor{mygreen}{RGB}{170,225,170}
			\colorlet{guideblue}{blue!65}

			\def\yup{0.55}
			\def\ylow{-0.55}

			\begin{scope}[shift={(0,0)}]

				\path[fill=myyellow]
				(-0.2,\ylow)
				.. controls (-0.12,-2) and (0.12,-2).. (0.2,\ylow)
				-- cycle;

				\path[fill=mypink]
				(-0.30,\yup)
				.. controls (-0.27,0.2) and (-0.23,-0.2).. (-0.2,\ylow)
				-- (0.2,\ylow)
				.. controls (0.23,-0.2) and (0.27,0.2).. (0.30,\yup)
				-- cycle;

				\draw[line width=0.95pt]
				(-0.95,2.05)
				.. controls (-0.72,1.82) and (-0.42,1.15).. (-0.30,\yup)
				.. controls (-0.27,0.2) and (-0.23,-0.2).. (-0.2,\ylow)
				.. controls (-0.12,-2) and (0.12,-2).. (0.2,\ylow)
				.. controls (0.23,-0.2) and (0.27,0.2).. (0.30,\yup)
				.. controls (0.42,1.15) and (0.72,1.82).. (0.95,2.05);

				\node[font=\small] at (0,2.55) {$\bigl(\mathbb{R}^2\times \mathbb{R}^2,\; h_{{BH}}\bigr)/\Lambda$};

			\end{scope}
			\node[font=\bfseries\LARGE] at (2,0) {$+$};
			\begin{scope}[shift={(4,0)}]

				\path[fill=mygreen]
				(-1.15,2.05)
				.. controls (-0.82,1.83) and (-0.45,1.18).. (-0.30,\yup)
				-- (0.30,\yup)
				.. controls (0.45,1.18) and (0.82,1.83).. (1.15,2.05)
				-- cycle;

				\path[fill=mypink]
				(-0.30,\yup)
				.. controls (-0.27,0.2) and (-0.23,-0.2).. (-0.2,\ylow)
				-- (0.2,\ylow)
				.. controls (0.23,-0.2) and (0.27,0.2).. (0.30,\yup)
				-- cycle;

				\draw[line width=0.95pt]
				(-1.15,2.05)
				.. controls (-0.82,1.83) and (-0.45,1.18).. (-0.30,\yup)
				.. controls (-0.27,0.2) and (-0.23,-0.2).. (-0.2,\ylow)
				.. controls (-0.20,-0.8) and (-0.12,-1.3).. (-0.08,-2)

				(0.08,-2)
				.. controls (0.12,-1.3) and (0.20,-0.8).. (0.2,\ylow)
				.. controls (0.23,-0.2) and (0.27,0.2).. (0.30,\yup)
				.. controls (0.45,1.18) and (0.82,1.83).. (1.15,2.05);

				\node[font=\small] at (0,2.55) {$(M,h)$};

			\end{scope}

			\draw[->,line width=1pt] (6,0) -- (7,0);

			\begin{scope}[shift={(9.10,0)}]

				\path[fill=mygreen]
				(-1.15,2.05)
				.. controls (-0.82,1.83) and (-0.45,1.18).. (-0.30,\yup)
				-- (0.30,\yup)
				.. controls (0.45,1.18) and (0.82,1.83).. (1.15,2.05)
				-- cycle;

				\path[fill=mypink]
				(-0.30,\yup)
				.. controls (-0.27,0.2) and (-0.23,-0.2).. (-0.2,\ylow)
				-- (0.2,\ylow)
				.. controls (0.23,-0.2) and (0.27,0.2).. (0.30,\yup)
				-- cycle;

				\path[fill=myyellow]
				(-0.2,\ylow)
				.. controls (-0.12,-2) and (0.12,-2).. (0.2,\ylow)
				-- cycle;

				\draw[line width=0.95pt]
				(-1.15,2.05)
				.. controls (-0.82,1.83) and (-0.45,1.18).. (-0.30,\yup)
				.. controls (-0.27,0.2) and (-0.23,-0.2).. (-0.2,\ylow)
				.. controls (-0.12,-2) and (0.12,-2).. (0.2,\ylow)
				.. controls (0.23,-0.2) and (0.27,0.2).. (0.30,\yup)
				.. controls (0.45,1.18) and (0.82,1.83).. (1.15,2.05);

				\node[font=\small] at (0,2.55) {$\bigl(M_{\sigma},\, \bar h_{R,l})$};

				\node[font=\small] at (1.6,\yup+1) {$r=\epsilon_1$};
				\node[font=\small] at (1.6,\yup+0.5) {$s=\epsilon_0$};
				\node[font=\small] at (2,\yup+0.1) {$s=e^{-R+10}$};
				\node[font=\small] at (2,\ylow+0.1) {$s=e^{-R-10}$};
				\node[font=\small] at (1.65,\ylow-1) {$s=s_+$};

			\end{scope}
			\draw[guideblue,dashed,thick] (-1.15,\yup) -- (10.15,\yup);
			\draw[guideblue,dashed,thick] (-1.15,\ylow) -- (10.15,\ylow);
			\draw[guideblue,dashed,thick] (3,\yup + 1) -- (10.15,\yup + 1);
			\draw[guideblue,dashed,thick] (8,\yup + 0.5) -- (10.15,\yup + 0.5);

		\end{tikzpicture}
		\caption{The gluing construction}
		\label{fig:gluing-dehn-filling}
	\end{figure}

	The quotient of $h_{BH}$ by $\Lambda$ yields an Einstein metric on $\bR^2\times T^2$.
	Denote $M\setminus \{0<s<e^{-R}\}$ by $M_R$ and $\bR^2\times T^2 \setminus \{s\ge e^{-R}\}$ by $N_R$.
	Via the isometry between the boundary tori, we identify the slice $s=e^{-R}$ of $h_{\bRH^4_{cusp}}$ with the slice $s=e^{-R}$ of the quotient toral black hole metric, thereby obtaining the manifold $M_\sigma = M_R \amalg N_R$. The function $s$ on the end $(0,\epsilon_0)\times T^3$ patches continuously with the one on $D^2\times T^2$ at $s=e^{-R}$, hence we may define the function $s$ on $M_\sigma$, via extending it as $s\equiv\epsilon_0$ outside of the end $(0,\epsilon_0)\times T^3$ in $M$. We define the approximate metric $\bar h_{R,l}$ on $M_\sigma$
	\begin{itemize}
		\item as $h$ on the part $s\geq e^{-R+10}$;
		\item as the quotient of $h_{BH}$ on the part $s\leq e^{-R-10}$;
		\item we interpolate them smoothly via a cut-off function $\gamma = \chi(\log(s)+R)$ on $e^{-R-10} < s < e^{-R+10}$, where $\chi \in C^\infty(\mathbb{R})$, $0\leq \chi\leq 1$, $\chi \equiv 0$ on $[10, \infty)$, and $\chi \equiv 1$ on $(-\infty, -10]$.
	\end{itemize}
	So $\bar h_{R,l}=(1-\chi) h+\chi h_{BH}$.
	For the PE end $E_0$ of $M$, we can find a geodesic defining function $r$ and identify the Einstein metric $h$ with $\frac{1}{r^2}(dr^2+g_{r})$ on $(0,\epsilon_1/2)_r\times T^3$. The function $r$ can also be extended to a smooth function on $M_{\sigma}$ such that $r\equiv \epsilon_1$ on the region $\{s<\epsilon_0\}$.

	Next we define weighted H\"older norms for symmetric $2$-tensors on
$M_\sigma$. Fix weights
\[
0<\delta<\frac32,\qquad \frac32<\tau<3.
\]
Set
\[
\|u\|_{C^{m,\alpha}_{\delta,\tau}(M_\sigma)}
:=
\left\|
r^{-\tau}\left(\frac{s}{s_+}\right)^{\delta}u
\right\|_{C^{m,\alpha}(M_\sigma)}.
\]
	Here $m\geq 2, \ 0<\alpha<1$, and the $C^{m,\alpha}$-norm of a symmetric $2$-tensor $k$ is defined by
	\[
	\|k\|_{C^{m,\alpha}(M_\sigma)}
	:=
	\sup_{p\in M_\sigma}
	\bigl\|(\exp_p)^*k\bigr\|_{C^{m,\alpha}\left(B_{r_0}(0)\subset T_pM_\sigma\right)},
	\qquad
	r_0:=\frac12 \inf_{p\in M_\sigma}\operatorname{conj}_{\bar h_{R,l}}(p),
	\]
	where $\exp_p:T_pM_\sigma\to M_\sigma$ is the exponential map with respect to $\bar h_{R,l}$. Note that the sectional curvatures of $(M_\sigma, \bar{h}_{R,l})$ are uniformly bounded; thus, the conjugacy radius of $(M_\sigma, \bar{h}_{R,l})$ is bounded uniformly from below, independently of $\sigma$, $R$, and $l$.

	Recall that $s\equiv\epsilon_0$ outside a compact region of $M_\sigma$ while $r\equiv \epsilon_1$ inside a compact region of $M_\sigma$. For a background metric $\bar g$ and a nearby metric $g$,
	consider the Einstein operator in the Bianchi gauge
	\[\Phi_{\bar g}(g)=\Ric_{g}+3g+\divv_{g}^*\left(\divv_{\bar g}(g)+\frac{1}{2}d\mathrm{tr}_{\bar g}(g)\right),\]
	whose linearization at $\bar g$ is
	\[(d\Phi_{\bar g})_{\bar g}(k)=\frac{1}{2}\Delta_{L}k+3k.\]
	Here $\Delta_L$ is the Lichnerowicz Laplace of $\bar g$, given by 
    $(\Delta_L k)_{ij}=(\nabla^*\nabla k)_{ij}+\Ric_i{}^p k_{pj}+\Ric_j{}^p k_{ip}
    -2R_{ipqj}k^{pq}
    $. In what follows, we denote $L_{\bar g}:=(d\Phi_{\bar g})_{\bar g}$ to denote the linearization at $\bar g$. Note that our ASD PE
manifold with an AH cusp $(M,h)$ is \emph{non-degenerate}
\cite{BiquardRollin2009Wormholes}, in the sense that the kernel of
$
L_h:=(d\Phi_h)_h
$
is trivial in $L^2$.

    We prove the following uniform estimate of $L_{\bar h_{R,l}}$ for all sufficiently large $R$ and $l$.

	\begin{lemma}\label{lem:invertibility-of-linearization}
    There exist constants $A,C>0$ such that, for all $R,l>A$, if $u$ and $f$
    are symmetric $2$-tensors satisfying $L_{\bar h_{R,l}}u=f$, then
    \[
    \|u\|_{C^{m,\alpha}_{\delta,\tau}}
    \leq C\|f\|_{C^{m-2,\alpha}_{\delta,\tau}} .
    \]
    \end{lemma}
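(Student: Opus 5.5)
We argue by contradiction combined with a blow-up analysis, following the standard scheme of \cite{Biquard2008DGGA,Bamler2012GeneralizedDehnFilling}. Suppose the estimate fails. Then there are sequences $R_j,l_j\to\infty$ and tensors $u_j,f_j$ with $L_{\bar h_j}u_j=f_j$, where $\bar h_j:=\bar h_{R_j,l_j}$, normalized so that $\|u_j\|_{C^{m,\alpha}_{\delta,\tau}}=1$ and $\|f_j\|_{C^{m-2,\alpha}_{\delta,\tau}}\to0$. The conjugacy radius of $\bar h_j$ is uniformly bounded below and all curvature derivatives are uniformly bounded. Local Schauder estimates in the harmonic charts pulled back by $\exp_p$ therefore show that the weighted $C^{m,\alpha}$ norm of $u_j$ is controlled by the weighted $C^0$ norm of $u_j$ plus the norm of $f_j$. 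So there are points $p_j$ with
\[
w(p_j)\,|u_j|(p_j)\ge c>0,\qquad w:=r^{-\tau}(s/s_+)^{\delta}.
\]
We split into cases according to where $p_j$ lies, and derive a contradiction in each. In every case we pass to a pointed limit of $(M_\sigma,\bar h_j,p_j)$, passing to local universal covers (equivariantly) where the metrics collapse. The rescaled tensors $w(p_j)u_j$ then converge to a nonzero solution $u_\infty$ of the limiting equation $L u_\infty=0$, with growth controlled by the limit of $w/w(p_j)$.

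\emph{Case 1: $p_j$ stays in a fixed compact region of $M$, or in the PE end.} Here $\bar h_j\to h$ on compacts. The limit $u_\infty$ satisfies $L_hu_\infty=0$. The weights give decay $O(r^{\tau})$ at the PE end, and on the cusp side $O(s^{-\delta})$ measured in cusp coordinates. Since $\tau\in(\frac32,3)$ and $\delta<\frac32$ lie in the non-indicial range on each end, $u_\infty\in L^2$. Non-degeneracy of $(M,h)$ then gives $u_\infty=0$, a contradiction. If instead $r(p_j)\to0$, the limit is $\bRH^4$ with a decaying solution. Such a solution vanishes by the standard indicial-root argument: $L$ has no $L^2$ kernel on $\bRH^4$.

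\emph{Case 2: $p_j$ lies in the neck region $e^{-R_j-10}\lesssim s(p_j)\ll\epsilon_0$.} After lifting to the universal cover of the $T^3$ slices (or $\bS^1\times\bR^2$ on the black hole side), the limit is the hyperbolic metric on $\bRH^4$ in horospherical coordinates, or the universal cover of $h_{BH}$. Averaging $u_\infty$ over the translation group $\bR^3$, respectively $\bR^2$, reduces $Lu_\infty=0$ to ODEs in $s$ with indicial roots outside the weight range. Since the weight $s^{\delta}$ forces growth strictly between consecutive indicial roots on both sides, the ODE solutions vanish. Non-invariant Fourier modes decay or blow up exponentially and are also excluded, so $u_\infty=0$.

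\emph{Case 3: $p_j$ near the core, $s(p_j)\le Cs_+$.} The limit is the quotient of $h_{BH}$, or its $\bR^2$-cover when $T^2$ collapses. Here we use that $h_{BH}$ is non-degenerate in the weighted space with $0<\delta<\frac32$, which is the known result for the toral black hole used in \cite{Biquard2008DGGA,Anderson2006DehnFillingHigherDim}. Again $u_\infty=0$.

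The main obstacle is Case 2 together with the uniformity in the transition region, where the error $\bar h_j-h$ decays only like $s^{\delta_0}$ on the $M$ side and $a/s^3$ on the black hole side. One must check that these errors do not destroy the limit equation. They vanish in the limit because $R_j\to\infty$. One must also check that the weight window $(0,\frac32)$ simultaneously avoids the indicial roots of $\frac12\Delta_L+3$ for $\bRH^4$ acting on $\bR^3$-invariant tensors (roots $0,3$ and $-1,\dots$ on trace-type components) and for the black hole. This is the step I would verify first, by explicitly computing the indicial roots of $\frac12\Delta_L+3$ on $dr^2+e^{-2r}g_{\bR^3}$ for invariant symmetric tensors.
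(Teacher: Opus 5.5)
Your scheme matches the paper's: contradiction, normalization, and a case split according to whether the concentration points $p_j$ stay in a compact part of $M$, enter the hyperbolic neck, or approach the black-hole core. The limit problems and inputs are also the same: non-degeneracy of $(M,h)$ for an $L^2$ limit, and Biquard's lemmas for $\bR^3$-invariant solutions on $\bRH^4$ and for the toral black hole.

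The genuine gap is the subcase $r(p_j)\to0$ of your Case 1.

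\begin{itemize}
\item \emph{Why the stated argument fails.} There the pointed limit is $\bRH^4$ in the half-space model $\rho^{-2}(d\rho^2+|dy|^2)$, $y\in\bR^3$. The normalized limit only satisfies $|u_\infty|\le C\rho^{\tau}$, with $\rho$ the height. This bound gives decay as $\rho\to0$, but it allows growth like $\rho^\tau$ as $\rho\to\infty$ and gives no decay at all in $y$. Since the volume form is $\rho^{-4}\,d\rho\,dy$ and $2\tau-4>-1$, $u_\infty$ is not in $L^2$. So ``$L$ has no $L^2$ kernel on $\bRH^4$'' does not give $u_\infty=0$.
\item \emph{What is actually needed.} You need a Liouville theorem on the half-space for this one-sided weight. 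It is true, because $\tau$ lies strictly between the indicial roots ($0$ and $3$ on the trace-free part). But it requires its own argument, for instance Fourier analysis in $y$ together with an indicial/ODE analysis at each frequency. You neither prove nor cite it.
\end{itemize}

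The paper never blows up at the PE end, and so avoids this issue.

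\begin{itemize}
\item First it rules out concentration in the compact part, the neck, and the core. This gives $r^{-\tau}(s/s_+)^{\delta}|u_j|\to0$ uniformly wherever $r$ and $s$ are bounded below.
\item It then cuts off the cusp with $\chi$. Since $\bar h_{R,l}=h$ on the support of $\chi$, one has $L_j(\chi\bar u_j)=L_h(\chi\bar u_j)$, and the commutator term is small.
\item Finally it applies the global estimate $\|\chi\bar u_j\|_{C^{m,\alpha}_\tau(M)}\le C\|L_h(\chi\bar u_j)\|_{C^{m-2,\alpha}_\tau(M)}$ on $(M,h)$. This comes from Lee's Fredholm theory plus non-degeneracy for $0<\tau<3$, and already contains the boundary analysis your blow-up would have to redo by hand.
\end{itemize}

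Two minor points.

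\begin{itemize}
\item In the neck the lifted limit is automatically $\bR^3$-invariant, because the deck translations have length $O(s(p_j))\to0$; no averaging is needed.
\item Your split into $e^{-R_j-10}\lesssim s(p_j)$ versus $s(p_j)\le Cs_+$ should be replaced by the dichotomy $s(p_j)\to0$ with $s(p_j)/s_+\to\infty$, versus $s(p_j)/s_+$ bounded. Otherwise the black-hole side of the neck, $Cs_+<s\ll e^{-R_j}$, is not clearly covered.
\end{itemize}
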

	\begin{proof}
		We prove this by contradiction. If not, then we have $R_j,l_j\to\infty$, symmetric 2 tensors $u_j$ with $L_ju_j=f_j$ where $\|u_j\|_{C^{m,\alpha}_{\delta,\tau}}=1,\|f_j\|_{C^{m-2,\alpha}_{\delta,\tau}}\to0$, $L_j:=L_{\bar h_{R_j,l_j}}$. Elliptic estimates imply that $\|u_j\|_{C^{0}_{\delta,\tau}}>\epsilon_2$ for some $\epsilon_2>0$, so we can find points $x_j\in M_{\sigma_j}$, with
		\begin{equation}\label{eq:positive-lower-bound-in-contradiction-argument}
			r(x_j)^{-\tau}\left(\frac{s(x_j)}{s_+}\right )^\delta |u_j(x_j)|>\epsilon_2.
		\end{equation}
        Here, for simplicity, we use the same notation $r$, $s$, and $s_+$ throughout,
        although they depend on $R_j$ and $l_j$.
		By passing to a subsequence, there are four possibilities.
		\begin{enumerate}
			\item $x_j$ diverges to the PE end, i.e. $r(x_j)\to0$;
			\item $x_j$ converges to compact regions of $M$, i.e. $r(x_j)$ and $s(x_j)$ remain uniformly bounded from below;
			\item $x_j$ converges to the hyperbolic region between $M$ and the black hole metric, i.e. $s(x_j)\to0$ and $s(x_j)/s_+\to\infty$.
			\item $x_j$ converges to compact regions of the black hole metric, i.e. $s(x_j)/s_+$ remains uniformly bounded from above;
		\end{enumerate}
		We first show that (2)-(4) cannot happen.

	If (2) happens, normalize by
    $\bar u_j:=s_+^{-\delta}u_j$ and $\bar f_j:=s_+^{-\delta}f_j$.
    After passing to a subsequence, $\bar u_j$ converges on compact subsets of
    $(M,h)$ to a nonzero tensor $\bar u_\infty$ satisfying
    \[
    L_h\bar u_\infty=0,
    \qquad
    \|r^{-\tau}s^\delta\bar u_\infty\|_{C^{m,\alpha}}<\infty .
    \]
    For $0<\delta<\frac32$, this bound implies that $\bar u_\infty$ is $L^2$
    along the AH cusp. For $\frac32<\tau<3$, it also implies that
    $\bar u_\infty$ is $L^2$ along the PE end. Hence
    $\bar u_\infty$ lies in the $L^2$ kernel of $L_h$ on $(M,h)$,
    contradicting the non-degeneracy of $(M,h)$.

		If (3) happens, then we write $w=\frac{s}{s(x_j)}$. Set $\bar u_j=(\frac{s(x_j)}{s_+})^\delta u_j$ and $\bar f_j=(\frac{s(x_j)}{s_+})^\delta f_j$, then $(\frac{s}{s_+})^\delta u_j=(\frac{s(x_j)}{s_+})^\delta w^\delta u_j$. Around $x_j$, the metric $\bar h_{R_j,l_j}$ is collapsing. But after passing to the local universal cover it is close to
		\[\frac{dw^2}{w^2}+w^2s(x_j)^2g_{\bR^3},\]
		which can be seen explicitly by substituting $w=s/s(x_j)$ into \eqref{eq:asymptotic-to-cusp} and \eqref{eq:BH-metric}. In particular, it is converging to the hyperbolic metric, and the tensor $\bar u_j$ converges to a non-trivial $\bR^3$-invariant limit $\bar u_\infty$ on the hyperbolic space $\bRH^4$ that satisfies $L_{h_{\bRH^4}}\bar u_\infty=0$ and $\|w^\delta\bar u_\infty\|_{C^{m,\alpha}}<\infty$. This is again impossible by Lemma 3.9.21 of \cite{Biquard2008DGGA}, where we need $0<\delta<3$.

		If (4) happens, then one can rewrite the black hole metric by setting $w=\frac{s}{s_+}$. Then the black hole metric becomes
		\[h_{BH}=\frac{dw^2}{w^2-1/w}+s_+^2(w^2-1/w)d\theta^2+s_+^2w^2g_{\bR^2}.\]
		The approximation metric $\bar h_{R_j,l_j}$ is collapsing around $x_j$, but after locally passing to the universal cover it is converging to $h_{BH}$. The value $w(x_j)$ is bounded from above and below. Hence passing to the limit of the local universal cover, we obtain a limit function $\bar u_\infty$ on $(\bR^2\times\bR^2,h_{BH})$ that satisfies $L_{h_{BH}}\bar u_\infty=0$ and $\|w^\delta \bar u_\infty\|_{C^{m,\alpha}}<\infty$. Then there is a contradiction as proved in Lemma 3.9.17 of \cite{Biquard2008DGGA}, where we need $0<\delta<3$.

		Finally, we show (1) cannot happen as well. Since (2)-(4) cannot happen, we know on compact regions of $M$ (that means, the set where $r,s$ are uniformly bounded from below), $|r^{-\tau}(\frac{s}{s_+})^\delta u_j|\to0$ uniformly. Note that on the compact regions of $M$ and the PE end, $s$ is a constant. Set $\bar u_j=(\frac{s}{s_+})^\delta u_j,\bar f_j=(\frac{s}{s_+})^\delta f_j$ and choose a cut-off function $\chi$ on $M$ that is equal to one outside a neighborhood of $E_\infty$ and is equal to zero outside a neighborhood of $M\setminus E_\infty$. The cut-off function therefore trivially extends to $M_\sigma$. Since (2)-(4) cannot happen and $\|\bar f_j\|_{C^{m-2,\alpha}_{\delta,\tau}}\to0$, we have
		\[\|r^{-\tau} \chi \bar u_j\|_{C^{m,\alpha}(M)}>1/2\]
		and
		\[\|r^{-\tau} L_j(\chi \bar u_j)\|_{C^{m-2,\alpha}(M)}<\epsilon\]
		for a small $\epsilon$ when $j\gg1$. However, since $(M,h)$ is non-degenerate, we have the inequality
		\[\|\chi \bar u_j\|_{C^{m,\alpha}_{\tau}(M)}\leq C\|L_h(\chi\bar u_j)\|_{C^{m-2,\alpha}_{\tau}(M)}\]
		where $C^{m,\alpha}_\tau$ is the weighted norm $\|u\|_{C^{m,\alpha}_\tau}=\|r^{-\tau} u\|_{C^{m,\alpha}}$ on $(M,h)$, and we need $0<\tau<3$. This leads to a contradiction as $L_h(\chi\bar u_j)=L_j(\chi\bar u_j)$.
	\end{proof}

It remains to estimate the defect \(\Phi_{\bar h_{R,l}}(\bar h_{R,l})\). Since
both \(h\) and \(h_{BH}\) are Einstein, this defect is supported in the transition
region \(A_R=\{e^{-R-10}<s<e^{-R+10}\}\). On \(A_R\), relative to
\(h_c=s^{-2}ds^2+s^2g_{T^3}\), we have
\(h=h_c+O(s^{\delta_0})\) and \(h_{BH}=h_c+O(as^{-3})\), with the same bounds
for derivatives. Since \(s\sim e^{-R}\) on \(A_R\), and the
cutoff \(\chi(\log s+R)\) has uniformly bounded derivatives, it follows that
\[
\|\Phi_{\bar h_{R,l}}(\bar h_{R,l})\|_{C^{m-2,\alpha}_{\delta,\tau}}
\le
C\left(\frac{e^{-R}}{s_+}\right)^\delta
\left(e^{-\delta_0R}+ae^{3R}\right).
\]
Here \(r\equiv \epsilon_1\) on \(A_R\), so the factor \(r^{-\tau}\) is harmless.
Using the matching relation \(s_+e^R\sim l^{-1}\), equivalently
\(ae^{3R}=(s_+e^R)^3\sim l^{-3}\), we obtain
\[
\|\Phi_{\bar h_{R,l}}(\bar h_{R,l})\|_{C^{m-2,\alpha}_{\delta,\tau}}
\le
C\bigl(l^\delta e^{-\delta_0R}+l^{\delta-3}\bigr).
\]
Thus, for $0<\delta<\frac32$ and for
$R,l\to\infty$ chosen so that $l^\delta e^{-\delta_0R}\to0$, the defect tends
to zero. For $0<\tau<3$, the operator
$
L_{\bar h_{R,l}}:C_{\delta,\tau}^{m,\alpha}\rightarrow C_{\delta,\tau}^{m-2,\alpha}
$
is bounded and Fredholm of index zero; by
Lemma~\ref{lem:invertibility-of-linearization}, it is invertible, and
$\|L_{\bar h_{R,l}}^{-1}\|$ is uniformly bounded independently of $R,l$. Moreover, the
nonlinear remainder satisfies the corresponding uniform quadratic estimate.
The inverse function theorem therefore gives a unique small perturbation
$u_{R,l}\in C_{\delta,\tau}^{m,\alpha}$ such that
$h_{R,l}:=\bar h_{R,l}+u_{R,l}$ solves
$\Phi_{\bar h_{R,l}}(h_{R,l})=0$ on $M_\sigma$. Since $\tau>0$, the Bianchi
one-form decays at the PE end; applying the standard Bianchi argument to the
gauged equation gives
$
\Ric_{h_{R,l}}+3h_{R,l}=0.
$
Thus $h_{R,l}$ is a Poincar\'e--Einstein metric on $M_\sigma$ with the same
conformal infinity as $(M,h)$.

\subsection{Further Discussion}

The examples constructed in Theorem \ref{thm:PE-with-AH-cusp} show that Proposition 5.7 of \cite{anderson2003boundary} fails. Their conformal infinities have negative Yamabe constants, except in the flat $T^3$ case. Recall that for a
closed $n$-manifold $N$, $n\geq 3$, with conformal class $[\gamma]$, the
Yamabe constant is
$$
Y(N,[\gamma])
:=
\inf_{\tilde\gamma\in[\gamma]}
\frac{\int_N R_{\tilde\gamma}\,d\vol_{\tilde\gamma}}
{\operatorname{Vol}(N,\tilde\gamma)^{\frac{n-2}{n}}}.
$$
We mention that Proposition 5.7 of \cite{anderson2003boundary} is true if we assume further that the conformal infinity of a PE end has non-negative Yamabe constant by the result of Witten-Yau \cite{WittenYau1999} and Cai-Galloway \cite{Cai1999BoundariesOZ}. For completeness we recall their argument.

\begin{proposition}\label{prop:Cai-Galloway-rigidity}
Let $(M^{n+1},h)$ be a connected complete Einstein manifold with
$\operatorname{Ric}_h=-nh$. Assume that $M$ has at least a Poincar\'e--Einstein end
whose conformal infinity $(N,[\gamma])$ satisfies $Y(N,[\gamma])\geq 0$.
Then one of the following alternatives holds:
\begin{enumerate}
\item $Y(N,[\gamma])>0$, and $M$ has only one end.
\item $Y(N,[\gamma])=0$, and $M$ has only one end.
\item $Y(N,[\gamma])=0$, and $(M,h)$ is isometric to
\begin{equation}\label{eq:warped-product-hyperbolic-metric}
        \mathbb R\times N,\qquad h=ds^2+e^{-2s}g_N, \qquad \operatorname{Ric}_{g_N}=0.
\end{equation}
\end{enumerate}
\end{proposition}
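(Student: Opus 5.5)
We will follow the Witten–Yau / Cai–Galloway argument: use the Yamabe sign of the conformal infinity to build a boundary-normalized defining function, then apply the Cheeger–Gromoll splitting theorem to a Busemann-type function associated with a second end.

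\textbf{Step 1: choice of compactification.} Since $Y(N,[\gamma])\ge0$, we pick a representative $\gamma$ with constant scalar curvature $R_\gamma\ge0$. Let $\rho$ be the associated geodesic defining function near the Poincar\'e--Einstein end $E$, so that $h=\rho^{-2}(d\rho^2+\gamma_\rho)$ with $\gamma_0=\gamma$. The level sets $\Sigma_\rho=\{\rho=\text{const}\}$ then have mean curvature satisfying
\[
H_\rho=n-\frac{R_\gamma}{2(n-1)}\rho^2+O(\rho^3)
\]
with respect to the outward normal. We consider the Witten–Yau area functional
\[
\mathcal S(\Sigma)=\mathrm{Area}(\Sigma)-n\,\mathrm{Vol}(\Omega_\Sigma),
\]
where $\Omega_\Sigma$ is the region enclosed between $\Sigma$ and a fixed compact slice. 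The expansion above shows that $\mathcal S(\Sigma_\rho)$ is bounded below as $\rho\to0$ when $R_\gamma\ge0$, and that the slices $\Sigma_\rho$ act as barriers with $H\ge n$ up to lower order.

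\textbf{Step 2: a second end forces a line.} Suppose $M$ has another end $E'$. We choose points $p_j\to\infty$ in $E$ and $q_j\to\infty$ in $E'$, together with minimizing geodesics from $p_j$ to $q_j$. We then run the Cai–Galloway argument. Minimizing $\mathcal S$ in the homology class separating $E$ from $E'$ produces a hypersurface, or a limit of such, whose mean curvature is identically $n$. Here the barrier from Step 1 prevents escape into $E$, and $R_\gamma\ge0$ is precisely what rules out escape at infinity of $E$. Comparing with the Riccati equation under $\Ric=-nh$, the normal exponential flow from this hypersurface has mean curvature $H\le n$ along the flow. Alternatively, we can avoid geometric measure theory by using a Busemann function $\beta$ built from rays into $E'$ together with the Laplacian comparison $\Delta\beta\ge -n$ in the barrier sense. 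We then need to show that $\Delta\beta=-n$ is forced and that $|\nabla^2\beta+h|$ vanishes. This is the key rigidity step, and we expect it to be the main obstacle. Specifically, one must show that the minimizer exists and does not run off to infinity in $E$. When $R_\gamma>0$ the boundary term has a strict sign, which yields a contradiction outright; this gives alternative (1). When $R_\gamma=0$, equality must hold throughout.

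\textbf{Step 3: rigidity in the equality case.} If $Y=0$ and a second end exists, equality in the Riccati/Bochner comparison gives a function $s$ with $|\nabla s|=1$ and $\nabla^2 s=-h$ on the complement of $\nabla s$, that is, umbilic level sets with shape operator equal to the identity. Hence $h=ds^2+e^{-2s}g_N$ globally on $\mathbb R\times N$. The Einstein condition $\Ric_h=-nh$ for such a warped product reduces, by the standard warped-product formulas, to $\Ric_{g_N}=0$. This gives alternative (3). If no second end exists, we are in alternative (2). Finally, we note that alternative (1) with $Y>0$ is simply the strict case of Step 2, so the trichotomy is exhaustive.
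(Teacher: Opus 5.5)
Your proposal has a genuine gap. Step 2 carries the entire content of the proposition, and it is described but never carried out. In addition, the one quantitative input you state has the wrong sign.

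\textbf{The sign.} With respect to the normal pointing toward the conformal boundary, the slices of the geodesic compactification satisfy $H_\rho=n+\frac{R_\gamma}{2(n-1)}\rho^2+o(\rho^2)$. This is because the $\rho^2$-coefficient of $\gamma_\rho$ is minus the Schouten tensor of $\gamma$, whose trace is $\frac{R_\gamma}{2(n-1)}$. As a check, geodesic spheres in hyperbolic space have $H=n\coth t>n$, and their conformal infinity is the round sphere. With your sign, $R_\gamma>0$ would give $H<n$, contradicting your own claim that the slices are barriers with $H\ge n$.

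\textbf{The case $Y>0$.} The sentence \emph{the boundary term has a strict sign, which yields a contradiction outright} is not an argument. The minimization of $\mathcal S$ you invoke would also have to rule out minimizing sequences escaping into the second end $E'$, for instance into a cusp where slices have vanishing area, and you leave this open. The missing idea is elementary and needs no second-end analysis:
\begin{enumerate}
\item Fix $r_0$ with $H\ge n(1+\delta)$ on $N_{r_0}$.
\item For $x$ outside the collar $\{0<r<r_0\}$, a minimizing geodesic from $N_{r_0}$ to $x$ leaves $N_{r_0}$ orthogonally, pointing away from the PE end.
\item Riccati comparison with $\operatorname{Ric}_h\ge -nh$ produces a focal point within distance $\coth^{-1}(1+\delta)$ along this geodesic.
\end{enumerate}
Hence $d(x,N_{r_0})\le\coth^{-1}(1+\delta)$, the complement of the collar is compact, and $M$ has one end.

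\textbf{The case $Y=0$.} You flag \emph{$\Delta\beta=-n$ is forced} as the main obstacle and do not resolve it, but that step is the whole rigidity statement. What is missing is the quantitative rate that makes the comparison close. Take $\Sigma_k=N_{r_k}$ with $r_k\to0$, set $d_k=d(o,\Sigma_k)=-\log r_k+O(1)$ and $h_k=\min\{\min_{\Sigma_k}H,n\}$. When $R_\gamma=0$ the expansion gives $H=n+o(r^2)$, hence $(n-h_k)e^{2d_k}\to0$. This is exactly the hypothesis of Theorem 3 of Cai--Galloway. Together with $\operatorname{Ric}_h=-nh$, that theorem yields either a single end or the warped product. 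Your Step 3 is just the final part of that theorem, and the computation giving $\operatorname{Ric}_{g_N}=0$ is fine.

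Your claim that $R_\gamma\ge0$ is \emph{precisely what rules out escape at infinity} cannot be substantiated without this rate. When $Y<0$, the same quantity $(n-h_k)e^{2d_k}$ stays bounded away from zero. The examples of this paper, with a PE end, a cusp end and negative Yamabe conformal infinity, show that the conclusion genuinely fails there. You must therefore either verify this hypothesis and cite Cai--Galloway, as the paper does, or actually carry out the Busemann-function comparison with this error term controlled.
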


\begin{proof}
Choose a Yamabe representative $\gamma$ of the conformal infinity
of the given PE end, with $R_{\gamma}\equiv Y(N,[\gamma])\geq 0$. Let $r$ be the
associated geodesic defining function. Near the PE end,
$$
 h=r^{-2}(dr^2+g_r),\qquad g_r|_{r=0}=\gamma .
$$
For $N_r=\{r=\mathrm{const}\}$, the mean curvature satisfies
$$
 H_r=n+\frac{1}{2(n-1)}R_{\gamma}r^2+o(r^2).
$$

If $Y(N,[\gamma])>0$, then $H_r>n$ for all sufficiently small $r$. Hence, for some
$r_0,\delta>0$, the compact hypersurface $N_{r_0}$ has mean curvature
$H\geq n(1+\delta)$. Since $\operatorname{Ric}_h\geq -nh$, Riccati
comparison gives
$$
    d(x,N_{r_0})\leq \coth^{-1}(1+\delta)
$$
for every $x$ outside the PE collar $\{0<r<r_0\}$. Thus the complement of
the PE end is compact, and $M$ has no other end.

It remains to consider $Y(N,[\gamma])=0$. Let $r_k\to 0$ and set $\Sigma_k=N_{r_k}$.
Fix $o\in M$ and write $d_k=d(o,\Sigma_k)$. Since $r$ is geodesic,
$d_k=-\log r_k+O(1)$, hence $e^{2d_k}=O(r_k^{-2})$. If
$h_k=\min\{\min_{\Sigma_k}H_k,n\}$, then the expansion gives
$$
    (n-h_k)e^{2d_k}\to 0 .
$$
Theorem 3 of \cite{Cai1999BoundariesOZ}, together with
$\operatorname{Ric}_h=-nh$, implies that either (2) or (3) holds.
\end{proof}

For completeness, we also verify non-local conformal flatness and determine the Yamabe sign intrinsically on the conformal infinities of the metrics we constructed.
\begin{lemma}\label{lem:lemma-conformal-infinity-non-locally-conformally-flat}
Each conformal infinity in Theorems
\ref{thm:PE-with-AH-cusp}--\ref{thm:PE-with-Sigma-cusp}
is not locally conformally flat, except in the flat $T^3$ case of Theorem
\ref{thm:PE-with-AH-cusp}, where $g^\sharp$ is flat, and in the product
$\Sigma_{\ttg}\times \bS^1$ case of Theorem
\ref{thm:PE-with-Sigma-cusp}, where $g^\sharp$ is hyperbolic.
\end{lemma}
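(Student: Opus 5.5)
We will work directly with the boundary metric $g^\flat=\bar g|_{\{0\}\times\mathscr L}$ of the K\"ahler compactification of Lemma \ref{lem:pe-end-compactification}. Setting $\xi=0$ in \eqref{eq:Kahler-metric-g} gives
\[
g^\flat=W(0,\cdot)^{-1}\eta_0^2+W(0,\cdot)e^{\varphi}g_\Sigma .
\]
In all four problems we have $W(0,\cdot)\equiv1$: in \eqref{eq:case-I} and \eqref{eq:case-II} the factor $\xi$ multiplies $v_\xi$, and $v$ is smooth up to $\xi=0$. Hence
\[
g^\flat=\eta_0^2+e^{\varphi}g_\Sigma=\eta_0^2+g^\sharp ,
\]
where $\eta_0$ is a connection form on $\mathscr L\to\Sigma$ with
\[
d\eta_0=(We^v)_\xi|_{\xi=0}\,d\vol_\Sigma .
\]
So $(\mathscr L,g^\flat)$ is a unit-fibre Riemannian submersion over $(\Sigma,g^\sharp)$ with geodesic fibres. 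Its curvature 2-form is $F=d\eta_0=\psi\, d\vol_{g^\sharp}$, where
\[
\psi=e^{-\varphi}(We^v)_\xi|_{\xi=0}.
\]
Using $W=1-\tfrac12\xi v_\xi$ (Type I), one computes $(We^v)_\xi|_0=\tfrac12 e^{\varphi}v_\xi|_0$, so $\psi=\tfrac12 v_\xi(0,\cdot)$. In Type II one gets the analogous expression $\psi=\tfrac12 v_\xi(0,\cdot)$, because the $\xi^3$ term in the denominator does not contribute at first order.

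The first step is to compute the Cotton tensor of a metric $\eta^2+g^\sharp$ with Killing unit fibres in terms of $K^\sharp$ (the Gauss curvature of $g^\sharp$) and $\psi$. This is a standard O'Neill-type calculation, which we plan to do in an orthonormal frame $\{e_1,e_2,T\}$. The expected outcome is that local conformal flatness is equivalent to
\[
d\psi=0\qquad\text{and}\qquad dK^\sharp=c\,\psi\,d\psi
\]
for a universal constant $c$. Equivalently, both $\psi$ and $K^\sharp$ must be constant. Here the $TTe_i$ and $Te_ie_j$ components of the Cotton tensor detect $d\psi$, and the components involving $\nabla K^\sharp$ detect $dK^\sharp$ combined with $\psi\,d\psi$. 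The case where $\psi$ and $K^\sharp$ are both constant is the homogeneous Berger-type case; we then check directly that a Berger metric $\eta^2+g^\sharp$ is conformally flat only when $\psi=0$, i.e.\ for the product with a flat or round base, or for specific ratios on $S^3$, which cannot occur here since $\Sigma$ is $T^2$ or $\Sigma_{\ttg}$. This step is the main computation and the main obstacle. We plan to cross-check it against the known result that $\mathrm{Nil}$ and $\widetilde{SL}(2,\bR)$ are not conformally flat.

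Next we use the PDE to show that constancy of $\psi$ and $K^\sharp$ forces $\varphi$ to be constant. If $K^\sharp$ is constant, then $g^\sharp=e^{\varphi}g_\Sigma$ has constant curvature. When $\Sigma=T^2$ this means $\Delta_\Sigma\varphi=0$, so $\varphi$ is constant. When $\Sigma=\Sigma_{\ttg}$ it means $-1-\tfrac12\Delta\varphi=K^\sharp e^{\varphi}$ with $K^\sharp$ constant and negative; the maximum principle then gives $\varphi$ constant. By the uniqueness in Theorem \ref{thm:bvp-existence-uniqueness}, the solution $v$ is then the model solution $v_{\mathrm{mod}}$ of Section \ref{sec:model-geometry}. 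In particular $\psi=\tfrac12 v_{\mathrm{mod},\xi}(0)$ is the constant $a/(2b)$ (up to the Type II correction), and it vanishes exactly when $a=0$.

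Finally, we go through the cases. In \eqref{eq:bvp1}, $a=0$, and constant $\varphi$ gives the flat $T^3$; this is the exceptional case in the statement. In \eqref{eq:bvp2} and \eqref{eq:bvp3}, $a>0$, so $\psi\ne0$ is constant and the boundary is a Nil-type Berger metric, which is not conformally flat. In \eqref{eq:bvp4}, $a=0$ exactly when $e^{\bar\varphi}=\tfrac13\xi_\ast^2$; this gives the product $\Sigma_{\ttg}\times\bS^1$ with hyperbolic $g^\sharp$, the other exceptional case. Otherwise $\psi\ne0$ and the boundary is $\widetilde{SL}(2,\bR)$-type, hence not conformally flat. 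In every nonexceptional case with nonconstant $\varphi$, the constancy criterion from the first step already fails.
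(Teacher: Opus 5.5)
Your setup is fine: $W(0,\cdot)\equiv1$, $g^\flat=\eta_0^2+g^\sharp$, and $d\eta_0=\psi\,d\vol_{g^\sharp}$ with $\psi=\tfrac12 v_\xi(0,\cdot)$. The step you call the main computation fails, however. Vanishing of the Cotton tensor of $\eta_0^2+g^\sharp$ does \emph{not} give $d\psi=0$, and no pointwise computation can.

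\begin{itemize}
\item The components with two vertical slots, and the purely horizontal ones, only relate $dK^\sharp$ to $\psi\,d\psi$.
\item The components with exactly one vertical slot differentiate $\Ric(T,\cdot)\sim\star d\psi$, so they involve the full Hessian of $\psi$.
\end{itemize}

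The correct conditions, which are the ones the paper uses, are
\[
\nabla K^\sharp=3\psi\nabla\psi,\qquad \nabla^2\psi=\psi\bigl(\psi^2-K^\sharp\bigr)g^\sharp .
\]
These admit local solutions with nonconstant $\psi$. For example, rescale the round metric $ds^2+\cos^2 s\,d\theta_1^2+\sin^2 s\,d\theta_2^2$ on $S^3$ by $|X|^{-2}$, where $X=\partial_{\theta_1}+\lambda\partial_{\theta_2}$. The result is conformally flat and $X$ is a unit Killing field. The curvature function of the quotient is $\psi=2\lambda(\cos^2 s+\lambda^2\sin^2 s)^{-1/2}$, which is nonconstant for $\lambda\neq0,\pm1$. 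Your claimed equivalence would also make every Berger metric conformally flat, which contradicts your own next step.

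The missing idea is global. The Hessian equation says that $\nabla\psi$ is a conformal vector field on $(\Sigma,g^\sharp)$.
\begin{itemize}
\item On $\Sigma_{\ttg}$ there are no nonzero conformal vector fields.
\item On $T^2$ they are translations of the flat conformal structure. A gradient field vanishes at a maximum of $\psi$, so it vanishes identically.
\end{itemize}
Hence $\psi$ is constant, so $K^\sharp$ is constant, and the Hessian equation collapses to $\psi(\psi^2-K^\sharp)=0$. A constant-curvature metric on $T^2$ or $\Sigma_{\ttg}$ has $K^\sharp\le0$, so this forces $\psi=0$. Then the connection is flat, so $\ell=0$, and $g^\sharp$ is flat or hyperbolic.

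This argument replaces both your Berger-metric check and your passage through the PDE and uniqueness for the ``not locally conformally flat'' direction. Your PDE argument is still useful for the converse: it shows that flat boundary data, or hyperbolic data with $\ell=0$, really give $\psi=0$. Note also that $\mathbb{H}^2\times\mathbb{R}$ is conformally flat. So the exceptional products are those over flat \emph{or hyperbolic} bases, not ``flat or round'' as your parenthetical says.
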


\begin{proof}
Let
$
g^\flat=\eta^2+g^\natural
$
be the canonical representative of the conformal infinity, where
$g^\natural$ is a metric on $T^2$ or $\Sigma_{\ttg}$. Write
$
d\eta=f\,d\vol_{g^\natural},\quad K=K_{g^\natural},\quad
\nabla=\nabla_{g^\natural}.
$
If $g^\flat$ is locally conformally flat, then its Cotton tensor vanishes,
which gives
$$
\nabla K=3f\nabla f,\qquad
\nabla^2 f=f(f^2-K)g^\natural .
$$
Thus $\nabla f$ is a conformal vector field. On $T^2$, since $\nabla f$ has a
zero, it must vanish identically; on $\Sigma_{\ttg}$, $\ttg\geq2$, there are
no nontrivial conformal vector fields. Hence $f$ is constant. Therefore $K$ is
constant, and either $f=0$ or $K=f^2$. The conclusion follows.
\end{proof}

\begin{lemma}\label{lem:lemma-conformal-infinity-non-positive-yamabe}
Each conformal infinity in Theorems
\ref{thm:PE-with-AH-cusp}--\ref{thm:PE-with-Sigma-cusp}
has negative Yamabe constant, except in the flat $T^3$ case of Theorem
\ref{thm:PE-with-AH-cusp}, where $g^\sharp$ is flat.
\end{lemma}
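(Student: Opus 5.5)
The plan is to reduce everything to the rigidity statement Proposition~\ref{prop:Cai-Galloway-rigidity}, using the fact that every metric in Theorems \ref{thm:PE-with-AH-cusp}--\ref{thm:PE-with-Sigma-cusp} lives on $M=(0,\infty)\times\mathscr L$ and has exactly two ends $E_0$ and $E_\infty$. Suppose, for contradiction, that the conformal infinity $[g^\flat]$ of $E_0$ has $Y(\mathscr L,[g^\flat])\geq 0$. Apply Proposition~\ref{prop:Cai-Galloway-rigidity} with $n=3$. Alternatives (1) and (2) both assert that $M$ has only one end. Since $E_\infty$ is a genuine second (cusp) end, both are excluded, so alternative (3) must hold. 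Thus $(M,h)$ is isometric to $\mathbb R\times \mathscr L$ with $h=ds^2+e^{-2s}g_N$ and $\Ric_{g_N}=0$. In dimension three Ricci-flat means flat, so $(\mathscr L,g_N)$ is a flat $3$-manifold and $h$ is a hyperbolic cusp metric. Its conformal infinity, at the $s\to-\infty$ end, is the flat class $[g_N]$.

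Next I would rule out alternative (3) in each theorem, case by case.

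\emph{Theorem~\ref{thm:PE-with-ACH-cusp}.} Here $\mathscr L=\NN_\ell$ with $\ell\neq 0$ is a nontrivial nilmanifold. It carries no flat metric, because by Bieberbach its fundamental group would be virtually abelian, whereas the Heisenberg lattice is not. Alternatively, $E_\infty$ is an ACH cusp, whose curvature tends to that of $\bCH^2$, and this is incompatible with constant curvature $-1$.

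\emph{Theorem~\ref{thm:PE-with-Sigma-cusp}.} Here $\mathscr L=P_\ell$ fibres over $\Sigma_{\ttg}$ with $\ttg\geq 2$, so $\pi_1(P_\ell)$ surjects onto $\pi_1(\Sigma_{\ttg})$ and is not virtually abelian. Hence $P_\ell$ admits no flat metric. The curvature asymptotics of a $\Sigma_{\ttg}$-cusp give the same contradiction.

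\emph{Theorem~\ref{thm:PE-with-AH-cusp}.} Here alternative (3) forces $h$ to be exactly the hyperbolic cusp metric. This is the Type I model of Case~1 in Table~\ref{tab:typeI-cases}, for which $v$ is constant in $\xi$. I would make this precise by reading off $v$ from $h$ through the reduction of Section~\ref{sec:background}. The hyperbolic cusp metric has $W\equiv1$ and $v$ constant in $\xi$, and then \eqref{eq:bvp1} gives $\Delta_{T^2}v=0$, so $v$ is constant. By the uniqueness in Theorem~\ref{thm:bvp-existence-uniqueness}, $v\equiv\bar\varphi$, so $\varphi$ is constant and $g^\sharp=e^{\varphi}g_{T^2}$ is flat. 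This is exactly the excluded flat case. So whenever $g^\sharp$ is not flat, $Y<0$.

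The step I expect to need the most care is this identification in the AH case. One has to check that an isometry with the warped product forces the Killing field $\mathcal K$ and the conformal factor $\xi$ to coincide with those of the model. The cleanest route is to note that $\xi$ is determined by $h$ up to scale, via $|(d\mathcal K^{\sharp_h})^+|^{-1}$, and to use Lemma~\ref{lem:asymptotic-for-xi-ASD}. Another route is the non-local-conformal-flatness Lemma~\ref{lem:lemma-conformal-infinity-non-locally-conformally-flat}: the flat class $[g_N]$ is locally conformally flat, which on $T^3$ forces $g^\sharp$ to be flat. I would use this second route, since it avoids identifying $\xi$.

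For completeness I would also record the converse: in the flat case, $g^\flat$ is flat, so $Y=0$, which is consistent with the exception in the statement.
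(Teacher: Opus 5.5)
Your argument is correct, but it takes a different route from the paper's proof of this lemma, namely the Witten--Yau/Cai--Galloway route the introduction alludes to.

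\textbf{The paper's route.} The argument stays on the boundary. The links $\mathscr L$ are aspherical circle bundles, so Schoen--Yau rules out positive scalar curvature and gives $Y\le 0$. If $Y=0$, a Yamabe minimizer is scalar-flat, and Bourguignon's argument makes it Ricci-flat, hence flat. Lemma~\ref{lem:lemma-conformal-infinity-non-locally-conformally-flat} then leaves only two candidates: flat $T^3$, or $\Sigma_{\ttg}\times\bS^1$ with hyperbolic $g^\sharp$. The second is discarded because it is not a flat manifold.

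\textbf{Your route.} You use the filling instead. The cusp end $E_\infty$ rules out alternatives (1)--(2) of Proposition~\ref{prop:Cai-Galloway-rigidity}. So $Y\ge 0$ forces $(M,h)$ to be the hyperbolic warped product over a flat $N$ with $\pi_1(N)\cong\pi_1(\mathscr L)$. Bieberbach then excludes $\NN_\ell$ ($\ell\neq 0$) and $P_\ell$, and the local conformal flatness lemma settles $T^3$.

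\textbf{Comparison.}
\begin{itemize}
\item Your approach avoids the positive scalar curvature obstruction and Bourguignon's argument.
\item It makes the nilmanifold and $\Sigma_{\ttg}$ cases purely topological, so the $\Sigma_{\ttg}\times\bS^1$ exception never arises.
\item It yields the stronger statement that $Y\ge 0$ forces the whole Einstein filling to be hyperbolic.
\item The cost is that it relies on the Cai--Galloway splitting theorem and on the presence of a second end.
\item The paper's argument, by contrast, shows the sign of $Y$ is forced by the boundary data alone, with no reference to any filling.
\end{itemize}

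\textbf{Two small points.} First, when you transfer local conformal flatness from $[g_N]$ to $[g^\flat]$, you are using that the conformal infinity is an isometry invariant. You also need that the isometry carries $E_0$ to the infinite-volume end $s\to-\infty$; this holds because the other end of the warped product has finite volume. Second, your first sub-route in the AH case, reading off $v$ from $h$, would require identifying $\mathcal K$ and $\xi$ with the model ones. Discarding it in favour of the local conformal flatness route is the right call.
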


\begin{proof}
These conformal infinities are aspherical circle bundles, so they admit no
metric of positive scalar curvature by the Schoen--Yau obstruction
\cite{SchoenYau1979Structure}. Hence their Yamabe constants are nonpositive.
If one were zero, a Yamabe minimizer would be scalar-flat; since the manifold
admits no positive scalar curvature, Bourguignon's argument
\cite{Bourguignon1975Stratification} implies that the minimizer is Ricci-flat,
hence flat. Then Lemma
\ref{lem:lemma-conformal-infinity-non-locally-conformally-flat} leaves only
the flat $T^3$ case or the product $\Sigma_{\ttg}\times\bS^1$ case. The latter
is not a flat three-manifold, so only the flat $T^3$ case can occur.
\end{proof}

    \appendix

	\section{Liouville theorems}

	\begin{lemma}\label{lem:Liouville_non_compact_easy}
		Let $u$ satisfy $(e^u)_{\zeta\zeta}+u_{xx}+u_{yy}=0$ on $(0,\infty)\times\mathbb R^2$. If $u-\log(\zeta)\in L^\infty((0,\infty)\times\mathbb R^2)$, then $u(\zeta,x,y)=\log(c\zeta)$ for some $c>0$.
	\end{lemma}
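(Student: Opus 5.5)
The plan is to remove the degeneracy at $\zeta=0$ by lifting the equation to a uniformly elliptic equation on $\mathbb R^6$. There, a scale-invariant oscillation-decay (Krylov--Safonov) argument will show that $w:=u-\log\zeta$ is constant. Then $u=\log(c\zeta)$ with $c=e^{w}>0$.

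\emph{Step 1: reformulation.} Write $e^{u}=\zeta F$ with $F:=e^{w}$, so that $C^{-1}\le F\le C$. The equation becomes
\[
\zeta F_{\zeta\zeta}+2F_\zeta+\Delta_{x,y}\log F=0 .
\]
Put $\zeta=s^2/4$ with $s=|X|$ for $X\in\mathbb R^4$. Then $\zeta\partial_\zeta^2+2\partial_\zeta=\partial_s^2+\tfrac3s\partial_s$, which is the radial part of $\Delta_{\mathbb R^4}$. Hence $F$, viewed as an $SO(4)$-invariant function of $(X,x,y)\in(\mathbb R^4\setminus\{0\})\times\mathbb R^2$, solves
\[
\Delta_{\mathbb R^4}F+\Delta_{\mathbb R^2}\log F=0 .
\]
Equivalently,
\[
\Delta_{\mathbb R^4}F+F^{-1}\Delta_{\mathbb R^2}F-F^{-2}|\nabla_{\mathbb R^2}F|^2=0 .
\]
This equation is uniformly elliptic, because $F$ is bounded above and below. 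The excluded set $\{0\}\times\mathbb R^2$ has codimension $4$ and $F$ is bounded, so a standard removable-singularity argument makes $F$ a bounded weak, hence smooth, solution on all of $\mathbb R^6$. One can, for instance, test against cutoffs $\chi_\epsilon(|X|)$ whose gradients have $L^2$-norm $\to0$, using the divergence structure of $\Delta_4F+\Delta_2\log F$. The equation is invariant under $(X,x,y)\mapsto(\lambda X,\lambda x,\lambda y)$ and under translations in $(x,y)$. This matches the scaling $u(\lambda\zeta,\lambda^{1/2}\cdot)-\log\lambda$ used in the text.

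\emph{Step 2: scale-invariant estimates.} Apply interior estimates on the unit ball to the rescaled functions $F_{p,R}(Z):=F(p+RZ)$, which satisfy the same equation with the same bounds. Since $\log F$ is bounded, De Giorgi--Nash--Moser and Schauder estimates give $|\nabla F|\le C/R$ on $B_R(p)$, and hence $|\nabla w|\le C/R$ there. The constant $C$ depends only on $\sup|w|$.

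\emph{Step 3: oscillation decay.} Regard the equation as a linear nondivergence equation for $F$:
\[
a^{ij}D_{ij}F+b\cdot DF=0,\qquad a=\mathrm{diag}(I_4,F^{-1}I_2),\qquad b=-(0,F^{-1}\nabla_{\mathbb R^2}w).
\]
The coefficient matrix $a$ is uniformly elliptic, and on $B_{2R}(p)$ Step 2 gives $|b|\le C/R$. After rescaling $B_{2R}$ to $B_2$, the drift has a uniform bound. The Krylov--Safonov Harnack inequality therefore yields a constant $\theta<1$, independent of $R$ and $p$, with $\operatorname{osc}_{B_R(p)}F\le\theta\operatorname{osc}_{B_{2R}(p)}F$. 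Iterating $k$ times and using $\operatorname{osc}F\le 2C$ gives $\operatorname{osc}_{B_R(p)}F\le 2C\theta^k\to0$. Thus $F$ is constant, so $u=\log\zeta+\log F=\log(c\zeta)$ with $c>0$.

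The main obstacle is Step 1: checking carefully that the lifted function extends across $\{X=0\}$ as a genuine solution on $\mathbb R^6$, so that the balls in Steps 2 and 3 may meet the axis. If the removable-singularity argument proves delicate, I would first try to obtain the estimates directly in $(\zeta,x,y)$ using the scaling. I would work on half-balls near $\zeta=0$, adapted to the metric $ds^2+dx^2+dy^2$ with $s=2\sqrt\zeta$, where the operator is of Bessel type with index corresponding to dimension $4$ and therefore has a good boundary Harnack theory.
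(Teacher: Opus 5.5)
Your proposal is correct and follows essentially the same route as the paper. The substitution $\zeta=\frac14|Z|^2$ turns $e^{u-\log\zeta}$ into a bounded, positive solution of $\Delta_{\mathbb R^4}V+\Delta_{\mathbb R^2}\log V=0$; the codimension-$4$ axis is removed (the paper cites Serrin's theorem); and a scale-invariant Harnack argument on $\mathbb R^6$ forces $V$ to be constant. The only difference is in that last step. The paper reads the equation in divergence form, $\operatorname{div}_Z\nabla_Z V+\operatorname{div}_z(V^{-1}\nabla_z V)=0$, treats $V^{-1}$ as a bounded measurable coefficient, and applies the De Giorgi--Nash--Moser Harnack inequality directly. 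So your route through gradient bounds and Krylov--Safonov with drift is valid but unnecessary: the De Giorgi--Nash--Moser estimate you already use in Step 2 gives the oscillation decay by itself.
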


	\begin{proof}
		Set $w:=u-\log(\zeta)$ and $V(Z,z):=e^{w(\frac14|Z|^2,z)}$, where $z=(x,y)\in \mathbb{R}^2, \ Z\in\mathbb{R}^4\backslash \{0\}$. Then $\lambda\leq V\leq \Lambda$ for some $0<\lambda\leq \Lambda<\infty$, and a direct computation shows that $V$ solves $\Delta_ZV+\Delta_z\log V=0$. Equivalently $\mathcal{L} V=0$ with $\mathcal{L}=\operatorname{div}_Z(\nabla_Z)+\operatorname{div}_z(V^{-1}\nabla_z)$, so $\mathcal{L}$ is uniformly elliptic. Since $\{0\}\times\mathbb R^2$ has codimension $4$, a removable singularity theorem (Theorem 2 in \cite{serrin1965removable}) implies that $V$ extends to a bounded global weak solution of $\mathcal{L} V =0$ on $\mathbb R^6$. Hence the Harnack inequality (Theorem 8.20 in \cite{gilbarg_trudinger_2001}) yields that $V$ is constant.
	\end{proof}

	\begin{lemma}\label{lem:Liouville_non_compact}
		Let $u$ satisfy $(e^u)_{\zeta\zeta}+u_{xx}+u_{yy}=0$ on $[0,\infty)\times\mathbb R^2$, with $u|_{\{\zeta=0\}}=0$. If $u-\log(1+\zeta)\in L^\infty([0,\infty)\times\mathbb R^2)$, then $u(\zeta,x,y)=\log(1+c\zeta)$ for some $c>0$.
	\end{lemma}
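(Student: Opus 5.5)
I would reduce the statement to the Liouville theorem for bounded solutions of a uniformly elliptic divergence-form equation, as in Lemma \ref{lem:Liouville_non_compact_easy}. The boundary condition $u|_{\zeta=0}=0$ lets me reflect across $\{\zeta=0\}$ in a suitable variable; the answer $\log(1+c\zeta)$ is then the unique profile compatible with the boundary data.

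\textbf{Step 1: the slice average.} I cannot integrate over the non-compact $\mathbb R^2$ directly, so I use mean-value or periodization arguments in the $z=(x,y)$ variables. Set $w:=u-\log(1+\zeta)$, which is bounded. Differentiating in $x,y$ and using interior and boundary estimates (the equation is uniformly elliptic in $e^u$, since $e^u/(1+\zeta)$ is pinched), I get uniform bounds on $D_z u$ at the correct scales.

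\textbf{Step 2: the key Liouville argument.} I would show that $\partial_x u$ and $\partial_y u$ vanish identically. Consider $U:=e^u$, which solves
\[
U_{\zeta\zeta}+\Delta_z\log U=0 .
\]
Both $U$ and the translates $U(\zeta,z+h)$ are solutions with the same boundary value $1$ and the same growth $\sim(1+\zeta)$. Their difference $D:=U(\cdot,\cdot+h)-U$ satisfies a linear equation of the form
\[
D_{\zeta\zeta}+\Delta_z(\Theta D)=0,
\qquad \Theta\approx U^{-1}\sim(1+\zeta)^{-1},
\]
with $D=0$ on $\zeta=0$ and $|D|\le C(1+\zeta)$. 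Changing to $t=\sqrt{1+\zeta}$, as in the treatment of \eqref{eq:bvp2}, makes this uniformly elliptic on a half-space $[1,\infty)\times\mathbb R^2$, with bounded coefficients and a first-order term $\sim t^{-1}$. The relevant quantity $\Theta D\approx D/U$ is then bounded and vanishes on the boundary. I would then prove a Phragmén--Lindelöf/Liouville statement showing that this bounded solution must be $0$: either via the $Z\in\mathbb R^4$ radial lift used in Lemma \ref{lem:Liouville_non_compact_easy}, after odd reflection, or via a barrier argument comparing with $\epsilon(\zeta+|z|^2)$-type supersolutions. This is the main obstacle. The operator is not translation-invariant in $\zeta$, and the growth $C(1+\zeta)$ of $D$ is exactly borderline, so the bound has to be chosen carefully. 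I would first try normalizing by $D/(1+\zeta)$ and use the maximum principle on large half-balls together with a sublinear growth argument in $z$.

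\textbf{Step 3: the ODE.} Once $u$ is independent of $z$, the equation reduces to $(e^u)''=0$, so $e^u=1+c\zeta$ by the boundary condition, with $c>0$ forced by $u-\log(1+\zeta)\in L^\infty$. This gives $u=\log(1+c\zeta)$.
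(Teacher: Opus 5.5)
\textbf{There is a genuine gap, at the step you yourself flag as the main obstacle.} The Liouville statement you want for the difference equation is false as you formulate it.

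Take two of the solutions in the conclusion, $U_1=1+c_1\zeta$ and $U_2=1+c_2\zeta$ with $c_1\neq c_2$. Then $D=U_1-U_2=(c_1-c_2)\zeta$ has all the properties you use:
\begin{itemize}
\item it solves $D_{\zeta\zeta}+\Delta_z(\Theta D)=0$ with $\Theta=(\log U_1-\log U_2)/(U_1-U_2)\sim(1+\zeta)^{-1}$;
\item it vanishes on $\{\zeta=0\}$ and satisfies $|D|\le C(1+\zeta)$;
\item $\Theta D=\log\frac{1+c_1\zeta}{1+c_2\zeta}$ is bounded and not identically zero.
\end{itemize}
Odd reflection does not remove this example, since $\alpha\zeta$ is odd.

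Your own substitution shows why. In $t=\sqrt{1+\zeta}$, the function $E=e^{w}$, with $w=u-\log(1+\zeta)$, solves $\frac14(E_{tt}+\frac3tE_t)+\Delta_z\log E=0$. The operator $\partial_{tt}+\frac3t\partial_t$ is the radial Laplacian of $\mathbb R^4$ in $|Z|=2t$. So the problem lives on the exterior domain $\{|Z|>2\}\times\mathbb R^2$, where infinity carries positive harmonic measure (witness $|Z|^{-2}$). Bounded solutions with the same data on $\{|Z|=2\}$ are therefore pinned down only by their limit at $|Z|=\infty$.

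Consequently, nothing built only from the linear equation, the boundary condition and the $O(1+\zeta)$ bound can give $D=0$. That rules out barriers $\epsilon(\zeta+|z|^2)$, half-ball maximum principles, and normalizing by $1+\zeta$. The only extra information a translate difference $U(\cdot,\cdot+h)-U$ carries is that the two functions should have the same slope at $\zeta=\infty$. Turning that into a proof requires knowing that $e^{u}/(1+\zeta)\to c$ as $\zeta\to\infty$, \emph{uniformly in $z$}. This is the real content of the lemma, and your proposal gives no mechanism for it. Your Step 1 does not help either: gradient bounds at the natural scale still allow $w$ to oscillate by $O(1)$ in $z$ over distances comparable to $\sqrt{\zeta}$.

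\textbf{How the paper supplies the missing uniform limit.} It lifts $w$ to $V(Z,z)=e^{w(|Z|^2/4-1,z)}$ on $\{|Z|>2\}\times\mathbb R^2$. This is a bounded solution of the uniformly elliptic equation $\Delta_ZV+\Delta_z\log V=0$, with $V=1$ on $\{|Z|=2\}$.
\begin{itemize}
\item If the oscillation of $V$ on $\{|Z|>\rho\}$ did not tend to zero, one blows down along two offending sequences, $V_j(Z,z)=V(L_jZ,z_j+L_jz)$.
\item The limit is a bounded solution on $(\mathbb R^4\setminus\{0\})\times\mathbb R^2$. It is constant by Serrin's removable singularity theorem in codimension four together with Harnack, as in Lemma~\ref{lem:Liouville_non_compact_easy}.
\item The rescaled offending points may approach the singular set $\{Z=0\}$, where only local convergence is available. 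The paper transfers the constancy back to them using a $Z$-harmonic barrier built from $|Z|^{-2}$ and the exponential decay estimate on annular cylinders, Lemma~\ref{lem:annular_cylinder_decay_lemma}, which handles the non-compactness in $z$.
\item With $V\to c$ uniformly, one compares $V$ with the explicit solutions $c\pm\varepsilon+4(1-c\mp\varepsilon)|Z|^{-2}$, that is $e^{u}=1+(c\pm\varepsilon)\zeta$. The comparison uses a maximum principle on $\{2\le|Z|\le R\}\times\mathbb R^2$, and one then lets $\varepsilon\to0$.
\end{itemize}
Once the uniform limit is available, your translation argument is superfluous. Without it, that argument cannot be closed.
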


	\begin{proof}
		Set $w:=u-\log(1+\zeta)$ and $V(Z,z):=e^{w(\frac14|Z|^2-1,z)}$, where $z=(x,y)\in \mathbb{R}^2, \ Z\in\mathbb{R}^4\backslash B_2$. Then $\lambda\leq V\leq \Lambda$ for some $0<\lambda\leq \Lambda<\infty$, $V=1$ on $\{|Z|=2\}$, and a direct computation shows that $V$ solves $\Delta_ZV+\Delta_z\log V=0$. Equivalently $\mathcal{L} V=0$ with $\mathcal{L}=\operatorname{div}_Z(\nabla_Z)+\operatorname{div}_z(V^{-1}\nabla_z)$, so $\mathcal{L}$ is uniformly elliptic.

		We first claim that $V(Z,z)\to c$ as $|Z|\to\infty$, uniformly in $z$, for some constant $c>0$. Otherwise there exist $\varepsilon_0>0$ and points $(Z_j,z_j)$, $(Z_j',z_j')$ with $|Z_j|,|Z_j'|\to\infty$ and $|V(Z_j,z_j)-V(Z_j',z_j')|\ge\varepsilon_0$. Let $L_j:=|Z_j|+|Z_j'|+|z_j-z_j'|$, $\epsilon_j:=2/L_j$, and define $V_j(Z,z):=V(L_jZ,z_j+L_jz)$. Then $V_j$ solves the same equation on $\{|Z|>\epsilon_j\}\times\mathbb R^2$, still satisfies $\lambda\le V_j\le\Lambda$, and equals $1$ on $\{|Z|=\epsilon_j\}$. By local elliptic estimates, after passing to a subsequence, $V_j\to V_\infty$ uniformly on any compact set $D\Subset (\mathbb R^4\setminus\{0\})\times\mathbb R^2$, where $V_\infty$ is bounded and solves $\mathcal{L}_\infty V_\infty=0$, and $\mathcal{L}_\infty=\operatorname{div}_Z(\nabla_Z)+\operatorname{div}_z(V_\infty^{-1}\nabla_z)$. As in the proof of Lemma \ref{lem:Liouville_non_compact_easy}, $V_\infty\equiv c$.

		Now let $Q_j$ be any bounded sequence such that $|Z(Q_j)|/\epsilon_j\to\infty$. We show that $V_j(Q_j)\rightarrow c$. Fix $\delta\gg 1$ and $K>2\delta$, and set
		$\Omega_{j,\delta,K}:=\{\epsilon_j<|Z|<\delta,\ |z|<K\}$.
		Let
		\[
		\psi_j(Z):=\frac{|Z|^{-2}-\delta^{-2}}{\epsilon_j^{-2}-\delta^{-2}},
		\qquad
		\eta_j(\delta,K):=\sup_{\{|Z|=\delta,\ |z|\le K\}}|V_j-c|,
		\]
		and choose $N\ge \sup_j\|V_j-c\|_{L^\infty}$. Then $\psi_j$ is harmonic in the $Z$-variable, satisfies $\psi_j=1$ on $|Z|=\epsilon_j$ and $\psi_j=0$ on $|Z|=\delta$, and
		$\psi_j(Q_j)\sim (\epsilon_j/|Z(Q_j)|)^2\to0$.
		Write $\mathcal{L}_j=\operatorname{div}_Z(\nabla_Z)+\operatorname{div}_z(V_j^{-1}\nabla_z)$ and define
		\[
		W_j:=N\psi_j+\eta_j(\delta,K),\qquad
		v_j^\pm:=(\pm(V_j-c)-W_j)^+.
		\]
		Since $W_j\ge |V_j-c|$ on $|Z|=\epsilon_j$ and on $|Z|=\delta$, the functions $v_j^\pm$ vanish on the $Z$-boundary of $\Omega_{j,\delta,K}$; moreover $0\le v_j^\pm\le 2N$. Because $\mathcal{L}_j(V_j-c)=0$ and $\mathcal{L}_j W_j=0$, we conclude $\mathcal{L}_j v_j^\pm\ge0$ weakly in $\Omega_{j,\delta,K}$. Hence Lemma \ref{lem:annular_cylinder_decay_lemma} yields
		\[
		v_j^\pm(Q_j)\le Ce^{-\gamma K/\delta}.
		\]
		Therefore
		\[
		|V_j(Q_j)-c|
		\le
		v_j^+(Q_j)+v_j^-(Q_j)+W_j(Q_j)
		\le
		2Ce^{-\gamma K/\delta}+N\psi_j(Q_j)+\eta_j(\delta,K).
		\]
		For fixed $\delta$ and $K$, local uniform convergence away from $\{Z=0\}$ gives $\eta_j(\delta,K)\to0$, while $|Z(Q_j)|/\epsilon_j\to\infty$ implies $\psi_j(Q_j)\to0$.
		Taking the limit as $j \to \infty$ and then $K \to \infty$, we conclude that $V_j(Q_j) \to c$.

		Taking $Q_j=P_j:=(Z_j/L_j,0)$ and $Q_j=P_j':=(Z_j'/L_j,(z_j'-z_j)/L_j)$, both of which stay in a fixed bounded set and satisfy $|Z(P_j)|/\epsilon_j=|Z_j|/2\to\infty$ and $|Z(P_j')|/\epsilon_j=|Z_j'|/2\to\infty$, we obtain $V(Z_j,z_j)=V_j(P_j)\to c$ and $V(Z_j',z_j')=V_j(P_j')\to c$, contradicting $|V(Z_j,z_j)-V(Z_j',z_j')|\ge\varepsilon_0$. This proves the claim.

		Finally, for $\varepsilon>0$ set $\phi_\pm(Z):=c\pm\varepsilon+4(1-c\mp\varepsilon)|Z|^{-2}$. Then $\phi_\pm$ solve the same equation, satisfy $\phi_\pm=1$ on $\{|Z|=2\}$, and converge to $c\pm\varepsilon$ as $|Z|\to\infty$. Since $V\to c$ uniformly in $z$, we have $\phi_-\le V\le\phi_+$ for $|Z|$ large, hence everywhere by the maximum principle on truncated domains $\{2\le |Z|\le R\}\times\mathbb R^2$ (e.g., Theorem 1.2 in \cite{busca1999existence}), and letting $\varepsilon\downarrow0$ gives $V(Z,z)=c+4(1-c)|Z|^{-2}$. Since $|Z|^2=4(1+\zeta)$, it follows that
		$
		e^u=(1+\zeta)V=1+c\zeta.
		$
	\end{proof}

	\begin{lemma}\label{lem:annular_cylinder_decay_lemma}
		Let
		\[
		A_{\varepsilon,\delta}:=\{Z\in\mathbb R^4:\varepsilon<|Z|<\delta\},
		\qquad
		Q_K:=A_{\varepsilon,\delta}\times B_K,
		\]
		where $0<\varepsilon<\delta$ and $K\ge 2\delta$. Let
		\[
		Lv:=\operatorname{div}(\mathcal A\nabla v),
		\qquad
		\mathcal A=
		\begin{pmatrix}
			I_4&0\\
			0&aI_2
		\end{pmatrix},
		\qquad
		0<\lambda\le a\le\Lambda.
		\]
		Assume $v\in H^1(Q_K)\cap L^\infty(Q_K)$ satisfies
		\[
		0\le v\le 1,\qquad Lv\ge0\ \text{in }Q_K,
		\qquad
		v=0\ \text{on }\partial A_{\varepsilon,\delta}\times B_K.
		\]
		Then there exist $C,\gamma>0$, depending only on $\lambda,\Lambda$, such that
		\[
		\sup_{A_{\varepsilon,\delta}\times B_{\delta/2}} v
		\le
		C e^{-\gamma K/\delta}.
		\]
	\end{lemma}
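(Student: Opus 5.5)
We need to prove Lemma \ref{lem:annular_cylinder_decay_lemma}: exponential decay in the $z$-direction for a subsolution that vanishes on the lateral boundary $\partial A_{\varepsilon,\delta}\times B_K$.

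First rescale. Setting $Z=\delta Z'$, $z=\delta z'$ preserves the form of $L$ (it is homogeneous of degree $-2$). We may therefore assume $\delta=1$, with $\varepsilon<1$ and $K\ge 2$. The claim becomes
\[
\sup_{A_{\varepsilon,1}\times B_{1/2}}v\le Ce^{-\gamma K}.
\]
The key point is that the cross-section $A_{\varepsilon,1}$ has bounded width and Dirichlet conditions, uniformly in $\varepsilon$. Indeed, the Poincaré inequality $\int_{A}|\nabla_Z f|^2\ge \mu\int_A f^2$ holds for $f\in H^1_0(A_{\varepsilon,1})\subset H^1_0(B_1)$, with $\mu=\lambda_1(B_1)$ independent of $\varepsilon$.

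Next I will run an $L^2$ decay (Saint-Venant type) argument in the $z$ direction. Since $v\ge0$ and $Lv\ge0$, testing against $v\zeta^2$ with a cutoff $\zeta=\zeta(z)$ is legitimate, because $v\zeta^2$ vanishes on the lateral boundary. Define
\[
E(\rho):=\int_{A\times B_\rho}\bigl(|\nabla_Z v|^2+a|\nabla_z v|^2\bigr).
\]
Take $\zeta=1$ on $B_\rho$, $\zeta=0$ outside $B_{\rho+1}$, and $|\nabla\zeta|\le 2$. The Caccioppoli inequality then gives
\[
E(\rho)\le C\int_{A\times(B_{\rho+1}\setminus B_\rho)}v^2 .
\]
Applying the Poincaré inequality in $Z$ slice by slice bounds the right-hand side by $C\mu^{-1}\bigl(E(\rho+1)-E(\rho)\bigr)$. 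This yields $E(\rho)\le \theta E(\rho+1)$ with $\theta=C'/(1+C')<1$, depending only on $\lambda,\Lambda$. Iterating from $\rho=1$ to $\rho\approx K-1$ gives $E(1)\le \theta^{K-2}E(K-1)$. Finally, $E(K-1)\le C\int_{A\times B_K}v^2\le CK^2$, using $v\le1$ and $|A|\le|B_1|$.

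To pass from $L^2$ control to a pointwise bound, I use De Giorgi--Nash--Moser local boundedness for nonnegative subsolutions with vanishing lateral boundary values. Extend $v$ by $0$ across $\partial A$: the extension is still a subsolution, and this works uniformly for all boundary points. The estimate reads
\[
\sup_{A\times B_{1/2}}v\le C\Bigl(\int_{A\times B_1}v^2\Bigr)^{1/2}.
\]
The right-hand side is bounded by $C\mu^{-1/2}E(1)^{1/2}\le CK\theta^{K/2}$. Absorbing the polynomial factor $K$ into a slightly smaller exponent gives $Ce^{-\gamma K}$, as claimed.

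The main obstacle is uniformity as $\varepsilon\to0$, since the inner sphere shrinks. Both the Poincaré constant and the boundary subsolution estimate must not degrade. The zero-extension trick handles both: $v$ extended by zero is a subsolution on all of $B_1\times B_K$, because the positive part of a subsolution vanishing on the boundary remains a subsolution. Moreover, Poincaré only uses containment in $B_1$. So I expect no genuine difficulty there. The one detail to check is that the Caccioppoli step uses only $v\zeta^2$ as a test function, and this is valid for the zero extension.
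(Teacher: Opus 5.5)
Your proposal is correct and follows essentially the same route as the paper: a Caccioppoli estimate with a cutoff in $z$, combined with the slice-wise Dirichlet Poincar\'e inequality on $A_{\varepsilon,\delta}\subset B_\delta$ (uniform in $\varepsilon$), gives geometric decay of a cylinder integral, and local boundedness for the zero extension (taken across both spheres, e.g.\ to $\mathbb{R}^4\times B_K$) then yields the pointwise bound. The differences are cosmetic: you normalize $\delta=1$ and iterate on the Dirichlet energy $E(\rho)$, whereas the paper iterates on the $L^2$ mass $F(t)=\int_{A_{\varepsilon,\delta}\times B_t}v^2$.
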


	\begin{proof}
		Set
		\[
		F(t):=\int_{A_{\varepsilon,\delta}\times B_t} v^2,
		\qquad 0<t<K.
		\]
		Fix $0<t<t+\delta<K$, and choose $\chi\in C_c^\infty(B_{t+\delta})$ such that
		$\chi\equiv1$ on $B_t$ and $|\nabla_z\chi|\le C\delta^{-1}$.
		Testing the inequality $Lv\ge0$ with $\chi^2v$ gives
		\[
		\int_{A_{\varepsilon,\delta}\times B_t} |\nabla_Z v|^2
		\le
		C\delta^{-2}\bigl(F(t+\delta)-F(t)\bigr).
		\]
		Since $v(\cdot,z)=0$ on $\partial A_{\varepsilon,\delta}$, the Dirichlet
		Poincar\'e inequality on $A_{\varepsilon,\delta}$ yields
		\[
		\int_{A_{\varepsilon,\delta}} v(Z,z)^2\,dZ
		\le
		C\delta^2\int_{A_{\varepsilon,\delta}} |\nabla_Z v(Z,z)|^2\,dZ,
		\]
		with $C$ independent of $\varepsilon$, because
		$\lambda_1(A_{\varepsilon,\delta})\ge \lambda_1(B_\delta^4)\simeq \delta^{-2}$.
		Integrating in $z$ we obtain
		\[
		F(t)\le C\bigl(F(t+\delta)-F(t)\bigr),
		\]
		hence
		\[
		F(t)\le \theta F(t+\delta)
		\]
		for some $\theta\in(0,1)$ depending only on $\lambda,\Lambda$.
		Iterating from $t=\delta$ up to $t\simeq K$ gives
		\[
		F(\delta)\le C e^{-\gamma_0 K/\delta} F(K)
		\le C\delta^4K^2 e^{-\gamma_0 K/\delta},
		\]
		since $0\le v\le1$.

		Finally, the standard local boundedness estimate for nonnegative subsolutions
		of uniformly elliptic divergence-form equations gives
		\[
		\sup_{A_{\varepsilon,\delta}\times B_{\delta/2}} v
		\le
		C\delta^{-3}\|v\|_{L^2(A_{\varepsilon,\delta}\times B_\delta)}.
		\]
		The constant is uniform in $\varepsilon$: after scaling by $\delta$ and using
		$v=0$ on $\partial A_{\varepsilon,\delta}\times B_\delta$, one extends by zero
		across the $Z$-boundary and applies the usual interior estimate in dimension $6$.
		Therefore
		\[
		\sup_{A_{\varepsilon,\delta}\times B_{\delta/2}} v
		\le
		C\delta^{-3}F(\delta)^{1/2}
		\le
		C\frac{K}{\delta}e^{-\gamma_0 K/(2\delta)}
		\le
		C e^{-\gamma_0 K/(4\delta)}.
		\]
	\end{proof}

	\Addresses

\end{document}